\documentclass[a4paper,reqno,11pt]{amsart}
\textheight 220mm
\textwidth 150mm
\hoffset -16mm
\usepackage{amssymb}
\usepackage{amstext}
\usepackage{amsmath}
\usepackage{amscd}
\usepackage{amsthm}
\usepackage{amsfonts}
\usepackage{pict2e}
\usepackage{enumerate}
\usepackage{graphicx}
\usepackage{latexsym}
\usepackage{mathrsfs}
\usepackage{slashbox}
\input xy
\xyoption{all}
\usepackage{pstricks}
\usepackage{lscape}
\usepackage{comment}

\makeatletter

\@addtoreset{equation}{section}
\makeatother
\newtheorem{theorem}{Theorem}[section]

\newtheorem{corollary}[theorem]{Corollary}
\newtheorem{lemma}[theorem]{Lemma}
\newtheorem{proposition}[theorem]{Proposition}
\newtheorem{definition-theorem}[theorem]{Definition-Theorem}
\newtheorem{definition-proposition}[theorem]{Definition-Proposition}

\newtheorem{mainthm}{Theorem}

\theoremstyle{definition}
\newtheorem{definition}[theorem]{Definition}
\newtheorem{remark}[theorem]{Remark}
\newtheorem{example}[theorem]{Example}

\newtheorem{notation}[theorem]{Notation}
\newtheorem{algorithm}[theorem]{Algorithm}

\newcommand{\E}{{\mathcal E}}

\newcommand{\G}{{\mathcal G}}

\newcommand{\M}{{\mathcal M}}
\newcommand{\N}{{\mathcal N}}

\renewcommand{\S}{{\mathcal S}}
\newcommand{\T}{{\mathcal T}}

\newcommand{\Y}{{\mathcal Y}}
\newcommand{\Z}{{\mathcal Z}}
\newcommand{\ZZ}{\mathbb{Z}}

\newcommand{\add}{\mathsf{add}\hspace{.01in}}
\newcommand{\Fac}{\mathsf{Fac}\hspace{.01in}}

\renewcommand{\mod}{\mathsf{mod}\hspace{.01in}}


\newcommand{\Arc}{\operatorname{arc}\nolimits}

\newcommand{\Hom}{\operatorname{Hom}\nolimits}

\newcommand{\ind}{\operatorname{ind}\nolimits}
\newcommand{\inn}{\operatorname{in}\nolimits}

\newcommand{\np}{\operatorname{np}\nolimits}
\newcommand{\op}{\operatorname{op}\nolimits}

\newcommand{\pr}{\operatorname{pr}\nolimits}
\newcommand{\rad}{\operatorname{rad}\nolimits}
\newcommand{\s}{\operatorname{s}\nolimits}
\newcommand{\soc}{\operatorname{soc}\nolimits}

\renewcommand{\top}{\operatorname{top}\nolimits}


\newcommand{\psttilt}{\mbox{\rm ps$\tau$-tilt}\hspace{.01in}}

\newcommand{\sttilt}{\mbox{\rm s$\tau$-tilt}\hspace{.01in}}

\newcommand{\tilt}{\mbox{\rm tilt}\hspace{.01in}}
\newcommand{\trigid}{\mbox{\rm $\tau$-rigid}\hspace{.01in}}
\newcommand{\ttilt}{\mbox{\rm $\tau$-tilt}\hspace{.01in}}

\newcommand{\bQ}{\overline{Q}}
\newcommand{\bLambda}{\overline{\Lambda}}
\newcommand{\hasse}{{\rm H}}

\newcommand{\po}{\Omega}

\newcommand{\arr}{\operatorname{a}\nolimits}


\newcommand{\xto}{\xrightarrow}
\begin{document}
\title[$\tau$-tilting modules over Nakayama algebras]{The classification of $\tau$-tilting modules over Nakayama algebras}
\author[Takahide Adachi]{Takahide Adachi}
\address{Graduate School of Mathematics, Nagoya University, Frocho, Chikusaku, Nagoya, 464-8602, Japan}
\email{m09002b@math.nagoya-u.ac.jp}
\begin{abstract}
In this paper, we study $\tau$-tilting modules over Nakayama algebras.
We establish bijections between $\tau$-tilting modules, triangulations of a polygon with a puncture, and certain integer sequences.
Moreover, we give an algorithm to construct the Hasse quiver of support $\tau$-tilting modules by using Drozd-Kirichenko rejection.
\end{abstract}
\maketitle
\section*{Introduction}
In representation theory of algebras, tilting modules are important because they control derived equivalences.
An effective method to construct tilting modules is given by mutation \cite{RS,U}.
It is an operation to replace an indecomposable direct summand of a given tilting module to get a new one.
However, it is known that mutation of tilting modules is often impossible depending on a choice of indecomposable direct summands.
To improve behavior of mutation of tilting modules, 
the authors of \cite{AIR} introduced the notion of (support) $\tau$-tilting modules as a generalization of tilting modules.
In fact, they showed that mutation of support $\tau$-tilting modules is always possible.
In addition, $\tau$-tilting modules satisfy nice properties of tilting modules.
For example, the set of support $\tau$-tilting modules has a natural structure of a partially ordered set (poset, for short),
and the Hasse quiver of this poset coincides with the mutation quiver of support $\tau$-tilting modules.
This is an analog of a result by Happel-Unger \cite{HU} for tilting modules.
It is also known that there are close relationships 
between $\tau$-tilting modules and some important notions in representation theory,
{\it e.g.}, torsion classes, silting complexes (see \cite{AI,KV}) and cluster-tilting objects (see \cite{BMRRT}).
Therefore it is important to give a classification of (support) $\tau$-tilting modules.
For results on $\tau$-tilting modules, we refer to \cite{AIR, BY, J1, Ma, Mi1, Mi2, Z} for instance.

The aim of this paper is to classify $\tau$-tilting modules over any Nakayama algebra $\Lambda$,
and moreover to give an algorithm to construct the Hasse quiver of support $\tau$-tilting $\Lambda$-modules.
The following theorem is our first main result, which gives combinatorial descriptions of support $\tau$-tilting $\Lambda$-modules.
\begin{mainthm}[Theorems \ref{2-11}, \ref{2-6} and \ref{2-8}]\label{1}
Let $\Lambda$ be a Nakayama algebra with $n$ simple modules.
Assume that the Loewy length of each indecomposable projective $\Lambda$-module is at least $n$.
Then there are bijections between
\begin{itemize}
\item[(1)] the set $\ttilt\Lambda$ of isomorphism classes of basic $\tau$-tilting $\Lambda$-modules,
\item[(2)] the set $\psttilt\Lambda$ of isomorphism classes of basic proper support $\tau$-tilting $\Lambda$-modules,
\item[(3)] the set $\T(n)$ of triangulations of an $n$-regular polygon with a puncture,
\item[(4)] the set $\Z(n)$ of sequences $(a_{1},a_{2},\ldots, a_{n})$ of nonnegative integers with $\sum_{i=1}^{n}a_{i}=n$.
\end{itemize}
\end{mainthm}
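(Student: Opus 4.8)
The plan is to prove that all four sets are in bijection by routing everything through the combinatorial model $\T(n)$ together with the distribution of composition tops. First I would record the structural reduction: since every indecomposable projective has Loewy length at least $n$, the algebra cannot be of linear type, so the quiver of $\Lambda$ is an oriented $n$-cycle and every indecomposable module is uniserial, determined by its top $S_i$ and its Loewy length $\ell$; write $M(i,\ell)$ for this module, with $P_i = M(i,c_i)$ the projectives. I would then classify the indecomposable $\tau$-rigid modules: computing that $\tau M(i,\ell)$ is again uniserial with top $S_{i+1}$ shows that every non-projective $M(i,\ell)$ with $\ell \le n-1$ is $\tau$-rigid while length-$n$ non-projectives are not, so the $\tau$-rigid indecomposables are exactly the $n(n-1)$ short uniserials together with the $n$ projectives, i.e. $n^2$ of them. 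The hypothesis $c_i \ge n$ is precisely what makes this list uniform in $i$ and identifies it with the $n^2$ arcs of the punctured $n$-gon: $M(i,\ell)$ with $\ell \le n-1$ is the arc from vertex $i$ to vertex $i+\ell$ (the puncture forcing the two orientations around a pair of vertices to be distinct arcs), and $P_i$ is the arc from vertex $i$ to the puncture.

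Next I establish the bijection (1)$\leftrightarrow$(3). The key computation is $\Hom(M(i,\ell),\tau M(j,m))$, which for uniserial modules is governed by the overlap of their composition series; I expect the simultaneous vanishing $\Hom(M(i,\ell),\tau M(j,m)) = 0 = \Hom(M(j,m),\tau M(i,\ell))$ to reduce to the purely geometric statement that the two arcs do not cross inside the punctured polygon. Granting this dictionary, a basic $\tau$-rigid module is the same datum as a set of pairwise non-crossing arcs. By the characterization in \cite{AIR} of $\tau$-tilting modules as the $\tau$-rigid modules with exactly $n$ indecomposable summands, together with an Euler-characteristic count showing that a triangulation of the punctured $n$-gon has exactly $n$ arcs, the maximal non-crossing collections match $\ttilt\Lambda$, giving $\T(n)$. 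I would also verify that $\tau$-tilting mutation corresponds to a flip of arcs, so that the bijection identifies the Hasse quiver with the flip graph.

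The bijection (3)$\leftrightarrow$(4) is then elementary combinatorics. To $T \in \T(n)$ I attach $(a_1,\ldots,a_n)$ with $a_i$ the number of arcs of $T$ based at the boundary vertex $i$, equivalently the number of indecomposable summands of the corresponding $\tau$-tilting module whose top is $S_i$; since a triangulation has $n$ arcs this lands in $\Z(n)$. The real content is that, once the number of arcs based at $i$ is fixed, the non-crossing condition and the presence of the puncture force their lengths uniquely, which gives injectivity, while a fan construction realizes every sequence, giving surjectivity. Composing yields the clean form of (1)$\leftrightarrow$(4): a $\tau$-tilting module is determined by, and determines, the multiplicities with which the simples occur as tops of its indecomposable summands.

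Finally, for (1)$\leftrightarrow$(2) I would treat a proper support $\tau$-tilting module as a pair $(M,P)$ with $M$ $\tau$-rigid, $P\ne 0$ projective, $\Hom(P,M)=0$ and $|M|+|P|=n$, and encode it by a notched triangulation in which the arcs incident to the puncture are tagged according to $P$; this again counts $\psttilt\Lambda$ by $\Z(n)$, and the bijection (1)$\leftrightarrow$(2) is the one induced by this common target, made explicit through the mutation/flip structure and through Drozd--Kirichenko rejection, which controls the passage to the support quotient. The main obstacle throughout is the step in the second paragraph: proving that algebraic $\tau$-rigidity of a pair of uniserial modules is equivalent to geometric non-crossing of their arcs, since this is exactly what converts a representation-theoretic maximality problem into the transparent combinatorics of triangulations. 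Once this dictionary is in place, the remaining bijections follow by counting, and the Hasse-quiver statement follows by matching mutation with flips.
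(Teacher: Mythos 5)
The (1)$\leftrightarrow$(3) and (3)$\leftrightarrow$(4) legs of your outline essentially coincide with the paper's route: the Loewy-length criterion for $\tau$-rigidity of indecomposables (Proposition \ref{2-2}), the Hom-vanishing/non-crossing dictionary for pairs of uniserials (Lemma \ref{2-1} and Proposition \ref{2-5}(2)) --- which you correctly flag as the main computational content --- then maximality via Proposition \ref{air2.10} matched against the count that every triangulation of the punctured $n$-gon has exactly $n$ arcs (Proposition \ref{2-4}); and for (3)$\leftrightarrow$(4) the terminal-point/top count $\top(X)$ of Theorem \ref{2-8}, with injectivity because the non-crossing condition forces the arc lengths (the paper's Lemma \ref{newlem1}, phrased via the partial-sum sequence $a'$) and surjectivity by an explicit construction (the paper's $X_{a}$ built from the maxima $k^{l}_{s}$). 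One caution: ``arcs based at vertex $i$'' must mean arcs with \emph{terminal} point $i$, since $\langle i,j\rangle$ and $\langle j,i\rangle$ are distinct arcs and each inner arc is counted once; your equivalent formulation via tops of summands does pin this down correctly.

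The genuine gap is the leg (1)$\leftrightarrow$(2). The paper's bijection is explicit: $M\mapsto M_{\np}$ with inverse $N\mapsto N\oplus \phi(e_{N})\Lambda$, where $\phi$ rotates idempotents by the $\tau$-shift of tops, and its crux is Proposition \ref{2-13}(1): \emph{every} $\tau$-tilting module over a Nakayama algebra has a nonzero projective direct summand. This is proved by taking a longest indecomposable summand $L$ of a hypothetical projective-free $M=L\oplus N$, showing $P_{L}\oplus N$ is again $\tau$-tilting, and contradicting the two-completions theorem (Proposition \ref{air2.18}) for the almost complete pair $(N,0)$; the hypothesis $\ell(P_{i})\ge n$ then gives $\psttilt_{\np}\Lambda=\psttilt\Lambda$ because every projective is sincere. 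Your proposal contains neither this lemma nor a working substitute: you encode proper support $\tau$-tilting modules by notched triangulations and assert that ``this again counts $\psttilt\Lambda$ by $\Z(n)$,'' but in the paper the signed/tagged description (Corollary \ref{bij}) is \emph{deduced from} Theorem \ref{2-11}(2), so invoking it here is circular unless you supply an independent proof --- for instance by decomposing $\psttilt\Lambda=\coprod_{e\neq 0}\ttilt(\Lambda/\langle e\rangle)$ into hereditary type-$A$ pieces and verifying the resulting Catalan-convolution identity against $|\Z(n)|=\binom{2n-1}{n-1}$, which you do not do. Finally, Drozd--Kirichenko rejection relates $\Lambda$ to $\Lambda/\soc Q$ for a projective-injective $Q$, not to the support quotients $\Lambda/\langle e\rangle$; in the paper it enters only in the Hasse-quiver algorithm of Section 3, not in this bijection, so it cannot play the role you assign it here.
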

We give a more general result for an arbitrary Nakayama algebra (see Theorem \ref{2-14}).
Moreover, we give a bijection between the set $\sttilt\Lambda$ of isomorphism classes of basic support $\tau$-tilting $\Lambda$-modules 
and the set of signed triangulations introduced recently by \cite{BS} and \cite{IN} (See Corollary \ref{bij}).
Note that Theorem \ref{1} is analogous to known classification results in representation theory:
tilting modules for cyclic quivers \cite{BK}, torsion pairs in tube categories \cite{BBM}, 
cluster-tilting objects in cluster categories of type $A$ \cite{CCS} and $D$ \cite{S}, 
cluster-tilting modules over self-injective algebras and Gorenstein orders of type $A$ and $D$ \cite{I}.

Next, we study a relationship between support $\tau$-tilting $\Lambda$-modules over an algebra $\Lambda$ (not necessarily Nakayama) and 
those over the factor algebra $\bLambda=\Lambda/\soc Q$, where $Q$ is an indecomposable projective-injective $\Lambda$-module.
The following theorem is the second main result in this paper.
\begin{mainthm}[Theorems \ref{3-4} and \ref{3-5} for details]\label{2}
Let $\Lambda$ be a basic finite dimensional algebra, 
$Q$ an indecomposable projective-injective $\Lambda$-module, and $\bLambda:=\Lambda/\soc Q$.
Then all support $\tau$-tilting modules of $\Lambda$ can be obtained explicitly from those of $\bLambda$.
Moreover, the Hasse quiver of support $\tau$-tilting modules of $\Lambda$ can be constructed explicitly from that of $\bLambda$.
\end{mainthm}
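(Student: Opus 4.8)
The plan is to reduce everything to the correspondences of \cite{AIR} between support $\tau$-tilting modules, functorially finite torsion classes, and $2$-term silting complexes, and then to transport data across the natural embedding $\iota\colon\mod\bLambda\hookrightarrow\mod\Lambda$ that identifies $\mod\bLambda$ with the full subcategory of $\Lambda$-modules annihilated by $\soc Q$, which is a simple two-sided ideal in this setting. First I would record how the basic homological ingredients behave under this embedding: since $Q$ is projective-injective, I expect that a $\bLambda$-module $M$ is $\tau_{\bLambda}$-rigid precisely when $\iota M$ is $\tau_{\Lambda}$-rigid, up to a controlled correction coming from the difference between a minimal projective presentation over $\bLambda$ and over $\Lambda$. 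The point is that the indecomposable projective-injective $Q$ and the simple $\soc Q$ are the only objects that detect the passage from $\bLambda$ to $\Lambda$, so the entire comparison should be governed by their behaviour.

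For the first assertion (Theorem \ref{3-4}), I would partition $\sttilt\Lambda$ according to the role played by $Q$: whether $Q$ occurs as an indecomposable summand of the $\tau$-tilting part, whether its projective cover is rejected into the projective part, or neither. The expected picture is a forgetful map $\sttilt\Lambda\to\sttilt\bLambda$ described on torsion classes by intersection with $\mod\bLambda$ (which one checks is well defined and preserves functorial finiteness), together with an explicit section lifting each support $\tau$-tilting $\bLambda$-module either to a single module or to a pair of modules --- one ``containing $Q$'' and one not --- according to a combinatorial condition on the support. Verifying that these assignments land in $\sttilt$ on both sides, and are mutually inverse on the appropriate pieces, is the bulk of the bookkeeping, and the $\tau$-rigidity transfer from the first step is exactly what makes it go through. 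The bijection $\ttilt\Lambda\leftrightarrow\psttilt\Lambda$ of Theorem \ref{1} can be used as a sanity check and, if convenient, as an intermediate step.

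For the second assertion (Theorem \ref{3-5}), I would use that by \cite{AIR} the Hasse quiver of $\sttilt$ coincides with the mutation quiver, and then track mutation through the rejection. Each arrow of $\hasse(\sttilt\Lambda)$ is a single mutation at an indecomposable summand; I would show that mutation at any summand other than $Q$ is compatible with the forgetful map, so such arrows are inherited from $\hasse(\sttilt\bLambda)$, while mutation at $Q$ produces precisely the new arrows joining the two lifts of a given $\bLambda$-module. Concretely I expect $\hasse(\sttilt\Lambda)$ to be obtained from $\hasse(\sttilt\bLambda)$ by doubling the vertices at which $Q$ may be inserted, joining each such pair by an arrow, and copying the remaining arrows; this is the sense in which the quiver is built ``explicitly'' by Drozd--Kirichenko rejection.

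The step I expect to be the main obstacle is the $\tau$-rigidity transfer, and in particular comparing $\tau_{\Lambda}$ with $\tau_{\bLambda}$ near the summand $Q$: because $Q$ is injective over $\Lambda$ but $\bar Q=Q/\soc Q$ need not be over $\bLambda$, the minimal projective presentations, and hence the Auslander--Reiten translates, can differ exactly along $\soc Q$, and one must show this single discrepancy never destroys the $\Hom$-orthogonality defining a $\tau$-rigid pair. Controlling this, and checking that the doubling neither over- nor under-counts --- equivalently, that mutation at $Q$ is always possible and yields a genuinely new support $\tau$-tilting module --- is where the real work lies; once it is in hand, both the bijection and the Hasse-quiver description should follow formally.
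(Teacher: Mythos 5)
Your overall architecture is close to the paper's: Adachi also partitions $\sttilt\Lambda$ by the role of $Q$ (his $\M_{1},\M_{2}^{-},\M_{2}^{+},\M_{3}$), also produces a doubling of a distinguished subset of $\sttilt\bLambda$, and the crux is indeed the $\tau$-rigidity transfer you flag. But that transfer is precisely where your proposal stops, and it is the entire content: the paper resolves it with the Drozd--Kirichenko rejection lemma, which gives $\tau_{\Lambda}U\simeq\tau_{\bLambda}U$ for every $U\in\mod\bLambda$ with $U\not\simeq\bQ$ and $\tau_{\Lambda}(\bQ)\simeq\rad Q$, combined with $\Hom_{\Lambda}(X,\rad Q)\simeq\Hom_{\Lambda}(X,Q)$ for $X$ without summand $Q$ (projectivity of $Q$). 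This computes your ``controlled correction'' exactly: it is the single term $\Hom_{\Lambda}(N,Q)$, and its (non)vanishing is the combinatorial condition you left unspecified. Concretely, the doubled vertices are $\N=\{N\in\sttilt\bLambda\mid \bQ\in\add N,\ \Hom_{\Lambda}(N,Q)=0\}$, with lifts $N$ and $Q\oplus N$; your picture also misses the third class ($\bQ\in\add N$, $\Hom_{\Lambda}(N,Q)\neq 0$), where the unique lift is $Q\oplus(N/\bQ)$, i.e.\ $\bQ$ is \emph{replaced} by $Q$ rather than $Q$ being inserted or omitted. Without the AR-translate comparison, nothing in your plan forces this trichotomy, and the AIR torsion-class machinery you invoke (including the claim that intersection with $\mod\bLambda$ preserves functorial finiteness) is never actually needed in the paper.

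Your Hasse-quiver step also fails as stated. It is not true that ``mutation at any summand other than $Q$ is compatible with the forgetful map'': for $N\in\N$ regarded upstairs (so $Q\notin\add N$), one of its mutations in $\sttilt\Lambda$ is $Q\oplus N$ itself, whereas the corresponding mutation downstairs is some $\omega>N$ with $\omega\notin\N$ --- so this mutation, which is not at $Q$, is not intertwined by $\alpha$. Correspondingly, the quiver is not obtained by ``doubling and copying the remaining arrows'': in the paper's Definition \ref{3-2}, every Hasse arrow $\omega_{1}\to n_{1}$ from an undoubled vertex into a doubled one must be \emph{redirected} to $\omega_{1}\to n_{1}^{+}$, and the only arrow into $n_{1}$ from outside $\N$ is the new connecting arrow $n_{1}^{+}\to n_{1}$ (this is also forced by $|\Lambda|$-regularity of the Hasse quiver). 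The paper sidesteps mutation-tracking altogether: it proves directly that $\alpha$ is an isomorphism of posets $\sttilt\Lambda\simeq(\sttilt\bLambda)^{\N}$ via $\Fac$-comparisons, with Lemma \ref{3-8} excluding comparabilities between the wrong pieces, after which the Hasse-quiver statement is purely formal. So the two genuine gaps are: the unproved transfer lemma (with its consequence, the exact doubling condition and the replacement phenomenon), and an incorrect local description of the quiver modification.
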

As an application, we have an algorithm to construct the Hasse quivers of Nakayama algebras.
Each Nakayama algebra $\Lambda$ always has an indecomposable projective-injective $\Lambda$-module $Q$, and 
the factor algebra $\bLambda$ is a Nakayama algebra again.
Therefore we can iteratively apply Theorem \ref{2}, and we have the following.
\begin{mainthm}[Algorithm \ref{3-9} for details]\label{3}
Let $\Lambda$ be a Nakayama algebra.
Then there exists an algorithm to construct the Hasse quiver of support $\tau$-tilting $\Lambda$-modules.
\end{mainthm}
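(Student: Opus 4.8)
The plan is to realize Theorem~\ref{3} as a finite iteration of the single-step reduction furnished by Theorem~\ref{2}, regulated by a strictly decreasing complexity measure on the class of Nakayama algebras. The two structural facts already noted in the introduction are exactly what make such an iteration legitimate: every Nakayama algebra $\Lambda$ admits an indecomposable projective-injective module $Q$, and the factor $\bLambda = \Lambda/\soc Q$ is again a Nakayama algebra. Hence the assignment $\Lambda \mapsto \bLambda$ (Drozd--Kirichenko rejection) never leaves the class of Nakayama algebras, so a projective-injective module is available at every stage and the construction of Theorem~\ref{2} can be reapplied indefinitely.

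Next I would supply the termination argument. I would set $d(\Lambda)$ to be the sum of the Loewy lengths of the indecomposable projective $\Lambda$-modules, equivalently $\dim_k \Lambda$. Passing to $\bLambda = \Lambda/\soc Q$ deletes the simple socle of $Q$ and so strictly decreases $d$. Since $d$ takes nonnegative integer values, after finitely many rejections the process reaches a Nakayama algebra all of whose indecomposable projectives are simple, that is, a semisimple algebra. For such an algebra $\tau$ vanishes, every simple module is projective-injective, and the support $\tau$-tilting modules are precisely the direct sums of distinct simple modules; their Hasse quiver is the Boolean lattice on the $n$ vertices and can be written down directly. This semisimple algebra serves as the base case of the recursion.

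The algorithm then unwinds the chain of reductions in reverse. Recording the sequence $\Lambda = \Lambda_{0}, \Lambda_{1} = \bLambda_{0}, \ldots, \Lambda_{m}$ of factors produced on the way down, with $\Lambda_{m}$ semisimple, one starts from the explicit Hasse quiver of $\Lambda_{m}$ and applies the explicit construction of Theorem~\ref{2} once for each index: at the $i$-th stage it produces the Hasse quiver of $\Lambda_{i-1}$ from that of $\Lambda_{i} = \bLambda_{i-1}$. After $m$ applications the Hasse quiver of the original $\Lambda$ is obtained, which yields the desired algorithm.

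The main obstacle I anticipate lies not in any individual step---the passage from the Hasse quiver of $\bLambda$ to that of $\Lambda$ is exactly the content of Theorem~\ref{2}---but in the bookkeeping that makes the recursion well-posed. One must check that the complexity measure strictly decreases at each rejection (so that the process halts), that the terminal algebra is one whose Hasse quiver is independently computable, and that the choices of projective-injective module made during the descent are compatible with reassembling the quivers on the way back up. Once these points are verified, Theorem~\ref{3} follows by assembling the single-step construction of Theorem~\ref{2} into the stated algorithm.
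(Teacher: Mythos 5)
Your proposal is correct and follows essentially the same route as the paper's Algorithm \ref{3-9}: iterate Drozd--Kirichenko rejection $\Lambda\mapsto\Lambda/\soc Q$ (staying within Nakayama algebras), terminate by the strictly decreasing dimension, and rebuild each Hasse quiver via the single-step construction $\hasse(\Lambda_{i})=\hasse(\Lambda_{i+1})^{\N_{2}(Q_{i})}$ of Theorem \ref{3-4}. The only cosmetic difference is your base case: the paper formally descends to the zero algebra (one vertex, no arrows), while you stop at a semisimple algebra with its Boolean-lattice Hasse quiver --- a shortcut the paper itself endorses in Example \ref{3-3-2}.
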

\subsection*{Notation}
Throughout this paper, $K$ is an algebraically closed field.
By an algebra we mean basic and finite dimensional $K$-algebra and by a module we mean a finitely generated right module.
We denote by $[i,j]$ the interval $\{ i,i+1,\ldots,j-1,j\}$ of integers.
Let $\{ e_{1}, e_{2}, \ldots, e_{n}\}$ be a complete set of primitive orthogonal idempotents of an algebra $\Lambda$
and $\E_{\Lambda}:=\{ \sum_{i\in I}e_{i}\ |\ I\subset [1,n] \}$.
For each $i\in[1, n]$, we put $P_{i}=e_{i}\Lambda$, $S_{i}=P_{i}/\rad P_{i}$ and $E_{i}=\Hom_{K}(\Lambda e_{i},K)$.
We denote by $\mod \Lambda$ the category of finitely generated right $\Lambda$-modules and 
by $\add M$ the full subcategory of $\mod\Lambda$ consisting of direct summands of finite direct sums of copies of $M\in\mod\Lambda$.
We denote by $\tau_{\Lambda}$ the Auslander-Reiten translation of $\Lambda$ 
and by $\langle e \rangle$ a two-sided ideal of $\Lambda$ generated by $e\in \Lambda$.
For two sets $X$ and $Y$, we denote by $X\coprod Y$ the disjoint union.
\subsection*{Acknowledgements}
The author wishes to express his sincere gratitude to Osamu Iyama for valuable advices and suggestions.
He is grateful to Kota Yamaura and Gustavo Jasso for helpful discussions.
He thanks Takuma Aihara and Yuya Mizuno for useful comments.
He would like to thank Tomoki Nakanishi for suggesting him to consider signed triangulations.
\section{Preliminaries}
Let $\Lambda$ be a basic finite dimensional algebra over an algebraically closed field.
In this section, we collect basic results which are necessary in this paper.

We may regard $(\Lambda/I)$-modules as $\Lambda$-modules by the following lemma.
\begin{lemma}\cite[A.6.1]{ASS}
Let $I$ be a two-sided ideal of $\Lambda$.
Then the natural surjection $\Lambda\rightarrow \Lambda/I$ induces a fully faithful functor $\mod(\Lambda/I)\rightarrow \mod \Lambda$.
\end{lemma}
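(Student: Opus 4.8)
The plan is to recognize the functor in question as restriction of scalars along the surjection $\pi\colon\Lambda\to\Lambda/I$, and then to verify full faithfulness directly. First I would make the functor explicit. Given a finitely generated right $(\Lambda/I)$-module $M$, I would equip the underlying $K$-vector space with the $\Lambda$-action $m\cdot\lambda:=m\cdot\pi(\lambda)$; this is well defined and associative because $\pi$ is a ring homomorphism, and the ideal $I$ lies in the annihilator since $\pi(I)=0$. The module $M$ remains finitely generated over $\Lambda$, as any set of $(\Lambda/I)$-generators also generates it over $\Lambda$. On a $(\Lambda/I)$-linear map $f$ I would let the functor $F$ return the very same underlying $K$-linear map, which is automatically $\Lambda$-linear because both actions factor through $\pi$. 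This defines $F\colon\mod(\Lambda/I)\to\mod\Lambda$.

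Faithfulness is then immediate: since $F$ does not alter the underlying $K$-linear map, $F(f)=F(g)$ forces $f=g$. The only real content is fullness. Here I would take a $\Lambda$-homomorphism $h\colon F(M)\to F(N)$ and check that it is already $(\Lambda/I)$-linear. For an element $\bar\lambda\in\Lambda/I$ I would choose, using surjectivity of $\pi$, a preimage $\lambda\in\Lambda$ with $\pi(\lambda)=\bar\lambda$, and compute $h(m\cdot\bar\lambda)=h(m\cdot\lambda)=h(m)\cdot\lambda=h(m)\cdot\bar\lambda$. Thus $h$ respects the $(\Lambda/I)$-action, so it coincides with $F$ of a $(\Lambda/I)$-homomorphism, proving fullness.

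The step I expect to be the crux — mild as it is — is the fullness argument, and specifically the observation that surjectivity of $\pi$ is exactly what guarantees that every $\Lambda$-linear map between modules annihilated by $I$ is already $(\Lambda/I)$-linear; without surjectivity this would fail. It is worth recording that $F$ is in fact an equivalence onto the full subcategory of $\mod\Lambda$ consisting of those modules $N$ with $N\cdot I=0$, which is the form in which the result is used later, when $(\Lambda/I)$-modules are silently regarded as $\Lambda$-modules.
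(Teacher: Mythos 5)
Your proof is correct and is precisely the standard restriction-of-scalars argument; the paper itself gives no proof, citing \cite[A.6.1]{ASS}, where the same verification (functor fixes underlying maps, hence faithful; fullness via lifting $\bar\lambda$ through the surjection $\pi$) is carried out. Your closing remark that the image is the full subcategory of $\Lambda$-modules annihilated by $I$ is exactly the form in which the lemma is used throughout the paper.
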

The following elementary lemma is often used.
\begin{lemma}\label{1-1}
Let $M\in\mod\Lambda$ be indecomposable, $P\in\mod\Lambda$ indecomposable projective, and $E\in\mod\Lambda$ indecomposable injective.
\begin{itemize}
\item[(1)] \cite[II.1]{ARS} The following hold.
\begin{itemize}
\item[(a)] $\Hom_{\Lambda}(P,M)\neq 0$ if and only if $M$ has $\top P $ as a composition factor.
\item[(b)] $\Hom_{\Lambda}(M,E)\neq 0$ if and only if $M$ has $\soc E$ as a composition factor.
\end{itemize}
\item[(2)] \cite[IV.3.5]{ASS} The following hold.
\begin{itemize}
\item[(a)] Assume that $M\not\simeq P$. 
The natural injection $\rad P\rightarrow P$ induces an isomorphism $\Hom_{\Lambda}(M,\rad P)\simeq \Hom_{\Lambda}(M,P)$.
\item[(b)] Assume that $M\not\simeq E$. 
The natural surjection $E\rightarrow E/\soc E$ induces an isomorphism $\Hom_{\Lambda}(E/\soc E,M)\simeq \Hom_{\Lambda}(E,M)$.
\end{itemize}  
\end{itemize}
\end{lemma}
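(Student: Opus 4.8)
The plan is to treat parts (1) and (2) separately, obtaining statement (b) from statement (a) in each case by the standard duality $\kD=\Hom_K(-,K)$, which restricts to a contravariant equivalence between $\mod\Lambda$ and $\mod\Lambda^{\op}$.

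For (1)(a), I would first reduce $\Hom_\Lambda(P,M)$ to a concrete vector space. Writing $P\simeq e_i\Lambda$ for a primitive idempotent $e_i$, evaluation at $e_i$ furnishes a natural isomorphism $\Hom_\Lambda(e_i\Lambda,M)\simeq Me_i$. Since $\top P=S_i$ and the multiplicity of $S_i$ in a composition series of $M$ equals $\dim_K Me_i$ (here $K$ algebraically closed gives $\End_\Lambda(S_i)=K$), the space $\Hom_\Lambda(P,M)$ is nonzero precisely when $\top P$ occurs as a composition factor of $M$. Part (1)(b) then follows by applying $\kD$: it sends the indecomposable injective $E$ to the indecomposable projective $\kD E$ with $\top\kD E=\kD(\soc E)$, it satisfies $\Hom_\Lambda(M,E)\simeq\Hom_{\Lambda^{\op}}(\kD E,\kD M)$, and it preserves composition-factor multiplicities; so the assertion reduces to (1)(a) over $\Lambda^{\op}$, with $\soc E$ playing the role of the top.

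For (2)(a) I would apply $\Hom_\Lambda(M,-)$ to the short exact sequence $0\to\rad P\to P\to\top P\to 0$, producing a left-exact sequence in which $\Hom_\Lambda(M,\rad P)\to\Hom_\Lambda(M,P)$ is automatically injective. The real content is surjectivity, i.e.\ that every $f\colon M\to P$ factors through $\rad P$. The key step: if $\Im f\not\subseteq\rad P$, then $\Im f+\rad P=P$ because $P/\rad P$ is simple, whence $\Im f=P$ by Nakayama's lemma, so $f$ is an epimorphism; projectivity of $P$ splits $f$, exhibiting $P$ as a direct summand of the indecomposable $M$ and forcing $M\simeq P$, contrary to hypothesis. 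Hence every such $f$ lands in $\rad P$ and the induced map is an isomorphism.

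Part (2)(b) is the dual argument: apply $\Hom_\Lambda(-,M)$ to $0\to\soc E\to E\to E/\soc E\to 0$, note that the induced map $\Hom_\Lambda(E/\soc E,M)\to\Hom_\Lambda(E,M)$ is injective, and prove surjectivity by showing any $g\colon E\to M$ vanishes on $\soc E$. If not, then $g$ is injective on the simple module $\soc E$, so $\Ker g\cap\soc E=0$; since the socle is essential in the finite-dimensional module $E$ (every nonzero submodule of an Artinian module has nonzero socle), this forces $\Ker g=0$, and injectivity of $E$ splits the monomorphism $g$, making $E$ a summand of the indecomposable $M$ and contradicting $M\not\simeq E$. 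I expect the two surjectivity steps in part (2) — the Nakayama-plus-splitting argument and its socle-essentiality dual — to be the only non-formal points; the rest is bookkeeping with the identification $\Hom_\Lambda(e_i\Lambda,M)\simeq Me_i$, composition-factor multiplicities, and the duality $\kD$.
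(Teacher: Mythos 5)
Your proof is correct. Note that the paper does not prove this lemma at all --- it simply cites \cite[II.1]{ARS} for part (1) and \cite[IV.3.5]{ASS} for part (2) --- and your argument is precisely the standard textbook one behind those citations: the identification $\Hom_{\Lambda}(e_{i}\Lambda,M)\simeq Me_{i}$ for (1)(a), the duality $\kD$ to transfer to (1)(b), and for (2) the Nakayama-plus-splitting argument (using that $\top P$ is simple) together with its dual via essentiality of the socle (using that $\soc E$ is simple for an indecomposable injective). All steps are sound; the only facts you use implicitly --- simplicity of $\top P$ and $\soc E$ for indecomposable projectives and injectives over an Artin algebra, and that the socle is essential in a finite-dimensional module --- are standard and hold in the paper's setting.
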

We recall the definition and basic properties of $\tau$-tilting modules.
For more details, we refer to \cite{AIR}.
We denote by $|M|$ the number of pairwise nonisomorphic indecomposable summands of a $\Lambda$-module $M$.
\begin{definition}
\begin{itemize}
\item[(1)] We call $M$ in $\mod\Lambda$ {\it{$\tau$-rigid}} if $\Hom_{\Lambda}(M,\tau M)=0$.
\item[(2)] We call $M$ in $\mod\Lambda$ {\it{$\tau$-tilting}} if it is $\tau$-rigid and $|M|=|\Lambda|$.
\end{itemize}
\end{definition}
It is known that
\begin{itemize}
\item[(i)] If $M$ is $\tau$-rigid, then $|M|\le |\Lambda|$.
\item[(ii)] Each $\tau$-tilting $\Lambda$-module $M$ is {\it sincere} ({\it i.e.}, every simple $\Lambda$-module appears as a composition factor in $M$).
\end{itemize}

For $\tau$-tilting modules, we have an analog of Bongartz Lemma for tilting modules.
\begin{proposition}\cite[Theorem 2.10 and 2.12]{AIR}\label{air2.10}
Let $M$ be a $\tau$-rigid $\Lambda$-module.
Then there exists $N\in\mod\Lambda$ such that $M\oplus N$ is a $\tau$-tilting $\Lambda$-module.
Moreover, $M$ is a $\tau$-tilting $\Lambda$-module if and only if it is a maximal $\tau$-rigid $\Lambda$-module 
(i.e., if $M\oplus L$ is $\tau$-rigid for some $\Lambda$-module $L$, then $L\in\add M$).
\end{proposition}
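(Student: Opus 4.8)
The plan is to reduce the maximality characterization to the existence (Bongartz-completion) statement, and to prove the latter by a torsion-theoretic argument resting on Auslander--Reiten duality. The maximality equivalence is almost formal once existence is known. If $M$ is $\tau$-tilting then $|M|=|\Lambda|$; whenever $M\oplus L$ is $\tau$-rigid, the bound $|M\oplus L|\le|\Lambda|$ valid for all $\tau$-rigid modules gives $|M|\le|M\oplus L|\le|\Lambda|=|M|$, so $M\oplus L$ has no indecomposable summand outside $\add M$, i.e. $L\in\add M$, and $M$ is maximal $\tau$-rigid. Conversely, if $M$ is maximal $\tau$-rigid, existence provides $N$ with $M\oplus N$ $\tau$-tilting, hence $\tau$-rigid; maximality forces $N\in\add M$, so $\add(M\oplus N)=\add M$ and $|M|=|M\oplus N|=|\Lambda|$, whence $M$ is $\tau$-tilting. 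Everything therefore rests on constructing a Bongartz completion.

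The engine is the Auslander--Smal{\o} dictionary relating $\tau$-rigidity to Ext-projectivity: for $X,Y\in\mod\Lambda$, one has $\Hom_\Lambda(X,\tau Y)=0$ if and only if $\Ext^1_\Lambda(Y,\Fac X)=0$. The implication I will lean on is immediate from the Auslander--Reiten duality $\Ext^1_\Lambda(Y,Z)\cong\kD\,\overline{\Hom}_\Lambda(Z,\tau Y)$, where $\overline{\Hom}$ is taken modulo maps factoring through injectives: a surjection $X^{k}\twoheadrightarrow Z$ embeds $\Hom_\Lambda(Z,\tau Y)$ into $\Hom_\Lambda(X,\tau Y)^{k}$ by left exactness, so $\Hom_\Lambda(X,\tau Y)=0$ annihilates $\Hom_\Lambda(Z,\tau Y)$, hence $\overline{\Hom}_\Lambda(Z,\tau Y)$ and $\Ext^1_\Lambda(Y,Z)$, for every $Z\in\Fac X$; the reverse implication is the remaining half of the correspondence. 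In particular $M$ is $\tau$-rigid exactly when it is Ext-projective in $\Fac M$.

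With this in hand I set $\mathcal{T}:={}^{\perp}(\tau M)=\{X\in\mod\Lambda\mid\Hom_\Lambda(X,\tau M)=0\}$. Since $\Hom_\Lambda(-,\tau M)$ is contravariant and left exact, $\mathcal{T}$ is closed under factor modules, extensions and finite direct sums, so it is a torsion class, and $\tau$-rigidity of $M$ gives $M\in\mathcal{T}$. For every $X\in\mathcal{T}$ the dictionary yields $\Ext^1_\Lambda(M,X)=0$, so $M$ is Ext-projective in $\mathcal{T}$. The candidate completion is the basic module $N$ whose indecomposable summands represent the indecomposable Ext-projective objects of $\mathcal{T}$; since each indecomposable summand of $M$ is Ext-projective in $\mathcal{T}$, we have $M\in\add N$.

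The main obstacle is to upgrade $N$ to a genuine $\tau$-tilting module, i.e. to show $|N|=|\Lambda|$, and here two points require real work. First, $\mathcal{T}$ must be functorially finite. Contravariant finiteness is automatic for a torsion class (the torsion radical $t(X)\hookrightarrow X$ is a right $\mathcal{T}$-approximation), so the content is covariant finiteness, equivalently that $\mathcal{T}$ is generated by its Ext-projectives, $\mathcal{T}=\Fac N$; I would prove this by an Auslander--Smal{\o} approximation argument exploiting the $\tau$-rigidity of $M$. Once $\mathcal{T}=\Fac N$ is known, the reverse half of the dictionary gives $\Hom_\Lambda(N,\tau N)=0$, so $N$ is $\tau$-rigid. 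Second, the equality $|N|=|\Lambda|$ holds precisely when $\mathcal{T}$ is sincere, which is exactly what separates a $\tau$-tilting module from a merely support $\tau$-tilting one; establishing the sincerity of $\mathcal{T}={}^{\perp}(\tau M)$ for an arbitrary $\tau$-rigid $M$ — equivalently, that no indecomposable projective is irretrievably lost across the torsion pair $(\mathcal{T},\mathcal{T}^{\perp})$ — is the delicate step I expect to cost the most effort.
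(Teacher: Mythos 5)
First, be aware of what the benchmark is: the paper does not prove this proposition at all --- it is imported verbatim from \cite[Theorems 2.10 and 2.12]{AIR} --- so your attempt can only be measured against the proof in that reference. Your formal half is complete and correct, and it is exactly how [AIR, Theorem 2.12] is deduced from [AIR, Theorem 2.10]: one direction from the bound $|M\oplus L|\le|\Lambda|$ valid for all $\tau$-rigid modules, the other by completing a maximal $\tau$-rigid module and invoking maximality. Your frame for the completion is also the frame of [AIR]: the torsion class $\mathcal{T}={}^{\perp}(\tau M)$, the Auslander--Smal{\o} equivalence $\Hom_{\Lambda}(X,\tau Y)=0\Leftrightarrow\Ext^{1}_{\Lambda}(Y,\Fac X)=0$ (your derivation of the easy half via AR duality is fine, and quoting the converse from Auslander--Smal{\o} is legitimate), the Ext-projectivity of $M$ in $\mathcal{T}$, and the candidate $N$ assembled from the indecomposable Ext-projectives of $\mathcal{T}$.

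But the two steps you defer are not loose ends --- they are the entire mathematical content of [AIR, Theorem 2.10], so what you have is a correct plan rather than a proof, and neither step follows from the soft torsion-theoretic machinery you have set up. Covariant finiteness of $\mathcal{T}$, equivalently $\mathcal{T}=\Fac N$, genuinely uses the $\tau$-rigidity of $M$ and is obtained in [AIR] not by an abstract ``approximation argument'' but by an explicit construction: the $\tau$-analogue of Bongartz's universal extension, an exact sequence $0\to\Lambda\to E\to M_{0}\to 0$ with $M_{0}\in\add M$, followed by the Hom/Ext computations (via AR duality) showing that $E\oplus M$ is $\tau$-rigid and $\Fac(E\oplus M)={}^{\perp}(\tau M)$; that is where the real work sits, and nothing in your proposal supplies it. Note also that your assessment of the difficulty is inverted: sincerity, which you single out as the step you ``expect to cost the most effort,'' is essentially free once the universal extension exists, because $\Lambda$ embeds in $E$, so $E$ is a sincere module in $\mathcal{T}$, and $|N|=|\Lambda|$ then follows by counting (compare the criterion of Proposition \ref{1-3} in the present paper). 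In short: the reduction of the maximality statement is done, but the completion statement is open in your write-up precisely at its core, and closing it requires the explicit Bongartz-type construction (or an equivalent concrete input), not further torsion-theoretic generalities.
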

\begin{definition}\label{air0.1}
We call $M$ in $\mod\Lambda$ {\it{support $\tau$-tilting}} if there exists an idempotent $e_{M}\in \Lambda$ such that 
$M$ is a $\tau$-tilting $(\Lambda/\langle e_{M} \rangle)$-module.
Note that $e_{M}$ can be chosen uniquely from $\E_{\Lambda}$ since $M$ is a sincere $(\Lambda/\langle e_{M} \rangle)$-module by (ii) above.
If moreover $e_{M}\neq 0$, $M$ is called a {\it{proper support $\tau$-tilting}} $\Lambda$-module.
\end{definition}

Throughout this paper, we denote by $\tilt\Lambda$ (respectively, $\ttilt \Lambda$, $\sttilt \Lambda$, $\psttilt\Lambda$)
the set of isomorphism classes of basic tilting (respectively, $\tau$-tilting, support $\tau$-tilting, proper support $\tau$-tilting) $\Lambda$-modules.
The following observations are clear.
\begin{proposition}\label{1-2}
\begin{itemize}
\item[(1)] $\sttilt\Lambda=\ttilt\Lambda \coprod \psttilt\Lambda$.
\item[(2)] $\psttilt\Lambda =\coprod_{e\in \E_{\Lambda}\setminus\{ 0\}}\ttilt (\Lambda/\langle e \rangle)$.
\item[(3)] If $\Lambda$ is hereditary, then $\ttilt\Lambda=\tilt\Lambda$.
\end{itemize}
\end{proposition}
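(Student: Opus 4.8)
The plan is to read off all three statements from Definition~\ref{air0.1}, the key point being the uniqueness of the idempotent $e_{M}$ attached to a support $\tau$-tilting module; only part (3) requires input from Auslander--Reiten theory.

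For (1), I would start from the fact that each $M\in\sttilt\Lambda$ determines a unique $e_{M}\in\E_{\Lambda}$ with $M$ a $\tau$-tilting $(\Lambda/\langle e_{M}\rangle)$-module. Since $\Lambda/\langle 0\rangle=\Lambda$, we have $M\in\ttilt\Lambda$ exactly when $e_{M}=0$, and $M\in\psttilt\Lambda$ exactly when $e_{M}\neq 0$. As $\{0\}$ and $\E_{\Lambda}\setminus\{0\}$ partition $\E_{\Lambda}$ and $e_{M}$ is uniquely determined, the two subsets are disjoint and together exhaust $\sttilt\Lambda$, giving $\sttilt\Lambda=\ttilt\Lambda\coprod\psttilt\Lambda$. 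Disjointness can also be seen directly: a $\tau$-tilting $\Lambda$-module is sincere by (ii), hence is not annihilated by any nonzero $\langle e\rangle$, so it cannot simultaneously be a module over a proper quotient $\Lambda/\langle e\rangle$.

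For (2), I would send a proper support $\tau$-tilting module $M$ to the pair consisting of its unique nonzero $e_{M}$ and $M$ viewed as an object of $\ttilt(\Lambda/\langle e_{M}\rangle)$. Conversely, through the fully faithful embedding $\mod(\Lambda/\langle e\rangle)\hookrightarrow\mod\Lambda$ of Lemma~\cite[A.6.1]{ASS}, every $\tau$-tilting $(\Lambda/\langle e\rangle)$-module with $e\neq 0$ is a proper support $\tau$-tilting $\Lambda$-module; these two assignments are mutually inverse. The union $\coprod_{e\in\E_{\Lambda}\setminus\{0\}}\ttilt(\Lambda/\langle e\rangle)$ is genuinely disjoint because a $\tau$-tilting $(\Lambda/\langle e\rangle)$-module is sincere over $\Lambda/\langle e\rangle$, so $e$ is recovered as its associated idempotent $e_{M}$, whence objects arising from distinct $e$ are non-isomorphic.

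For (3), assume $\Lambda$ hereditary. The inclusion $\tilt\Lambda\subseteq\ttilt\Lambda$ holds since a classical tilting module $T$ satisfies $\pd T\le 1$, $\Ext^{1}_{\Lambda}(T,T)=0$ and $|T|=|\Lambda|$, and the Auslander--Reiten formula $\Ext^{1}_{\Lambda}(T,T)\cong\kD\overline{\Hom}_{\Lambda}(T,\tau T)$ turns $\Ext^{1}_{\Lambda}(T,T)=0$ into $\Hom_{\Lambda}(T,\tau T)=0$; so $T$ is $\tau$-rigid with $|T|=|\Lambda|$, i.e. $\tau$-tilting. For the reverse inclusion, heredity forces $\pd M\le 1$ for every $M$, so a $\tau$-tilting module $M$ (that is, $\tau$-rigid with $|M|=|\Lambda|$) satisfies $\Ext^{1}_{\Lambda}(M,M)=0$ by the same formula, and $\pd M\le 1$, $\Ext^{1}_{\Lambda}(M,M)=0$, $|M|=|\Lambda|$ is precisely the classical characterization of a tilting module over a hereditary algebra. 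The main (indeed only) obstacle here is the equivalence $\Hom_{\Lambda}(M,\tau M)=0\iff\Ext^{1}_{\Lambda}(M,M)=0$ for modules of projective dimension at most one; I would obtain it from the Auslander--Reiten formula above together with the observation that over a hereditary algebra no nonzero map $M\to\tau M$ factors through an injective, an equivalence already recorded in \cite{AIR} and \cite{ARS}.
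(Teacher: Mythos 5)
Your proof is correct and is precisely the argument the paper leaves implicit: Proposition \ref{1-2} is stated without proof (``The following observations are clear''), and your reasoning---the uniqueness of $e_{M}$ forced by sincerity for parts (1) and (2), and for part (3) the Auslander--Reiten formula together with the observation that no nonzero map $M\to\tau M$ factors through an injective when $\pd M\le 1$, reducing $\tau$-rigidity to $\Ext^{1}$-vanishing and invoking the classical characterization of tilting modules---is exactly the intended justification. Nothing to correct.
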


The following lemma is useful.
\begin{lemma}\cite[Lemma 2.1]{AIR}\label{air2.1}
Let $I$ be a two-sided ideal of $\Lambda$, and $M,N\in \mod(\Lambda/I)$.
Then the following hold.
\begin{itemize}
\item[(1)] If $\Hom_{\Lambda}(N,\tau_{\Lambda} M)=0$, then $\Hom_{\Lambda/I}(N,\tau_{\Lambda/I}M)=0$.
\item[(2)] The converse of $(1)$ holds if $I=\langle e\rangle$ for an idempotent $e\in\Lambda$.
\end{itemize}
\end{lemma}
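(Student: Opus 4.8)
The plan is to reduce both statements to a single surjectivity criterion obtained from the transpose description of $\tau$. Recall that for a minimal projective presentation $P_1\xrightarrow{p}P_0\to M\to 0$ in $\mod\Lambda$ one has $\tau_\Lambda M=D\Tr M$ with $\Tr M=\Cok(P_0^{\ast}\xrightarrow{p^{\ast}}P_1^{\ast})$, where $(-)^{\ast}=\Hom_\Lambda(-,\Lambda)$ and $D=\Hom_K(-,K)$. Since $N\otimes_\Lambda P_i^{\ast}\cong\Hom_\Lambda(P_i,N)$ for finitely generated projective $P_i$, applying $N\otimes_\Lambda-$ and then $D$ yields a functorial isomorphism
\[
\Hom_\Lambda(N,\tau_\Lambda M)\cong D\,\Cok\big(\Hom_\Lambda(P_0,N)\xrightarrow{\Hom_\Lambda(p,N)}\Hom_\Lambda(P_1,N)\big),
\]
so that $\Hom_\Lambda(N,\tau_\Lambda M)=0$ if and only if $\Hom_\Lambda(p,N)$ is surjective; the analogous statement holds over $\bLambda$. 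First I would fix a minimal projective presentation of $M$ over $\Lambda$ and apply $-\otimes_\Lambda\bLambda$ to obtain a projective presentation $\overline{P_1}\xrightarrow{\overline p}\overline{P_0}\to M\to 0$ in $\mod\bLambda$, where $\overline{P_i}=P_i/P_iI$. Because $M$ is a $\bLambda$-module, $\top\overline{P_0}=\top P_0=\top M$, so $\overline{P_0}$ is still a projective cover and the reduced presentation is minimal in degree $0$; it may however fail to be minimal in degree $1$, say $\overline{P_1}\cong\overline{P_1}^{\mathrm{min}}\oplus R$ (up to isomorphism of presentations) with $R$ projective and $\overline p|_{R}=0$.

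The next step is bookkeeping. Since $N\in\mod\bLambda$, every $\Lambda$-homomorphism from a projective $P$ to $N$ kills $PI$, giving $\Hom_\Lambda(P,N)=\Hom_{\bLambda}(\overline P,N)$ compatibly with $p$. Splitting off the trivial piece $R\xrightarrow{0}0$ of $\overline p$ and passing to cokernels then produces
\[
\Hom_\Lambda(N,\tau_\Lambda M)\cong\Hom_{\bLambda}(N,\tau_{\bLambda}M)\oplus D\Hom_{\bLambda}(R,N).
\]
Statement (1) is immediate from this: if the left-hand side vanishes, then so does the first summand on the right. This direction uses nothing about $I$ beyond $MI=NI=0$.

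For (2) I must kill the extra term $D\Hom_{\bLambda}(R,N)$. As $R$ is a nonzero projective $\bLambda$-module whenever $R\neq0$, taking $N=\top R$ gives $\Hom_{\bLambda}(R,N)\neq0$; hence the converse can only hold if $R=0$, that is, if reduction modulo $I$ preserves minimality in degree $1$ as well. This is exactly where the hypothesis $I=\langle e\rangle$ must enter, and it is the main obstacle. The plan here is to compare tops: writing $\Omega_\Lambda M=\Ker(P_0\to M)$ and $\Omega_{\bLambda}M=\Omega_\Lambda M/P_0I$, minimality in degree $1$ over $\bLambda$ is equivalent to the induced map $\top\overline{P_1}\to\top\Omega_{\bLambda}M$ being an isomorphism. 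Since $\top\overline{P_1}$ retains exactly the simple summands $S_j$ of $\top P_1=\top\Omega_\Lambda M$ with $\overline{P_j}\neq0$ (equivalently $e_j\notin\langle e\rangle$), it suffices to show that the kernel of $\top\Omega_\Lambda M\to\top\Omega_{\bLambda}M$, namely the image of $P_0I$, meets only the components $S_j$ with $e_j\in\langle e\rangle$.

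That is the one genuinely idempotent-specific point. For $I=\langle e\rangle$ one has $P_0I=P_0\langle e\rangle=P_0e\Lambda$, so its image in $\top\Omega_\Lambda M$ is generated by the classes of $P_0e$; these live in the isotypic components $S_j$ with $e_j\in\langle e\rangle$, while the remaining generators $P_0e\,\rad\Lambda$ are absorbed into $\rad\Omega_\Lambda M$ and contribute nothing to the top. Thus the image of $P_0I$ avoids every surviving component, the comparison map on tops is an isomorphism, $R=0$, and the displayed formula reduces to $\Hom_\Lambda(N,\tau_\Lambda M)\cong\Hom_{\bLambda}(N,\tau_{\bLambda}M)$, giving (2). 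I expect the only delicate part to be tracking that reduction neither merges nor destroys degree-$1$ top summands outside $\langle e\rangle$; once that is pinned down, both parts follow from the single isomorphism above, and it is also clear how a general ideal $I$ can break (2), since then $P_0I$ may hit components $S_j$ with $\overline{P_j}\neq0$ and force $R\neq0$.
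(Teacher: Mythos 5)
Your proof is correct. Note that the paper offers no argument of its own here: the lemma is imported verbatim from [AIR, Lemma~2.1], so the only comparison available is with the cited proof, whose strategy your write-up essentially reconstructs. Both rest on the identification $\Hom_{\Lambda}(N,\tau_{\Lambda}M)\simeq D\Cok(\Hom_{\Lambda}(p,N))$ for a minimal presentation $p$, the identity $\Hom_{\Lambda}(P,N)=\Hom_{\bLambda}(\overline{P},N)$ for $N\in\mod\bLambda$, and, for (2), the fact that $-\otimes_{\Lambda}\bLambda$ preserves minimality of the presentation precisely when $I=\langle e\rangle$. Your handling of that last point, which is where the real content lies, is sound: since degree-$0$ minimality is already secured ($\top\overline{P_{0}}=\top M$), the nonminimal part of $\overline{p}$ can only have the form $R\to 0$, and your computation that the image of $P_{0}\langle e\rangle=P_{0}e\Lambda$ in $\top\Omega_{\Lambda}M$ lies in the components $S_{j}$ with $e_{j}\in\langle e\rangle$ is exactly right (classes of $P_{0}e$ are fixed by right multiplication by $e$, and $P_{0}e\rad\Lambda\subseteq\rad\Omega_{\Lambda}M$ because $P_{0}e\subseteq\Omega_{\Lambda}M$). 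You leave the reverse inclusion tacit, but it is the trivial one: $\Omega_{\Lambda}M\cdot\langle e\rangle\subseteq P_{0}\langle e\rangle$, so the image of $P_{0}I$ in the top contains the full $e$-isotypic part, i.e.\ exactly the summands deleted from $\top\overline{P_{1}}$; together the two inclusions give equality, the comparison map on tops is an isomorphism, $R=0$, and (2) follows. Two cosmetic caveats, neither a gap: the side remark that taking $N=\top R$ shows the converse forces $R=0$ is not airtight, since $\Hom_{\bLambda}(\top R,\tau_{\bLambda}M)$ need not vanish for that choice of $N$, but it is only motivation; and it is worth observing that your argument invokes the idempotent form of $I$ at exactly the right spot --- for a general ideal (e.g.\ $\Lambda=K[x]/(x^{2})$, $I=\rad\Lambda$, $M=N=\Lambda/I$) the image of $P_{0}I$ does hit surviving top components, $R\neq 0$, and (2) genuinely fails, consistent with the hypotheses of the lemma.
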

We give a criterion for $\tau$-rigid modules to be support $\tau$-tilting modules.
We denote by $\s(M)$ the number of nonisomorphic simple modules appearing in a composition series of $M\in\mod\Lambda$.
Then $M$ is sincere if and only if $\s(M)=|\Lambda|$.
\begin{proposition}\label{1-3}
Let $M$ be a $\tau$-rigid $\Lambda$-module.
Then the following are equivalent:
\begin{itemize}
\item[(1)] $M$ is a support $\tau$-tilting $\Lambda$-module.
\item[(2)] $|M|=\s(M)$.
\end{itemize}
\end{proposition}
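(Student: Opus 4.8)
The plan is to prove the two implications separately; the forward implication $(1)\Rightarrow(2)$ is essentially bookkeeping, while the reverse one requires producing the supporting idempotent by hand.

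For $(1)\Rightarrow(2)$ I would argue as follows. If $M$ is support $\tau$-tilting, then by definition there is an $e=e_{M}\in\E_{\Lambda}$ making $M$ a $\tau$-tilting module over $\Gamma:=\Lambda/\langle e\rangle$. Since a $\tau$-tilting module is sincere, every simple $\Gamma$-module occurs as a composition factor of $M$; as the simple $\Gamma$-modules are precisely the $S_{i}$ with $e_{i}$ not a summand of $e$, and composition factors computed over $\Lambda$ and over $\Gamma$ agree, this gives $\s(M)=|\Gamma|$. Combined with $|M|=|\Gamma|$ (which holds because $M$ is $\tau$-tilting over $\Gamma$), I obtain $|M|=\s(M)$.

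For $(2)\Rightarrow(1)$ I would construct the idempotent directly. Let $J\subseteq[1,n]$ index the simple modules occurring in a composition series of $M$, so that $|J|=\s(M)$, and put $e:=\sum_{i\notin J}e_{i}\in\E_{\Lambda}$ and $\Gamma:=\Lambda/\langle e\rangle$. Then $|\Gamma|=|J|=\s(M)=|M|$ by hypothesis, so it remains only to check that $M$ is a $\tau$-tilting $\Gamma$-module. First I would verify $M\in\mod\Gamma$: since $\langle e\rangle=\Lambda e\Lambda$ it suffices to show $Me_{i}=0$ for each $i\notin J$, and using $Me_{i}\cong\Hom_{\Lambda}(P_{i},M)$ together with $\top P_{i}=S_{i}$, Lemma~\ref{1-1}(1)(a) identifies $Me_{i}\neq 0$ with $S_{i}$ being a composition factor of $M$; since $i\notin J$ this forces $Me_{i}=0$, whence $M\langle e\rangle=Me\Lambda=0$. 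Then I would transfer rigidity: applying Lemma~\ref{air2.1}(1) with $I=\langle e\rangle$ and $N=M$ to the hypothesis $\Hom_{\Lambda}(M,\tau_{\Lambda}M)=0$ gives $\Hom_{\Gamma}(M,\tau_{\Gamma}M)=0$, so $M$ is $\tau_{\Gamma}$-rigid. With $|M|=|\Gamma|$ this makes $M$ a $\tau$-tilting $\Gamma$-module, hence support $\tau$-tilting over $\Lambda$.

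The hard part is the reverse direction, and within it the key point is that $e$ must be chosen from the simples \emph{not} appearing in $M$; Lemma~\ref{1-1}(1)(a) is exactly the device that converts the vanishing of these composition factors into $Me_{i}=0$, i.e.\ into $M$ genuinely being a $\Gamma$-module. Once that is in place, the rigidity transfer via Lemma~\ref{air2.1}(1) and the equality $|M|=\s(M)=|\Gamma|$ close the argument immediately, with no further estimates needed.
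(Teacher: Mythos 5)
Your proof is correct and follows essentially the same route as the paper: the paper's maximal idempotent $e$ with $\Hom_{\Lambda}(e\Lambda,M)=0$ is exactly your $\sum_{i\notin J}e_{i}$, and both arguments rest on Lemma \ref{1-1}(1) to identify $\s(M)$ with $|\Lambda|-|e\Lambda|$ and on Lemma \ref{air2.1}(1) to transfer $\tau$-rigidity to $\Lambda/\langle e\rangle$. Your write-up merely makes explicit the steps (sincerity over the quotient, $M\langle e\rangle=0$) that the paper leaves implicit.
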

\begin{proof}
Let $e\in\Lambda$ be a maximal idempotent such that $\Hom_{\Lambda}(e\Lambda, M)=0$.
Namely, $M$ does not have $\top e\Lambda$ as a composition factor by Lemma \ref{1-1}(1).
Then we have $\s(M)=|\Lambda|-|e\Lambda|$.
Thus (1)$\Rightarrow$(2) holds clearly.
On the other hand, (2)$\Rightarrow$(1) holds since $M$ is a $\tau$-rigid $(\Lambda/\langle e\rangle)$-module by Lemma \ref{air2.1}(1).
\end{proof}
Let $M$ be a $\Lambda$-module and $P$ a projective $\Lambda$-module.
We call $(M,P)$ an {\it almost complete support $\tau$-tilting pair} if $M$ is $\tau$-rigid, $\Hom_{\Lambda}(P,M)=0$, and $|M|+|P|=|\Lambda|-1$.
\begin{proposition}\cite[Theorem 2.18]{AIR}\label{air2.18}
For any basic almost complete support $\tau$-tilting pair $(M,P)$ for $\Lambda$,  
there exist exactly two basic support $\tau$-tilting $\Lambda$-modules $L$ and $N$ 
such that $M\in \add L\cap \add N$ and $P\in\add e_{L}\Lambda \cap \add e_{N}\Lambda$.
\end{proposition}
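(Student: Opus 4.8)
The plan is to strip off the projective part $P$ by passing to a factor algebra, thereby reducing to the case of an \emph{almost complete} $\tau$-tilting module, and then to produce its two completions and rule out any others by an exchange-sequence argument.

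\emph{Reduction.} Write $P\cong e\Lambda$ with $e\in\E_{\Lambda}$. Since $\Hom_{\Lambda}(P,M)\cong Me=0$, the module $M$ lies in $\mod(\Lambda/\langle e\rangle)$, and by Lemma \ref{air2.1}(2) it is $\tau$-rigid over $\Lambda':=\Lambda/\langle e\rangle$; moreover $|M|=|\Lambda|-1-|P|=|\Lambda'|-1$. A support $\tau$-tilting completion $L$ of $(M,P)$ with $P\in\add e_{L}\Lambda$ is precisely a support $\tau$-tilting $\Lambda'$-module having $M$ as a direct summand: the idempotent $e_{L}\in\E_{\Lambda}$ must satisfy $e\Lambda\in\add e_{L}\Lambda$, hence descends to an idempotent $\bar e_{L}$ of $\Lambda'$, and one transports $\tau$-rigidity between $\Lambda$, $\Lambda'$ and $\Lambda'/\langle\bar e_{L}\rangle$ using Lemma \ref{air2.1} together with the fully faithful embedding $\mod(\Lambda/I)\to\mod\Lambda$. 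Thus it suffices to treat $P=0$: I must show that an almost complete $\tau$-tilting module $M$ (that is, $\tau$-rigid with $|M|=|\Lambda|-1$) is a summand of exactly two support $\tau$-tilting modules.

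\emph{The dichotomy.} Let $e\in\E_{\Lambda}$ be maximal with $Me=0$, so that $\s(M)=|\Lambda|-|e\Lambda|$ and, by Lemma \ref{1-1}(1), $M$ is a sincere $\tau$-rigid module over $\Gamma:=\Lambda/\langle e\rangle$. Since a $\tau$-rigid module has at most as many indecomposable summands as the rank of its algebra, $|M|\le|\Gamma|=|\Lambda|-|e\Lambda|$, forcing $|e\Lambda|\le 1$. Hence either $M$ is sincere over $\Lambda$, or it misses exactly one simple $S_{j}$. In the latter case $\s(M)=|M|$, so Proposition \ref{1-3} shows $M$ itself is support $\tau$-tilting, giving one completion $N=M$ with $e_{N}=e_{j}\neq 0$; the Bongartz completion of Proposition \ref{air2.10} then supplies a genuine $\tau$-tilting module $L=M\oplus X$ with $X$ indecomposable, $X\notin\add M$ and $e_{L}=0$, and $L\neq N$ as their supports differ. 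In the sincere case the Bongartz completion again yields one $\tau$-tilting completion $M\oplus X$, and a second one still has to be produced.

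\emph{The exchange sequence (the crux).} To construct the remaining completion and simultaneously bound their number, I would analyse the complements of $M$ via an exchange sequence. Starting from a $\tau$-tilting completion $M\oplus X$, take a minimal left $\add M$-approximation $X\xrightarrow{f}M_{0}$; its cokernel $Y=\Cok f$ (when $f$ is injective, which is the case precisely when $M$ is sincere) or else the associated support reduction produces a new summand, and a computation of $\Hom_{\Lambda}(M\oplus Y,\tau(M\oplus Y))$ through Lemma \ref{1-1} and Lemma \ref{air2.1}, followed by the maximality criterion of Proposition \ref{air2.10}, shows that $M\oplus Y$ (or its support-reduced version) is again support $\tau$-tilting. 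The same sequence shows conversely that any indecomposable $Z$ with $M\oplus Z$ $\tau$-rigid must occur in an exchange sequence built from $X$, so $Z\in\{X,Y\}$; together with the dichotomy this gives exactly two completions. This at-most-two bound is the step I expect to be the main obstacle, since it rests on the full interplay between $\add M$-approximations, the Auslander--Reiten translation and $\tau$-rigidity that forms the heart of \cite[Theorem 2.18]{AIR}.
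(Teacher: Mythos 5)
You have correctly diagnosed your own weak point, and it is fatal as written. First, note that the paper itself contains no proof of this proposition: it is imported verbatim from \cite[Theorem 2.18]{AIR} and used as a black box (for instance, in the proof of Proposition \ref{2-13}, where three completions of $(N,0)$ produce a contradiction), so there is no internal argument to compare yours against; the question is whether your sketch would stand alone.

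Your reduction to $P=0$ via $\Lambda'=\Lambda/\langle e\rangle$ and your dichotomy are sound: the observation that the maximal idempotent $e$ annihilating $M$ satisfies $|e\Lambda|\le 1$ (since $M$ is $\tau$-rigid and sincere over $\Lambda/\langle e\rangle$, forcing $|\Lambda|-1=|M|\le|\Lambda|-|e\Lambda|$) is a clean way to split into the sincere case and the case of one missing simple, and in the latter case Proposition \ref{1-3} plus the Bongartz completion of Proposition \ref{air2.10} do yield two distinct completions. But this only proves ``at least two'' in that case (and only ``at least one'' in the sincere case); the upper bound, and the second sincere-case completion, rest entirely on your exchange-sequence step, which is circular. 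The claims that $Y=\Cok f$ contributes exactly one new indecomposable summand, that $M\oplus Y$ (or its support reduction) is again support $\tau$-tilting, and that every complement $Z$ arises from such a sequence are, in \cite{AIR}, the content of the mutation theorem (Theorem 2.30 and Corollary 2.34 there), which is proved \emph{after} and \emph{using} Theorem 2.18, not before it. (Your parenthetical that $f$ is injective precisely when $M$ is sincere is also not the right criterion in the $\tau$-tilting setting: the approximation map in the exchange sequence need not be injective even for mutations of genuine $\tau$-tilting modules.) The actual argument in \cite{AIR} runs through the bijection between support $\tau$-tilting pairs and functorially finite torsion classes: completions of $(M,P)$ correspond to suitable torsion classes in the interval between $\Fac M$ and ${}^{\perp}(\tau M)\cap P^{\perp}$, and one shows that only the two endpoints occur. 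Your sketch neither develops nor replaces that machinery, so the ``exactly'' in the statement remains unproven.
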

Two basic support $\tau$-tilting modules $L$ and $N$ are said to be \emph{mutations} of each other 
if there exists a basic almost complete support $\tau$-tilting pair $(M,P)$ such that 
$M$ (respectively, $P$) is a direct summand of $L$ and $N$ (respectively, $e_{L}\Lambda$ and $e_{N}\Lambda$).

For a $\Lambda$-module $M$, we denote by $\Fac(M)$ the full subcategory of $\mod \Lambda$ consisting of factor modules of direct sums of copies of $M$.
\begin{definition-proposition}\cite[Theorem 2.7]{AIR}
For any $M, N \in\sttilt\Lambda$, we write $M \ge N$ if $\Fac(M)\supseteq\Fac(N)$.
Then $\ge$ gives a partial order on $\sttilt\Lambda$.
\end{definition-proposition}
We have an associated Hasse quiver:
\begin{itemize}
\item The set of vertices is $\sttilt\Lambda$.
\item We draw an arrow from $M$ to $N$ if $M>N$ and there exists no $L\in\sttilt\Lambda$ such that $M>L>N$.
\end{itemize}
By \cite[Corollary 2.34]{AIR}, two support $\tau$-tilting modules are connected by an arrow in the Hasse quiver $\hasse(\Lambda)$ if and only if they are mutations of each other.
In particular, the Hasse quiver $\hasse (\Lambda)$ is a $|\Lambda|$-regular graph.
\section{Classification of $\tau$-tilting modules over Nakayama algebras}
In this section, for Nakayama algebras, 
we study a connection between (1) $\tau$-tilting modules, (2) proper support $\tau$-tilting modules, 
(3) triangulations of a regular polygon with a puncture, and (4) certain integer sequences.
In Subsection \ref{summary}, we give an example of these correspondences.

Recall the definition and basic properties of Nakayama algebras.
A module $M$ is said to be {\it uniserial} if it has a unique composition series.
A finite dimensional algebra is said to be {\it Nakayama} if every indecomposable projective module
and every indecomposable injective module are uniserial.
The following quivers will play a central role in this paper.
\begin{align}
\begin{minipage}{0.6\hsize}
\ \ \ \ \ $\vec{A}_{n}:$ \xymatrix{\begin{smallmatrix}n\end{smallmatrix}\ar[r]^{\alpha_{n-1}}&\begin{smallmatrix}n-1\end{smallmatrix}\ar[r]^{\alpha_{n-2}}
&\cdots\ar[r]^{\alpha_{2}}&\begin{smallmatrix}2\end{smallmatrix}\ar[r]^{\alpha_{1}}&\begin{smallmatrix}1\end{smallmatrix}}\notag
\end{minipage}
\begin{minipage}{0.4\hsize}
$\vec{\Delta}_{n}:$ \xymatrix @R=7mm{
&\begin{smallmatrix}1\end{smallmatrix}\ar[ld]_{\alpha_{n}}&\\
\begin{smallmatrix}n\end{smallmatrix}\ar[d]_{\alpha_{n-1}}&&\begin{smallmatrix}2\end{smallmatrix}\ar[lu]_{\alpha_{1}}\\
\begin{smallmatrix}n-1\end{smallmatrix}\ar[rd]_{\alpha_{n-2}}&&\begin{smallmatrix}3\end{smallmatrix}\ar[u]_{\alpha_{2}}\\
&\cdots \ar[ur]_{\alpha_{3}}&
}\notag
\end{minipage}
\end{align}
\begin{proposition}\cite[V.3.2]{ASS}\label{ass3.2}
A basic connected algebra is Nakayama if and only if 
its quiver is either $\vec{A}_{n}$ or $\vec{\Delta}_{n}$.
\end{proposition}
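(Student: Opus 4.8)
The plan is to translate the module-theoretic hypothesis—uniseriality of every indecomposable projective and every indecomposable injective—into a purely combinatorial condition on the quiver $Q$ of $\Lambda$, and then to classify the connected quivers satisfying that condition. Throughout I would write $\Lambda\cong KQ/I$ for an admissible ideal $I$ (Gabriel's theorem), so that the claim becomes a statement about the shape of $Q$.

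First I would record the standard dictionary between radical layers and arrows. For the indecomposable projective right module $P_i=e_i\Lambda$, the layer $\rad P_i/\rad^2 P_i$ is isomorphic to $\bigoplus_{\alpha\colon i\to j}S_j$, so its length equals the number of arrows of $Q$ starting at $i$. A cyclic module all of whose radical layers are simple or zero is uniserial, and conversely a uniserial module has simple radical layers; hence $P_i$ is uniserial for every $i$ if and only if every vertex of $Q$ emits at most one arrow. Dually, applying $D=\Hom_K(-,K)$ identifies the indecomposable injective $E_i$ with the dual of the indecomposable projective left module $\Lambda e_i$, and duality preserves uniseriality; since the radical layers of $\Lambda e_i$ are governed by the arrows ending at $i$, all injectives are uniserial if and only if every vertex receives at most one arrow. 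Combining the two characterizations, $\Lambda$ is Nakayama exactly when each vertex of $Q$ has in-degree at most $1$ and out-degree at most $1$.

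Next I would carry out the graph-theoretic classification. If $Q$ is connected and every vertex has in-degree and out-degree at most $1$, then every vertex has total degree at most $2$ in the underlying graph, so that underlying graph is either a path or a cycle. Along a path, any interior vertex meets two edges, and the degree bounds force one to point in and the other out; this propagates to a consistent linear orientation, yielding $\vec{A}_n$ after relabeling. Around a cycle the same local argument at every vertex forces a consistent cyclic orientation, yielding $\vec{\Delta}_n$. This settles the forward implication. The converse is immediate, since $\vec{A}_n$ and $\vec{\Delta}_n$ visibly satisfy the two degree bounds, so the dictionary above makes every projective and every injective uniserial for any admissible $I$.

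The routine bookkeeping—verifying the radical-layer formula, and checking that simple radical layers force uniseriality even in the presence of the relations $I$—is standard. I expect the only genuinely delicate point to be keeping the right/left and source/target conventions consistent, so that the projective condition yields the out-degree bound while the injective condition yields the in-degree bound. Getting this matching right is exactly what makes both families of quivers appear (rather than, say, all orientations of a path), and I anticipate this conceptual alignment, not any computation, to be the main thing to pin down carefully.
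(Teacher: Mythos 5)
Your proof is correct and takes essentially the same route as the source: the paper gives no proof of its own but cites \cite[V.3.2]{ASS}, whose argument is exactly your dictionary---$\Lambda$ is Nakayama if and only if every vertex of its quiver has out-degree at most one (uniserial projectives) and in-degree at most one (uniserial injectives)---followed by the observation that a finite connected quiver with these bounds is $\vec{A}_{n}$ or $\vec{\Delta}_{n}$. The one step you compress, namely that the \emph{global} out-degree bound forces every layer $\rad^{t}P_{i}/\rad^{t+1}P_{i}$ (not just the first) to be spanned by the unique path of length $t$ from $i$, is indeed the routine bookkeeping you flag, and your right/left, source/target matching is the correct one.
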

Throughout this section, we assume that $\Lambda$ is a basic connected Nakayama algebra with $n$ simple modules.
We give a concrete description of indecomposable modules over Nakayama algebras.
We denote by $\ell(M)$ the Loewy length of $M\in\mod\Lambda$.
\begin{proposition}\cite[V.3.5, V.4.1 and V.4.2]{ASS}\label{ass3.5}
For any indecomposable $\Lambda$-module $M$, there exists $i\in [1,n]$ 
and $t\in [1,\ell(P_{i})]$ such that $M\simeq P_{i}/\rad^{t} P_{i}$ and $t=\ell(M)$.
Moreover, if $M$ is not projective, then we have $\tau M\simeq \rad P_{i}/\rad^{t+1}P_{i}$ and $\ell(\tau M)=\ell(M)$.
\end{proposition}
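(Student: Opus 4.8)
The plan is to split the statement into two largely independent parts: first the structural classification of the indecomposable $\Lambda$-modules as uniserial quotients $P_{i}/\rad^{t}P_{i}$, and then the explicit computation of the Auslander--Reiten translate via the transpose. The starting point for the first part is the definition of a Nakayama algebra: each $P_{i}$ is uniserial, so its submodule lattice is the chain $P_{i}\supseteq \rad P_{i}\supseteq \rad^{2}P_{i}\supseteq\cdots\supseteq \rad^{\ell(P_{i})}P_{i}=0$. Consequently every quotient of $P_{i}$ is of the form $P_{i}/\rad^{t}P_{i}$, and counting composition factors gives $\ell(P_{i}/\rad^{t}P_{i})=t$.

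First I would establish that every indecomposable $M$ is uniserial, hence such a quotient. Taking a projective cover $\pi\colon P\to M$, it suffices to show that $\top M$ is simple, for then $P\simeq P_{i}$ is indecomposable, $\Ker\pi$ is a submodule of the uniserial module $P_{i}$, whence $\Ker\pi=\rad^{t}P_{i}$ and $M\simeq P_{i}/\rad^{t}P_{i}$. The crux is thus the simplicity of $\top M$; here I would use that both $\Lambda$ and $\Lambda^{\op}$ are Nakayama (the injectives are uniserial too, by definition), so that both the radical layers $\rad^{k}M/\rad^{k+1}M$ and the socle layers of $M$ are simple, which forces $M$ uniserial. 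Uniqueness of the pair $(i,t)$ is then immediate, since $i$ is determined by $\top M=S_{i}$ and $t=\ell(M)$ by the length count above.

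For the second part, let $M=P_{i}/\rad^{t}P_{i}$ be non-projective, so $\rad^{t}P_{i}\neq 0$. A minimal projective presentation is
$$P_{j}\xrightarrow{f}P_{i}\to M\to 0,$$
where $P_{j}\twoheadrightarrow\rad^{t}P_{i}$ is the projective cover of the uniserial submodule $\rad^{t}P_{i}$ and the index $j$ is read off from $\top(\rad^{t}P_{i})$ along the arrows of the quiver. Applying $\Hom_{\Lambda}(-,\Lambda)$ and passing to the cokernel yields $\Tr M$, and then $\tau M=\kD\Tr M$. Both functors send uniserial modules to uniserial modules and behave predictably on the chains of radical powers, so the computation reduces to tracking indices; carrying this out identifies $\tau M$ with $\rad P_{i}/\rad^{t+1}P_{i}$. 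The length statement $\ell(\tau M)=t=\ell(M)$ then follows at once, as $\rad P_{i}/\rad^{t+1}P_{i}$ has composition length $t$.

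The main obstacle is the structural step: proving that indecomposability forces uniseriality. Once this is in hand, the remainder is bookkeeping. The one point demanding care in that bookkeeping is that the quiver may be of type $\vec{\Delta}_{n}$ (by Proposition \ref{ass3.2}), where the vertex indices must be read modulo $n$; this is where I would be most careful, since the index $j$ in the projective presentation and the final identification of $\tau M$ both depend on correctly following the cyclic order of the arrows.
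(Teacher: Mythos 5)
You should first note that the paper itself contains no proof of this proposition: it is quoted verbatim from \cite[V.3.5, V.4.1 and V.4.2]{ASS}, so the relevant benchmark is the textbook argument, whose overall architecture (classify indecomposables as uniserial quotients of projectives, then compute $\tau$) your sketch does follow. However, the step you yourself single out as the crux contains a genuine gap. You propose to deduce that the radical layers $\rad^{k}M/\rad^{k+1}M$ and the socle layers of an indecomposable $M$ are simple merely from the fact that $\Lambda$ and $\Lambda^{\op}$ are Nakayama, i.e.\ that the indecomposable projectives and injectives are uniserial. That deduction is not valid as stated: simplicity of all radical layers of $M$ is equivalent to uniseriality of $M$ (the very thing you are trying to prove), it manifestly fails without indecomposability (take $M=S\oplus S$), and yet your argument makes no use of indecomposability at this point. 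Uniseriality of the projectives controls the layer structure of projective modules, not of an arbitrary indecomposable module, and there is no direct passage from the former to the latter. The way the gap is actually closed in \cite{ASS} is different in kind: setting $t=\ell(M)$, one chooses $x=xe_{i}\in M$ with $x\rad^{t-1}\Lambda\neq 0$; the induced map $P_{i}/\rad^{t}P_{i}\to M$ is injective, since a nonzero kernel inside the uniserial module $P_{i}/\rad^{t}P_{i}$ would contain its socle $\rad^{t-1}P_{i}/\rad^{t}P_{i}$ and kill $x\rad^{t-1}\Lambda$; one then shows that $P_{i}/\rad^{t}P_{i}$ is an \emph{injective} module over the Nakayama algebra $\Lambda/\rad^{t}\Lambda$, so this copy splits off, and indecomposability of $M$ forces $M\simeq P_{i}/\rad^{t}P_{i}$. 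Indecomposability thus enters through a splitting argument, not through an a priori simplicity of layers; without some such injectivity/splitting input your crux step fails.

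The second half of your plan, computing $\tau M$ from the minimal presentation $P_{j}\xto{f} P_{i}\to M\to 0$ (with $P_{j}$ the projective cover of $\rad^{t}P_{i}$) via $\Tr$ and the duality $\kD$, is a workable route and does yield $\tau M\simeq\rad P_{i}/\rad^{t+1}P_{i}$ with $\ell(\tau M)=t$; but as written it defers everything to ``bookkeeping'': the actual content is identifying the cokernel of $\Hom_{\Lambda}(f,\Lambda)\colon \Lambda e_{i}\to\Lambda e_{j}$ as a uniserial left module and dualizing, and the phrase ``both functors send uniserial modules to uniserial modules'' needs justification ($\Hom_{\Lambda}(-,\Lambda)$ is applied to projectives here, so this is really a statement about the cokernel). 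The source \cite{ASS} instead exhibits the sequence $0\to\rad P_{i}/\rad^{t+1}P_{i}\to \rad P_{i}/\rad^{t}P_{i}\oplus P_{i}/\rad^{t+1}P_{i}\to P_{i}/\rad^{t}P_{i}\to 0$ and verifies directly that it is almost split, which simultaneously gives the formula for $\tau M$ and the length statement. This omission is repairable and much less serious than the structural gap above, but in its current form your argument proves neither half completely.
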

We let $\Lambda_{n}^{r}:=K\vec{\Delta}_{n}/J^{r}$, where $J$ is the arrow ideal of $K\vec{\Delta}_{n}$.
The Auslander-Reiten quiver of $\Lambda_{n}^{r}$ can be drawn easily \cite[V.4.1]{ASS}.
For example, the Auslander-Reiten quiver of $\Lambda^{5}_{4}$ is given by the following:
\begin{align}
\tiny{\xymatrix @R=2mm @C=10mm{
{\begin{smallmatrix}3\\2\\1\\4\\3\end{smallmatrix}}\ar[rd]&&
{\begin{smallmatrix}4\\3\\2\\1\\4\end{smallmatrix}}\ar[rd]&&
{\begin{smallmatrix}1\\4\\3\\2\\1\end{smallmatrix}}\ar[rd]&&
{\begin{smallmatrix}2\\1\\4\\3\\2\end{smallmatrix}}\ar[rd]&&
{\begin{smallmatrix}3\\2\\1\\4\\3\end{smallmatrix}}\\
&{\begin{smallmatrix}3\\2\\1\\4\end{smallmatrix}}\ar[ru]\ar[rd]\ar@{.}[l]&&
{\begin{smallmatrix}4\\3\\2\\1\end{smallmatrix}}\ar[ru]\ar[rd]\ar@{.}[ll]&&
{\begin{smallmatrix}1\\4\\3\\2\end{smallmatrix}}\ar[ru]\ar[rd]\ar@{.}[ll]&&
{\begin{smallmatrix}2\\1\\4\\3\end{smallmatrix}}\ar[ru]\ar[rd]\ar@{.}[ll]&\ar@{.}[l]\\
{\begin{smallmatrix}2\\1\\4\end{smallmatrix}}\ar[ru]\ar[rd]&&
{\begin{smallmatrix}3\\2\\1\end{smallmatrix}}\ar[ru]\ar[rd]\ar@{.}[ll]&&
{\begin{smallmatrix}4\\3\\2\end{smallmatrix}}\ar[ru]\ar[rd]\ar@{.}[ll]&&
{\begin{smallmatrix}1\\4\\3\end{smallmatrix}}\ar[ru]\ar[rd]\ar@{.}[ll]&&
{\begin{smallmatrix}2\\1\\4\end{smallmatrix}}\ar@{.}[ll]\\
&{\begin{smallmatrix}2\\1\end{smallmatrix}}\ar[ru]\ar[rd]\ar@{.}[l]&&
{\begin{smallmatrix}3\\2\end{smallmatrix}}\ar[ru]\ar[rd]\ar@{.}[ll]&&
{\begin{smallmatrix}4\\3\end{smallmatrix}}\ar[ru]\ar[rd]\ar@{.}[ll]&&
{\begin{smallmatrix}1\\4\end{smallmatrix}}\ar[ru]\ar[rd]\ar@{.}[ll]&\ar@{.}[l]\\
{\begin{smallmatrix}1\end{smallmatrix}}\ar[ru]&&\textnormal{$\begin{smallmatrix}2\end{smallmatrix}$}\ar[ru]\ar@{.}[ll]&&
{\begin{smallmatrix}3\end{smallmatrix}}\ar[ru]\ar@{.}[ll]&&\textnormal{$\begin{smallmatrix}4\end{smallmatrix}$}\ar[ru]\ar@{.}[ll]&&
{\begin{smallmatrix}1\end{smallmatrix}}\ar@{.}[ll]
}}\notag
\end{align}

We use the following notation in this paper.
\begin{notation}\label{nota}
Fix an integer $n>0$.
For any integer $i$, there exist integers $j$ and $1\le k\le n$ such that $i=nj+k$.
Then we let $(i)_{n}:=k$.
For integers $i<j$, we let 
\begin{align}
[i,j]_{n} :=\{ (i)_{n},(i+1)_{n},\ldots,(j-1)_{n},(j)_{n} \}. \notag
\end{align}
\end{notation}
For example, when $n=4$, we have $[ 3,5 ]_{4}=\{ 3,4,1\}$.

By Proposition \ref{ass3.5}, each indecomposable $\Lambda$-module $M$ is uniquely determined, up to isomorphism, 
by its simple top $S_{j}$ and the Loewy length $l:=\ell(M)$.
In this case, $M$ has a unique composition series with the associated composition factors
\begin{align}
S_{(j)_{n}},S_{(j-1)_{n}},\ldots,S_{(j-l+1)_{n}}. \notag
\end{align}
Thus we can easily characterize when nonzero homomorphisms exist between indecomposable $\Lambda$-modules.
\begin{lemma}\label{2-1}
Let $M=P_{j}/\rad^{l}P_{j}$ and $N=P_{i}/\rad^{k}P_{i}$ for $i,j,k,l\in [1,n]$.
The following conditions are equivalent:
\begin{itemize}
\item[(1)] $\Hom_{\Lambda}(M,N)\neq 0$.
\item[(2)] $j\in [i-k+1,i]_{n}$ and $(i-k+1)_{n}\in [j-l+1,j]_{n}$.
\end{itemize}
Moreover, if $l\ge k$, then the following condition is also equivalent:
\begin{itemize}
\item[(3)] $\Hom_{\Lambda}(P_{j},N)\neq 0$.
\end{itemize}
\end{lemma}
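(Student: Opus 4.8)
The plan is to translate all three conditions into statements about where the composition factors of $M$ and $N$ sit around the cyclic vertex set $\{1,\dots,n\}$, and then show the equivalence by reading off overlaps of the two supporting arcs. Recall from Proposition \ref{ass3.5} and the discussion preceding the lemma that $M=P_j/\rad^l P_j$ has composition factors $S_{(j)_n},S_{(j-1)_n},\dots,S_{(j-l+1)_n}$, so its top is $S_{(j)_n}$ and its socle is $S_{(j-l+1)_n}$; similarly $N=P_i/\rad^k P_i$ has top $S_{(i)_n}$ and socle $S_{(i-k+1)_n}$. The interval notation $[\,\cdot\,,\cdot\,]_n$ is exactly designed to record these cyclic stretches of indices: $M$ is supported (as composition factors) on $[j-l+1,j]_n$ and $N$ on $[i-k+1,i]_n$.

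\textbf{Proving $(1)\Leftrightarrow(2)$.} First I would recall the standard description of maps between uniserial modules over a Nakayama algebra: a nonzero homomorphism $M\to N$ factors as a surjection of $M$ onto a uniserial quotient followed by an inclusion of that quotient into $N$ as a submodule. Equivalently, $\Hom_\Lambda(M,N)\neq 0$ precisely when $M$ and $N$ share a common uniserial ``piece'' of the right shape; concretely, since $N$ has socle $S_{(i-k+1)_n}$, a nonzero map $M\to N$ must carry the socle of its image to the socle of $N$, and by uniseriality this image is generated by $\top N$-style data. The cleanest way to package this is: a nonzero map exists iff the socle of $N$ is a composition factor of $M$ \emph{and} the top of $M$ can be ``reached'' inside $N$, i.e. $\top M$ occurs as a composition factor of $N$ in the position compatible with $\soc N$ lying in $M$. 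This is exactly what condition (2) asserts once unwound: $j\in[i-k+1,i]_n$ says $S_{(j)_n}=\top M$ is a composition factor of $N$ (the top of $M$ lies within $N$'s support), while $(i-k+1)_n\in[j-l+1,j]_n$ says $\soc N=S_{(i-k+1)_n}$ is a composition factor of $M$. I would prove each implication by exhibiting/obstructing the factorization through the common uniserial subquotient, using that over a Nakayama algebra the image of any map between indecomposables is itself indecomposable uniserial and is simultaneously a quotient of $M$ and a submodule of $N$.

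\textbf{Adding $(3)$ under $l\ge k$.} For the third condition I would invoke Lemma \ref{1-1}(1)(a): $\Hom_\Lambda(P_j,N)\neq 0$ iff $N$ has $\top P_j=S_{(j)_n}$ as a composition factor, i.e. iff $j\in[i-k+1,i]_n$. So (3) is equivalent to just the \emph{first} clause of (2). The role of the hypothesis $l\ge k$ is to force the second clause of (2) to be automatic once the first holds: if $M$ is at least as long as $N$ and the top of $M$ already sits inside the length-$k$ support of $N$, then the length-$l$ support of $M$ (reading backwards from $(j)_n$) must sweep past the socle position $(i-k+1)_n$ of $N$, giving $(i-k+1)_n\in[j-l+1,j]_n$ for free. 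I would verify this by a short index computation: assuming $j\in[i-k+1,i]_n$ write $j\equiv i-k+1+s \pmod n$ with $0\le s\le k-1$; then the socle index of $N$ is $s$ steps before $(j)_n$, and since $l\ge k>s$ the interval $[j-l+1,j]_n$ reaches back at least $s$ steps and hence contains it.

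\textbf{Main obstacle.} The delicate point is the cyclic wrap-around in the $\vec\Delta_n$ case, where the same index can be revisited and an interval $[a,b]_n$ may have length exceeding $n$ or overlap itself; here ``$S$ is a composition factor of $M$'' is not literally ``the index lies in the set $[j-l+1,j]_n$'' unless one is careful about multiplicities and about the convention $i<j$ in the interval notation. The hard part will be checking that the set-membership conditions in (2) correctly capture multiplicity-one occurrences and do not misfire when $l$ or $k$ is large enough to wrap around the cycle; I would handle this by arguing directly with the integer representatives (lifting to $\ZZ$ via Notation \ref{nota}) rather than with the reduced residues, so that the uniserial composition series is an honest interval of integers and all the interval inclusions become elementary inequalities.
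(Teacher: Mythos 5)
Your treatment of $(1)\Leftrightarrow(2)$ is essentially the paper's: necessity via the composition-factor criterion of Lemma \ref{1-1}(1) applied to $\top M$ and $\soc N$, and sufficiency via the common uniserial subquotient $L$ with $\top L=S_{j}$ and $\soc L=S_{(i-k+1)_{n}}$, which is simultaneously a factor module of $M$ and a submodule of $N$ — exactly the module $L$ the paper exhibits. Where you genuinely diverge is in $(3)$: the paper proves $(3)\Rightarrow(1)$ module-theoretically, noting that any nonzero $f\colon P_{j}\to N$ satisfies $f(\rad^{l}P_{j})\subset\rad^{l}N=0$ because $l\ge k$, so $f$ factors through $M=P_{j}/\rad^{l}P_{j}$ (and $(1)\Rightarrow(3)$ is trivial since $M$ is a quotient of $P_{j}$). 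You instead observe via Lemma \ref{1-1}(1)(a) that $(3)$ is equivalent to the single membership $j\in[i-k+1,i]_{n}$, and then verify by the index computation $j\equiv i-k+1+s$ with $0\le s\le k-1\le l-1$ that under $l\ge k$ this first clause of $(2)$ forces the second clause $(i-k+1)_{n}\in[j-l+1,j]_{n}$. Both routes are correct and comparably short: the paper's one-line factoring makes transparent exactly why the hypothesis $l\ge k$ enters (it is what makes $\rad^{l}N$ vanish), while yours buys the additional combinatorial insight that for $l\ge k$ condition $(2)$ collapses to a single membership condition — at the cost of routing $(3)$ through the already-established equivalence $(1)\Leftrightarrow(2)$ rather than handling it directly. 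One small remark: the wrap-around and multiplicity worry in your final paragraph is moot, since the statement restricts $k,l\in[1,n]$, so the supports $[i-k+1,i]_{n}$ and $[j-l+1,j]_{n}$ contain at most $n$ indices, each simple occurs at most once as a composition factor, and set membership is literally equivalent to occurrence as a composition factor; your plan of lifting to integer representatives via Notation \ref{nota} is a fine way to see this, but no genuine obstacle remains.
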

\begin{proof}
(1) $\Rightarrow$ (2): 
If $\Hom_{\Lambda}(M,N)\neq 0$, then $M$ has $\soc N=S_{(i-k+1)_{n}}$ as a composition factor 
and $N$ has $\top M=S_{j}$ as a composition factor by Lemma \ref{1-1}(1).
Hence the assertion follows.

(2) $\Rightarrow$ (1): 
By our assumption, there exists an indecomposable $\Lambda$-module $L$ such that $\top L=S_{j}$, $\soc L=S_{(i-k+1)_{n}}$ and $\ell(L)\le n$.
Then $L$ is a factor module of $M$ and a submodule of $N$.
Thus we have $\Hom_{\Lambda}(M,N)\neq 0$.

(1) $\Rightarrow$ (3):
This is clear since $M$ is a factor module of $P_{j}$.

(3) $\Rightarrow$ (1):
Take a nonzero morphism $f: P_{j}\xto{}N$.
Since $l \ge k$, we have $f(\rad^{l}P_{j})\subset \rad^{l}N=0$.
Thus $f$ induces a nonzero morphism $M=P_{j}/\rad^{l}P_{j}\xto{}N$.
\end{proof}
We give a criterion for indecomposable modules to be $\tau$-rigid.
\begin{proposition}\label{2-2}
Let $M$ be an indecomposable nonprojective $\Lambda$-module.
Then $M$ is $\tau$-rigid if and only if $\ell(M)<n$.
\end{proposition}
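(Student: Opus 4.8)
The statement only involves the single module $M$ and its Auslander-Reiten translate, so everything reduces to computing one Hom-space. By Proposition \ref{ass3.5} I would write $M\simeq P_{j}/\rad^{l}P_{j}$ with $l=\ell(M)$, and since $M$ is nonprojective the same proposition gives $\tau M\simeq \rad P_{j}/\rad^{l+1}P_{j}$ with $\ell(\tau M)=l$. As $\top(\rad P_{j})=S_{(j-1)_{n}}$, this presents the translate as $\tau M\simeq P_{(j-1)_{n}}/\rad^{l}P_{(j-1)_{n}}$. Since $M$ is $\tau$-rigid exactly when $\Hom_{\Lambda}(M,\tau M)=0$, the whole proposition becomes the problem of deciding when this Hom-space vanishes.

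\textbf{Reduction via the Hom-criterion.} The next step is to apply Lemma \ref{2-1} to the pair $(M,N)$ with $N:=\tau M$, i.e.\ with $i=(j-1)_{n}$ and $k=l$. Its equivalence (1)$\Leftrightarrow$(2) says precisely that $\Hom_{\Lambda}(M,\tau M)\neq 0$ iff $\top M=S_{j}$ occurs among the composition factors of $\tau M$ \emph{and} $\soc(\tau M)=S_{(j-l)_{n}}$ occurs among the composition factors of $M$. Using the composition-series description recorded just before Lemma \ref{2-1}, the factors of $M$ are $S_{(j-t)_{n}}$ for $t\in[0,l-1]$, while those of $\tau M$ are $S_{(j-s)_{n}}$ for $s\in[1,l]$.

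\textbf{The key computation.} It then remains to resolve two membership questions by modular arithmetic. First, $S_{j}=S_{(j)_{n}}$ occurs among the factors of $\tau M$ iff $j\equiv j-s\pmod n$ for some $s\in[1,l]$, i.e.\ iff $n\mid s$ for some such $s$, which holds exactly when $l\ge n$. Symmetrically, $S_{(j-l)_{n}}$ occurs among the factors of $M$ iff $l\equiv t\pmod n$ for some $t\in[0,l-1]$, i.e.\ iff $t=l-n$ lies in that range, again exactly when $l\ge n$. Thus both conditions hold precisely when $l\ge n$, so $\Hom_{\Lambda}(M,\tau M)\neq 0$ iff $\ell(M)\ge n$; equivalently $M$ is $\tau$-rigid iff $\ell(M)<n$.

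\textbf{Where the care is needed.} I expect the only delicate point to be the cyclic bookkeeping in the last step: one must verify that the two membership conditions of Lemma \ref{2-1} are \emph{each} equivalent to $l\ge n$ (rather than interacting in a more subtle way), and that the argument is uniform over both quivers $\vec{A}_{n}$ and $\vec{\Delta}_{n}$ of Proposition \ref{ass3.2}. In the linear case $\vec{A}_{n}$ no wraparound can occur and every nonprojective indecomposable automatically satisfies $\ell(M)<n$, so the conclusion is consistent there and the substantive content lies in the cyclic case, where $\ell(M)$ may reach or exceed $n$. As a final sanity check I would read the boundary $l=n$ off the Auslander-Reiten quiver of $\Lambda_{4}^{5}$ displayed above: the length-$4$ module with top $S_{2}$ and its $\tau$-translate both have all four simples as composition factors, forcing $\Hom_{\Lambda}(M,\tau M)\neq 0$, in agreement with $l=n$.
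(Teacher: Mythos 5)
Your proof is correct and follows essentially the same route as the paper: write $M=P_{j}/\rad^{l}P_{j}$ and $\tau M\simeq P_{(j-1)_{n}}/\rad^{l}P_{(j-1)_{n}}$ via Proposition \ref{ass3.5}, apply Lemma \ref{2-1} to reduce $\Hom_{\Lambda}(M,\tau M)\neq 0$ to the two membership conditions, and check by cyclic arithmetic that each holds exactly when $l\ge n$. The paper's proof is just a terser version of this (it records the two conditions $j\in[j-l,j-1]_{n}$ and $(j-l)_{n}\in[j-l+1,j]_{n}$ and asserts the equivalence with $\ell(M)\ge n$ without spelling out the modular computation you make explicit), so there is nothing to add.
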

\begin{proof}
By Proposition \ref{ass3.5}, 
we can assume that $M=P_{j}/\rad^{l}P_{j}$ and $\tau M=P_{j-1}/\rad^{l}P_{j-1}$.
Then we have
\begin{align}
\Hom_{\Lambda}(M,\tau M)\neq 0
&\overset{\ref{2-1}}{\Longleftrightarrow} 
\begin{cases}
\ j\in [j-l,j-1]_{n}\\
\ (j-l)_{n}\in [j-l+1, j]_{n}
\end{cases}\notag\\
&\Longleftrightarrow \ell(M)\ge n. \notag\qedhere
\end{align}
\end{proof}
In view of \cite[Theorem 3.2]{AIR}, this was independently obtained by Antipov-Zvonareva \cite[Proposition 5.3]{AZ}.
\subsection{$\tau$-tilting modules and proper support $\tau$-tilting modules}
In this subsection, we study a connection between $\tau$-tilting $\Lambda$-modules and proper support $\tau$-tilting $\Lambda$-modules.

Let $\{ e_{1},e_{2},\ldots,e_{n}\}$ be a complete set of primitive orthogonal idempotents of $\Lambda$
and $\E_{\Lambda}:=\{ \sum_{j\in J}e_{j}\ |\ J\subset [1,n] \}$.
We define the bijection
\begin{align}
\phi: \E_{\Lambda}\longrightarrow \E_{\Lambda} \notag
\end{align}
given by $\phi(\sum_{i\in I}e_{i})=\sum_{i\in I}e_{i-1}$, where $e_{0}:=e_{n}$.

We denote by $\mod_{\np} \Lambda$ the full subcategory of $\mod\Lambda$ 
consisting of $\Lambda$-modules which do not have nonzero projective direct summands,
and let $\psttilt_{\np}\Lambda:=\psttilt\Lambda\cap \mod_{\np}\Lambda$.
We decompose $M\in\mod\Lambda$ as $M=M_{\np}\oplus M_{\pr}$, 
where $M_{\np}\in \mod_{\np}\Lambda$ and $M_{\pr}$ is a maximal projective direct summand of $M$. 

We state our main theorem in this subsection, where $e_{M}$ is the idempotent in Definition \ref{air0.1}.
\begin{theorem}\label{2-11}
Let $\Lambda$ be a Nakayama algebra.
Then the following hold.
\begin{itemize}
\item[(1)] There are mutually inverse bijections
\begin{align}
\ttilt\Lambda\longleftrightarrow \psttilt_{\np}\Lambda \notag
\end{align}
given by $\ttilt\Lambda \ni M \mapsto M_{\np}$ and $\psttilt_{\np}\Lambda\ni N\mapsto N\oplus \phi(e_{N})\Lambda$.
\item[(2)] If $\ell(P_{i})\ge n$ for all $i\in [1,n]$, then $\psttilt_{\np}\Lambda=\psttilt\Lambda$.
In particular, we have a bijection
\begin{align}
\ttilt\Lambda\longleftrightarrow \psttilt\Lambda. \notag
\end{align}
\end{itemize}
\end{theorem}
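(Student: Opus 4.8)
The plan is to establish the two maps of part (1) as mutually inverse bijections, and then show in part (2) that under the Loewy-length hypothesis there are no proper support $\tau$-tilting modules with projective summands, so that $\psttilt_{\np}\Lambda$ already exhausts $\psttilt\Lambda$.

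First I would verify that the two stated assignments land in the correct sets. For $M \in \ttilt\Lambda$, I must check that $M_{\np}$ is a proper support $\tau$-tilting module with no projective summands. Since $M$ is sincere and $\tau$-tilting, removing its projective summands yields a $\tau$-rigid module; by Proposition \ref{1-3} it is support $\tau$-tilting precisely when $|M_{\np}| = \s(M_{\np})$, and I would deduce this from the combinatorial structure of Nakayama modules, using that $M$ being $\tau$-tilting forces $|M|=|\Lambda|=n$ and that the simple composition factors are controlled by Lemma \ref{2-1} and Proposition \ref{ass3.5}. The key point is that $M$ must have at least one indecomposable projective summand (a Nakayama algebra always has a projective-injective, and sincerity plus $\tau$-rigidity forces such a summand), so $M_{\np} \neq M$ and $e_{M_{\np}} \neq 0$, making $M_{\np}$ \emph{proper}. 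Conversely, for $N \in \psttilt_{\np}\Lambda$ I would check that $N \oplus \phi(e_{N})\Lambda$ is $\tau$-tilting: the idempotent $e_N \in \E_{\Lambda}$ records exactly which simple tops are missing from $N$, and applying $\phi$ (the shift $e_i \mapsto e_{i-1}$) produces precisely the projective modules whose tops fill in the gap one step around the cycle $\vec{\Delta}_n$. I would use Lemma \ref{2-1}(3) together with Proposition \ref{2-2} to confirm that adding these projectives preserves $\tau$-rigidity and raises the summand count to $n$.

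The main obstacle I anticipate is the verification that $N \oplus \phi(e_N)\Lambda$ is genuinely $\tau$-rigid, i.e.\ that no new $\Hom$ into the AR-translate appears when the shifted projectives are adjoined. Concretely, I must show $\Hom_{\Lambda}(\phi(e_N)\Lambda, \tau N) = 0$ and $\Hom_{\Lambda}(N, \tau(\phi(e_N)\Lambda)) = 0$; the latter is automatic since projectives have vanishing $\tau$, so the real content is the former. Here the shift $e_i \mapsto e_{i-1}$ is essential: because $N$ lacks the top $S_i$ exactly for $i \in I$ (where $e_N = \sum_{i \in I} e_i$), the translate $\tau N$ is governed by tops shifted by one via Proposition \ref{ass3.5}, and the projective $P_{i-1} = e_{i-1}\Lambda$ maps nontrivially into $\tau N$ only through composition factors that the support condition $|N| = \s(N)$ rules out. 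I would make this precise by a careful application of Lemma \ref{2-1}, checking the two interval membership conditions in (2) fail for each relevant pair.

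Finally, for part (2), I would argue that if $\ell(P_i) \geq n$ for all $i$, then any proper support $\tau$-tilting module automatically has no projective summands, so $\psttilt_{\np}\Lambda = \psttilt\Lambda$. The idea is that an indecomposable projective $P_i$ has $\ell(P_i) \geq n$, hence its top and socle composition factors wrap entirely around the quiver; by Lemma \ref{2-1} such a $P_i$ has a nonzero homomorphism to $\tau$ of any nonprojective $\tau$-rigid module whose support it overlaps, which would be incompatible with membership in a proper (non-sincere, by Definition \ref{air0.1}) support $\tau$-tilting module. More directly, the defining idempotent $e_M \neq 0$ of a proper module means some simple is absent as a top, but a long projective $P_i$ forces that simple to reappear; so no projective can be a summand. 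The bijection $\ttilt\Lambda \leftrightarrow \psttilt\Lambda$ is then immediate by composing with part (1). I expect the interval bookkeeping in $(\cdot)_n$ and $[\,\cdot\,,\cdot\,]_n$ to be the most delicate part to write cleanly, but the structure of the argument reduces entirely to repeated applications of Lemma \ref{2-1} and Proposition \ref{2-2}.
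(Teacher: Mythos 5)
Your treatment of the direction $N\mapsto N\oplus\phi(e_{N})\Lambda$ (the only content being $\Hom_{\Lambda}(\phi(e_{N})\Lambda,\tau N)=0$, obtained from the shift of composition factors under $\tau$ via Proposition \ref{ass3.5}) and of part (2) (here $\ell(P_{i})\ge n$ makes $P_{i}$ sincere, while a proper support $\tau$-tilting module is non-sincere, so no projective summand can occur) agrees with the paper. But there is a genuine gap at the pivotal step of part (1): you assert that every $\tau$-tilting $\Lambda$-module has a nonzero projective direct summand, justified only by ``a Nakayama algebra always has a projective-injective, and sincerity plus $\tau$-rigidity forces such a summand.'' That is not a proof, and the suggested mechanism fails: over the self-injective algebra $\Lambda_{3}^{3}$ every indecomposable projective is projective-injective, yet $S_{1}\oplus P_{1}\oplus P_{2}$ is $\tau$-tilting (it appears in the paper's own example list) and omits the projective-injective $P_{3}$, so a given projective-injective need not be a summand, and neither sincerity nor $\tau$-rigidity alone produces one. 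The paper's proof of this fact (Proposition \ref{2-13}(1)) is the real work of the theorem and is an exchange argument you do not have: assuming $M\in\ttilt\Lambda$ has no projective summand, pick an indecomposable summand $L$ of maximal Loewy length, write $M=L\oplus N$, use Lemma \ref{2-1}(3)$\Rightarrow$(1) to see that the projective cover $P_{L}$ also satisfies $\Hom_{\Lambda}(P_{L},\tau N)=0$, so $P_{L}\oplus N$ is again $\tau$-tilting; then the almost complete support $\tau$-tilting pair $(N,0)$ admits three distinct completions $N$, $L\oplus N$, $P_{L}\oplus N$, contradicting Proposition \ref{air2.18}. Without this (or some equivalent) argument, your map $M\mapsto M_{\np}$ need not land in \emph{proper} support $\tau$-tilting modules, since $e_{M_{\np}}$ could be zero, and the claimed bijection collapses.

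A secondary omission: you verify only that the two assignments are well defined, not that they are mutually inverse. The nontrivial composite is $M\mapsto M_{\np}\mapsto M_{\np}\oplus\phi(e_{M_{\np}})\Lambda$, which requires identifying the projective part of $M$ as exactly $\phi(e_{M_{\np}})\Lambda$. In the paper this is done by writing $M_{\pr}=e\Lambda$, showing $M_{\np}$ is a $\tau$-tilting $(\Lambda/\langle\phi^{-1}(e)\rangle)$-module (the shift argument of Proposition \ref{ass3.5} plus the summand count $|M_{\np}|=|\Lambda|-|\phi^{-1}(e)\Lambda|$), and invoking the uniqueness of $e_{M_{\np}}$ from Definition \ref{air0.1} to conclude $\phi^{-1}(e)=e_{M_{\np}}$. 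Your route through Proposition \ref{1-3} (checking $|M_{\np}|=\s(M_{\np})$) yields support $\tau$-tiltingness but does not by itself pin down $e_{M_{\np}}$ in terms of $e$; that identification must be made explicit for the maps to be inverse to each other.
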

In the rest of this subsection, we will give a proof of Theorem \ref{2-11}.
First, we show that the map $\psttilt_{\np}\Lambda\ni N\mapsto N\oplus \phi(e_{N})\Lambda \in\ttilt\Lambda$ is well-defined.
\begin{proposition}\label{2-12}
If $N \in \psttilt_{\np}\Lambda$, then we have $N\oplus \phi(e_{N})\Lambda\in \ttilt\Lambda$.
\end{proposition}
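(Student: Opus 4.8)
The plan is to verify the two defining conditions of a $\tau$-tilting module directly for $M':=N\oplus \phi(e_{N})\Lambda$: that it is $\tau$-rigid and that $|M'|=|\Lambda|=n$. Since $N$ is proper I would write $e_{N}=\sum_{i\in I}e_{i}$ for a nonempty $I\subset[1,n]$, set $\bLambda:=\Lambda/\langle e_{N}\rangle$, and observe from the definition of $\phi$ that $\phi(e_{N})\Lambda=\bigoplus_{i\in I}P_{(i-1)_{n}}$ (with the convention $e_{0}=e_{n}$). By hypothesis $N$ is a sincere $\tau$-tilting $\bLambda$-module; in particular no simple $S_{i}$ with $i\in I$ occurs as a composition factor of $N$, a fact I will use repeatedly.

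For the count, I would note that $N\in\mod_{\np}\Lambda$ has no projective summand while $\phi(e_{N})\Lambda$ is projective, so the two have no common indecomposable summand and $|M'|=|N|+|\phi(e_{N})\Lambda|$. Since $N$ is $\tau$-tilting over $\bLambda$ we get $|N|=|\bLambda|=n-|I|$, and the projectives $P_{(i-1)_{n}}$ for $i\in I$ are pairwise nonisomorphic because $i\mapsto(i-1)_{n}$ is injective on $[1,n]$; hence $|\phi(e_{N})\Lambda|=|I|$ and $|M'|=n$.

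For $\tau$-rigidity, since $\phi(e_{N})\Lambda$ is projective we have $\tau_{\Lambda}M'=\tau_{\Lambda}N$, so it suffices to check the two vanishings $\Hom_{\Lambda}(N,\tau_{\Lambda}N)=0$ and $\Hom_{\Lambda}(\phi(e_{N})\Lambda,\tau_{\Lambda}N)=0$. The first is immediate from Lemma \ref{air2.1}(2) applied to the idempotent ideal $\langle e_{N}\rangle$: indeed $\Hom_{\bLambda}(N,\tau_{\bLambda}N)=0$ because $N$ is $\tau$-rigid over $\bLambda$, and the converse direction of that lemma lifts the vanishing to $\Lambda$. So the whole difficulty is concentrated in the second vanishing, which I expect to be the main obstacle.

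To handle it I would fix $i\in I$ and reduce, via Lemma \ref{1-1}(1)(a), the condition $\Hom_{\Lambda}(P_{(i-1)_{n}},\tau_{\Lambda}N)=0$ to the statement that $S_{(i-1)_{n}}=\top P_{(i-1)_{n}}$ is not a composition factor of $\tau_{\Lambda}N$. Decomposing $N=\bigoplus_{k}N_{k}$ into (nonprojective) indecomposables and writing $N_{k}=P_{j}/\rad^{t}P_{j}$, Proposition \ref{ass3.5} tells me that the composition factors of $N_{k}$ are $S_{(j)_{n}},\ldots,S_{(j-t+1)_{n}}$ while those of $\tau_{\Lambda}N_{k}$ are $S_{(j-1)_{n}},\ldots,S_{(j-t)_{n}}$; that is, the composition factors of $\tau_{\Lambda}N_{k}$ are exactly those of $N_{k}$ relabelled by the shift $S_{m}\mapsto S_{(m-1)_{n}}$. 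Since that shift is a bijection of $[1,n]$, the simple $S_{(i-1)_{n}}$ appears in $\tau_{\Lambda}N_{k}$ if and only if $S_{i}$ appears in $N_{k}$, and the latter is excluded because $i\in I$ and $N_{k}$ is a $\bLambda$-module. Hence $S_{(i-1)_{n}}$ occurs in no $\tau_{\Lambda}N_{k}$, so not in $\tau_{\Lambda}N$, giving the vanishing and completing the proof. The only delicate bookkeeping here is tracking the cyclic labelling $(\cdot)_{n}$ together with the fact that $\tau$ lowers every label by one, so that the forbidden composition factors $S_{i}$ ($i\in I$) of $N$ match precisely the forbidden tops $S_{(i-1)_{n}}$ of the projectives we adjoin; everything else is formal.
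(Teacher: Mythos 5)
Your proof is correct and follows essentially the same route as the paper's: the summand count $|N\oplus\phi(e_{N})\Lambda|=|N|+|e_{N}\Lambda|=|\Lambda|$, plus the observation via Proposition \ref{ass3.5} that $\tau$ shifts composition factors $S_{m}\mapsto S_{(m-1)_{n}}$, so $\tau N$ avoids $\top(\phi(e_{N})\Lambda)$ exactly because $N$ avoids $\top(e_{N}\Lambda)$. The only difference is presentational: you make explicit the appeal to Lemma \ref{air2.1}(2) for $\Hom_{\Lambda}(N,\tau_{\Lambda}N)=0$, which the paper leaves implicit in its ``Hence $N\oplus\phi(e_{N})\Lambda$ is $\tau$-rigid.''
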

\begin{proof}
Since $N$ is annihilated by $e_{N}$, $N$ does not have $\top e_{N}\Lambda$ as a composition factor.
By Proposition \ref{ass3.5}, $\tau N$ does not have any direct summand of $\top (\phi(e_{N})\Lambda)$ as a composition factor.
Hence $N\oplus \phi(e_{N})\Lambda$ is a $\tau$-rigid $\Lambda$-module.
Moreover, by $N\in \mod_{\np}\Lambda$, we have 
\begin{align}
|N\oplus \phi(e_{N})\Lambda|=|N|+|\phi(e_{N})\Lambda|=|N|+|e_{N}\Lambda|=|\Lambda|. \notag
\end{align}
Thus $N\oplus \phi(e_{N})\Lambda$ is a $\tau$-tilting $\Lambda$-module.
\end{proof}
Conversely, for a given $\tau$-tilting $\Lambda$-module, we give a construction of a certain proper support $\tau$-tilting $\Lambda$-module.
\begin{proposition}\label{2-13}
The following hold.
\begin{itemize}
\item[(1)] Each $\tau$-tilting $\Lambda$-module has a nonzero projective $\Lambda$-module as a direct summand.
\item[(2)] If $M\in \ttilt\Lambda$, then we have $M_{\np}\in \psttilt_{\np}\Lambda$ and $M= M_{\np}\oplus \phi(e_{M_{\np}})\Lambda$.
\end{itemize}
\end{proposition}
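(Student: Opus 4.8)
The plan is to prove (1) and (2) in this order, treating (1)---the existence of a nonzero projective summand---as the real content, and then using it to pin down \emph{which} projectives occur in (2). Throughout I use Proposition \ref{ass3.5} to write each indecomposable summand as a quotient $P_{c}/\rad^{t}P_{c}$ with prescribed composition factors, and Lemma \ref{2-1} to detect nonzero homomorphisms.

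For (1) I would argue by contradiction. Suppose $M=\bigoplus_{i=1}^{n}M_{i}\in\ttilt\Lambda$ has no projective summand. Each $M_{i}$ is $\tau$-rigid and nonprojective, so by Proposition \ref{2-2} one may write $M_{i}=P_{c_{i}}/\rad^{l_{i}}P_{c_{i}}$ with $1\le l_{i}\le n-1$; its composition factors are $S_{(c_{i})_{n}},\ldots,S_{(c_{i}-l_{i}+1)_{n}}$, and by Proposition \ref{ass3.5} the factors of $\tau M_{i}$ are these shifted down by one. Since $M$ is $\tau$-tilting it is sincere, so the simple $S_{(c_{i}+1)_{n}}$ lying just beyond the top of $M_{i}$ is a composition factor of some summand $M_{i'}$. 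Reading the $\tau$-rigidity condition $\Hom_{\Lambda}(M_{i},\tau M_{i'})=0$ through Lemma \ref{2-1}, I expect it to force $l_{i'}\ge l_{i}+1$. As sincerity always supplies a next summand, the construction iterates indefinitely, producing an infinite strictly increasing sequence of Loewy lengths, contradicting $l_{i}\le n-1$. The delicate point---and the main obstacle---is the cyclic bookkeeping in the index $(\,\cdot\,)_{n}$: to turn the two interval conditions of Lemma \ref{2-1} into a clean comparison I would lift the modules to the universal cover $\ZZ$ of $\ZZ/n\ZZ$, choosing at each step a lift of $M_{i'}$ whose support contains the chosen lift of $(c_{i}+1)_{n}$. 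Within a window of $n-1$ consecutive integers both conditions become honest interval comparisons, and they yield $l_{i'}\ge l_{i}+1$ with no wrap-around ambiguity.

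For (2), write $M=M_{\np}\oplus M_{\pr}$ and let $e\in\E_{\Lambda}$ be the idempotent cutting out the simples that do not occur in $M_{\np}$, so that $\s(M_{\np})=n-|e\Lambda|$. The key local computation is that a projective $P_{j}$ can be adjoined to $M_{\np}$ keeping $\tau$-rigidity if and only if $\Hom_{\Lambda}(P_{j},\tau M_{\np})=0$ (as $\tau P_{j}=0$); by Lemma \ref{1-1}(1) together with the shift description of $\tau$ from Proposition \ref{ass3.5}, this holds exactly when $S_{(j+1)_{n}}$ is not a composition factor of $M_{\np}$, that is, precisely when $P_{j}$ is a summand of $\phi(e)\Lambda$. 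Since every indecomposable summand of $M_{\pr}$ is such an adjoinable projective, $M_{\pr}$ is isomorphic to a direct summand of $\phi(e)\Lambda$, whence $|M_{\pr}|\le|\phi(e)\Lambda|=|e\Lambda|$. On the other hand $M_{\np}$ has no composition factor $S_{i}$ with $e_{i}\le e$, so it is a $\Lambda/\langle e\rangle$-module; being $\tau$-rigid over $\Lambda$, it is $\tau$-rigid over $\Lambda/\langle e\rangle$ by Lemma \ref{air2.1}(1), and since that algebra has $\s(M_{\np})$ simple modules the general bound for $\tau$-rigid modules gives $|M_{\np}|\le\s(M_{\np})$.

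Now the two one-sided bounds combine with $|M_{\np}|+|M_{\pr}|=|\Lambda|=n=\s(M_{\np})+|e\Lambda|$ to force equality in each, so that $M_{\pr}=\phi(e)\Lambda$ and $|M_{\np}|=\s(M_{\np})$. By Proposition \ref{1-3} the latter identity says $M_{\np}$ is support $\tau$-tilting, with associated idempotent $e_{M_{\np}}=e$; and part (1), which guarantees $M_{\pr}\ne0$ and hence $e\ne0$, makes $M_{\np}$ \emph{proper}. Thus $M_{\np}\in\psttilt_{\np}\Lambda$ and $M=M_{\np}\oplus\phi(e_{M_{\np}})\Lambda$, as claimed. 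I expect essentially all the work to sit in part (1); granting it, part (2) is a cardinality squeeze assembled from the two easy bounds above.
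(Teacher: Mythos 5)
Your proof is correct, but both halves take routes genuinely different from the paper's. For (1), the paper argues by mutation theory rather than combinatorics: it takes a summand $L$ of $M=L\oplus N$ of maximal Loewy length, uses Lemma \ref{2-1}(3)$\Rightarrow$(1) (applicable because $\ell(L)\ge\ell(\tau N')$ for every indecomposable summand $N'$ of $N$) to get $\Hom_{\Lambda}(P_{L},\tau N)=0$ for the projective cover $P_{L}$ of $L$, and then observes that $N$, $L\oplus N$ and $P_{L}\oplus N$ would be three distinct completions of the almost complete support $\tau$-tilting pair $(N,0)$, contradicting Proposition \ref{air2.18}. Your sincerity-driven induction does go through: the implication you flag as delicate is true, since after lifting to $\ZZ$ so that a lift of $c_{i}+1$ lies in the support interval $[b,t]$ of $M_{i'}$, the top condition of Lemma \ref{2-1} for $\Hom_{\Lambda}(M_{i},\tau M_{i'})$ holds automatically, so the socle condition must fail, giving $b-1\le c_{i}-l_{i}$ and hence $l_{i'}=t-b+1\ge (c_{i}+1)-b+1\ge l_{i}+1$; the window $[c_{i}-n+2,\,c_{i}]$ of length $n-1$ indeed makes residue membership agree with integer membership, as you say. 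So your version is more elementary -- it avoids Proposition \ref{air2.18} entirely -- at the price of the interval bookkeeping the paper sidesteps. For (2) the directions are reversed: the paper works forwards from $M_{\pr}=e\Lambda$, noting that $\tau$-rigidity prevents $M_{\np}$ from having $\top(\phi^{-1}(e)\Lambda)$ as a composition factor, so $M_{\np}$ is a $(\Lambda/\langle\phi^{-1}(e)\rangle)$-module, and the single count $|M_{\np}|=|\Lambda|-|\phi^{-1}(e)\Lambda|$ makes it $\tau$-tilting there; you instead define $e$ from the support of $M_{\np}$ and squeeze via the two bounds $|M_{\pr}|\le|\phi(e)\Lambda|$ and $|M_{\np}|\le\s(M_{\np})$ before invoking Proposition \ref{1-3}. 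Both are sound, and your logical order is cleaner in one respect: the paper proves its (2)-type claim first and uses it inside the proof of (1) (to see that $N$ is support $\tau$-tilting), whereas your (1) is self-contained, so your appeal to (1) for properness in (2) involves no circularity.
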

\begin{proof}
(i) We claim that, if $M$ has a nonzero projective direct summand, then we have $M_{\np}\in \psttilt_{\np}\Lambda$ and $M= M_{\np}\oplus \phi(e_{M_{\np}})\Lambda$.
Let $M=M_{\np}\oplus M_{\pr}\in \ttilt\Lambda$.
We may assume that $M_{\pr}=e\Lambda$, where $e\in\Lambda$ is a nonzero idempotent.
Since $M$ is $\tau$-rigid, $\tau M_{\np}$ does not have $\top e\Lambda$ as a composition factor by Lemma \ref{1-1}(1). 
Thus $M_{\np}$ does not have $\top (\phi^{-1}(e)\Lambda)$ as a composition factor by Proposition \ref{ass3.5}.
Hence $M_{\np}$ is a $\tau$-rigid $(\Lambda/\langle \phi^{-1}(e)\rangle)$-module.
Moreover, we have 
\begin{align}
|M_{\np}|=|M|-|e\Lambda|=|\Lambda|-|\phi^{-1}(e)\Lambda|. \notag
\end{align}
Thus $M_{\np}$ is a $\tau$-tilting $(\Lambda/\langle \phi^{-1}(e)\rangle)$-module.
Hence, $M_{\np}\in\psttilt_{\np}\Lambda$ holds, and moreover we have $\phi^{-1}(e)=e_{M_{\np}}$.
Thus $M_{\pr}=e\Lambda=\phi(e_{M_{\np}})\Lambda$.

(ii) We show that each $\tau$-tilting $\Lambda$-module has a nonzero projective $\Lambda$-module as a direct summand.
Assume that $M\in\ttilt\Lambda$ does not have a nonzero projective $\Lambda$-module as a direct summand.
Let $L$ be an indecomposable direct summand of $M$ with the maximal Loewy length.
We write $M=L\oplus N$. 
Then $(N,0)$ is a basic almost complete support $\tau$-tilting pair for $\Lambda$.
Since $M$ is $\tau$-rigid, $\Hom_{\Lambda}(L,\tau N)$ vanishes.
Since the Loewy length of $L$ is at least the Loewy length of any indecomposable direct summand of $N$ by the choice of $L$,
we have $\Hom_{\Lambda}(P_{L},\tau N)=0$ by Lemma \ref{2-1}(3)$\Rightarrow$(1), where $P_{L}$ is a projective cover of $L$.
Thus $P_{L}\oplus N$ is a $\tau$-rigid $\Lambda$-module.
Moreover, since $P_{L}$ is indecomposable, we have 
\begin{align}
|P_{L}\oplus N|=|P_{L}|+|N|=|\Lambda|.\notag
\end{align}
Hence, $P_{L}\oplus N$ is a $\tau$-tilting $\Lambda$-module.
Moreover, $N$ is a support $\tau$-tilting $\Lambda$-module by (i).
This means that basic almost complete support $\tau$-tilting pair $(N,0)$ has
three basic support $\tau$-tilting $\Lambda$-modules $N, L\oplus N$ and $P_{L}\oplus N$.
However, this contradicts Proposition \ref{air2.18}.
Hence the statement (1) follows.

(iii) The statement (2) follows from (i) and (ii).
\end{proof}

Now we are ready to prove Theorem \ref{2-11}.
\begin{proof}[Proof of Theorem \ref{2-11}]
(1) It follows from Proposition \ref{2-12} and \ref{2-13}.

(2) This is clear because $\ell(P_{i})\ge n$ implies that $P_{i}$ is sincere.
\end{proof}
As an immediate consequence of Theorem \ref{2-11}, we have the following statement, 
where $\ttilt_{\np}(\Lambda/\langle e\rangle):=\ttilt(\Lambda/\langle e\rangle)\cap \mod_{\np}\Lambda$.
\begin{corollary}
Let $\Lambda$ be a Nakayama algebra.
\begin{itemize}
\item[(1)] We have 
\begin{align}
\displaystyle\sttilt\Lambda 
=\coprod_{e\in \E_{\Lambda}\setminus \{ 0\}}\left(
\ttilt(\Lambda/\langle e\rangle)\coprod \{ M\oplus \phi(e)\Lambda\ |\ M\in \ttilt_{\np}(\Lambda/\langle e \rangle)\right). \notag
\end{align}
\item[(2)] If $\ell(P_{i})\ge n$ for all $i\in  [1,n]$, we have
\begin{align}
\displaystyle\sttilt\Lambda 
=\coprod_{e\in \E_{\Lambda}\setminus \{ 0\}}
\{ M,\ M\oplus \phi(e)\Lambda\ |\ M\in \tilt(\Lambda/\langle e \rangle)\}.\notag
\end{align}
\end{itemize}
\end{corollary}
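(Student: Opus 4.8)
The plan is to derive both formulas purely formally, by feeding the bijections of Theorem~\ref{2-11} into the disjoint-union decompositions of Proposition~\ref{1-2}; almost everything is bookkeeping of disjoint unions. I would begin from $\sttilt\Lambda=\ttilt\Lambda\coprod\psttilt\Lambda$ (Proposition~\ref{1-2}(1)) and rewrite the two summands separately.

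The summand $\psttilt\Lambda$ is already expanded by Proposition~\ref{1-2}(2) as $\coprod_{e\in\E_\Lambda\setminus\{0\}}\ttilt(\Lambda/\langle e\rangle)$, which supplies the first half of each $e$-term in (1). For the summand $\ttilt\Lambda$ I would transport it across the bijection of Theorem~\ref{2-11}(1). Intersecting Proposition~\ref{1-2}(2) with $\mod_{\np}\Lambda$ and distributing over the disjoint union gives $\psttilt_{\np}\Lambda=\coprod_{e\in\E_\Lambda\setminus\{0\}}\ttilt_{\np}(\Lambda/\langle e\rangle)$; here one uses that the idempotent $e_N$ of Definition~\ref{air0.1} attached to $N$ is precisely the index $e$ of the component $\ttilt(\Lambda/\langle e\rangle)$ containing $N$, since $N$ is sincere as a $(\Lambda/\langle e\rangle)$-module. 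Applying the inverse bijection $N\mapsto N\oplus\phi(e_N)\Lambda$ of Theorem~\ref{2-11}(1) to each component then yields $\ttilt\Lambda=\coprod_{e}\{ M\oplus\phi(e)\Lambda\ |\ M\in\ttilt_{\np}(\Lambda/\langle e\rangle)\}$. Recombining this with the expansion of $\psttilt\Lambda$ and grouping by $e$ produces exactly formula (1).

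For (2) the extra ingredient is the hypothesis $\ell(P_i)\ge n$, which I would exploit twice. First, Theorem~\ref{2-11}(2) gives $\psttilt_{\np}\Lambda=\psttilt\Lambda$; comparing the two disjoint-union descriptions term by term over the common index set forces $\ttilt_{\np}(\Lambda/\langle e\rangle)=\ttilt(\Lambda/\langle e\rangle)$ for every $e\neq 0$. Second, I claim $\Lambda/\langle e\rangle$ is hereditary for $e\neq 0$: under the hypothesis the quiver of $\Lambda$ is the cyclic $\vec{\Delta}_n$ (Proposition~\ref{ass3.2}), and passing to $\Lambda/\langle e\rangle$ deletes at least one vertex, so the quiver becomes a disjoint union of linear $\vec{A}$-type quivers whose longest path has length at most $n-2$, whereas every relation of $\Lambda$ concerns a path of length $\ell(P_i)\ge n$ (Proposition~\ref{ass3.5}), which does not occur in the quotient and hence becomes vacuous. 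Thus $\Lambda/\langle e\rangle$ is a path algebra of type $\vec{A}$, so $\ttilt(\Lambda/\langle e\rangle)=\tilt(\Lambda/\langle e\rangle)$ by Proposition~\ref{1-2}(3). Substituting both identities into (1) turns each $e$-term into $\tilt(\Lambda/\langle e\rangle)\coprod\{ M\oplus\phi(e)\Lambda\ |\ M\in\tilt(\Lambda/\langle e\rangle)\}$, which I would merge into $\{ M,\ M\oplus\phi(e)\Lambda\ |\ M\in\tilt(\Lambda/\langle e\rangle)\}$, giving (2).

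Since the rest is formal, the one step demanding genuine care is the hereditary claim in (2). The delicate point is matching path lengths against relation lengths: deleting any vertex of the $n$-cycle kills every path of length $\ge n-1$ (such a path would have to visit all $n$ vertices), while the defining relations all have length $\ge n$, so none of them constrains the surviving linear quiver. A secondary point to verify carefully is the indexing compatibility $e_N=e$ used when expanding $\psttilt_{\np}\Lambda$, which is what guarantees that the unions in both (1) and (2) are honestly indexed by $\E_\Lambda\setminus\{0\}$ and that each projective part $\phi(e)\Lambda$ is attached with the correct idempotent.
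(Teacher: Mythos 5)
Your proof is correct and follows essentially the same route as the paper's: you obtain (1) by combining the bijection of Theorem \ref{2-11}(1) (transported componentwise through $\psttilt_{\np}\Lambda=\coprod_{e}\ttilt_{\np}(\Lambda/\langle e\rangle)$, with the indexing compatibility $e_{N}=e$) with Proposition \ref{1-2}(1)--(2), and (2) by dropping ``np'' via Theorem \ref{2-11}(2) and replacing $\ttilt$ by $\tilt$ via Proposition \ref{1-2}(3). The only divergence is that you make explicit why $\Lambda/\langle e\rangle$ is hereditary when $\ell(P_{i})\ge n$ --- the quiver must be $\vec{\Delta}_{n}$, every relation has length $\ge n$ and so passes through a deleted vertex --- a point the paper leaves implicit when citing Proposition \ref{1-2}(3), and your path-length argument for it is sound.
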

\begin{proof}
By Theorem \ref{2-11}(1), we have 
\begin{align}\label{eq1}
\ttilt\Lambda
=\coprod_{e\in\E_{\Lambda}\setminus \{ 0\}}\{ M\oplus \phi(e)\Lambda \mid M\in \ttilt_{\np}(\Lambda/\langle e \rangle) \}. 
\end{align}

(1) The assertion follows from (\ref{eq1}), Proposition \ref{1-2}(1) and (2).

(2) By Theorem \ref{2-11}(2), we can omit ``np'' in (\ref{eq1}), and hence we have 
\begin{align}\label{eq2}
\displaystyle\sttilt\Lambda 
=\coprod_{e\in \E_{\Lambda}\setminus \{ 0\}}
\{ M,\ M\oplus \phi(e)\Lambda\ |\ M\in \ttilt(\Lambda/\langle e \rangle)\}.
\end{align}
By Proposition \ref{1-2}(3), we can replace $\ttilt(\Lambda/\langle e\rangle)$ in (\ref{eq2}) by $\tilt(\Lambda/\langle e \rangle)$.
Thus the assertion follows.
\end{proof}
Finally, we give an example.
\begin{example}
Let $\Lambda:=\Lambda_{3}^{3}=K\vec{\Delta}_{3}/J^{3}$ (see Proposition \ref{ass3.2}).
To obtain $\tau$-tilting $\Lambda$-modules, let us calculate $\Lambda/\langle e \rangle$ for any idempotent $e\in \E_{\Lambda}$.
We have $\Lambda/\langle e_{i}\rangle \simeq K\vec{A}_{2}$, 
$\Lambda/\langle e_{i}+e_{i+1}\rangle \simeq K\vec{A}_{1}$, and 
$\Lambda/\langle e_{1}+e_{2}+e_{3}\rangle = \{ 0\}$ for $i\in [1,3]$.
Thus we have
\begin{align}
&\ttilt(\Lambda/\langle e_{i}\rangle)
=\tilt(K\vec{A}_{2})=\{ \begin{smallmatrix}
i+2\\
i+1
\end{smallmatrix}\oplus\begin{smallmatrix}
i+2\\
\end{smallmatrix}, 
\begin{smallmatrix}
i+2\\
i+1
\end{smallmatrix}\oplus\begin{smallmatrix}
i+1\\
\end{smallmatrix}\}\notag\\
&\ttilt(\Lambda/\langle e_{i}+e_{i+1}\rangle)
=\tilt(K\vec{A}_{1})=\{
\begin{smallmatrix}
i+2\\
\end{smallmatrix}
\}\notag\\
&\ttilt(\Lambda/\langle e_{1}+e_{2}+e_{3}\rangle)=\{ 0 \}. \notag
\end{align}
Applying Theorem \ref{2-11}, we have
\begin{align}
\sttilt\Lambda
&=\{ 
\begin{smallmatrix}
\{0\}
\end{smallmatrix},
\begin{smallmatrix}
1
\end{smallmatrix},
\begin{smallmatrix}
2
\end{smallmatrix},
\begin{smallmatrix}
3
\end{smallmatrix}, 
\begin{smallmatrix}
1\\
3
\end{smallmatrix}\oplus\begin{smallmatrix}
1\\
\end{smallmatrix}, 
\begin{smallmatrix}
1\\
3
\end{smallmatrix}\oplus\begin{smallmatrix}
3\\
\end{smallmatrix},
\begin{smallmatrix}
2\\
1
\end{smallmatrix}
\oplus
\begin{smallmatrix}
2\\
\end{smallmatrix}, 
\begin{smallmatrix}
2\\
1
\end{smallmatrix}\oplus\begin{smallmatrix}
1\\
\end{smallmatrix},
\begin{smallmatrix}
3\\
2
\end{smallmatrix}
\oplus
\begin{smallmatrix}
3\\
\end{smallmatrix}, 
\begin{smallmatrix}
3\\
2
\end{smallmatrix}\oplus\begin{smallmatrix}
2\\
\end{smallmatrix}
\}\notag\\
&\coprod\{
\begin{smallmatrix}
1\\
3\\
2
\end{smallmatrix}\oplus\begin{smallmatrix}
2\\
1\\
3
\end{smallmatrix}\oplus\begin{smallmatrix}
3\\
2\\
1
\end{smallmatrix},
\begin{smallmatrix}
1
\end{smallmatrix}\oplus\begin{smallmatrix}
1\\
3\\
2
\end{smallmatrix}\oplus\begin{smallmatrix}
2\\
1\\
3
\end{smallmatrix},
\begin{smallmatrix}
2
\end{smallmatrix}\oplus\begin{smallmatrix}
2\\
1\\
3
\end{smallmatrix}\oplus\begin{smallmatrix}
3\\
2\\
1
\end{smallmatrix},
\begin{smallmatrix}
3
\end{smallmatrix}\oplus\begin{smallmatrix}
3\\
2\\
1
\end{smallmatrix}\oplus\begin{smallmatrix}
1\\
3\\
2
\end{smallmatrix},\notag\\
&\ \ \ \ \ \ 
\begin{smallmatrix}
1\\
3
\end{smallmatrix}\oplus\begin{smallmatrix}
1
\end{smallmatrix}\oplus\begin{smallmatrix}
1\\
3\\
2
\end{smallmatrix},
\begin{smallmatrix}
1\\
3
\end{smallmatrix}\oplus\begin{smallmatrix}
3
\end{smallmatrix}\oplus\begin{smallmatrix}
1\\
3\\
2
\end{smallmatrix},
\begin{smallmatrix}
2\\
1
\end{smallmatrix}\oplus\begin{smallmatrix}
2
\end{smallmatrix}\oplus\begin{smallmatrix}
2\\
1\\
3
\end{smallmatrix},
\begin{smallmatrix}
2\\
1
\end{smallmatrix}\oplus\begin{smallmatrix}
1
\end{smallmatrix}\oplus\begin{smallmatrix}
2\\
1\\
3
\end{smallmatrix},
\begin{smallmatrix}
3\\
2
\end{smallmatrix}\oplus\begin{smallmatrix}
3
\end{smallmatrix}\oplus\begin{smallmatrix}
3\\
2\\
1
\end{smallmatrix},
\begin{smallmatrix}
3\\
2
\end{smallmatrix}\oplus\begin{smallmatrix}
2
\end{smallmatrix}\oplus\begin{smallmatrix}
3\\
2\\
1
\end{smallmatrix}
\}.\notag
\end{align}
Moreover, the Hasse quiver $\hasse(\Lambda)$ is the following:
\begin{align}
\xymatrix @R=2.5mm @C=0.5mm{
 & & & & &\textnormal{$\tiny\begin{smallmatrix}
1\\
3\\
2
\end{smallmatrix}\begin{smallmatrix}
2\\
1\\
3
\end{smallmatrix}\begin{smallmatrix}
3\\
2\\
1
\end{smallmatrix}$}\ar[dllll]\ar[d]\ar[drrrr]& & & & & \\
 &\textnormal{\tiny$\begin{smallmatrix}
2
\end{smallmatrix}\begin{smallmatrix}
2\\
1\\
3
\end{smallmatrix}\begin{smallmatrix}
3\\
2\\
1
\end{smallmatrix}$}\ar[dl]\ar[dr]& & & &\textnormal{\tiny$\begin{smallmatrix}
1\\
3\\
2
\end{smallmatrix}\begin{smallmatrix}
2\\
1\\
3
\end{smallmatrix}\begin{smallmatrix}
1
\end{smallmatrix}$}\ar[dl]\ar[dr]& & & &\textnormal{\tiny$\begin{smallmatrix}
1\\
3\\
2
\end{smallmatrix}\begin{smallmatrix}
3
\end{smallmatrix}\begin{smallmatrix}
3\\
2\\
1
\end{smallmatrix}$}\ar[dl]\ar[dr]& \\
\textnormal{\tiny$\begin{smallmatrix}
2
\end{smallmatrix}\begin{smallmatrix}
3\\
2
\end{smallmatrix}\begin{smallmatrix}
3\\
2\\
1
\end{smallmatrix}$}\ar[dd]\ar@/^6mm/[rrrrrrrrrr]& &\textnormal{\tiny$\begin{smallmatrix}
2
\end{smallmatrix}\begin{smallmatrix}
2\\
1\\
3
\end{smallmatrix}\begin{smallmatrix}
2\\
1
\end{smallmatrix}$}\ar[dd]& &\textnormal{\tiny$\begin{smallmatrix}
2\\
1
\end{smallmatrix}\begin{smallmatrix}
2\\
1\\
3
\end{smallmatrix}\begin{smallmatrix}
1
\end{smallmatrix}$}\ar[dd]\ar[ll]& &\textnormal{\tiny$\begin{smallmatrix}
1\\
3\\
2
\end{smallmatrix}\begin{smallmatrix}
1\\
3
\end{smallmatrix}\begin{smallmatrix}
1
\end{smallmatrix}$}\ar[dd]& &\textnormal{\tiny$\begin{smallmatrix}
1\\
3\\
2
\end{smallmatrix}\begin{smallmatrix}
3
\end{smallmatrix}\begin{smallmatrix}
1\\
3
\end{smallmatrix}$}\ar[dd]\ar[ll]& &\textnormal{\tiny$\begin{smallmatrix}
3\\
2
\end{smallmatrix}\begin{smallmatrix}
3
\end{smallmatrix}\begin{smallmatrix}
3\\
2\\
1
\end{smallmatrix}$}\ar[dd] \\
 & & & & & & & & & & \\
\textnormal{\tiny$\begin{smallmatrix}
2
\end{smallmatrix}\begin{smallmatrix}
3\\
2
\end{smallmatrix}$}\ar[dr]\ar@/^6mm/[rrrrrrrrrr]& &\textnormal{\tiny$\begin{smallmatrix}
2
\end{smallmatrix}\begin{smallmatrix}
2\\
1
\end{smallmatrix}$}\ar[dl]& &\textnormal{\tiny$\begin{smallmatrix}
2\\
1
\end{smallmatrix}\begin{smallmatrix}
1
\end{smallmatrix}$}\ar[ll]\ar[dr]& &\textnormal{\tiny$\begin{smallmatrix}
1\\
3
\end{smallmatrix}\begin{smallmatrix}
1
\end{smallmatrix}$}\ar[dl]& &\textnormal{\tiny$\begin{smallmatrix}
3
\end{smallmatrix}\begin{smallmatrix}
1\\
3
\end{smallmatrix}$}\ar[ll]\ar[dr]& &\textnormal{\tiny$\begin{smallmatrix}
3\\
2
\end{smallmatrix}\begin{smallmatrix}
3
\end{smallmatrix}$}\ar[dl] \\
 &\textnormal{\tiny$\begin{smallmatrix}
2
\end{smallmatrix}$}\ar[drrrr]& & & &\textnormal{\tiny$\begin{smallmatrix}
1
\end{smallmatrix}$}\ar[d]& & & &\textnormal{\tiny$\begin{smallmatrix}
3
\end{smallmatrix}$}\ar[dllll]& \\
 & & & & &\textnormal{\tiny$\begin{smallmatrix}
\{ 0\}
\end{smallmatrix}$}& & & & & \\
}\notag
\end{align}
\end{example}
\subsection{$\tau$-tilting modules and triangulations}
In this subsection, we give a connection between $\tau$-tilting $\Lambda$-modules and triangulations of an $n$-regular polygon with a puncture.
Recall the definition and basic results of triangulations. 
We refer to \cite{FST} about more details of triangulations of surfaces. 
Let $\G_{n}$ be an $n$-regular polygon with a puncture.
We label the points of $\G_{n}$ counterclockwise around the boundary by $1,2,\ldots,n$.
\begin{definition}
Let $i,j\in [1,n]$.
\begin{itemize}
\item[(1)] An {\it inner arc} $\langle i,j \rangle$ in $\G_{n}$ is a path from the point $i$ to the point $j$
homotopic to the boundary path $(i)_{n},(i+1)_{n},\ldots,(i+t)_{n}=j$, where $t\in [ 2,n ]$.
Then we call $i$ an {\it initial} point, $j$ a {\it terminal} point, and $\ell(\langle i,j\rangle):=t$ the {\it length} of the inner arc.
By definition, $2 \le \ell(\langle i,j\rangle) \le n$ holds for any inner arc in $\G_{n}$.
\item[(2)] A {\it projective arc} $\langle \bullet,j\rangle$ in $\G_{n}$ is a path from the puncture to the point $j$.
Then we call $j$ a {\it terminal} point.
\item[(3)] An {\it admissible arc} is an inner arc or a projective arc.
\end{itemize}
\end{definition}

We denote by $\Arc(n)$ (respectively, $\Arc_{\inn}(n)$, $\Arc_{\pr}(n)$) the set of all admissible (respectively, inner, projective) arcs of $\G_{n}$.
Thus we have  
\begin{align}
\Arc(n)= \Arc_{\inn}(n)\coprod \Arc_{\pr}(n). \notag
\end{align}

Note that, if $i\neq j$, $\langle i,j\rangle$ and $\langle j,i\rangle$ are different arcs as the picture in Figure \ref{fig1} shows.

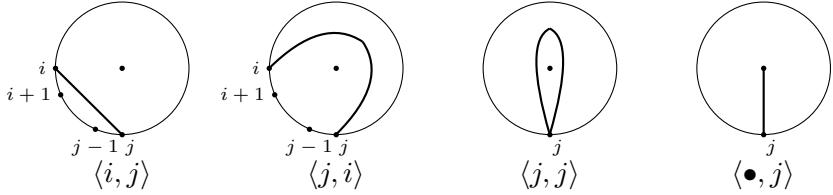
\begin{figure}[htbp]
\begin{picture}(300,65)(0,0)
\put(30,40){\circle{50}}
\put(30,40){\circle*{2}}
\put(30,15){\circle*{2}}
\put(5,40){\circle*{2}}
\put(20,17){\circle*{2}}
\put(7,30){\circle*{2}}
\put(33,10){\makebox(0,0){\tiny$j$}}
\put(20,10){\makebox(0,0){\tiny$j-1$}}
\put(-5,30){\makebox(0,0){\tiny$i+1$}}
\put(0,40){\makebox(0,0){\tiny$i$}}
\put(30,0){\makebox(0,0){$\langle i,j\rangle$}}
\put(110,40){\circle{50}}
\put(110,40){\circle*{2}}
\put(110,15){\circle*{2}}
\put(85,40){\circle*{2}}
\put(100,17){\circle*{2}}
\put(87,30){\circle*{2}}
\put(113,10){\makebox(0,0){\tiny$j$}}
\put(100,10){\makebox(0,0){\tiny$j-1$}}
\put(75,30){\makebox(0,0){\tiny$i+1$}}
\put(80,40){\makebox(0,0){\tiny$i$}}
\put(110,0){\makebox(0,0){$\langle j,i\rangle$}}
\put(190,40){\circle{50}}
\put(190,40){\circle*{2}}
\put(190,15){\circle*{2}}
\put(193,10){\makebox(0,0){\tiny$j$}}
\put(190,0){\makebox(0,0){$\langle j,j\rangle$}}
\put(270,40){\circle{50}}
\put(270,40){\circle*{2}}
\put(270,15){\circle*{2}}
\put(273,10){\makebox(0,0){\tiny$j$}}
\put(270,0){\makebox(0,0){$\langle \bullet,j\rangle$}}
\thicklines
\put(30,15){\line(-1,1){25}}
\qbezier(85,40)(105,60)(120,50)
\qbezier(120,50)(130,35)(110,15)
\qbezier(190,15)(180,50)(190,55)
\qbezier(190,55)(200,50)(190,15)
\put(270,40){\line(0,-1){25}}
\end{picture}
\caption{Admissible arcs in a polygon with a puncture} 
\label{fig1}
\end{figure}
\begin{definition}
\begin{itemize}
\item[(1)] Two admissible arcs in $\G_{n}$ are called {\it compatible} if they do not intersect in $\G_{n}$ 
(except possibly at their initial and terminal points).
\item[(2)] A {\it triangulation} of $\G_{n}$ is a maximal set of distinct pairwise compatible admissible arcs.
We denote by $\T(n)$ the set of triangulations of $\G_{n}$.
\item[(3)] For integers $l_{1},l_{2},\ldots,l_{n}\ge 1$,
we denote by $\T(n;l_{1},l_{2},\ldots,l_{n})$ the subset of $\T(n)$ consisting of triangulations
such that the length of every inner arc with the terminal point $j$ is at most $l_{j}$ for any $j\in [1,n]$.
\end{itemize}
\end{definition}

For example, the set of all projective arcs gives a triangulation of $\G_{n}$.

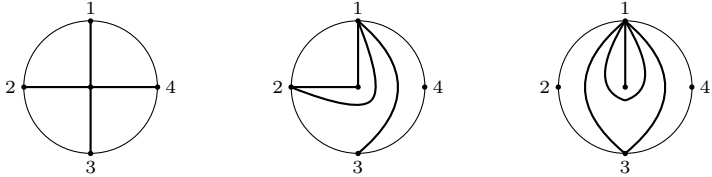
\begin{figure}[hbtp] 
\begin{picture}(300,55)(0,0)
\put(50,25){\circle{50}}
\put(50,25){\circle*{2}}
\put(50,50){\circle*{2}}
\put(50,55){\makebox(0,0){\tiny 1}}
\put(75,25){\circle*{2}}
\put(80,25){\makebox(0,0){\tiny 4}}
\put(50,0){\circle*{2}}
\put(50,-5){\makebox(0,0){\tiny 3}}
\put(25,25){\circle*{2}}
\put(20,25){\makebox(0,0){\tiny 2}}
\put(150,25){\circle{50}}
\put(150,25){\circle*{2}}
\put(150,50){\circle*{2}}
\put(150,55){\makebox(0,0){\tiny 1}}
\put(175,25){\circle*{2}}
\put(180,25){\makebox(0,0){\tiny 4}}
\put(150,0){\circle*{2}}
\put(150,-5){\makebox(0,0){\tiny 3}}
\put(125,25){\circle*{2}}
\put(120,25){\makebox(0,0){\tiny 2}}
\put(250,25){\circle{50}}
\put(250,25){\circle*{2}}
\put(250,50){\circle*{2}}
\put(250,55){\makebox(0,0){\tiny 1}}
\put(275,25){\circle*{2}}
\put(280,25){\makebox(0,0){\tiny 4}}
\put(250,0){\circle*{2}}
\put(250,-5){\makebox(0,0){\tiny 3}}
\put(225,25){\circle*{2}}
\put(220,25){\makebox(0,0){\tiny 2}}
\thicklines
\put(50,25){\line(0,1){25}}
\put(50,25){\line(1,0){25}}
\put(50,25){\line(0,-1){25}}
\put(50,25){\line(-1,0){25}}
\put(150,25){\line(0,1){25}}
\put(150,25){\line(-1,0){25}}
\qbezier(150,50)(160,25)(155,20)
\qbezier(155,20)(150,15)(125,25)
\qbezier(150,50)(180,25)(150,0)
\put(250,25){\line(0,1){25}}
\qbezier(250,50)(265,25)(250,20)
\qbezier(250,20)(235,25)(250,50)
\qbezier(250,50)(280,25)(250,0)
\qbezier(250,50)(220,25)(250,0)
\end{picture}
\caption{Triangulations of $\G_{4}$} 
\end{figure}
For a subset $X$ of $\Arc(n)$, we denote by $X_{i,j}$ the subset of $X$ consisting of all inner arcs contained in the fan 
whose boundary is $\langle \bullet, i\rangle$, $\langle \bullet, j\rangle$ and the edge connecting from $i$ to $j$ in the counterclockwise direction. 
\begin{lemma}\label{2-3}
Let $X\in \T(n)$. For $i,j\in [1,n]$, we have
\begin{align}
|X_{i,j}|\le (j-i)_{n}-1. \notag
\end{align}
Moreover, $|X_{i,j}|=(j-i)_{n}-1$ if and only if $\langle i,j \rangle\in X$.
\end{lemma}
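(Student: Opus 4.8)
The plan is to reduce the inequality to the standard fact that a triangulation of a convex $m$-gon uses exactly $m-3$ diagonals. Put $t:=(j-i)_{n}$ and let $F$ denote the fan carved out by $\langle\bullet,i\rangle$, $\langle\bullet,j\rangle$ and the counterclockwise boundary arc through $(i)_{n},(i+1)_{n},\ldots,(j)_{n}$; this is a disk whose boundary vertices are the puncture together with the $t+1$ points $i,(i+1)_{n},\ldots,j$. The first step is the geometric observation that, since an inner arc is homotopic to a piece of the boundary and therefore does not wind around the puncture, every inner arc of $X$ lying in $F$ joins two of the points $i,\ldots,j$ and is a chord of the convex $(t+1)$-gon $C$ on these vertices. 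Among the chords of $C$, the $t$ edges joining consecutive points are boundary edges of $\G_n$ and hence not inner arcs, whereas all remaining chords are genuine inner arcs; in particular the closing edge of $C$, joining $i$ to $j$, is exactly the inner arc $\langle i,j\rangle$, which is admissible since $\ell(\langle i,j\rangle)=t\le n$. Thus $X_{i,j}$ is a pairwise compatible (noncrossing) family of chords of $C$, none of which is a genuine boundary edge of $\G_n$.

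Next I would argue by cases on whether $\langle i,j\rangle\in X$. If $\langle i,j\rangle\notin X$, then every arc of $X_{i,j}$ is an honest diagonal of $C$ (not a side), so $|X_{i,j}|\le (t+1)-3=t-2<t-1$. If $\langle i,j\rangle\in X$, then this arc cuts off the subdisk of $\G_n$ bounded by $\langle i,j\rangle$ and the boundary path from $i$ to $j$; this subdisk contains no puncture, so every arc of $X$ inside it is an inner arc, and by maximality of the triangulation $X$ the subdisk is completely triangulated. A triangulation of the convex $(t+1)$-gon $C$ contributes $t-2$ diagonals, and together with $\langle i,j\rangle$ itself this gives exactly $|X_{i,j}|=(t-2)+1=t-1$; no further inner arc can lie in the region between $\langle i,j\rangle$ and the puncture, whose only polygon vertices are $i$ and $j$.

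Combining the two cases yields $|X_{i,j}|\le t-1=(j-i)_{n}-1$, with equality if and only if we are in the second case, that is, if and only if $\langle i,j\rangle\in X$; this is precisely the assertion. The only genuine content is the opening geometric claim---that the inner arcs inside $F$ are chords of $C$ and do not encircle the puncture---together with the use of maximality to fully triangulate the cut-off cap; everything else is the elementary diagonal count for convex polygons. I expect the main obstacle to be phrasing the homotopy/non-winding argument cleanly and handling degenerate configurations (for instance small $t$, where $C$ collapses), which are controlled by the length constraint $2\le \ell(\langle i,j\rangle)\le n$ built into the definition of an inner arc.
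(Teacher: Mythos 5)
Your proof is correct and takes essentially the same route as the paper: both regard $X_{i,j}$ as a family of noncrossing chords of a convex $((j-i)_{n}+1)$-gon (the paper identifies $\langle i,j\rangle$ with a side of that polygon), reduce to the standard diagonal count for triangulations of convex polygons, and invoke maximality of $X$ to get $|X_{i,j}|=(j-i)_{n}-1$ exactly when $\langle i,j\rangle\in X$. Your explicit two-case analysis and the non-winding observation merely spell out what the paper's terser argument leaves implicit.
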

\begin{proof}
Assume that $X$ contains an inner arc $\langle i,j\rangle$.
Then we regard $X_{i,j}$ as a triangulation of the $((j-i)_{n}+1)$-regular polygon (with no puncture) 
by identifying the inner arc $\langle i,j\rangle$ with a side of the polygon.
Thus the cardinality of $X_{i,j}$ is equal to $(j-i)_{n}-1$.
Hence the assertion follows from the maximality of triangulation in a polygon.
\end{proof}

Triangulations of $\G_{n}$ have the following properties.
Let $X\in \T(n)$.
Assume that $1\le j_{1}<j_{2}<\cdots<j_{r}\le n$ are all integers satisfying $\langle \bullet, j_{i}\rangle\in X$, 
and let $j_{i}^{+}:=j_{i+1}$ for any $i\in [1,r]$, where $j^{+}_{r}=j_{r+1}:=j_{1}+n$.
Note that, if $j_{i}^{+}-j_{i}>1$, then $X$ must contain the inner arc $\langle j_{i}, j_{i}^{+} \rangle$.
\begin{proposition}\label{2-4}
For each triangulation $X\in\T(n)$, the following hold.
\begin{itemize}
\item[(1)] $X$ consists of exactly $n$ admissible arcs.
\item[(2)] $X$ contains at least one projective arc.
\end{itemize}
\end{proposition}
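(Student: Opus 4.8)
The plan is to prove (2) first and then derive (1) from it, since the count in (1) is naturally organized around the projective arcs whose existence (2) supplies. For (2) I would run a maximality argument. Suppose $X$ contains no projective arc; then every arc of $X$ is an inner arc and so avoids the puncture, whence the puncture lies in the interior of a single face $F$ of the dissection of $\G_{n}$ cut out by $X$. The boundary $\partial F$ is built from arcs of $X$ and boundary edges, all of whose endpoints lie among the marked points $1,\dots,n$; since the puncture is interior to $F$ it is not one of these vertices, yet $\partial F$ is nonempty, so $\partial F$ carries at least one boundary vertex $v$. One can then draw the projective arc $\langle\bullet,v\rangle$ from the puncture to $v$ entirely inside $F$. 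It meets no arc of $X$ except at the shared endpoint $v$, hence is compatible with every arc of $X$, and it is not already in $X$; thus $X\cup\{\langle\bullet,v\rangle\}$ contradicts the maximality of $X$. Therefore $X$ contains a projective arc.

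For (1) I would slice $\G_{n}$ into fans along the projective arcs and count inside each fan using Lemma \ref{2-3}. Write $1\le j_{1}<\dots<j_{r}\le n$ for the terminal points of the projective arcs in $X$, so $r\ge 1$ by (2), and adopt the convention $j_{i}^{+}$ from the paragraph before the proposition. The $r$ projective arcs cut $\G_{n}$ into $r$ fans, the $i$-th bounded by $\langle\bullet,j_{i}\rangle$, $\langle\bullet,j_{i}^{+}\rangle$ and the counterclockwise boundary path from $j_{i}$ to $j_{i}^{+}$. Each inner arc of $X$ lies in exactly one fan: it cannot cross a projective arc by compatibility, and there is no terminal point strictly between $j_{i}$ and $j_{i}^{+}$, so the inner arcs in the $i$-th fan are precisely $X_{j_{i},j_{i}^{+}}$. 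Consequently the inner arcs of $X$ are partitioned as $\coprod_{i=1}^{r}X_{j_{i},j_{i}^{+}}$.

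To finish, note $j_{i}^{+}-j_{i}\in[1,n]$, so $(j_{i}^{+}-j_{i})_{n}=j_{i}^{+}-j_{i}$. When $j_{i}^{+}-j_{i}>1$ the note preceding the proposition gives $\langle j_{i},j_{i}^{+}\rangle\in X$, and Lemma \ref{2-3} then yields $|X_{j_{i},j_{i}^{+}}|=(j_{i}^{+}-j_{i})-1$; when $j_{i}^{+}-j_{i}=1$ the fan is a triangle with no inner arc, so $|X_{j_{i},j_{i}^{+}}|=0=(j_{i}^{+}-j_{i})-1$, and equality holds uniformly. Summing and using $\sum_{i=1}^{r}(j_{i}^{+}-j_{i})=(j_{r}-j_{1})+(j_{1}+n-j_{r})=n$, the number of inner arcs of $X$ is $\sum_{i=1}^{r}\bigl((j_{i}^{+}-j_{i})-1\bigr)=n-r$, and adding back the $r$ projective arcs gives $|X|=(n-r)+r=n$. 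The main obstacle is the topological step in (2): making rigorous that the puncture occupies a single face whose boundary carries a marked vertex $v$ and that $\langle\bullet,v\rangle$ can be realized inside that face without new crossings. Once that is granted, the remainder is bookkeeping with Lemma \ref{2-3} and the fan partition.
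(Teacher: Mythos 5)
Your proof is correct, and part (1) is essentially the paper's argument verbatim: the same partition of the inner arcs of $X$ into the fans $X_{j_{i},j_{i}^{+}}$ cut out by the projective arcs, the same appeal to the equality case of Lemma \ref{2-3} (with the note before the proposition supplying $\langle j_{i},j_{i}^{+}\rangle\in X$ when $j_{i}^{+}-j_{i}>1$), and the same telescoping sum $\sum_{i=1}^{r}\{1+(j_{i}^{+}-j_{i}-1)\}=n$; you are in fact slightly more careful than the paper in treating the degenerate fans with $j_{i}^{+}-j_{i}=1$ explicitly. For part (2) you take a genuinely different route. The paper argues directly: pick a longest inner arc $\langle i,j\rangle$ in $X$ and observe that the projective arcs $\langle\bullet,i\rangle$ and $\langle\bullet,j\rangle$ are compatible with everything in $X$ (any inner arc crossing them would have to be nested strictly over $\langle i,j\rangle$, hence longer), so maximality forces them into $X$; the case where $X$ has no inner arc at all is trivial, since a maximal $X$ is nonempty and then consists of projective arcs. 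Your argument instead proceeds by contradiction, locating the puncture in a single face $F$ of the dissection and inserting a projective arc to a marked vertex of $\partial F$. Both are maximality arguments and both are sound; the trade-off is that the paper's version stays purely combinatorial at the cost of an implicit nesting argument, while yours is topologically transparent but, as you note yourself, rests on the unproved (though standard) facts that $\partial F$ carries a marked vertex --- which needs a word when $\partial F$ is a single loop $\langle j,j\rangle$ around the puncture --- and that the new arc can be realized inside $F$ without crossings. Your version also yields slightly less information: the paper's argument produces projective arcs at \emph{both} endpoints of a longest inner arc, which is the form used implicitly elsewhere in the combinatorics of Section 2.
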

This is well-known, but we give a proof here for the convenience of the reader.
\begin{proof}
Let $X\in \T(n)$. 
For a longest inner arc in $X$ with the initial point $i$ and the terminal point $j$, 
there exist projective arcs $\langle \bullet,i\rangle, \langle \bullet,j\rangle\in X$ by the maximality of triangulations.
Thus each triangulation contains a projective arc.
Moreover, by Lemma \ref{2-3}, the cardinality of $X$ is
\[
|X|=\sum_{i=1}^{r}\{1+(j_{i}^{+}-j_{i}-1)\}=j_{r+1}-j_{1}=n.\qedhere
\]
\end{proof}
We give a correspondence between indecomposable $\tau$-rigid modules and admissible arcs.
By Proposition \ref{2-2}, every indecomposable nonprojective $\tau$-rigid $\Lambda$-module $M$ is uniquely determined by 
its simple top $S_{j}$ and its simple socle $S_{k}$.
Such an indecomposable $\tau$-rigid module is denoted by $M_{k-2,j}$.
Moreover, let $M_{\bullet,j}:=P_{j}$.
We denote by $\trigid\Lambda$ the set of isomorphism classes of indecomposable $\tau$-rigid $\Lambda$-modules.
\begin{proposition}\label{2-5}
Let $\Lambda$ be a Nakayama algebra and $\ell_{j}:=\ell(P_{j})$.
The following hold.
\begin{itemize}
\item[(1)] There is a bijection
\begin{align}
\Arc_{\pr}(n)\coprod \{ \langle i,j \rangle \in \Arc_{\inn}(n) \mid i,j\in [1,n],\ \ell(\langle i,j\rangle)\le \ell(P_{j})\} 
\longrightarrow \trigid\Lambda \notag
\end{align}
given by $\langle i,j\rangle\mapsto M_{i,j}$ for $i\in [1,n]\coprod\{\bullet\}$ and $j\in [1,n]$.
\item[(2)] For any $i,k\in [1,n]\coprod\{\bullet\}$ and $j,l\in [1,n]$, 
$\langle i,j\rangle$ and $\langle k,l\rangle$ are compatible if and only if $M_{i,j}\oplus M_{k,l}$ is $\tau$-rigid. 
\end{itemize}
\end{proposition}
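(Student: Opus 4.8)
The plan is to prove (1) by a direct bookkeeping argument and (2) by computing the relevant $\Hom$-spaces with Lemma \ref{2-1} and matching the resulting combinatorics with the non-crossing condition. For (1), I would first recall that projective modules are automatically $\tau$-rigid, and that by Proposition \ref{2-2} the indecomposable nonprojective $\tau$-rigid modules are exactly the indecomposables $M=P_{j}/\rad^{l}P_{j}$ with $l<n$. By Proposition \ref{ass3.5} such an $M$ is determined by its top $S_{j}$ and its length $l$, and being a proper (hence nonprojective) factor of $P_{j}$ forces $l<\ell(P_{j})$; conversely every $l$ with $1\le l<\min\{n,\ell(P_{j})\}$ produces such a module, namely $M_{i,j}$ with $i=(j-l-1)_{n}$ and socle $S_{(i+2)_{n}}$. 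The map sends the projective arc $\langle\bullet,j\rangle$ to $P_{j}=M_{\bullet,j}$ and an inner arc $\langle i,j\rangle$ of length $t$ to the indecomposable of top $S_{j}$ and length $t-1$, i.e. $M_{i,j}$; here the restriction $t=\ell(\langle i,j\rangle)\le\ell(P_{j})$ corresponds precisely to $l=t-1<\ell(P_{j})$, while the inner-arc bound $t\le n$ corresponds to $l<n$. Injectivity is clear because the terminal and initial points recover the top and socle of $M_{i,j}$, and projective arcs are the only ones hitting projectives; surjectivity is the classification just recalled. This proves (1).

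For (2), since $M_{i,j}$ and $M_{k,l}$ are individually $\tau$-rigid by (1), the module $M_{i,j}\oplus M_{k,l}$ is $\tau$-rigid if and only if the two cross terms $\Hom_{\Lambda}(M_{i,j},\tau M_{k,l})$ and $\Hom_{\Lambda}(M_{k,l},\tau M_{i,j})$ both vanish. I would compute $\tau M_{k,l}$ by Proposition \ref{ass3.5}: it has the same length as $M_{k,l}$ but top $S_{(l-1)_{n}}$, hence socle $S_{(k+1)_{n}}$. If one arc is projective, say $\langle\bullet,j\rangle$, then $\tau P_{j}=0$ kills one cross term, and $\tau$-rigidity reduces to $\Hom_{\Lambda}(P_{j},\tau M_{k,l})=0$; two projective arcs are always compatible and give a $\tau$-rigid sum for free. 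By Lemma \ref{1-1}(1), $\Hom_{\Lambda}(P_{j},\tau M_{k,l})\neq0$ exactly when $S_{j}$ is a composition factor of $\tau M_{k,l}$, that is when $j$ lies strictly between $k$ and $l$ among the boundary vertices, which is precisely when $\langle\bullet,j\rangle$ crosses $\langle k,l\rangle$.

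The core is the case of two inner arcs. Here Lemma \ref{2-1} gives that $\Hom_{\Lambda}(M_{i,j},\tau M_{k,l})\neq0$ iff the top $S_{j}$ of $M_{i,j}$ is a composition factor of $\tau M_{k,l}$ \emph{and} the socle $S_{(k+1)_{n}}$ of $\tau M_{k,l}$ is a composition factor of $M_{i,j}$; unwinding these into cyclic-interval memberships of the form $[\,\cdot\,,\cdot\,]_{n}$ produces one of the two ``directed'' overlaps of the four endpoints $i,j,k,l$. I would then show directly that $\langle i,j\rangle$ and $\langle k,l\rangle$ cross if and only if their endpoints interleave cyclically around the puncture, and that this interleaving is detected exactly by the disjunction ``$\Hom_{\Lambda}(M_{i,j},\tau M_{k,l})\neq0$ or $\Hom_{\Lambda}(M_{k,l},\tau M_{i,j})\neq0$''. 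Negating both sides then yields compatibility iff $\tau$-rigidity.

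The hard part is precisely this last matching, because of the off-by-one and off-by-two shifts: $M_{i,j}$ has socle $S_{(i+2)_{n}}$ whereas the region cut off by $\langle i,j\rangle$ contains the vertices $(i+1)_{n},\ldots,(j-1)_{n}$, and applying $\tau$ shifts tops and socles down by one again, so the composition-factor intervals appearing in the $\Hom$ computations are displaced relative to the naive set of vertices enclosed by an arc. Checking that these displacements line up so that cyclic interleaving of endpoints is equivalent to the disjunction above, uniformly over all residues modulo $n$ and including the boundary cases of the loop arcs $\langle i,i\rangle$ of length $n$ and of two arcs sharing an endpoint, is the delicate step, and it is here that careful modular bookkeeping rather than a picture is required.
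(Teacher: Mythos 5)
Your proposal is correct and takes essentially the same route as the paper: part (1) via Proposition \ref{2-2} together with the top/socle parametrization of Proposition \ref{ass3.5} (with the arc-length bound $\ell(\langle i,j\rangle)\le\ell(P_{j})$ matching $\ell(M_{i,j})<\ell(P_{j})$), and part (2) by computing $\tau M_{k,l}=M_{k-1,l-1}$ and translating the interval conditions of Lemma \ref{2-1} into the arc-crossing condition. If anything you are slightly more careful than the paper's own terse proof, which records only one of the two cross $\Hom$-conditions and leaves the symmetric one and the final geometric matching implicit.
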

\begin{proof}
(1) By Proposition \ref{2-2}, an indecomposable $\Lambda$-module $M$ is $\tau$-rigid if and only if it is projective or $\ell(M)<n$ holds.
Since $\ell(M)\le \ell(P)$ holds for a projective cover $P$ of $M$,
we have the desired bijection.

(2) If $i=k\in\{ \bullet\}$, then the assertion is clear.
We may assume that $i\in [1,n]\coprod\{ \bullet\}$ and $j,k,l\in [1,n]$.
Since we have $\tau M_{k,l}=M_{k-1,l-1}$ by Proposition \ref{ass3.5}, thus, by Lemma \ref{2-1}, 
$\Hom_{\Lambda}(M_{i,j},\tau M_{k,l})\neq 0$ if and only if $j\in [k+1,l-1]_{n}$ and $(k+1)_{n}\in [j-\ell(M_{i,j})+1,j]_{n}$.
This means  that $\langle i,j \rangle$ and $\langle k,l\rangle$ are compatible.
\end{proof}
As a conclusion, we obtain the following theorem.
A similar observation was given independently in \cite[Proposition 5.4]{AZ}.
\begin{theorem}\label{2-6}
Let $\Lambda$ be a Nakayama algebra with $n$ simple modules and $\ell_{j}:=\ell(P_{j})$ for any $j\in [1,n]$.
Then the map in Proposition \ref{2-5} induces a bijection
\begin{align}
\ttilt\Lambda \longrightarrow \T(n;\ell_{1},\ell_{2},\ldots,\ell_{n}). \notag
\end{align}
\end{theorem}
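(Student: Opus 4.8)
The plan is to combine the two bijections already established—Proposition \ref{2-5} at the level of indecomposable objects and Theorem \ref{2-11} relating $\tau$-tilting modules to their indecomposable constituents—into a single statement at the level of maximal compatible collections. Recall that a $\tau$-tilting $\Lambda$-module $M$ is a basic $\tau$-rigid module with $|M|=|\Lambda|=n$, and by Proposition \ref{air2.10} it is precisely a maximal $\tau$-rigid module. On the combinatorial side, a triangulation of $\G_n$ is a maximal set of pairwise compatible admissible arcs, which by Proposition \ref{2-4}(1) has exactly $n$ elements. So both sides are ``maximal'' objects built from $n$ mutually compatible ``atoms,'' and the bijection of Proposition \ref{2-5} matches the atoms while respecting compatibility. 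The goal is to promote an atom-level bijection preserving the pairwise relation into a bijection of the maximal collections.

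First I would define the candidate map explicitly. Given $M=\bigoplus_{s=1}^n M_{i_s,j_s}\in\ttilt\Lambda$, decompose it into indecomposable $\tau$-rigid summands and send it to the set $X_M:=\{\langle i_s,j_s\rangle\mid 1\le s\le n\}$ of corresponding admissible arcs under the inverse of Proposition \ref{2-5}(1). I must check three things. First, $X_M$ is pairwise compatible: this is immediate from Proposition \ref{2-5}(2), since any two distinct summands $M_{i_s,j_s}\oplus M_{i_t,j_t}$ form a $\tau$-rigid module (being a summand of the $\tau$-rigid module $M$), whence the arcs are compatible. Second, $X_M$ has exactly $n$ elements: the summands are pairwise nonisomorphic, so Proposition \ref{2-5}(1) gives $n$ distinct arcs. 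Since a pairwise compatible collection of admissible arcs has at most $n$ elements (again by the cardinality count in Proposition \ref{2-4}(1), as any such collection extends to a triangulation), a collection of exactly $n$ pairwise compatible arcs is already maximal, hence lies in $\T(n)$.

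Next I would verify that $X_M$ actually lands in the constrained set $\T(n;\ell_1,\ldots,\ell_n)$. This is where the inner-arc length condition enters: the summand $M_{i,j}$ is a genuine $\tau$-rigid $\Lambda$-module only when the inner arc $\langle i,j\rangle$ satisfies $\ell(\langle i,j\rangle)\le\ell(P_j)=\ell_j$, which is exactly the image of the bijection in Proposition \ref{2-5}(1). Thus every inner arc in $X_M$ with terminal point $j$ has length at most $\ell_j$, so $X_M\in\T(n;\ell_1,\ldots,\ell_n)$ by definition. Conversely, for the inverse map I would send a triangulation $X\in\T(n;\ell_1,\ldots,\ell_n)$ to $M_X:=\bigoplus_{\langle i,j\rangle\in X}M_{i,j}$; the length constraints guarantee each $M_{i,j}$ is a well-defined $\tau$-rigid module, pairwise compatibility and Proposition \ref{2-5}(2) give that $M_X$ is $\tau$-rigid, and $|M_X|=n=|\Lambda|$ forces $M_X$ to be $\tau$-tilting via the maximality criterion of Proposition \ref{air2.10}. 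These two assignments are mutually inverse because Proposition \ref{2-5}(1) is a bijection on atoms.

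The main obstacle I anticipate is not the forward or inverse construction, which are essentially bookkeeping, but the surjectivity of the map onto $\T(n;\ell_1,\ldots,\ell_n)$—equivalently, checking that the module $M_X$ assembled from a triangulation is genuinely $\tau$-\emph{tilting} rather than merely $\tau$-rigid with $n$ summands. The cleanest route is through Proposition \ref{air2.10}: a basic $\tau$-rigid module with $|M|=|\Lambda|$ is automatically $\tau$-tilting, so once pairwise compatibility upgrades to $\tau$-rigidity of the whole module and the count gives $n$ summands, we are done. I would be slightly careful about one subtlety: Proposition \ref{2-5}(2) is a statement about pairs, so I must confirm that pairwise $\tau$-rigidity of all summands implies $\tau$-rigidity of the direct sum; for Nakayama algebras this follows because $\Hom_\Lambda(M_X,\tau M_X)=\bigoplus_{s,t}\Hom_\Lambda(M_{i_s,j_s},\tau M_{i_t,j_t})$ decomposes as a direct sum over pairs, each summand of which vanishes by the pairwise hypothesis (taking $s=t$ as well, which corresponds to each arc being compatible with itself, i.e.\ each $M_{i,j}$ being $\tau$-rigid). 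This reduction of a global $\tau$-rigidity condition to pairwise conditions is exactly what makes the combinatorial model faithful.
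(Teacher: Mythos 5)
Your proof is correct and takes essentially the same route as the paper's: both identify $\tau$-tilting modules with maximal $\tau$-rigid modules via Proposition \ref{air2.10}, identify triangulations with maximal sets of $n$ pairwise compatible arcs via Proposition \ref{2-4}, and transfer one maximality to the other through the atom-level bijection and pairwise criterion of Proposition \ref{2-5}. You simply make explicit details the paper leaves implicit, such as the additive decomposition $\Hom_{\Lambda}(M,\tau M)\simeq\bigoplus_{s,t}\Hom_{\Lambda}(M_{s},\tau M_{t})$ reducing global $\tau$-rigidity to pairwise conditions, and the check that the length bounds in Proposition \ref{2-5}(1) place the image inside $\T(n;\ell_{1},\ldots,\ell_{n})$.
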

\begin{proof}
$\tau$-tilting $\Lambda$-modules are precisely maximal $\tau$-rigid $\Lambda$-modules by Proposition \ref{air2.10}.
On the other hand, triangulations of $\G_{n}$ are precisely maximal sets of pairwise compatible admissible arcs with the cardinality $n$ 
by Proposition \ref{2-4}.
Thus the assertion follows from Proposition \ref{2-5}(2).
\end{proof}

In the rest of this subsection, 
we give a connection between support $\tau$-tilting $\Lambda$-modules and signed triangulations of an $n$-gon with a puncture.

\begin{definition}[See \cite{LF, BS, IN}]
\begin{itemize}
\item[(1)] A triangulation is called a {\it signed triangulation} if the puncture is assigned the signature either $+$ or $-$.
We can identify the set of signed triangulations with $\T(n;\ell_{1},\ldots,\ell_{n})\times \ZZ/2\ZZ$.
\item[(2)] A {\it pop} of a signed triangulation is the operation of changing the sign of the puncture.
By {\it flips} of signed triangulations, 
we mean the operations which are flips of (ordinary) triangulations and pops for the projective arcs in self-folded triangles.
\end{itemize}
Then we always play a flip for any admissible arc of signed triangulations.
\end{definition}

The end of this subsection, we have the following bijection.
We denote by $(M,e_{M})$ a support $\tau$-tilting $\Lambda$-module $M$.
\begin{corollary}\label{bij}
Assume that $\ell(P_{i})\ge n$ for all $i\in [1,n]$.
Then there is a bijection 
\begin{align}
\T(n;\ell_{1},\ell_{2},\ldots,\ell_{n})\times \ZZ/2\ZZ \rightarrow \sttilt\Lambda \notag
\end{align}
given by $(\langle i,j \rangle,\pm)\mapsto (M_{i,j},0)$, 
$(\langle \bullet,j\rangle,+)\mapsto (P_{j},0)$ 
and $(\langle \bullet,j\rangle,-)\mapsto (0,e_{j+1})$.
Moreover, the bijection are compatible with mutations and flips.
\end{corollary}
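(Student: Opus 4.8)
The plan is to assemble the bijection from the two halves $\T(n;\ell_1,\ldots,\ell_n)\times\{+\}$ and $\T(n;\ell_1,\ldots,\ell_n)\times\{-\}$, using Theorems \ref{2-6} and \ref{2-11}. On the positive half I would send $(X,+)$ to the $\tau$-tilting module $M=\bigl(\bigoplus_{\langle i,j\rangle\in X}M_{i,j}\bigr)\oplus\bigl(\bigoplus_{\langle\bullet,j\rangle\in X}P_j\bigr)$; this is exactly the bijection $\T(n;\ell_1,\ldots,\ell_n)\xrightarrow{\sim}\ttilt\Lambda$ of Theorem \ref{2-6}, and it realizes the rules $(\langle i,j\rangle,+)\mapsto(M_{i,j},0)$ and $(\langle\bullet,j\rangle,+)\mapsto(P_j,0)$. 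On the negative half I would compose this with the bijection $\ttilt\Lambda\xrightarrow{\sim}\psttilt\Lambda$ of Theorem \ref{2-11}: since $\ell(P_i)\ge n$ forces $\psttilt_{\np}\Lambda=\psttilt\Lambda$, the module $M$ is sent to its non-projective part $M_{\np}=\bigoplus_{\langle i,j\rangle\in X}M_{i,j}$, carrying the support idempotent $e_{M_{\np}}=\phi^{-1}\bigl(\sum_{\langle\bullet,j\rangle\in X}e_j\bigr)=\sum_{\langle\bullet,j\rangle\in X}e_{j+1}$, which is precisely $(\langle i,j\rangle,-)\mapsto(M_{i,j},0)$ together with $(\langle\bullet,j\rangle,-)\mapsto(0,e_{j+1})$. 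As $\sttilt\Lambda=\ttilt\Lambda\coprod\psttilt\Lambda$ by Proposition \ref{1-2}(1), the two halves glue to the asserted bijection.

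For compatibility with mutations and flips I would argue by an $n$-regularity comparison. The mutation graph on $\sttilt\Lambda$ is the Hasse quiver $\hasse(\Lambda)$, which is $n$-regular; on the other side every signed triangulation has exactly $n$ admissible arcs by Proposition \ref{2-4}(1), and each arc determines a unique flip---an ordinary flip, or a pop when the arc is the projective arc of a self-folded triangle---so the flip graph is also $n$-regular. Since the first part identifies the two vertex sets, it suffices to verify that flip-adjacent signed triangulations go to mutation-adjacent modules: a bijection between the vertex sets of two finite $n$-regular graphs that carries edges to edges sends the $n$ neighbours of each vertex injectively onto the $n$ neighbours of its image, hence is automatically a graph isomorphism.

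It then remains to match each flip with a mutation, using that by Proposition \ref{air2.18} an almost complete support $\tau$-tilting pair has exactly two completions. Flipping an inner arc $\langle i,j\rangle$ fixes every other arc and replaces only the summand $M_{i,j}$, so the remaining data form such a pair and its two completions are the two sides of the flip. A projective arc $\langle\bullet,j\rangle$ lies in a self-folded triangle precisely when it is the unique projective arc of $X$ (the puncture being then enclosed by a loop $\langle j,j\rangle$), and the behaviour splits accordingly: if $X$ has at least two projective arcs the flip is ordinary and exchanges $\langle\bullet,j\rangle$ for an inner arc, trading the summand $P_j$ (sign $+$) or the idempotent part $e_{j+1}$ (sign $-$) for an inner summand $M_{i',j'}$---again a mutation by Proposition \ref{air2.18}---whereas if $\langle\bullet,j\rangle$ is the unique projective arc the flip is the pop, which fixes $X$, switches the sign, and under the bijection is exactly the passage between $M=M_{\np}\oplus P_j$ and $M_{\np}$, the single mutation of Theorem \ref{2-11} deleting $P_j$ and enlarging the support by $e_{j+1}$. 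The main obstacle is the local geometry near the puncture in this self-folded case: one must pin down which inner arc an ordinary flip yields---especially the flip of the loop $\langle j,j\rangle$, whose degeneracy is what forces the $\ZZ/2\ZZ$---and read it off from Proposition \ref{2-5} so that the two completions of Proposition \ref{air2.18} coincide with the two sides of the flip; the accompanying sign bookkeeping, with the global sign toggled only by the local pop at the unique self-folded triangle while ordinary flips preserve it, is what makes the $\ZZ/2\ZZ$-factor mesh with the projective-summand/support-idempotent dichotomy.
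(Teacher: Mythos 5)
Your construction of the bijection is exactly the paper's proof: the paper also splits $\T(n;\ell_{1},\ldots,\ell_{n})\times\ZZ/2\ZZ$ into the two halves $\{+\}$ and $\{-\}$, matches the $+$ half with $\ttilt\Lambda$ via Theorem \ref{2-6} and the $-$ half with $\psttilt\Lambda$ by composing with the bijection of Theorem \ref{2-11}(2), and glues along $\sttilt\Lambda=\ttilt\Lambda\coprod\psttilt\Lambda$ (Proposition \ref{1-2}(1)); your computation of the support idempotent, $e_{M_{\np}}=\phi^{-1}\bigl(\sum_{\langle\bullet,j\rangle\in X}e_{j}\bigr)=\sum_{\langle\bullet,j\rangle\in X}e_{j+1}$, correctly recovers the stated rule $(\langle\bullet,j\rangle,-)\mapsto(0,e_{j+1})$.

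Where you diverge is on the ``moreover'' clause: the paper offers no argument beyond the commutative diagram, while you actually prove the compatibility, and your route is sound. The regularity trick is correct (the Hasse quiver is $|\Lambda|$-regular by \cite[Corollary 2.34]{AIR} as quoted at the end of Section 1, the flip graph is $n$-regular since each of the $n$ arcs of a signed triangulation -- Proposition \ref{2-4}(1) -- has a unique involutive flip or pop, and an edge-preserving bijection between finite $n$-regular graphs is automatically a graph isomorphism), and reducing each flip to the two completions of an almost complete support $\tau$-tilting pair via Proposition \ref{air2.18} handles all cases uniformly, including the one you leave implicit, namely that an inner arc may flip to a projective arc (with sign $-$ this trades a module summand for a support idempotent, which Proposition \ref{air2.18} still covers since it controls the pair $(M,P)$, not just the module part). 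One small correction to your closing paragraph: the degeneracy that forces the $\ZZ/2\ZZ$ sits at the radius $\langle\bullet,j\rangle$ of the self-folded triangle, whose ordinary flip is undefined and is replaced by the pop -- exactly as the paper defines flips of signed triangulations -- whereas the loop $\langle j,j\rangle$ admits a perfectly ordinary flip (producing a projective arc $\langle\bullet,j'\rangle$, as one sees already in $\G_{2}$), so the worry about ``the flip of the loop'' is misplaced; your earlier case analysis, which attaches the pop to the unique projective arc, is the correct one, and with that reading your argument is complete.
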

\begin{proof}
The assertion follows from that, by Theorem \ref{2-11}(2) and \ref{2-6}, we have the following commutative diagram: 

\begin{picture}(400,80)(0,0)
\put(100,65){$\xymatrix{
\T(n;\ell_{1},\ell_{2},\ldots,\ell_{n})\times \{ +\}\ar[r]\ar@{<->}[d]&\ttilt\Lambda\ar@{<->}[d]\\
\T(n;\ell_{1},\ell_{2},\ldots,\ell_{n})\times \{ -\}\ar[r]&\psttilt\Lambda
}$}
\put(395.4,18){\qedhere}
\end{picture}
\end{proof}
We give an example of Corollary \ref{bij}.

\begin{picture}(400,150)(0,0)
\put(130,110){\tiny{$\begin{smallmatrix}
1\\
2\\
1
\end{smallmatrix}\begin{smallmatrix}
2\\
1\\
2
\end{smallmatrix}$}}

\put(95,80){\tiny{$\begin{smallmatrix}
2
\end{smallmatrix}\begin{smallmatrix}
2\\
1\\
2
\end{smallmatrix}$}}

\put(165,80){\tiny{$\begin{smallmatrix}
1\\
2\\
1
\end{smallmatrix}\begin{smallmatrix}
1
\end{smallmatrix}$}}

\put(90,40){\tiny{$\left(\begin{smallmatrix}
2
\end{smallmatrix},e_{1}\right)$}}

\put(160,40){\tiny{$\left(\begin{smallmatrix}
1
\end{smallmatrix},e_{2}\right)$}}

\put(115,10){{\tiny$\left(\begin{smallmatrix}
\{ 0\}
\end{smallmatrix},e_{1}+e_{2}\right)$}}

\put(108,90){\line(1,1){15}}
\put(103,68){\line(0,-1){15}}
\put(108,35){\line(1,-1){15}}

\put(168,90){\line(-1,1){15}}
\put(173,68){\line(0,-1){15}}
\put(168,35){\line(-1,-1){15}}

\put(330,115){\circle{30}}
\put(330,115){\circle*{2}}
\put(330,130){\circle*{2}}
\put(330,100){\circle*{2}}
\put(333,113){{\tiny$+$}}

\put(326,133){$\begin{smallmatrix}1\end{smallmatrix}$}
\put(326,93){$\begin{smallmatrix}2\end{smallmatrix}$}

\put(295,85){\circle{30}}
\put(295,85){\circle*{2}}
\put(295,100){\circle*{2}}
\put(295,70){\circle*{2}}
\put(298,83){{\tiny$+$}}

\put(365,85){\circle{30}}
\put(365,85){\circle*{2}}
\put(365,100){\circle*{2}}
\put(365,70){\circle*{2}}
\put(368,83){{\tiny$+$}}

\put(295,40){\circle{30}}
\put(295,40){\circle*{2}}
\put(295,55){\circle*{2}}
\put(295,25){\circle*{2}}
\put(298,38){{\tiny$-$}}

\put(365,40){\circle{30}}
\put(365,40){\circle*{2}}
\put(365,55){\circle*{2}}
\put(365,25){\circle*{2}}
\put(368,38){{\tiny$-$}}

\put(330,10){\circle{30}}
\put(330,10){\circle*{2}}
\put(330,25){\circle*{2}}
\put(330,-5){\circle*{2}}
\put(333,8){{\tiny$-$}}

\put(300,106){\line(1,1){10}}
\put(295,66){\line(0,-1){8}}
\put(300,21){\line(1,-1){10}}

\put(360,106){\line(-1,1){10}}
\put(365,66){\line(0,-1){8}}
\put(360,21){\line(-1,-1){10}}

\thicklines
\qbezier(330,115)(330,122.5)(330,130)
\qbezier(330,115)(330,107.5)(330,100)

\qbezier(295,85)(295,77.5)(295,70)
\qbezier(295,70)(275,85)(295,95)
\qbezier(295,70)(315,85)(295,95)

\qbezier(365,85)(365,92.5)(365,100)
\qbezier(365,100)(345,85)(365,75)
\qbezier(365,100)(385,85)(365,75)

\qbezier(295,40)(295,32.5)(295,25)
\qbezier(295,25)(275,40)(295,50)
\qbezier(295,25)(315,40)(295,50)

\qbezier(365,40)(365,47.5)(365,55)
\qbezier(365,55)(345,40)(365,30)
\qbezier(365,55)(385,40)(365,30)

\qbezier(330,10)(330,17.5)(330,25)
\qbezier(330,10)(330,2.5)(330,-5)

\end{picture}
\subsection{Triangulations and integer sequences}
In this subsection, we give a simple description of $\ttilt\Lambda$ and $\T(n)$ in terms of certain nonnegative integer sequences.
Let 
\begin{align}
\Z(n):=\left\{ (a_{1},a_{2},\ldots,a_{n})\in \ZZ_{\ge 0}^{n}\ \biggl|\  \sum_{i=1}^{n}a_{i}=n \right\}. \notag
\end{align}

Let $X$ be a subset of the set $\Arc(n)$ of all admissible arcs in $\G_{n}$.
In view of Proposition \ref{2-5}, we define 
\begin{align}
\top(X)=(a_{1},a_{2},\ldots,a_{n}), \notag
\end{align}
where $a_{p}$ is the number of admissible arcs in $X$ with a terminal point $p$.
If $X\in\T(n)$, then $\top(X)\in\Z(n)$ since all triangulations of $\G_{n}$ contain exactly $n$ admissible arcs by Proposition \ref{2-4}.

\begin{theorem}\label{2-8}
The map $X\mapsto \top(X)$ gives a bijection 
\begin{align}
\top:\T(n) \longrightarrow \Z(n). \notag
\end{align}
\end{theorem}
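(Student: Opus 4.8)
The plan is to construct an explicit inverse map $\Z(n)\to\T(n)$ and verify that it is two-sided inverse to $\top$. The key structural fact is that every triangulation of $\G_n$ is determined by its set of projective arcs together with, on each resulting ``fan,'' the unique triangulation of an ordinary polygon. By Proposition \ref{2-4}, a triangulation $X$ has exactly $n$ admissible arcs and contains at least one projective arc, so $\top(X)\in\Z(n)$ is well-defined; I must show it is bijective.

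\emph{Injectivity.} First I would argue that a triangulation is completely recoverable from $\top(X)=(a_1,\ldots,a_n)$. The positions $j$ where a projective arc $\langle\bullet,j\rangle$ occurs are forced: I claim $\langle\bullet,j\rangle\in X$ if and only if the arcs terminating at $j$ cannot all be accounted for by inner arcs. More precisely, using the notation preceding Proposition \ref{2-4}, if $1\le j_1<\cdots<j_r\le n$ are the terminal points of the projective arcs, then by Lemma \ref{2-3} and the fan decomposition, the inner arcs of $X$ inside each fan between consecutive projective arcs $\langle\bullet,j_i\rangle$ and $\langle\bullet,j_{i+1}\rangle$ form a triangulation of an ordinary $((j_i^+-j_i)+1)$-gon, hence are \emph{uniquely} determined by their multiset of terminal points. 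Therefore it suffices to show that the sequence $(a_1,\ldots,a_n)$ determines the set $\{j_1,\ldots,j_r\}$ of projective terminal points; once these are known, the inner-arc counts $a_p$ (minus the contribution of any projective arc at $p$) pin down each fan's triangulation uniquely.

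\emph{Surjectivity via an explicit construction.} Given $(a_1,\ldots,a_n)\in\Z(n)$, I would build a triangulation directly. The natural device is a cyclic ``balance'' or ``staircase'' argument: reading the $a_p$ around the polygon, place arcs so that at each terminal point $p$ exactly $a_p$ arcs end. Concretely, scan the indices cyclically and use partial sums of $(a_p-1)$ to decide where projective arcs must be inserted (a projective arc at $p$ is needed precisely when the running count of available initial points drops to zero), and fill each resulting gap by the unique fan triangulation with the prescribed terminal-point multiplicities. Because $\sum a_i=n$, the cyclic bookkeeping closes up consistently, yielding a set of exactly $n$ pairwise compatible admissible arcs, i.e.\ an element of $\T(n)$ whose $\top$ is the given sequence.

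\emph{Main obstacle.} The delicate point is the surjectivity/well-definedness of the reconstruction: I must verify that the greedy cyclic placement always produces a genuinely \emph{compatible} (non-crossing) family and that it terminates with exactly $n$ arcs, using only $\sum a_i=n$ and $a_i\ge 0$. The risk is that a naive local rule produces crossings or an inconsistent cyclic closure; the clean way to avoid this is to phrase the construction as: choose the projective terminal set as the unique cyclic solution making all fan-sizes nonnegative, then invoke the already-established bijection between terminal-point multisets and triangulations of an ordinary polygon (Lemma \ref{2-3} and the maximality argument) on each fan. Establishing that such a projective terminal set exists and is unique for every $(a_i)$ with $\sum a_i=n$ is the crux, and I expect it to reduce to a standard cycle-lemma type statement about partial sums of $(a_i-1)$ summing to zero.
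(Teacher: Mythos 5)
Your proposal is correct and follows essentially the same route as the paper: your partial sums of $(a_i-1)$ and the cycle-lemma placement of projective arcs are exactly the paper's $a'_i=\sum_{j\le i}(a_j-1)$ together with the criterion that $\langle\bullet,j\rangle\in X$ if and only if $a'_j=||a'||$ (Lemma \ref{newlem1}(1)), while your greedy fan-filling is the paper's construction of $X_a$ via $k^{l}_{s}=\max\{k\mid k<l-1,\ a'_k=a'_{l-1}+s\}$, whose maximality is what makes the arcs non-crossing (Proposition \ref{2-9}). The one soft spot you should note is that the unique recoverability of each fan's triangulation from its terminal-point multiplicities does not follow from Lemma \ref{2-3} alone, as you suggest, but is exactly the content of the paper's Lemma \ref{newlem1}(2) — and your own maximal-$k$ placement rule closes this gap in the same way the paper does.
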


We illustrate Theorem \ref{2-8} with an example.
\begin{example}
Let $n=3$.

\begin{picture}(400,200)(0,55)
\put(40,215){\circle{50}}
\put(40,215){\circle*{2}}
\put(40,240){\circle*{2}}
\put(40,245){\makebox(0,0){\tiny 1}}
\put(18.5,202.5){\circle*{2}}
\put(13.5,202.5){\makebox(0,0){\tiny 2}}
\put(61.5,202.5){\circle*{2}}
\put(66.5,202.5){\makebox(0,0){\tiny 3}}

\thicklines
\qbezier(40,215)(40,227.5)(40,240)
\qbezier(40,215)(29.2,208.7)(18.5,202.5)
\qbezier(40,215)(50.5,208.7)(61.5,202.5)

\put(25,170){$(1,1,1)$}
\put(120,215){\circle{50}}
\put(120,215){\circle*{2}}
\put(120,240){\circle*{2}}
\put(120,245){\makebox(0,0){\tiny 1}}
\put(98.5,202.5){\circle*{2}}
\put(93.5,202.5){\makebox(0,0){\tiny 2}}
\put(141.5,202.5){\circle*{2}}
\put(146.5,202.5){\makebox(0,0){\tiny 3}}

\thicklines
\qbezier(120,215)(120,227.5)(120,240)
\qbezier(120,215)(109.2,208.7)(98.5,202.5)
\qbezier(98.5,202.5)(150,195)(120,240)

\put(105,170){$(2,1,0)$}
\put(200,215){\circle{50}}
\put(200,215){\circle*{2}}
\put(200,240){\circle*{2}}
\put(200,245){\makebox(0,0){\tiny 1}}
\put(178.5,202.5){\circle*{2}}
\put(173.5,202.5){\makebox(0,0){\tiny 2}}
\put(221.5,202.5){\circle*{2}}
\put(226.5,202.5){\makebox(0,0){\tiny 3}}

\thicklines
\qbezier(178.5,202.5)(190,225)(205,220)
\qbezier(205,220)(210,205)(178.5,202.5)
\qbezier(200,215)(189.2,208.7)(178.5,202.5)
\qbezier(178.5,202.5)(230,195)(200,240)

\put(185,170){$(1,2,0)$}
\put(280,215){\circle{50}}
\put(280,215){\circle*{2}}
\put(280,240){\circle*{2}}
\put(280,245){\makebox(0,0){\tiny 1}}
\put(258.5,202.5){\circle*{2}}
\put(253.5,202.5){\makebox(0,0){\tiny 2}}
\put(301.5,202.5){\circle*{2}}
\put(306.5,202.5){\makebox(0,0){\tiny 3}}

\thicklines
\qbezier(258.5,202.5)(270,225)(285,220)
\qbezier(285,220)(290,205)(258.5,202.5)
\qbezier(280,215)(269.2,208.7)(258.5,202.5)
\qbezier(301.5,202.5)(280,250)(258.5,202.5)

\put(265,170){$(0,3,0)$}
\put(360,215){\circle{50}}
\put(360,215){\circle*{2}}
\put(360,240){\circle*{2}}
\put(360,245){\makebox(0,0){\tiny 1}}
\put(338.5,202.5){\circle*{2}}
\put(333.5,202.5){\makebox(0,0){\tiny 2}}
\put(381.5,202.5){\circle*{2}}
\put(386.5,202.5){\makebox(0,0){\tiny 3}}

\thicklines
\qbezier(360,215)(371,208.5)(381.5,202.5)
\qbezier(360,215)(349.2,208.7)(338.5,202.5)
\qbezier(381.5,202.5)(360,250)(338.5,202.5)

\put(345,170){$(0,2,1)$}
\put(40,115){\circle{50}}
\put(40,115){\circle*{2}}
\put(40,140){\circle*{2}}
\put(40,145){\makebox(0,0){\tiny 1}}
\put(18.5,102.5){\circle*{2}}
\put(13.5,102.5){\makebox(0,0){\tiny 2}}
\put(61.5,102.5){\circle*{2}}
\put(66.5,102.5){\makebox(0,0){\tiny 3}}

\thicklines
\qbezier(40,115)(51,108.5)(61.5,102.5)
\qbezier(61.5,102.5)(45,125)(35,120)
\qbezier(35,120)(35,105)(61.5,102.5)
\qbezier(61.5,102.5)(40,150)(18.5,102.5)

\put(25,70){$(0,1,2)$}
\put(120,115){\circle{50}}
\put(120,115){\circle*{2}}
\put(120,140){\circle*{2}}
\put(120,145){\makebox(0,0){\tiny 1}}
\put(98.5,102.5){\circle*{2}}
\put(93.5,102.5){\makebox(0,0){\tiny 2}}
\put(141.5,102.5){\circle*{2}}
\put(146.5,102.5){\makebox(0,0){\tiny 3}}

\thicklines
\qbezier(120,115)(131,108.5)(141.5,102.5)
\qbezier(141.5,102.5)(125,125)(115,120)
\qbezier(115,120)(115,105)(141.5,102.5)
\qbezier(120,140)(90,100)(141.5,102.5)

\put(105,70){$(0,0,3)$}
\put(200,115){\circle{50}}
\put(200,115){\circle*{2}}
\put(200,140){\circle*{2}}
\put(200,145){\makebox(0,0){\tiny 1}}
\put(178.5,102.5){\circle*{2}}
\put(173.5,102.5){\makebox(0,0){\tiny 2}}
\put(221.5,102.5){\circle*{2}}
\put(226.5,102.5){\makebox(0,0){\tiny 3}}

\thicklines
\qbezier(200,115)(211,108.5)(221.5,102.5)
\qbezier(200,115)(200,127.5)(200,140)
\qbezier(200,140)(170,100)(221.5,102.5)

\put(185,70){$(1,0,2)$}
\put(280,115){\circle{50}}
\put(280,115){\circle*{2}}
\put(280,140){\circle*{2}}
\put(280,145){\makebox(0,0){\tiny 1}}
\put(258.5,102.5){\circle*{2}}
\put(253.5,102.5){\makebox(0,0){\tiny 2}}
\put(301.5,102.5){\circle*{2}}
\put(306.5,102.5){\makebox(0,0){\tiny 3}}

\thicklines
\qbezier(280,140)(270,115)(280,110)
\qbezier(280,110)(290,115)(280,140)
\qbezier(280,115)(280,127.5)(280,140)
\qbezier(280,140)(250,100)(301.5,102.5)

\put(265,70){$(2,0,1)$}
\put(360,115){\circle{50}}
\put(360,115){\circle*{2}}
\put(360,140){\circle*{2}}
\put(360,145){\makebox(0,0){\tiny 1}}
\put(338.5,102.5){\circle*{2}}
\put(333.5,102.5){\makebox(0,0){\tiny 2}}
\put(381.5,102.5){\circle*{2}}
\put(386.5,102.5){\makebox(0,0){\tiny 3}}

\thicklines
\qbezier(360,140)(350,115)(360,110)
\qbezier(360,110)(370,115)(360,140)
\qbezier(360,115)(360,127.5)(360,140)
\qbezier(338.5,102.5)(390,95)(360,140)

\put(345,70){$(3,0,0)$}
\end{picture}
\end{example}

To prove Theorem \ref{2-8}, we construct the inverse map $\Z(n)\rightarrow \T(n)$.
To $a=(a_{1}, a_{2}, \ldots, a_{n})\in \Z(n)$, we associate a sequence $a'=(a'_{1},a'_{2},\ldots, a'_{n})$ given by 
\begin{align}
a'_{i}=\displaystyle \sum_{j=1}^{i}(a_{j}-1). \notag
\end{align}
In the rest of this subsection, we extend integer sequences $a$ and $a'$ to maps
$a:\ZZ \rightarrow \ZZ_{\ge 0}$ and $a': \ZZ \rightarrow \ZZ$ by 
\begin{align}
a_{p}:=a_{(p)_{n}}\ \textnormal{and}\ a'_{p}:=a'_{(p)_{n}} \notag
\end{align}
for any integer $p$.
Since $a'$ is periodic ({\it i.e.}, $a'_{p}=a'_{p+n}$ for any integer $p$) and $a'_{p}\ge a'_{p-1}-1$ holds for any integer $p$, we have that $\{ a'_{1},a'_{2},\ldots,a'_{n}\}$ is interval, that is,
\begin{align}
\{ a'_{1},a'_{2},\ldots,a'_{n}\} =\left[\min\{ a'_{1},a'_{2},\ldots,a'_{n}\},\max\{ a'_{1},a'_{2},\ldots,a'_{n}\}\right]. \label{eq4}
\end{align}
Let
\begin{align}
||a'||=\max\{ a'_{1}, a'_{2}, \ldots, a'_{n} \}. \notag
\end{align}
For an integer $p$, let 
\begin{align}
\delta_{p}:=
\begin{cases}
1 &(a'_{p}=||a'||)\\
0 &(\textnormal{otherwise}).
\end{cases}\notag
\end{align}

We give an interpretation of admissible arcs of a triangulation $X$ in $\top(X)$.
\begin{lemma}\label{newlem1}
Let $X\in \T(n)$ and $a:=\top(X)=(a_{1},a_{2},\ldots, a_{n})$. 
\begin{itemize}
\item[(1)] Let $\langle \bullet, j_{1}\rangle, \langle \bullet, j_{2}\rangle, \ldots, \langle \bullet, j_{r}\rangle$ be all projective arcs in $X$. Then, for $i\in[1,n]$, the following hold.
\begin{itemize}
\item[(a)] $a'_{i}=||a'||$ if $i\in \{ j_{1},j_{2},\ldots,j_{r} \}$.
\item[(b)] $a'_{i}<||a'||$ if $i\notin \{ j_{1},j_{2},\ldots,j_{r} \}$.
\end{itemize}
In particular, $\langle \bullet, j \rangle \in X$ if and only if $a'_{j}=||a'||$.
\item[(2)] Assume that $j_{i}< k+1\le l-1<j^{+}_{i}$. Then $X$ contains the inner arc $\langle k, l\rangle$ if and only if the following conditions are satisfied:
\begin{itemize}
\item[(a)] $a'_{m}<a'_{k}$ for any $m\in[k+1,l-1]$.
\item[(b)] There exists $s\in[1,a_{l}-\delta_{l}]$ such that $a'_{k}=a'_{l-1}+s$.
\end{itemize}
\end{itemize}
\end{lemma}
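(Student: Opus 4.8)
The plan is to read everything off the height function $a'$ through the fan decomposition of $X$ determined by its projective arcs. Write $j_1<\cdots<j_r$ for the terminal points of the projective arcs, and recall from the paragraph preceding Proposition \ref{2-4} that whenever $j_i^{+}-j_i>1$ the inner arc $\langle j_i,j_i^{+}\rangle$ lies in $X$; hence the region between the consecutive projective arcs $\langle\bullet,j_i\rangle$ and $\langle\bullet,j_i^{+}\rangle$ is an ordinary triangulated polygon on the vertices $j_i,j_i+1,\ldots,j_i^{+}$. The single computational engine I will use repeatedly is this: if an inner arc lies inside one fan and cuts off the sub-polygon on $\{u,\ldots,v\}$, then the non-crossing condition forces every arc of $X$ terminating at a point of $[u+1,v-1]_n$ to have both endpoints in $\{u,\ldots,v\}$ (otherwise it would cross the cutting arc, or the bounding projective arc). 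Combined with Lemma \ref{2-3} (equivalently, the maximality bound on the number of diagonals of a polygon), this lets me evaluate partial sums $\sum_{q}(a_q-1)$, i.e. differences of $a'$, from vertex counts alone.

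For part (1) I will work inside one fan. Fixing $i$ and setting $g_p:=a'_p-a'_{j_i}$, for $j_i<p<j_i^{+}$ the sum $\sum_{q=j_i+1}^{p}a_q$ counts exactly the inner arcs of $X$ contained in the sub-polygon on $\{j_i,\ldots,p\}$ (no projective arc occurs there, and nothing escapes past $\langle\bullet,j_i\rangle$), which by the polygon bound is at most $(p-j_i)-1$; thus $g_p\le-1<0$. At $p=j_i^{+}$ the extra projective arc $\langle\bullet,j_i^{+}\rangle$ contributes one more, restoring $g_{j_i^{+}}=0$, so $a'_{j_i^{+}}=a'_{j_i}$. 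Iterating over all fans gives $a'_{j_1}=\cdots=a'_{j_r}$ and shows every non-projective vertex lies strictly below this common value; hence $\|a'\|$ is attained exactly at $j_1,\ldots,j_r$, which is (1a), (1b) and the displayed consequence.

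For part (2) the heart is a structural description of the inner arcs terminating at $l$. Listing them as $\langle u_1,l\rangle,\ldots,\langle u_c,l\rangle$ with $l>u_1>\cdots>u_c$, where $c=a_l-\delta_l$ (the $\delta_l$ removes the projective arc when $l=j_i^{+}$, via part (1)), I claim $a'_{u_s}=a'_{l-1}+s$ for all $s$. This follows from the fan of triangles at $l$: consecutive chords bound triangles $\{l,u_s,u_{s+1}\}$, the sides $\langle u_{s+1},u_s\rangle$ cut off sub-polygons, and the engine gives $a'_{u_{s+1}}-a'_{u_s}=1$, with base case $a'_{u_1}-a'_{l-1}=1$ from the triangle on the boundary edge $(l-1,l)$. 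Granting this, both implications of (2) follow. If $\langle k,l\rangle\in X$ then $k=u_s$ for some $s$, giving (b) with $a'_k=a'_{l-1}+s$, while applying the engine to the sub-polygon cut off by $\langle u_s,l\rangle$ yields $a'_p<a'_{u_s}$ for $p\in(u_s,l)$, which is (a). Conversely, (b) produces the level $a'_k=a'_{u_s}$, and (a) forces $k=u_s$: if $k<u_s$ then $u_s\in(k,l)$ contradicts (a), and if $k>u_s$ then $k\in(u_s,l)$ contradicts condition (a) for $\langle u_s,l\rangle$; hence $\langle k,l\rangle\in X$.

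The step I expect to be delicate is precisely the bookkeeping hidden in the engine: verifying that the global count $a_q$ of arcs terminating at $q$ equals the number counted inside the relevant sub-polygon. This is automatic at interior vertices, but it must be checked at the terminal vertex of each cutting arc and, above all, at the projective vertex $j_i^{+}$, where several arcs terminating there (the projective arc and the longer chords) lie \emph{outside} the sub-polygon; this is exactly where $\delta_l$ must be inserted and where a naive count would give the wrong difference. Keeping the non-crossing argument aligned with the $\delta$-correction at these boundary vertices is the crux; once that is settled, every identity reduces to counting polygon diagonals through Lemma \ref{2-3}.
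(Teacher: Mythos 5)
Your proposal is correct, and while it runs on exactly the same computational engine as the paper---counting arcs in the fan subsets $X_{i,j}$ via Lemma \ref{2-3} and converting the counts into differences of partial sums $a'$---it diverges from the paper's proof in the converse half of (2). Your part (1) and the `only if' direction of (2) are essentially the paper's argument verbatim (the paper likewise shows $a'_{j_{i}}>a'_{k}$ on $[j_{i}+1,j_{i+1}-1]$ and $a'_{j_{i}}=a'_{j_{i+1}}$, and extracts (a),(b) from $|X_{k,l}|=(l-k)_{n}-1$). For the `if' direction, however, the paper argues by maximality: if $\langle k,l\rangle\notin X$ there is a crossing arc $\langle g,h\rangle\in X$, and the forward inequalities yield the contradictory chains $a'_{l}<a'_{g}<a'_{k}\le a'_{l}$ or $a'_{h}<a'_{k}<a'_{g}\le a'_{h}$. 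You instead prove the stronger structural statement that the inner arcs $\langle u_{1},l\rangle,\ldots,\langle u_{c},l\rangle$ at $l$, with $c=a_{l}-\delta_{l}$, realize exactly the levels $a'_{u_{s}}=a'_{l-1}+s$, so that (b) forces $a'_{k}=a'_{u_{s}}$ and (a) pins down $k=u_{s}$ by the two-sided comparison with the already-proved forward direction. Both are valid; the paper's route is shorter, while yours buys more: your spectrum claim shows that $k$ is \emph{uniquely determined} by $(s,l)$ and in effect identifies $u_{s}$ with the $k_{s}^{l}$ used later in the construction of $X_{a}$, so injectivity of $\top$ (Proposition \ref{newprop1}) and the inverse map of Theorem \ref{2-8} become transparent rather than needing a separate verification. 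The delicate points you flagged do check out: at the terminal vertex $u_{s}$ of a cutting arc, the nested chords $\langle v,u_{s}\rangle$ with $v<u_{s+1}$ that share the endpoint (and hence are not excluded by the cutting arc itself) are killed by crossing the adjacent chord $\langle u_{s+1},l\rangle$; the degenerate cases ($u_{s+1}=u_{s}-1$ giving a boundary edge and forcing $a_{u_{s}}=0$, and $l=j_{i}^{+}$ where $\delta_{l}=1$ removes the projective arc) all verify, so your sketch completes to a full proof.
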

\begin{proof}
(1) Since each triangulation contains at least one projective arc by Proposition \ref{2-4}, 
it suffices to show the following assertions for each projective arc $\langle \bullet, j_{i}\rangle$, where $j_{r+1}:=j_{r}+n$:
(i) $a'_{j_{i}}>a'_{k}$ for each $k\in[j_{i}+1,j_{i+1}-1]$, and
(ii) $a'_{j_{i}}=a'_{j_{i+1}}$.

(i) Fix $k\in[j_{i}+1,j_{i+1}-1]$. 
Since the set $X_{j_{i},k}$ consists of exactly $a_{l}$ inner arcs with a terminal point $l$ for all $l\in[j_{i}+1,k]$, we have 
\begin{align}
|X_{j_{i},k}|=a_{j_{i}+1}+a_{j_{i}+2}+\cdots+a_{k-1}+a_{k}. \notag
\end{align}
Moreover, by Lemma \ref{2-3}, we have
\begin{align}
|X_{j_{i},k}| \le k-j_{i}-1 \notag
\end{align}
and hence $a'_{j_{i}}>a'_{k}$.

(ii) Since $X_{j_{i},j_{i+1}}$ contains the inner arc $\langle j_{i}, j_{i+1}\rangle$, we have
\begin{align}
&a_{j_{i}+1}+a_{j_{i}+2}+\cdots+a_{j_{i+1}-1}+a_{j_{i+1}}=|X_{j_{i},j_{i+1}}|+|\{ \langle \bullet, j_{i+1}\rangle\}|=j_{i+1}-j_{i}, \notag
\end{align}
by Lemma \ref{2-3}, and hence $a'_{j_{i}}=a'_{j_{i+1}}$.

(2) First we show `only if' part. We assume that $\langle k,l\rangle\in X$.
Then the set $X_{k,l}$ consists of precisely $a_{m}$ inner arcs with the terminal point for each $m\in [k+1,l-1]$, and $s$ inner arcs with the terminal point $l$ for some $s\in[1,a_{l}-\delta_{l}]$. 
Thus we have 
\begin{align}
|X_{k,l}|=a_{k+1}+\cdots+a_{l-1}+s. \notag
\end{align}
Moreover, by Lemma \ref{2-3}, we have 
\begin{align}
|X_{k,l}|=(l-k)_{n}-1 \notag
\end{align}
and hence $a'_{k}=a'_{l-1}+s$. By the similar argument, we have 
\begin{align}
a_{k+1}+\cdots+a_{m-1}+a_{m}=|X_{k,(m)_{n}}|\le (m-k)_{n}-1 \notag
\end{align}
for each $m\in [k+1,l-1]$.
Hence we have $a'_{m}<a'_{k}$.

Next, we show `if' part. We assume that $\langle k,l\rangle\in X$.
Assume that $\langle k,l\rangle\notin X$. Then there exists $\langle g,h\rangle\in X$ satisfying 
the condition either (i) $g\in [k+1,l-1], h\in [l+1,j_{i}^{+}]$ or (ii) $g\in [j_{i},k-1], h\in [k+1,l-1]$. By $\langle g, h\rangle\in X$, we have 
\begin{align}
&a'_{l}<a'_{g}<a'_{k}\le a'_{l}\ \ \ \textnormal{(if (i))},\notag\\
&a'_{h}<a'_{k}<a'_{g}\le a'_{h}\ \ \ \textnormal{(if (ii))}.\notag
\end{align}
This is contradiction.
\if The set $X_{k,l}$ contains $a_{m}$ inner arcs with the terminal point $m$ for each $m\in [k+1,l-1]$.
Moreover, there exists $s\in [1,a_{l}-\delta_{l}]$ such that $X_{k,l}$ contains $s$ inner arcs with the terminal point $l$. Thus, by Lemma \ref{2-3}, we have 
\begin{align}
&a_{k+1}+\cdots+a_{l-1}+s=|X_{k,l}|=(l-j)_{n}-1. \notag\\
&a_{k+1}+\cdots+a_{m-1}+a_{m}=|X_{k,(m)_{n}}|\le (m-k)_{n}-1 \notag
\end{align}
for each $m\in [k+1,l-1]$.
Hence we have 
\begin{align}
a'_{m}<a'_{k}=a'_{l-1}+s \notag
\end{align}
for each $m\in [k+1,l-1]$.

Assume that $\langle k,l\rangle\notin X$. Then there exists $\langle g,h\rangle\in X$ satisfying 
the condition either (i) $g\in [k+1,l-1], h\in [l+1,j_{i}^{+}]$ or (ii) $g\in [j_{i},k-1], h\in [k+1,l-1]$. By $\langle g, h\rangle\in X$, we have 
\begin{align}
&a'_{l}<a'_{g}<a'_{k}\le a'_{l}\ \ \ \textnormal{(if (i))},\notag\\
&a'_{h}<a'_{k}<a'_{g}\le a'_{h}\ \ \ \textnormal{(if (ii))}.\notag
\end{align}
This is a contradiction.\fi
\end{proof}

As a result of Lemma \ref{newlem1}, we have the following proposition.
\begin{proposition}\label{newprop1}
The map $\top:\T(n)\to\Z(n)$ is an injection.
\end{proposition}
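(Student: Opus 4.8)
The plan is to treat Lemma \ref{newlem1} as a reconstruction recipe: it characterises membership of \emph{every} admissible arc in a triangulation $X$ purely in terms of the sequence $a=\top(X)$, together with the derived data $a'$, $||a'||$ and $\delta$, all of which are computed from $a$ alone. Since a triangulation is by definition nothing but its set of arcs, once this set is exhibited as a function of $a$ the injectivity of $\top$ follows at once. So I would fix $X,Y\in\T(n)$ with $\top(X)=\top(Y)=:a$ and show $X=Y$ by matching their arcs.

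First I would recover the projective arcs. By Lemma \ref{newlem1}(1), $\langle\bullet,j\rangle\in X$ if and only if $a'_{j}=||a'||$, and the same criterion applies to $Y$. Hence $X$ and $Y$ contain exactly the same projective arcs $\langle\bullet,j_{1}\rangle,\ldots,\langle\bullet,j_{r}\rangle$; in particular the two triangulations induce the same decomposition of $\G_{n}$ into fans bounded by consecutive projective arcs $\langle\bullet,j_{i}\rangle$ and $\langle\bullet,j_{i}^{+}\rangle$.

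Next I would recover the inner arcs. The key structural remark is that every inner arc of a triangulation is confined to a single fan: being compatible with all projective arcs in $X$, it cannot cross any $\langle\bullet,j_{i}\rangle$, so its endpoints satisfy $j_{i}\le k<l\le j_{i}^{+}$ for a unique $i$, which is precisely the hypothesis $j_{i}<k+1\le l-1<j_{i}^{+}$ of Lemma \ref{newlem1}(2). For such an arc, that lemma characterises $\langle k,l\rangle\in X$ by its conditions (a) and (b), which involve only $a$, $a'$ and $\delta_{l}$. Applying this to both $X$ and $Y$ shows they contain the same inner arcs in each fan, hence the same inner arcs overall. Combined with the previous paragraph, $X$ and $Y$ consist of exactly the same admissible arcs, so $X=Y$.

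I expect the point needing the most care to be the claim that Lemma \ref{newlem1} exhausts all admissible arcs, i.e.\ that no inner arc spans two different fans. This rests on the fact that an inner arc crossing some projective arc $\langle\bullet,j_{i}\rangle$ would be incompatible with it and so could not coexist in a triangulation; I would spell this out so that the hypothesis of Lemma \ref{newlem1}(2) is seen to apply to \emph{every} inner arc occurring in $X$. Once this coverage is established, the reconstruction is complete and injectivity is immediate.
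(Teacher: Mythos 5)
Your proposal is correct and follows exactly the paper's route: the paper derives Proposition \ref{newprop1} directly from Lemma \ref{newlem1}, since parts (1) and (2) of that lemma characterise, purely in terms of $a=\top(X)$, which projective and inner arcs belong to $X$, so $X$ is reconstructed from $a$. Your added observation that every inner arc is confined to a single fan (by compatibility with the projective arcs) is precisely the implicit point making Lemma \ref{newlem1}(2) exhaustive, so the proposal matches the paper's argument with that detail made explicit.
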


Let $a\in \Z(n)$. 
We will construct a triangulation $X_{a}$ which contains precisely $a_{l}$ admissible arcs with a terminal point $l$ for any $l\in [1,n]$.
\begin{definition}
For $a\in \Z(n)$, we define a subset $X_{a}$ of $\Arc(n)$ as follows:
\begin{itemize}
\item[(i)] Let 
\begin{align}
\{ j_{1}<j_{2}<\cdots<j_{r} \}:=\{ j\mid 1\le j \le n,\ a'_{j}=||a'||\}. \notag
\end{align}
We write $j_{i}^{+}:=j_{i+1}$ for each $i\in [1,r]$, where $j^{+}_{r}=j_{r+1}:=j_{1}+n$.
\item[(ii)] We define $(a_{l}-\delta_{l})$ inner arcs with the terminal point $l$ as follows.
For each $s\in [1,a_{l}-\delta_{l}]$, there exists $k<l-1$ satisfying $a'_{k}=a'_{l-1}+s$ by (\ref{eq4}).
Let \begin{align}
k_{s}^{l}:=\mathrm{max}\{ k \mid k<l-1,\ a'_{k}=a'_{l-1}+s \}. \notag
\end{align}
\item[(iii)] Now we define $X_{a}$ by
\begin{align}
\displaystyle X_{a}:= \{ \langle \bullet, j\rangle \mid j\in [1,n],\ a'_{j}=||a'|| \}\coprod
\{ \langle (k_{s}^{l})_{n}, l \rangle \mid l\in [1,n],\ s\in [1,a_{l}-\delta_{l}] \}. \notag
\end{align}
\end{itemize}
\end{definition}

We illustrate the construction of $X_{a}$ with the following example.
\begin{example}
Let $n=8$ and $a:=(0,4,1,0,1,0,2,0)\in\Z(8)$.
Then we have $a'=(-1,2,2,1,1,0,1,0)$.
We display the sequence $a\in \Z(n)$ as the union $L_{a}$ of the line segments connecting $(p,a'_{p})$ and $(p+1, a'_{p+1})$ for each integer $p$.

{\tiny
\begin{picture}(450,75)(0,0)
\multiput(125,10)(0,20){4}{\line(1,0){200}}
\multiput(125,10)(20,0){11}{\line(0,1){60}}
\put(123,0){0}
\put(143,0){1}
\put(163,0){2}
\put(183,0){3}
\put(203,0){4}
\put(223,0){5}
\put(243,0){6}
\put(263,0){7}
\put(283,0){8}
\put(303,0){9}
\put(323,0){10}
\put(110,9){$-1$}
\put(116,28){$0$}
\put(116,48){$1$}
\put(116,68){$2$}
\put(125,30){\circle*{3}}
\put(145,10){\circle*{3}}
\put(165,70){\circle*{3}}
\put(185,70){\circle*{3}}
\put(205,50){\circle*{3}}
\put(225,50){\circle*{3}}
\put(245,30){\circle*{3}}
\put(265,50){\circle*{3}}
\put(285,30){\circle*{3}}
\put(305,10){\circle*{3}}
\put(325,70){\circle*{3}}
\thicklines
\linethickness{0.5mm}
\put(125,30){\line(1,-1){20}}
\put(145,10){\line(1,3){20}}
\put(165,70){\line(1,0){20}}
\put(185,70){\line(1,-1){20}}
\put(205,50){\line(1,0){20}}
\put(225,50){\line(1,-1){20}}
\put(245,30){\line(1,1){20}}
\put(265,50){\line(1,-1){20}}
\put(285,30){\line(1,-1){20}}
\put(305,10){\line(1,3){20}}
\end{picture}}

By $||a'||=a'_{2}=a'_{3}$, the projective arcs $\langle \bullet, 2\rangle$ and $\langle \bullet, 3 \rangle$ are elements in $X_{a}$.
For an integer $l$ with $a_{l}-\delta_{l}\neq 0$ and $s\in [1, a_{l}-\delta_{l}]$, we can easily detect $k_{s}^{l}$.
For example, let $l=10$ and $s=1$. Note that $(l)_{8}=(10)_{8}=2$ and $a'_{9}+s=-1+1=0$.
Then we have $k_{1}^{2}=8$, and hence $\langle 8,2\rangle\in X_{a}$.

We draw the line segments connecting $(l,a'_{l})$ and $(k_{s}^{l}, a'_{k_{s}^{l}})$ for all $l$ and $s$ as follows:

\begin{align}
{\tiny
\begin{picture}(450,75)(0,0)
\put(183,60){3}
\put(203,40){4}
\put(223,40){5}
\put(243,20){6}
\put(263,40){7}
\put(283,20){8}
\put(303,0){1}
\put(323,60){2}
\put(125,30){\circle*{3}}
\put(145,10){\circle*{3}}
\put(165,70){\circle*{3}}
\put(185,70){\circle*{3}}
\put(205,50){\circle*{3}}
\put(225,50){\circle*{3}}
\put(245,30){\circle*{3}}
\put(265,50){\circle*{3}}
\put(285,30){\circle*{3}}
\put(305,10){\circle*{3}}
\put(325,70){\circle*{3}}
\put(125,30){\line(1,-1){20}}
\put(145,10){\line(1,3){20}}
\put(165,70){\line(1,0){20}}
\put(185,70){\line(1,-1){20}}
\put(205,50){\line(1,0){20}}
\put(225,50){\line(1,-1){20}}
\put(245,30){\line(1,1){20}}
\put(265,50){\line(1,-1){20}}
\put(285,30){\line(1,-1){20}}
\put(305,10){\line(1,3){20}}
\put(325,70){\line(-1,0){140}}
\put(325,70){\line(-2,-2){40}}
\put(325,70){\line(-3,-1){60}}
\put(265,50){\line(-1,0){40}}
\put(265,50){\line(-4,1){80}}
\put(225,50){\line(-2,1){40}}
\end{picture}}\notag
\end{align}

Then the line segments do not intersect to each other except the endpoints.
By identifying the regions over $L_{a}$ with $\G_{8}$, the set $X_{a}$ is a triangulation.

\begin{align}
{\tiny
\begin{picture}(450,100)(0,0)
\put(225,55){\circle{100}}
\put(225,55){\circle*{4}}
\linethickness{0.5mm}
\put(225,105){\circle*{4}}
\put(223,109){1}
\put(189,90){\circle*{4}}
\put(185,94){2}
\put(175,55){\circle*{4}}
\put(168,53){3}
\put(189,20){\circle*{4}}
\put(185,12){4}
\put(225,5){\circle*{4}}
\put(223,-4){5}
\put(261,20){\circle*{4}}
\put(261,12){6}
\put(275,55){\circle*{4}}
\put(278,53){7}
\put(261,90){\circle*{4}}
\put(262,94){8}
\linethickness{0.4mm}
\qbezier(225,55)(207,72.5)(189,90)
\qbezier(225,55)(200,55)(175,55)
\qbezier(175,55)(200,30)(225,5)
\qbezier(275,55)(250,30)(225,5)
\qbezier(275,55)(232,72.5)(189,90)
\qbezier(261,90)(225,90)(189,90)
\qbezier(175,55)(225,20)(275,55)
\qbezier(189,90)(240,60)(235,50)
\qbezier(175,55)(230,40)(235,50)
\end{picture}}\notag
\end{align}
\end{example}

We note that $X_{a}$ in the example above is a triangulation of $\G_{8}$. 
This is always the case as the following result shows.
\begin{proposition}\label{2-9}
We have $X_{a}\in \T(n)$ and $\top(X_{a})=a$.
\end{proposition}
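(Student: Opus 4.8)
The plan is to verify the two assertions separately, treating $\top(X_a)=a$ (equivalently, $|X_a|=n$ together with the correct terminal-point count) as a bookkeeping matter and reserving the real work for showing that $X_a$ is a genuine triangulation. For the count, I would first check that $k_s^l$ is well defined for every $l$ and every $s\in[1,a_l-\delta_l]$: since $a'_{l-1}+s$ ranges over $[a'_{l-1}+1,\,a'_l+1-\delta_l]\subseteq[\min a',\,\|a'\|]$, the interval property (\ref{eq4}) together with the periodicity of $a'$ guarantees that some index $k<l-1$ attains the value $a'_{l-1}+s$; and because $a'_{l-1}\neq a'_{l-1}+s$, the maximal such $k$ satisfies $l-n\le k\le l-2$, so $\langle (k_s^l)_n,l\rangle$ is a bona fide inner arc of length in $[2,n]$. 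Distinct values of $s$ give distinct heights $a'_{k_s^l}$, hence distinct arcs, so the terminal point $l$ carries exactly $\delta_l$ projective arcs and $a_l-\delta_l$ inner arcs; summing over $l$ gives $\top(X_a)=a$ and $|X_a|=\sum_l a_l=n$. Once $X_a$ is shown to consist of $n$ pairwise compatible arcs, Proposition \ref{2-4}(1) forces it to be maximal, since any triangulation containing it has exactly $n$ arcs, whence $X_a\in\T(n)$.

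The heart of the argument is pairwise compatibility, and it rests on two properties of each inner arc $\langle k,l\rangle=\langle(k_s^l)_n,l\rangle$ of $X_a$. The first is condition (a) of Lemma \ref{newlem1}(2), namely $a'_m<a'_k$ for all $m\in[k+1,l-1]$; I would derive this from the maximality in the definition of $k_s^l$ by a discrete intermediate value argument using $a'_p\ge a'_{p-1}-1$: if some $m\in[k+1,l-2]$ had $a'_m\ge a'_k=a'_{l-1}+s$, then (since $a'_{l-1}<a'_k$) the largest such $m$ would force $a'_m=a'_{l-1}+s$ with $m>k$ and $m<l-1$, contradicting the maximality of $k$. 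The second property is the height bound $a'_k\le a'_l+1$, which comes directly from the range constraint $s\le a_l-\delta_l$: indeed $a'_k-a'_l=s-(a_l-1)\le 1-\delta_l\le 1$.

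With these in hand I would check compatibility case by case; this is the step I expect to be the main obstacle, and the subtle point is that neither property alone suffices, so one must combine them. Two projective arcs are always compatible. A projective arc $\langle\bullet,j\rangle$, for which $a'_j=\|a'\|$, and an inner arc $\langle k,l\rangle$ can fail to be compatible only if $j\in[k+1,l-1]$; but then condition (a) gives $a'_j<a'_k\le\|a'\|=a'_j$, a contradiction, so they are compatible. For two inner arcs I would first note, again from condition (a), that the open span of each contains no peak, so each inner arc lies inside a single fan $[j_i,j_i^+]$; hence two inner arcs in different fans have essentially disjoint spans and are compatible, while two crossing inner arcs must lie in a common fan, which I unroll to an interval of integers so that a crossing becomes a proper overlap $k_1<k_2<l_1<l_2$. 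Then condition (a) of the first arc at $m=k_2$ gives $a'_{k_2}<a'_{k_1}$, condition (a) of the second arc at $m=l_1$ gives $a'_{l_1}<a'_{k_2}$, and the height bound for the first arc gives $a'_{k_1}\le a'_{l_1}+1$; chaining these integer inequalities yields $a'_{k_1}\le a'_{l_1}+1\le a'_{k_2}\le a'_{k_1}-1$, a contradiction. Thus no two arcs of $X_a$ cross, which together with the first paragraph completes the proof that $X_a\in\T(n)$ and $\top(X_a)=a$.
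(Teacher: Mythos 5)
Your proof is correct, and at its core it runs along the same lines as the paper's argument: both extract from the maximality in the definition of $k_{s}^{l}$ the key monotonicity property $a'_{m}<a'_{k_{s}^{l}}$ for $m\in[k_{s}^{l}+1,l-1]$. The difference is one of completeness rather than strategy. The paper's proof of Proposition \ref{2-9} is a two-sentence sketch: it dismisses $\top(X_{a})=a$ as clear, states the monotonicity property, and then asserts via the picture of chords drawn on the graph of $a'$ that the segments do not intersect; it never verifies maximality of $X_{a}$ and, more importantly, never records the second inequality that the non-crossing claim actually requires. You correctly identified that monotonicity alone does not exclude a proper overlap $k_{1}<k_{2}<l_{1}<l_{2}$: one also needs the height bound $a'_{k}\le a'_{l}+1$, which encodes the range constraint $s\le a_{l}-\delta_{l}$ (this pair of conditions is exactly (a) and (b) of Lemma \ref{newlem1}(2), whereas the paper's proof invokes only (a)), and your chain $a'_{k_{1}}\le a'_{l_{1}}+1\le a'_{k_{2}}\le a'_{k_{1}}-1$ supplies precisely the missing step. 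Your remaining ingredients --- well-definedness of $k_{s}^{l}$ from the interval property (\ref{eq4}) and periodicity, with $l-n\le k_{s}^{l}\le l-2$ guaranteeing a legitimate arc; the reduction of inner arcs to a single fan because no peak lies strictly inside a span; the projective-versus-inner case via $a'_{j}=||a'||$; and maximality of $X_{a}$ from $|X_{a}|=n$ combined with Proposition \ref{2-4}(1) --- are all sound and make explicit what the paper leaves to the reader's inspection of the figure.
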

\begin{proof}
We have only to show that $X_{a}\in \T(n)$ since $\top(X_{a})=a$ is clear from the construction.
Indeed, by maximality of $k_{s}^{l}$, for any integer $j\in [k_{s}^{l}+1, l-1]$ we have $a'_{j}<a'_{k_{s}^{l}}$. 
Therefore the line segments connecting $(l,a'_{l})$ and $(k_{s^{l}},a'_{k_{s}^{l}})$ do not intersect to others.
\end{proof}

Now, we are ready to prove Theorem \ref{2-8}.
\begin{proof}[Proof of Theorem \ref{2-8}]
We show that the map $a\mapsto X_{a}$ gives an inverse of the map $X\mapsto \top(X)$.
It follows from that Proposition \ref{newprop1} and \ref{2-9}.
\end{proof}
The next result gives a generalization of Theorem \ref{2-8}, 
For $l_{1},l_{2},\ldots,l_{n}\ge 1$, 
we denote by $\Z(n;l_{1},\ldots,l_{n})$ the subset of $\Z(n)$ consisting of the integer sequences $a=(a_{1},\ldots, a_{n})$
such that $\ell_{j}(a)\le l_{j}$ for each $j\in [1,n]$, where $\ell_{j}(a)$ is given by
\begin{align}
\ell_{j}(a):=
\begin{cases}
0\ \ \textnormal{if $a_{j}-\delta_{j}=0$}.\\
(j-k_{a_{j}-\delta_{j}}^{j})_{n}\ \ \textnormal{if $a_{j}-\delta_{j}>0$}.
\end{cases}\notag
\end{align}
Note that $\ell_{j}(a)$ is at most $n$ and equals to the maximal length of inner arcs in $X_{a}$ with a terminal point $j$
if $a_{j}-\delta_{j}>0$.
\begin{theorem}\label{2-10}
Let $l_{1},\ldots,l_{n}\ge 1$.
There are mutually inverse bijections
\begin{align}
\T(n;l_{1},\ldots,l_{n})\longleftrightarrow \Z(n;l_{1},\ldots,l_{n}) \notag
\end{align}
given by $X\mapsto\top(X)$ and $a\mapsto X_{a}$.
\end{theorem}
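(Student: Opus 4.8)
The plan is to deduce Theorem \ref{2-10} from Theorem \ref{2-8} by checking that the bijection $\top\colon\T(n)\to\Z(n)$ already established there, with inverse $a\mapsto X_{a}$, restricts to a bijection between the prescribed subsets. Since $\top$ and $a\mapsto X_{a}$ are mutually inverse on the full sets, it suffices to prove the single equivalence
\[
X\in\T(n;l_{1},\ldots,l_{n})\iff \top(X)\in\Z(n;l_{1},\ldots,l_{n}),
\]
or, equivalently (using $X=X_{\top(X)}$), that for $a:=\top(X)$ and each $j\in[1,n]$ the number $\ell_{j}(a)$ equals the maximal length of an inner arc in $X_{a}$ with terminal point $j$, and is $0$ when no such arc exists. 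This is precisely the claim recorded in the note following the definition of $\Z(n;l_{1},\ldots,l_{n})$; once it is verified, the defining length constraints of the two subsets coincide term by term and the theorem follows at once.

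First I would dispose of the trivial case. By the construction of $X_{a}$, the inner arcs with terminal point $j$ are exactly $\langle(k_{s}^{j})_{n},j\rangle$ for $s\in[1,a_{j}-\delta_{j}]$, so there are $a_{j}-\delta_{j}$ of them; in particular, if $a_{j}-\delta_{j}=0$ there are none, the length condition at $j$ is vacuous, and $\ell_{j}(a)=0\le l_{j}$ automatically. Thus I may assume $a_{j}-\delta_{j}>0$ and must identify the longest of these arcs.

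The heart of the argument is to show that the length $(j-k_{s}^{j})_{n}$ of the arc $\langle(k_{s}^{j})_{n},j\rangle$ is strictly increasing in $s$, so that the maximum is attained at $s=a_{j}-\delta_{j}$, giving $\ell_{j}(a)=(j-k_{a_{j}-\delta_{j}}^{j})_{n}$ as claimed. For $s<s'$ we have $a'_{k_{s'}^{j}}=a'_{j-1}+s'>a'_{j-1}+s=a'_{k_{s}^{j}}$. By the maximality defining $k_{s}^{j}$ (the same fact used in the proof of Proposition \ref{2-9}), every $m\in[k_{s}^{j}+1,j-1]$ satisfies $a'_{m}<a'_{k_{s}^{j}}<a'_{k_{s'}^{j}}$, so the index $k_{s'}^{j}$, where $a'$ attains the strictly larger value, cannot lie in $[k_{s}^{j}+1,j-1]$; hence $k_{s'}^{j}<k_{s}^{j}$ and the arc for $s'$ is strictly longer. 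This monotonicity is the one genuinely nontrivial point, and I expect it to be the main obstacle, as it relies on the interval property (\ref{eq4}) of $a'$ together with the nesting forced by the maximal choice of the $k_{s}^{j}$.

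With the note established, the conclusion is immediate: $X_{a}$ lies in $\T(n;l_{1},\ldots,l_{n})$ precisely when the maximal inner-arc length $\ell_{j}(a)$ at each terminal point $j$ is at most $l_{j}$, which is exactly the condition $a\in\Z(n;l_{1},\ldots,l_{n})$. Hence $\top$ and $a\mapsto X_{a}$ restrict to mutually inverse bijections between $\T(n;l_{1},\ldots,l_{n})$ and $\Z(n;l_{1},\ldots,l_{n})$, as required.
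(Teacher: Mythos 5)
Your proof is correct and takes essentially the same route as the paper, which disposes of Theorem \ref{2-10} in one line as ``clear from Theorem \ref{2-8} and the definition of $X_{a}$'' --- that is, by restricting the mutually inverse bijections of Theorem \ref{2-8}, exactly your strategy. Your monotonicity argument ($k_{s'}^{j}<k_{s}^{j}$ for $s<s'$, so the maximal inner-arc length at $j$ is $(j-k_{a_{j}-\delta_{j}}^{j})_{n}=\ell_{j}(a)$) is a valid verification of the claim the paper records without proof in the note following the definition of $\Z(n;l_{1},\ldots,l_{n})$, so you have merely filled in a detail the paper leaves implicit.
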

\begin{proof}
This is clear from Theorem \ref{2-8} and the definition of $X_{a}$.
\end{proof}
\subsection{Summary and applications}\label{summary}
Summarizing Theorem \ref{2-11}, \ref{2-6}, and \ref{2-10}, we have the following result.
\begin{theorem}\label{2-14}
Let $\Lambda$ be a Nakayama algebra with $n$ simple modules and $\ell_{i}:=\ell(P_{i})$ for each $i\in [1,n]$.
Then there are bijections between
\begin{itemize}
\item[(1)] $\ttilt\Lambda$,
\item[(2)] $\psttilt_{\np}\Lambda$,
\item[(3)] $\T(n;\ell_{1},\ell_{2},\ldots,\ell_{n})$,
\item[(4)] $\Z(n;\ell_{1},\ell_{2},\ldots,\ell_{n})$.
\end{itemize}
\end{theorem}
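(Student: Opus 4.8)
The plan is to assemble this theorem directly by composing the three bijections already established in this subsection; no new argument is needed, since every link in the chain $(1)\leftrightarrow(2)$, $(1)\leftrightarrow(3)$, $(3)\leftrightarrow(4)$ has been proved separately. The only thing to verify is that the length parameters $\ell_j=\ell(P_j)$ appearing in the hypothesis agree with those occurring in each of the cited statements, which they do by construction.

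First I would invoke Theorem \ref{2-11}(1), which provides mutually inverse bijections between $\ttilt\Lambda$ and $\psttilt_{\np}\Lambda$, given explicitly by $M\mapsto M_{\np}$ and $N\mapsto N\oplus\phi(e_{N})\Lambda$. This yields the correspondence $(1)\leftrightarrow(2)$ and already identifies the two module-theoretic sides of the statement. Note that no restriction on the $\ell_j$ is required here, so this part holds for an arbitrary Nakayama algebra.

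Next, for $(1)\leftrightarrow(3)$ I would apply Theorem \ref{2-6}: the assignment $\langle i,j\rangle\mapsto M_{i,j}$ of Proposition \ref{2-5} induces a bijection $\ttilt\Lambda\to\T(n;\ell_{1},\ell_{2},\ldots,\ell_{n})$, with the bounds $\ell_j=\ell(P_j)$ exactly as in the present hypothesis. Finally, for $(3)\leftrightarrow(4)$ I would apply Theorem \ref{2-10} with $l_j=\ell_j$, giving mutually inverse bijections $\T(n;\ell_{1},\ldots,\ell_{n})\leftrightarrow\Z(n;\ell_{1},\ldots,\ell_{n})$ via $X\mapsto\top(X)$ and $a\mapsto X_{a}$.

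Since all the substantive content resides in Theorems \ref{2-11}, \ref{2-6} and \ref{2-10}, there is no genuine obstacle: the proof consists of observing that these three bijections share compatible endpoints and composing them. The mildest point of care is purely bookkeeping, namely confirming that the same $n$-tuple $(\ell_1,\ldots,\ell_n)$ governs the length constraints in both the triangulation side and the integer-sequence side, which is immediate from the definition of $\Z(n;\ell_1,\ldots,\ell_n)$ through the quantities $\ell_j(a)$.
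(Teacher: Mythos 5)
Your proposal is correct and coincides exactly with the paper's own treatment: the paper states Theorem \ref{2-14} as a direct summary obtained by composing the bijections of Theorems \ref{2-11}(1), \ref{2-6} and \ref{2-10}, which is precisely your chain $(1)\leftrightarrow(2)$, $(1)\leftrightarrow(3)$, $(3)\leftrightarrow(4)$. Your bookkeeping remark that the same tuple $(\ell_{1},\ldots,\ell_{n})$ governs both the triangulation and integer-sequence sides is likewise implicit in the paper's use of Theorem \ref{2-10} with $l_{j}=\ell_{j}$.
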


The following corollary is an immediate consequence of Theorem \ref{2-14}.
We identify $\top M=S_{1}^{a_{1}}\oplus S_{2}^{a_{2}}\oplus \cdots \oplus S_{n}^{a_{n}}$ with a sequence $(a_{1},a_{2},\ldots,a_{n})$.
\begin{corollary}
Assume that $\ell(P_{i})\ge n$ for all $i\in [1,n]$.
Then the map $M\mapsto \top M$ gives a bijection 
\begin{align}
\ttilt\Lambda \longrightarrow \Z(n). \notag
\end{align}
In particular, the cardinality of $\sttilt\Lambda$ is
\begin{align}
|\sttilt\Lambda|=\binom{2n}{n}.\notag
\end{align}
\end{corollary}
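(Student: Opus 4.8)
The plan is to derive the statement as a direct specialization of Theorem \ref{2-14} followed by an elementary count. First I would observe that the hypothesis $\ell(P_i)\ge n$ renders all the length constraints vacuous: every inner arc of $\G_n$ has length at most $n$ by definition, so $\T(n;\ell_1,\ldots,\ell_n)=\T(n)$, and likewise $\ell_j(a)\le n$ always holds (as noted after its definition), so $\Z(n;\ell_1,\ldots,\ell_n)=\Z(n)$. Hence the four sets in Theorem \ref{2-14} specialize to $\ttilt\Lambda$, $\psttilt_{\np}\Lambda$, $\T(n)$ and $\Z(n)$, and in particular $\ttilt\Lambda$ is already in bijection with $\Z(n)$.

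Next I would verify that the composite bijection $\ttilt\Lambda\to\Z(n)$ supplied by Theorem \ref{2-14} is literally the map $M\mapsto\top M$. Tracking through the two maps that build it: by Theorem \ref{2-6} (via Proposition \ref{2-5}) an indecomposable $\tau$-rigid summand $M_{i,j}$ of $M$ corresponds to the admissible arc $\langle i,j\rangle$, whose terminal point is $j$, and $\top M_{i,j}=S_j$. Then the map of Theorem \ref{2-10}, namely $\top(X)$, records in its $p$-th entry the number of arcs of the resulting triangulation with terminal point $p$. Composing, the $p$-th entry counts the indecomposable summands of $M$ with top $S_p$, which is exactly the multiplicity of $S_p$ in $\top M$. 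Under the stated identification of $\top M=S_1^{a_1}\oplus\cdots\oplus S_n^{a_n}$ with $(a_1,\ldots,a_n)$, this is the asserted bijection; moreover $|M|=|\Lambda|=n$ forces $\sum_i a_i=n$, so the target is indeed $\Z(n)$.

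For the cardinality, I would combine $\sttilt\Lambda=\ttilt\Lambda\coprod\psttilt\Lambda$ from Proposition \ref{1-2}(1) with the bijection $\ttilt\Lambda\leftrightarrow\psttilt\Lambda$ of Theorem \ref{2-11}(2), which is available precisely because $\ell(P_i)\ge n$. This gives $|\sttilt\Lambda|=2|\ttilt\Lambda|=2|\Z(n)|$. Finally $|\Z(n)|$ is the number of weak compositions of $n$ into $n$ parts, namely $\binom{2n-1}{n-1}=\binom{2n-1}{n}$ by a stars-and-bars count, and the Pascal identity together with symmetry, $\binom{2n}{n}=\binom{2n-1}{n-1}+\binom{2n-1}{n}=2\binom{2n-1}{n}$, yields $|\sttilt\Lambda|=\binom{2n}{n}$.

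The only genuinely non-formal step is the middle one: confirming that the abstract bijection of Theorem \ref{2-14} coincides on the nose with taking tops, which forces one to unwind the definitions of $M_{i,j}$ and of $\top(X)$ and to match the terminal points of arcs with the simple tops of the corresponding modules. Everything else is either a vacuity check on the constraints or the elementary binomial identity above, so I expect no further obstacle.
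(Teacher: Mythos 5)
Your proposal is correct and takes essentially the same route as the paper: the paper likewise specializes Theorem \ref{2-14} (with the length constraints vacuous under $\ell(P_i)\ge n$), identifies the bijection with $M\mapsto\top M$ via the observation $\top M=\top X$ for the corresponding triangulation $X$, and then computes $|\sttilt\Lambda|=2|\ttilt\Lambda|=2|\Z(n)|$ using Theorem \ref{2-11}(2) and the identity $2\binom{2n-1}{n-1}=\binom{2n}{n}$. Your explicit unwinding of how terminal points of arcs match simple tops is merely a more detailed rendering of the paper's one-line claim.
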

\begin{proof}
For each $M\in \ttilt\Lambda$, 
we have 
\begin{align}
\top M =\top X, \notag
\end{align}
where $X$ is the triangulation corresponding to $M$ by Theorem \ref{2-8}.
Thus the first assertion follows from Theorem \ref{2-14}.
Next, we have 
\begin{align}
|\sttilt\Lambda|=|\ttilt\Lambda|+|\psttilt\Lambda|\overset{\ref{2-11}(2)}{=}2|\ttilt\Lambda|\overset{\ref{2-14}}{=}2|\Z(n)|. \notag
\end{align}
Now, it is elementary that $2|\Z(n)|=2\binom{2n-1}{n-1}=\binom{2n}{n}$ holds.
\end{proof}

As an application of Theorem \ref{2-14}, we give a proof of the following well-known result ({\it e.g.} \cite{BK}).
\begin{corollary}\label{2-15}
Let $\Lambda:=K\vec{A}_{n}$ be a path algebra.
Then there are bijections between
\begin{itemize}
\item[(1)] $\tilt\Lambda$,
\item[(2)] $\S(n):=\{ X\in \T(n)\mid \langle \bullet,n\rangle\in X\}$,
\item[(3)] $\Y(n):=\{ a\in \Z(n)\mid ||a'||= 0 \}$.
\end{itemize}
In particular, we have
\begin{align}
|\tilt\Lambda|=C_{n}, \notag
\end{align}
where $C_{n}$ is the $n$-th Catalan number $\frac{1}{n+1}\binom{2n}{n}$.
\end{corollary}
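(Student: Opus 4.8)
The plan is to recognise $\Lambda=K\vec{A}_{n}$ as a connected Nakayama algebra, feed it into Theorem~\ref{2-14}, and then translate the two combinatorial sets produced there into $\S(n)$ and $\Y(n)$. First I would record the two painless reductions. Since $\Lambda$ is hereditary, Proposition~\ref{1-2}(3) gives $\tilt\Lambda=\ttilt\Lambda$; and since the quiver of $\Lambda$ is $\vec{A}_{n}$, Proposition~\ref{ass3.2} shows $\Lambda$ is Nakayama, a direct count of paths giving $\ell_{j}:=\ell(P_{j})=j$ for every $j\in[1,n]$. In particular $\ell_{n}=n$ while $\ell_{j}=j<n$ for $j<n$, and $P_{n}$ is the unique indecomposable projective-injective module, corresponding under Proposition~\ref{2-5} to the projective arc $\langle\bullet,n\rangle$. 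Applying Theorem~\ref{2-14} then produces bijections $\tilt\Lambda\leftrightarrow\T(n;1,2,\ldots,n)\leftrightarrow\Z(n;1,2,\ldots,n)$, so the corollary reduces to identifying $\T(n;1,\ldots,n)=\S(n)$ and $\Z(n;1,\ldots,n)=\Y(n)$, together with the Catalan count.

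The heart of the argument, and the step I expect to be the main obstacle, is the set equality $\T(n;1,2,\ldots,n)=\S(n)$, which I would establish by two inclusions. For $\S(n)\subseteq\T(n;1,\ldots,n)$, suppose $\langle\bullet,n\rangle\in X$ and take any inner arc $\langle i,j\rangle\in X$; if the point $n$ lay strictly inside the counterclockwise span $(i+1)_{n},\ldots,(j-1)_{n}$ of the arc, then $\langle i,j\rangle$ would separate $n$ from the puncture and hence cross $\langle\bullet,n\rangle$, contradicting compatibility. A short residue computation (handling $j=n$ trivially) shows that ``$n$ is not strictly inside the span'' is exactly the inequality $\ell(\langle i,j\rangle)\le j=\ell_{j}$, giving $X\in\T(n;1,\ldots,n)$. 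For the reverse inclusion I would argue contrapositively: if $\langle\bullet,n\rangle\notin X$, then by Proposition~\ref{2-4} the point $n$ falls in the gap $(j_{r},j_{1}+n)$ between consecutive projective terminal points, forcing the bridging inner arc $\langle j_{r},j_{1}+n\rangle\in X$; its terminal point is $j_{1}$ and its length is $j_{1}+n-j_{r}>j_{1}=\ell_{j_{1}}$ (as $j_{r}<n$), so $X\notin\T(n;1,\ldots,n)$.

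Once $\T(n;1,\ldots,n)=\S(n)$ is in hand, the passage to $\Y(n)$ is clean. By Lemma~\ref{newlem1}(1), $\langle\bullet,n\rangle\in X$ if and only if $a'_{n}=||a'||$ for $a=\top(X)$; but $a'_{n}=\sum_{j=1}^{n}(a_{j}-1)=0$ always, so this says precisely $||a'||=0$, that is, $a\in\Y(n)$. Hence the bijection $\top$ of Theorem~\ref{2-8} restricts to a bijection $\S(n)\leftrightarrow\Y(n)$, and combined with the previous paragraph and Theorem~\ref{2-10} this simultaneously yields $\Z(n;1,\ldots,n)=\Y(n)$. This closes the three-way correspondence between $\tilt\Lambda$, $\S(n)$ and $\Y(n)$.

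For the cardinality it remains to count $\Y(n)$. The condition $||a'||=0$ unwinds to the ballot inequality $\sum_{j=1}^{i}a_{j}\le i$ for all $i\in[1,n]$, with equality at $i=n$. Encoding $a=(a_{1},\ldots,a_{n})$ as the monotone lattice path $R\,U^{a_{1}}R\,U^{a_{2}}\cdots R\,U^{a_{n}}$ from $(0,0)$ to $(n,n)$, this condition says exactly that the path stays weakly below the diagonal, and such paths are counted by the Catalan number $C_{n}=\frac{1}{n+1}\binom{2n}{n}$. Therefore $|\tilt\Lambda|=|\S(n)|=|\Y(n)|=C_{n}$, as claimed. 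The only genuinely delicate point is the geometric/residue bookkeeping in the second paragraph; everything else is assembly of results already proved.
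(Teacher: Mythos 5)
Your proposal is correct and follows the same overall architecture as the paper: reduce via Proposition \ref{1-2}(3) and Theorem \ref{2-14}, then identify $\S(n)=\T(n;1,\ldots,n)$ and $\Y(n)=\Z(n;1,\ldots,n)$. Within that skeleton you deviate at two points, both soundly. For the inclusion $\T(n;1,\ldots,n)\subseteq\S(n)$ the paper argues directly: $\langle\bullet,n\rangle$ is compatible with every arc of $X$ (precisely your length computation, which the paper leaves implicit), so maximality of the triangulation forces $\langle\bullet,n\rangle\in X$; your contrapositive via the bridging arc $\langle j_{r},j_{1}\rangle$ of length $j_{1}+n-j_{r}>j_{1}$ reaches the same conclusion and is legitimate, resting on the remark before Proposition \ref{2-4} that a gap $j_{i}^{+}-j_{i}>1$ forces the inner arc $\langle j_{i},j_{i}^{+}\rangle\in X$; the paper's maximality argument is marginally shorter, yours is more explicit about where the length bound fails. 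Your route to $\Y(n)=\Z(n;1,\ldots,n)$ through Lemma \ref{newlem1}(1) and the observation $a'_{n}=0$ is essentially the paper's argument repackaged (the paper checks both inclusions via $X_{a}$ and the quantities $k^{i}_{a_{i}-\delta_{i}}$, $\ell_{i}(a)$, also using the already-established equality of triangulation sets). The genuinely different step is the Catalan count: the paper identifies $\S(n)$ with triangulations of an $(n+2)$-regular polygon without puncture, whereas you count $\Y(n)$ directly by unwinding $||a'||=0$ to the ballot condition $\sum_{j\le i}a_{j}\le i$ and invoking the standard lattice-path count of paths weakly below the diagonal; your version has the virtue of being self-contained on the integer-sequence side of the correspondence, while the paper's is a one-line appeal to classical polygon combinatorics. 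No gaps.
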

Note that $\S(n)$ can identify the set of triangulations of $(n+2)$-regular polygon (with no puncture).
Therefore the cardinality of $\S(n)$ is equal to the $n$-th Catalan number $C_{n}$
\begin{proof}
Since $\Lambda$ is hereditary, we have $\ttilt\Lambda=\tilt\Lambda$ by Proposition \ref{1-2}(3).
Moreover, there are bijections between $\tilt\Lambda$, $\T(n;1,2,\ldots,n)$, and $\Z(n;1,2,\ldots,n)$ by Theorem \ref{2-14}.

First, we show that
\begin{align}\label{eq3}
\S(n)=\T(n;1,2,\ldots,n).
\end{align}
Indeed, assume that $X\in\S(n)$.
Since $X$ contains the projective arc $\langle \bullet,n\rangle$, 
we have $\ell(\langle i,j\rangle)\le j$ for each inner arc $\langle i,j\rangle\in X$.
Thus, we have $X\in \T(n;1,2,\ldots,n)$.
Conversely, assume that $X\in \T(n;1,2,\ldots,n)$.
Clearly, the projective arc $\langle \bullet, n\rangle$ is compatible with all admissible arc in $X$.
Thus, we have $\langle \bullet ,n\rangle\in X$, and hence $X\in \S(n)$.

Next, we show that
\begin{align}
\Y(n)=\Z(n;1,2,\ldots,n). \notag
\end{align}
Indeed, if $a\in\Z(n;1,2,\ldots,n)$, then $X_{a}$ contains the projective arc $\langle \bullet,n\rangle$ by (\ref{eq3}).
Thus we have $||a'||=a'_{n}(=0)$, and hence $a\in \Y(n)$.
Conversely, if $a\in\Y(n)$, then $a'_{n}=0=||a'||$.
Thus we have $0\le k_{a_{i}-\delta_{i}}^{i}<i\le n$, and hence $\ell_{i}(a)=i-k^{i}_{a_{i}-\delta_{i}}\le i$.
Therefore $a\in\Z(n;1,2,\ldots,n)$.
\end{proof}

\begin{picture}(420,600)(0,0)
\put(125,-10){Example of Theorem \ref{2-14}}
\thicklines 
\put(0,595){\line(1,0){420}}
\put(0,575){\line(1,0){420}}
\put(0,507){\line(1,0){420}}
\put(0,452){\line(1,0){420}}
\put(0,397){\line(1,0){420}}
\put(0,342){\line(1,0){420}}
\put(0,287){\line(1,0){420}}
\put(0,232){\line(1,0){420}}
\put(0,177){\line(1,0){420}}
\put(0,122){\line(1,0){420}}
\put(0,67){\line(1,0){420}}
\put(0,12){\line(1,0){420}}
\put(0,12){\line(0,1){583}}
\put(288,12){\line(0,1){583}}
\put(290,12){\line(0,1){583}}
\put(420,12){\line(0,1){583}}
\put(20,580){$\T(4)$}
\put(105,580){$\Z(4)$}
\put(165,580){$\ttilt\Lambda_{4}^{4}$}
\put(225,580){$\psttilt\Lambda_{4}^{4}$}
\put(300,580){$\ttilt\Lambda_{4}^{3}$}
\put(360,580){$\ttilt\Lambda_{4}^{2}$}
\put(40,540){\circle{46}}
\put(40,540){\circle*{3}}
\put(40,563){\circle*{3}}
\put(38,566){{\tiny{1}}}
\put(63,540){\circle*{3}}
\put(65,540){{\tiny{4}}}
\put(40,517){\circle*{3}}
\put(38,509){{\tiny{3}}}
\put(17,540){\circle*{3}}
\put(11,540){{\tiny{2}}}
\put(40,540){\line(0,1){23}}
\put(40,540){\line(1,0){23}}
\put(40,540){\line(0,-1){23}}
\put(40,540){\line(-1,0){23}}
\put(40,480){\circle{46}}
\put(40,480){\circle*{3}}
\put(40,503){\circle*{3}}
\put(63,480){\circle*{3}}
\put(40,457){\circle*{3}}
\put(17,480){\circle*{3}}
\put(40,480){\line(0,1){23}}
\put(40,480){\line(1,0){23}}
\put(40,480){\line(0,-1){23}}
\qbezier(40,457)(5,480)(40,503)
\put(40,425){\circle{46}}
\put(40,425){\circle*{3}}
\put(40,448){\circle*{3}}
\put(63,425){\circle*{3}}
\put(40,402){\circle*{3}}
\put(17,425){\circle*{3}}
\put(40,425){\line(0,1){23}}
\put(40,425){\line(0,-1){23}}
\qbezier(40,402)(5,425)(40,448)
\qbezier(40,448)(75,425)(40,402)
\put(40,370){\circle{46}}
\put(40,370){\circle*{3}}
\put(40,393){\circle*{3}}
\put(63,370){\circle*{3}}
\put(40,347){\circle*{3}}
\put(17,370){\circle*{3}}
\put(40,370){\line(0,1){23}}
\put(40,370){\line(1,0){23}}
\qbezier(40,347)(5,370)(40,393)
\qbezier(63,370)(40,350)(30,360)
\qbezier(30,360)(20,370)(40,393)
\put(40,315){\circle{46}}
\put(40,315){\circle*{3}}
\put(40,338){\circle*{3}}
\put(63,315){\circle*{3}}
\put(40,292){\circle*{3}}
\put(17,315){\circle*{3}}
\put(40,315){\line(0,1){23}}
\put(40,315){\line(1,0){23}}
\qbezier(63,315)(40,295)(30,305)
\qbezier(30,305)(20,315)(40,338)
\qbezier(63,315)(40,280)(17,315)
\put(40,260){\circle{46}}
\put(40,260){\circle*{3}}
\put(40,283){\circle*{3}}
\put(63,260){\circle*{3}}
\put(40,237){\circle*{3}}
\put(17,260){\circle*{3}}
\put(40,260){\line(0,1){23}}
\qbezier(40,237)(5,260)(40,283)
\qbezier(63,260)(40,240)(30,250)
\qbezier(30,250)(20,260)(40,283)
\qbezier(40,283)(50,250)(40,250)
\qbezier(40,250)(30,250)(40,283)
\put(40,205){\circle{46}}
\put(40,205){\circle*{3}}
\put(40,228){\circle*{3}}
\put(63,205){\circle*{3}}
\put(40,182){\circle*{3}}
\put(17,205){\circle*{3}}
\put(40,205){\line(0,1){23}}
\qbezier(40,182)(5,205)(40,228)
\qbezier(40,228)(75,205)(40,182)
\qbezier(40,228)(50,195)(40,195)
\qbezier(40,195)(30,195)(40,228)
\put(40,150){\circle{46}}
\put(40,150){\circle*{3}}
\put(40,173){\circle*{3}}
\put(63,150){\circle*{3}}
\put(40,127){\circle*{3}}
\put(17,150){\circle*{3}}
\put(40,150){\line(0,1){23}}
\qbezier(40,173)(75,150)(40,127)
\qbezier(40,173)(50,140)(40,140)
\qbezier(40,140)(30,140)(40,173)
\qbezier(40,173)(60,150)(50,140)
\qbezier(50,140)(40,130)(17,150)
\put(40,95){\circle{46}}
\put(40,95){\circle*{3}}
\put(40,118){\circle*{3}}
\put(63,95){\circle*{3}}
\put(40,72){\circle*{3}}
\put(17,95){\circle*{3}}
\put(40,95){\line(0,1){23}}
\qbezier(63,95)(40,60)(17,95)
\qbezier(40,118)(50,85)(40,85)
\qbezier(40,85)(30,85)(40,118)
\qbezier(40,118)(60,95)(50,85)
\qbezier(50,85)(40,75)(17,95)
\put(40,40){\circle{46}}
\put(40,40){\circle*{3}}
\put(40,63){\circle*{3}}
\put(63,40){\circle*{3}}
\put(40,17){\circle*{3}}
\put(17,40){\circle*{3}}
\put(40,40){\line(0,1){23}}
\qbezier(63,40)(40,20)(30,30)
\qbezier(30,30)(20,40)(40,63)
\qbezier(63,40)(40,5)(17,40)
\qbezier(40,63)(50,30)(40,30)
\qbezier(40,30)(30,30)(40,63)
\put(90,540){$(1,1,1,1)$}
\put(90,480){$(1,0,2,1)$}
\put(90,425){$(2,0,2,0)$}
\put(90,370){$(1,0,1,2)$}
\put(90,315){$(1,0,0,3)$}
\put(90,260){$(2,0,1,1)$}
\put(90,205){$(3,0,1,0)$}
\put(90,150){$(4,0,0,0)$}
\put(90,95){$(3,0,0,1)$}
\put(90,40){$(2,0,0,2)$}
\put(168,540){$\begin{smallmatrix}
1\\
4\\
3\\
2
\end{smallmatrix}\begin{smallmatrix}
2\\
1\\
4\\
3
\end{smallmatrix}\begin{smallmatrix}
3\\
2\\
1\\
4
\end{smallmatrix}\begin{smallmatrix}
4\\
3\\
2\\
1
\end{smallmatrix}$}
\put(168,480){$\begin{smallmatrix}
1\\
4\\
3\\
2
\end{smallmatrix}\begin{smallmatrix}
3\\
2\\
1\\
4
\end{smallmatrix}\begin{smallmatrix}
4\\
3\\
2\\
1
\end{smallmatrix}\begin{smallmatrix}
3
\end{smallmatrix}$}
\put(168,425){$\begin{smallmatrix}
1\\
4\\
3\\
2
\end{smallmatrix}\begin{smallmatrix}
3\\
2\\
1\\
4
\end{smallmatrix}\begin{smallmatrix}
1
\end{smallmatrix}\begin{smallmatrix}
3
\end{smallmatrix}$}\put(168,370){$\begin{smallmatrix}
1\\
4\\
3\\
2
\end{smallmatrix}\begin{smallmatrix}
4\\
3\\
2\\
1
\end{smallmatrix}\begin{smallmatrix}
4\\
3
\end{smallmatrix}\begin{smallmatrix}
3
\end{smallmatrix}$}\put(168,315){$\begin{smallmatrix}
1\\
4\\
3\\
2
\end{smallmatrix}\begin{smallmatrix}
4\\
3\\
2\\
1
\end{smallmatrix}\begin{smallmatrix}
4\\
3
\end{smallmatrix}\begin{smallmatrix}
4\\
\end{smallmatrix}$}
\put(168,260){$\begin{smallmatrix}
1\\
4\\
3\\
2
\end{smallmatrix}\begin{smallmatrix}
1\\
4\\
3
\end{smallmatrix}\begin{smallmatrix}
4\\
3
\end{smallmatrix}\begin{smallmatrix}
3
\end{smallmatrix}$}
\put(168,205){$\begin{smallmatrix}
1\\
4\\
3\\
2
\end{smallmatrix}\begin{smallmatrix}
1\\
4\\
3
\end{smallmatrix}\begin{smallmatrix}
1
\end{smallmatrix}\begin{smallmatrix}
3
\end{smallmatrix}$}
\put(168,150){$\begin{smallmatrix}
1\\
4\\
3\\
2
\end{smallmatrix}\begin{smallmatrix}
1\\
4\\
3
\end{smallmatrix}\begin{smallmatrix}
1\\
4
\end{smallmatrix}\begin{smallmatrix}
1
\end{smallmatrix}$}
\put(168,95){$\begin{smallmatrix}
1\\
4\\
3\\
2
\end{smallmatrix}\begin{smallmatrix}
1\\
4\\
3
\end{smallmatrix}\begin{smallmatrix}
1\\
4
\end{smallmatrix}\begin{smallmatrix}
4
\end{smallmatrix}$}
\put(168,40){$\begin{smallmatrix}
1\\
4\\
3\\
2
\end{smallmatrix}\begin{smallmatrix}
1\\
4\\
3
\end{smallmatrix}\begin{smallmatrix}
4\\
3
\end{smallmatrix}\begin{smallmatrix}
4
\end{smallmatrix}$}
\put(240,540){$\begin{smallmatrix}
\{ 0\}
\end{smallmatrix}$}
\put(244,480){$\begin{smallmatrix}
3
\end{smallmatrix}$}
\put(240,425){$\begin{smallmatrix}
1
\end{smallmatrix}\begin{smallmatrix}
3
\end{smallmatrix}$}
\put(240,370){$\begin{smallmatrix}
4\\
3
\end{smallmatrix}\begin{smallmatrix}
3
\end{smallmatrix}$}
\put(240,315){$\begin{smallmatrix}
4\\
3
\end{smallmatrix}\begin{smallmatrix}
4
\end{smallmatrix}$}
\put(236,260){$\begin{smallmatrix}
1\\
4\\
3
\end{smallmatrix}\begin{smallmatrix}
4\\
3
\end{smallmatrix}\begin{smallmatrix}
3
\end{smallmatrix}$}
\put(236,205){$\begin{smallmatrix}
1\\
4\\
3
\end{smallmatrix}\begin{smallmatrix}
1
\end{smallmatrix}\begin{smallmatrix}
3
\end{smallmatrix}$}
\put(236,150){$\begin{smallmatrix}
1\\
4\\
3
\end{smallmatrix}\begin{smallmatrix}
1\\
4
\end{smallmatrix}\begin{smallmatrix}
1
\end{smallmatrix}$}
\put(236,95){$\begin{smallmatrix}
1\\
4\\
3
\end{smallmatrix}\begin{smallmatrix}
1\\
4
\end{smallmatrix}\begin{smallmatrix}
4
\end{smallmatrix}$}
\put(236,40){$\begin{smallmatrix}
1\\
4\\
3
\end{smallmatrix}\begin{smallmatrix}
4\\
3
\end{smallmatrix}\begin{smallmatrix}
4
\end{smallmatrix}$}
\put(305,540){$\begin{smallmatrix}
1\\
4\\
3
\end{smallmatrix}\begin{smallmatrix}
2\\
1\\
4
\end{smallmatrix}\begin{smallmatrix}
3\\
2\\
1
\end{smallmatrix}\begin{smallmatrix}
4\\
3\\
2
\end{smallmatrix}$}
\put(305,480){$\begin{smallmatrix}
1\\
4\\
3
\end{smallmatrix}\begin{smallmatrix}
3\\
2\\
1
\end{smallmatrix}\begin{smallmatrix}
4\\
3\\
2
\end{smallmatrix}\begin{smallmatrix}
3
\end{smallmatrix}$}
\put(305,425){$\begin{smallmatrix}
1\\
4\\
3
\end{smallmatrix}\begin{smallmatrix}
3\\
2\\
1
\end{smallmatrix}\begin{smallmatrix}
1
\end{smallmatrix}\begin{smallmatrix}
3
\end{smallmatrix}$}
\put(305,370){$\begin{smallmatrix}
1\\
4\\
3
\end{smallmatrix}\begin{smallmatrix}
4\\
3\\
2
\end{smallmatrix}\begin{smallmatrix}
4\\
3
\end{smallmatrix}\begin{smallmatrix}
3
\end{smallmatrix}$}
\put(305,315){$\begin{smallmatrix}
1\\
4\\
3
\end{smallmatrix}\begin{smallmatrix}
4\\
3\\
2
\end{smallmatrix}\begin{smallmatrix}
4\\
3
\end{smallmatrix}\begin{smallmatrix}
4
\end{smallmatrix}$}
\put(365,540){$\begin{smallmatrix}
1\\
4
\end{smallmatrix}\begin{smallmatrix}
2\\
1
\end{smallmatrix}\begin{smallmatrix}
3\\
2
\end{smallmatrix}\begin{smallmatrix}
4\\
3
\end{smallmatrix}$}
\put(365,480){$\begin{smallmatrix}
1\\
4
\end{smallmatrix}\begin{smallmatrix}
3\\
2
\end{smallmatrix}\begin{smallmatrix}
4\\
3
\end{smallmatrix}\begin{smallmatrix}
3
\end{smallmatrix}$}
\put(365,425){$\begin{smallmatrix}
1\\
4
\end{smallmatrix}\begin{smallmatrix}
3\\
2
\end{smallmatrix}\begin{smallmatrix}
1
\end{smallmatrix}\begin{smallmatrix}
3
\end{smallmatrix}$}
\end{picture}
\subsection{Miscellaneous results on Nakayama algebras of type $A$}
In this subsection, we give another classification of $\tau$-tilting modules over Nakayama algebras of type $A$.
The calculation of proper support $\tau$-tilting modules over Nakayama algebras 
can be reduced to that of $\tau$-tilting modules over smaller Nakayama algebras of type $A$.
Moreover, as an application, we give the number of $\tau$-tilting modules as a recurrence relation.

Throughout this subsection, we assume that $\Lambda$ is a Nakayama algebra of type $\vec{A}^{\op}_{n}$.
Here we use the quiver $\vec{A}_{n}^{\op}$ instead of $\vec{A}_{n}$ because the opposite index is more convenient for our setting.
Namely, its quiver is isomorphic to $\vec{A}_{n}^{\op}$, that is, 
\begin{align}
\vec{A}_{n}^{\op}: \xymatrix{\begin{smallmatrix}n\end{smallmatrix}&\begin{smallmatrix}n-1\end{smallmatrix}\ar[l]_{\alpha_{n-1}}
&\ldots\ar[l]_{\alpha_{n-2}}&\begin{smallmatrix}2\end{smallmatrix}\ar[l]_{\alpha_{2}}&\begin{smallmatrix}1\end{smallmatrix}\ar[l]_{\alpha_{1}}}\notag
\end{align}
Thus $\tau M$ has the following property for any $M\in \mod\Lambda$.
\begin{lemma}\label{2-16}
If $M$ is a $\tau$-rigid $\Lambda$-module, then $M\oplus P_{1}$ is also a $\tau$-rigid $\Lambda$-module.
\end{lemma}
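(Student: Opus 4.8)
The plan is to reduce the statement to the vanishing of a single $\Hom$-space and then to read that off from the shape of the Auslander--Reiten translate. Since $P_{1}$ is projective we have $\tau P_{1}=0$, so $\tau(M\oplus P_{1})=\tau M$, and the splitting of $\Hom$ out of a direct sum gives
\[
\Hom_{\Lambda}(M\oplus P_{1},\tau(M\oplus P_{1}))
=\Hom_{\Lambda}(M,\tau M)\oplus\Hom_{\Lambda}(P_{1},\tau M).
\]
The first summand vanishes because $M$ is $\tau$-rigid by hypothesis, so the entire assertion comes down to proving $\Hom_{\Lambda}(P_{1},\tau M)=0$.

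Next I would decompose $M$ into indecomposables and discard the projective summands, which are annihilated by $\tau$ and hence contribute nothing to $\tau M$. It then suffices to show $\Hom_{\Lambda}(P_{1},\tau N)=0$ for every indecomposable nonprojective direct summand $N$ of $M$. By Lemma \ref{1-1}(1)(a) this is equivalent to the purely combinatorial assertion that $\tau N$ does not have $\top P_{1}=S_{1}$ as a composition factor.

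To finish I would invoke Proposition \ref{ass3.5}: writing $N=P_{j}/\rad^{l}P_{j}$ with $N$ nonprojective, we have $\tau N\simeq\rad P_{j}/\rad^{l+1}P_{j}$, whose top is the second composition factor of $P_{j}$, namely $S_{j+1}$. Over $\vec{A}_{n}^{\op}$ every indecomposable module is uniserial, with composition factors of strictly increasing index read from the top downwards, and the vertex $1$ is a source; hence $S_{1}$ can occur in such a uniserial module only as its very top. Since $\top(\tau N)=S_{j+1}$ with $j\ge 1$ is never $S_{1}$, the simple $S_{1}$ is not a composition factor of $\tau N$, which gives $\Hom_{\Lambda}(P_{1},\tau N)=0$ and completes the argument.

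The one place that genuinely uses the hypothesis on $\Lambda$ is this last step: the claim that $S_{1}$ appears in a uniserial module only at its top rests on there being no wrap-around of indices, i.e.\ on the quiver being of type $\vec{A}_{n}^{\op}$ with $1$ a source, rather than the cyclic $\vec{\Delta}_{n}$. This is the main point to keep track of; everything else is the formal $\tau$-rigidity bookkeeping recorded above.
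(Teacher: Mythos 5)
Your proof is correct and follows essentially the same route as the paper's: both reduce the claim to $\Hom_{\Lambda}(P_{1},\tau M)=0$, observe via Proposition \ref{ass3.5} and the fact that the vertex $1$ is a source in $\vec{A}_{n}^{\op}$ that $S_{1}$ is not a composition factor of $\tau M$, and conclude by Lemma \ref{1-1}(1). You merely spell out in more detail (the Hom-splitting, the reduction to indecomposable nonprojective summands, and the computation $\top(\tau N)=S_{j+1}$) what the paper's three-line proof leaves implicit.
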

\begin{proof}
Since the vertex $1$ in $\vec{A}_{n}^{\op}$ is a source,
$\tau M$ does not have $S_{1}$ as a composition factor by Proposition \ref{ass3.5}.
Hence $\Hom_{\Lambda}(P_{1},\tau M)=0$ by Lemma \ref{1-1}(1).
Thus $M\oplus P_{1}$ is $\tau$-rigid.
\end{proof}
By Lemma \ref{2-16}, we have the following result for $\tau$-tilting modules.
\begin{proposition}\label{2-17}
Each $\tau$-tilting $\Lambda$-module has $P_{1}$ as a direct summand.
\end{proposition}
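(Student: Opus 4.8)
The plan is to deduce the statement immediately from Lemma \ref{2-16} together with the maximality characterization of $\tau$-tilting modules recorded in Proposition \ref{air2.10}. First I would fix a $\tau$-tilting $\Lambda$-module $M\in\ttilt\Lambda$. Since $M$ is in particular $\tau$-rigid, Lemma \ref{2-16} applies and yields that $M\oplus P_{1}$ is again a $\tau$-rigid $\Lambda$-module.

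Now I would invoke the second half of Proposition \ref{air2.10}, which says that a $\tau$-tilting module is precisely a \emph{maximal} $\tau$-rigid module: if $M\oplus L$ is $\tau$-rigid for some $L\in\mod\Lambda$, then $L\in\add M$. Applying this with $L=P_{1}$, the $\tau$-rigidity of $M\oplus P_{1}$ forces $P_{1}\in\add M$. Finally, because $P_{1}$ is indecomposable, membership in $\add M$ means exactly that $P_{1}$ is a direct summand of $M$, which is the desired conclusion.

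There is no genuine obstacle here; the content of the proposition is entirely carried by Lemma \ref{2-16}, whose proof in turn rests on the observation that the vertex $1$ of $\vec{A}_{n}^{\op}$ is a source, so that $S_{1}$ never occurs as a composition factor of any $\tau M$ and hence $\Hom_{\Lambda}(P_{1},\tau M)=0$ by Lemma \ref{1-1}(1). The only point to state carefully is the reduction from $P_{1}\in\add M$ to ``$P_{1}$ is a direct summand of $M$'', which is immediate from indecomposability of $P_{1}$.
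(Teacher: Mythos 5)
Your proof is correct and coincides with the paper's own argument: both deduce from Lemma \ref{2-16} that $M\oplus P_{1}$ is $\tau$-rigid and then apply the maximality statement of Proposition \ref{air2.10} to conclude $P_{1}\in\add M$. Your closing remark spelling out why $P_{1}\in\add M$ gives a direct summand (indecomposability of $P_{1}$) is a harmless elaboration the paper leaves implicit.
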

\begin{proof}
Let $M$ be a basic $\tau$-tilting $\Lambda$-module.
Then $M\oplus P_{1}$ is a $\tau$-rigid $\Lambda$-module by Lemma \ref{2-16}.
Thus we have $P_{1}\in \add M$ by Proposition \ref{air2.10}.
\end{proof}
Our aim in this subsection is to show the following theorem.
\begin{theorem}\label{2-18}
Let $\Lambda$ be a Nakayama algebra of type $\vec{A}_{n}^{\op}$.
Then there are mutually inverse bijections
\begin{align}
\displaystyle \ttilt \Lambda \longleftrightarrow 
\coprod_{i=1}^{\ell(P_{1})} \ttilt \left( \Lambda/ \langle e_{i}\rangle \right) \notag
\end{align}
given by $\ttilt\Lambda\ni M \mapsto M/P_{1}$ and $N\mapsto N\oplus P_{1}\in\ttilt\Lambda$.
\end{theorem}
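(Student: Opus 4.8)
The plan is to check that the two assignments are well defined and mutually inverse, the substantial point being well-definedness of the forward map $M\mapsto M/P_1$. Write $\ell_1:=\ell(P_1)$, and recall from Proposition \ref{ass3.5} that, since vertex $1$ is a source of $\vec{A}_n^{\op}$, the composition factors of $P_1$ are exactly $S_1,S_2,\ldots,S_{\ell_1}$; this is the fact that pins down the range $[1,\ell_1]$.

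First I would handle the map $N\mapsto N\oplus P_1$. Given $N\in\ttilt(\Lambda/\langle e_i\rangle)$ with $i\in[1,\ell_1]$, regard $N$ as a $\Lambda$-module; Lemma \ref{air2.1}(2) (with $I=\langle e_i\rangle$) shows $N$ is $\tau_\Lambda$-rigid, and then Lemma \ref{2-16} gives that $N\oplus P_1$ is $\tau_\Lambda$-rigid. For the summand count: as $N$ is a $(\Lambda/\langle e_i\rangle)$-module it is annihilated by $e_i$, so $S_i$ is not a composition factor of $N$; since $i\le\ell_1$, the simple $S_i$ \emph{is} a composition factor of $P_1$, whence $P_1\notin\add N$. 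Therefore $|N\oplus P_1|=|N|+1=(n-1)+1=n=|\Lambda|$, and $N\oplus P_1\in\ttilt\Lambda$.

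The forward map is the heart of the matter. By Proposition \ref{2-17} every $M\in\ttilt\Lambda$ has $P_1$ as a summand, so $M':=M/P_1$ is basic $\tau$-rigid with $|M'|=n-1$, and $(M',0)$ is an almost complete support $\tau$-tilting pair. The key claim is that $M'$ is \emph{not} sincere. Suppose it were; then any completion of $(M',0)$ to a support $\tau$-tilting module contains $M'$ in its additive closure and is therefore sincere, hence an honest $\tau$-tilting module (a proper support $\tau$-tilting module is annihilated by a nonzero idempotent and so misses a simple). By Proposition \ref{2-17} every such completion contains $P_1$, and since $P_1\notin\add M'$ the only indecomposable that can be adjoined is $P_1$ itself, so $M'$ would admit the \emph{unique} completion $M'\oplus P_1=M$. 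This contradicts Proposition \ref{air2.18}, which furnishes exactly two completions of $(M',0)$. Hence $M'$ is non-sincere, so at least one simple is missing; combined with the inequality $\s(M')\ge|M'|=n-1$ coming from the argument in Proposition \ref{1-3} (a $\tau$-rigid module is $\tau$-rigid over $\Lambda/\langle e\rangle$, where $e$ is maximal with $\Hom_\Lambda(e\Lambda,M')=0$), exactly one simple $S_i$ is missing. By Proposition \ref{1-3}, $M'$ is then support $\tau$-tilting with $e_{M'}=e_i$. Finally, since $M=P_1\oplus M'$ is sincere, $S_i$ must occur in $P_1$, forcing $i\in[1,\ell_1]$; thus $M'\in\ttilt(\Lambda/\langle e_i\rangle)$.

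It remains to observe that the two maps are mutually inverse: $M\mapsto M/P_1\mapsto (M/P_1)\oplus P_1=M$ immediately, while for $N\in\ttilt(\Lambda/\langle e_i\rangle)$ one has $P_1\notin\add N$, so $(N\oplus P_1)/P_1=N$, and the unique missing simple of $N\oplus P_1$ is exactly $S_i$, returning $N$ to the $i$-th summand of the coproduct. I expect the only real obstacle to be the non-sincerity claim; the clean route is the counting-of-completions argument above via Propositions \ref{2-17} and \ref{air2.18}, which avoids a direct composition-factor analysis of $M'$.
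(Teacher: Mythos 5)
Your proposal is correct, and its forward direction takes a genuinely different route from the paper's. For the map $N\mapsto N\oplus P_{1}$ you argue exactly as the paper does (Lemma \ref{air2.1}(2), Lemma \ref{2-16}, and the summand count; you merely make explicit the check $P_{1}\notin\add N$ that the paper compresses into ``$P_{1}\notin\mod(\Lambda/\langle e_{i}\rangle)$''). For the map $M\mapsto M/P_{1}$, however, the paper proceeds concretely: it decomposes $M=P_{1}\oplus N_{1}\oplus N_{2}$ with $N_{1}$ the summands avoiding $S_{1}$ and $N_{2}$ the summands with top in $\add S_{1}$, sets $j:=\ell(N_{2})$, and uses the Hom-criterion Lemma \ref{2-1} to show that a summand $Y$ of $N_{1}$ containing $S_{j+1}$ would give $\Hom_{\Lambda}(Y,\tau X)\neq 0$ for $X=P_{1}/\rad^{j}P_{1}$, contradicting $\tau$-rigidity; this identifies the missing vertex explicitly as $i=\ell(N_{2})+1$ (or $i=1$ when $N_{2}=0$). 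You replace this Nakayama-specific composition-factor analysis with an abstract mutation-count argument: if $M/P_{1}$ were sincere, both completions of the almost complete pair $(M/P_{1},0)$ furnished by Proposition \ref{air2.18} would contain $P_{1}$ by Proposition \ref{2-17} and hence coincide with $M$, a contradiction; then $\s(M/P_{1})\ge |M/P_{1}|=n-1$ (from the argument of Proposition \ref{1-3}) pins down exactly one missing simple $S_{i}$, and sincerity of $M$ forces $i\in[1,\ell(P_{1})]$. Your uniqueness-of-completion trick is in fact the same device the paper itself uses in Proposition \ref{2-13}(ii), just deployed at a different spot. What each approach buys: yours avoids the case split on $N_{2}$ and all Loewy-length bookkeeping, and would transfer verbatim to any algebra in which every $\tau$-tilting module contains a fixed indecomposable projective; the paper's computation, by contrast, yields the index $i$ of the coproduct component directly from the module structure of $M$ (namely $i=\ell(N_{2})+1$), which is the more informative description for explicit calculations. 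Both arguments are complete; your inverse-check, including the observation that the missing simple recovers the correct component of the coproduct, is also fine.
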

\begin{proof}
Let $N\in \ttilt(\Lambda/\langle e_{i}\rangle)$ for $i\in [1,\ell(P_{1})]$.
Then $N\oplus P_{1}$ is a $\tau$-tilting $\Lambda$-module 
because it is $\tau$-rigid by Lemma \ref{2-16} and $|N\oplus P_{1}|=|\Lambda|$ by $P_{1}\notin \mod(\Lambda/\langle e_{i}\rangle)$.

Conversely, let $M$ be a basic $\tau$-tilting $\Lambda$-module.
Then, by Proposition \ref{2-17}, we decompose $M$ as $M=P_{1}\oplus N_{1}\oplus N_{2}$, 
where $N_{1}$ is a maximal direct summand of $M$ consisting of $(\Lambda/\langle e_{1}\rangle)$-modules
and $N_{2}$ is a direct summand of $M$ with $\top N_{2}\in\add S_{1}$.
If $N_{2}=0$, then $N_{1}$ is a $\tau$-tilting $(\Lambda/\langle e_{1}\rangle)$-module clearly.
Assume that $N_{2}\neq 0$.
Then we have $N_{2}\in \mod(\Lambda/\langle e_{j+1}\rangle)$ for $j:=\ell(N_{2})$.
Note that $1\le j<\ell(P_{1})$.
Now, we claim that $N_{1}\in \mod(\Lambda/\langle e_{j+1}\rangle)$.
Indeed, if it does not hold, then there exists an indecomposable $\Lambda$-module $Y\in\add N_{1}$ which has $S_{j+1}$ as a composition factor.
Let $X:=P_{1}/\rad^{j}P_{1}\in\add N_{2}$. 
Since $\soc(\tau X)=S_{j+1}$ and $\top(\tau X)=S_{2}$ hold by Proposition \ref{ass3.5}, 
we have $\Hom_{\Lambda}(Y,\tau X)\neq 0$ by Lemma \ref{2-1}(2)$\Rightarrow$(1).
However, this contradicts that $M$ is $\tau$-rigid.
Thus $N_{1}\oplus N_{2}$ is a $\tau$-tilting $(\Lambda/\langle e_{j+1}\rangle)$-module.
\end{proof}
As an application of Theorem \ref{2-18}, we have a recurrence relation for the cardinality of $\ttilt\Lambda$.
\begin{corollary}\label{2-19}
\begin{itemize}
\item[(1)] Let $\Lambda$ be a Nakayama algebra of type $A_{n}^{\op}$.
Then we have \begin{align}
\displaystyle |\ttilt \Lambda|=\sum_{i=1}^{\ell(P_{1})}C_{i-1}\cdot |\ttilt (\Lambda/\langle e_{\le i}\rangle)| \notag
\end{align}
where $e_{\le i}:=e_{1}+e_{2}+\cdots +e_{i}$.
\item[(2)] Let $\Lambda=\Gamma_{n}^{r}:=K\vec{A}^{\op}_{n}/\rad^{r}K\vec{A}^{\op}_{n}$.
Then we have 
\begin{align}
\displaystyle |\ttilt \Gamma_{n}^{r}|=\sum_{i=1}^{r}C_{i-1}\cdot |\ttilt \Gamma_{n-i}^{r}|. \notag
\end{align}
\end{itemize}
\end{corollary}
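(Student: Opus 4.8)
The plan is to derive both statements from Theorem \ref{2-18} by analysing the idempotent quotients that appear there.

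First, Theorem \ref{2-18} gives a bijection $\ttilt\Lambda \longleftrightarrow \coprod_{i=1}^{\ell(P_1)}\ttilt(\Lambda/\langle e_i\rangle)$, so counting the disjoint union immediately yields $|\ttilt\Lambda|=\sum_{i=1}^{\ell(P_1)}|\ttilt(\Lambda/\langle e_i\rangle)|$. The remaining task for (1) is therefore to evaluate each summand $|\ttilt(\Lambda/\langle e_i\rangle)|$. Since the vertex $i$ is an interior vertex of the linear quiver $\vec{A}_n^{\op}$, killing it disconnects the quiver into the two intervals $[1,i-1]$ and $[i+1,n]$; hence $\Lambda/\langle e_i\rangle$ is a product $A_i\times(\Lambda/\langle e_{\le i}\rangle)$, where $A_i$ denotes the factor supported on the vertices $1,\dots,i-1$ and $\Lambda/\langle e_{\le i}\rangle$ is the factor supported on $i+1,\dots,n$. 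Because $\mod$ of a product algebra splits as a product of module categories and the Auslander--Reiten translation respects this splitting, $\tau$-tilting modules are formed componentwise; thus $|\ttilt(\Lambda/\langle e_i\rangle)|=|\ttilt A_i|\cdot|\ttilt(\Lambda/\langle e_{\le i}\rangle)|$.

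The key remaining point---and the only genuinely algebra-specific step---is to show that $A_i\cong K\vec{A}_{i-1}^{\op}$ is hereditary, so that $|\ttilt A_i|=C_{i-1}$ by Corollary \ref{2-15} (using that the tilting count for a Dynkin type $A$ path algebra is independent of orientation, or by passing to the opposite algebra). For this I will use the standard Loewy-length inequality $\ell(P_{j+1})\ge\ell(P_j)-1$, which holds because $\rad P_j$ has top $S_{j+1}$ and is therefore a quotient of $P_{j+1}$. Iterating gives $\ell(P_j)\ge\ell(P_1)-(j-1)$ for all $j$, and combined with the hypothesis $i\le\ell(P_1)$ this yields $\ell(P_j)\ge i-j+1>i-j$ for each $j\in[1,i-1]$. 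Consequently the projective of $A_i$ at each such vertex $j$ already attains the maximal length $i-j$ allowed by the truncated quiver, so no relation of $\Lambda$ survives in $A_i$ and $A_i$ is the full path algebra $K\vec{A}_{i-1}^{\op}$. Substituting $|\ttilt A_i|=C_{i-1}$ into the displayed product gives exactly the formula in (1).

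Finally, (2) is the specialization $\Lambda=\Gamma_n^r$ (with $n\ge r$, so that $\ell(P_1)=r$). Here $\Lambda/\langle e_{\le i}\rangle$ is the algebra on $n-i$ consecutive vertices carrying the same relation $\rad^r=0$, hence isomorphic to $\Gamma_{n-i}^r$; and for $i\le r$ the factor $A_i=K\vec{A}_{i-1}^{\op}$ carries no effective relation, since its longest path has length $i-2<r$. Thus (1) becomes $|\ttilt\Gamma_n^r|=\sum_{i=1}^{r}C_{i-1}\cdot|\ttilt\Gamma_{n-i}^r|$, as claimed. I expect the product decomposition of $\Lambda/\langle e_i\rangle$ and the verification that $A_i$ is hereditary to be the crux; everything else is bookkeeping on top of Theorem \ref{2-18} and Corollary \ref{2-15}.
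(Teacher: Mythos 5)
Your proof is correct and follows essentially the same route as the paper: Theorem \ref{2-18} gives the sum over $i\in[1,\ell(P_{1})]$, the quotient $\Lambda/\langle e_{i}\rangle$ decomposes as a product of the algebra on $[1,i-1]$ (the paper's $\Lambda/\langle e_{\ge i}\rangle$, your $A_{i}$) with $\Lambda/\langle e_{\le i}\rangle$, counts multiply over the product, and Corollary \ref{2-15} supplies the Catalan factor $C_{i-1}$. The only difference is that you explicitly verify, via the inequality $\ell(P_{j+1})\ge\ell(P_{j})-1$, that $\Lambda/\langle e_{\ge i}\rangle\simeq K\vec{A}^{\op}_{i-1}$ when $i\le\ell(P_{1})$ --- a detail the paper states without proof --- which is a welcome addition rather than a different argument.
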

\begin{proof}
(1) By Theorem \ref{2-18}, we have
\begin{align}
|\ttilt\Lambda|=\sum_{i=1}^{\ell(P_{1})}|\ttilt(\Lambda/\langle e_{i}\rangle)|. \notag
\end{align}
Since the quiver of $\Lambda$ is a tree,
we have $\Lambda/\langle e_{i}\rangle \simeq (\Lambda/\langle e_{\ge i}\rangle) \times (\Lambda/\langle e_{\le i}\rangle)$,
where $e_{\ge i}:=e_{i}+e_{i+1}+\cdots +e_{n}$ and $e_{\le i}:=e_{1}+e_{2}+\cdots +e_{i}$ for $i\in [1,n]$.
Thus there is a bijection
\begin{align}
\ttilt(\Lambda/\langle e_{\ge i}\rangle)\times\ttilt(\Lambda/\langle e_{\le i}\rangle) \longrightarrow \ttilt(\Lambda/\langle e_{i}\rangle)\notag 
\end{align}
given by $(N,L)\mapsto N\oplus L$.
Hence we have 
\begin{align}
|\ttilt(\Lambda/\langle e_{i}\rangle)|=|\ttilt(\Lambda/\langle e_{\ge i}\rangle)|\cdot |\ttilt (\Lambda/\langle e_{\le i}\rangle)|. \notag
\end{align}
If $i \le\ell(P_{1})$, then $\Lambda/\langle e_{\ge i} \rangle$ is isomorphic to $K\vec{A}^{\op}_{i-1}$.
Thus, by Corollary \ref{2-15}, we have 
\begin{align}
|\ttilt(\Lambda/\langle e_{\ge i} \rangle)|=|\tilt (K\vec{A}_{i-1})|=C_{i-1}. \notag
\end{align}
Therefore the assertion follows.

(2) If $\Lambda=\Gamma_{n}^{r}$, then we have $\Lambda/\langle e_{\le i}\rangle\simeq\Gamma_{n-i}^{r}$.
Hence it is clear from (1). 
\end{proof}
\begin{example}
We calculate $\ttilt \Gamma_{n}^{2}$ inductively. First, we already know that 
\begin{align}
\tilt(K\vec{A}_{1})=\{ \begin{smallmatrix}
1
\end{smallmatrix}\},\ \  \tilt(K\vec{A}_{2})=\{ 
\begin{smallmatrix}
2
\end{smallmatrix}\oplus\begin{smallmatrix}
1\\
2
\end{smallmatrix},
\begin{smallmatrix}
1
\end{smallmatrix}\oplus\begin{smallmatrix}
1\\
2
\end{smallmatrix}
\}.\notag
\end{align}
By Theorem \ref{2-18}, 
\begin{align}
\ttilt \Gamma_{3}^{2}\longleftrightarrow 
\ttilt (\Gamma_{3}^{2}/\langle e_{2}\rangle) \coprod \ttilt (\Gamma_{3}^{2}/\langle e_{1}\rangle)
=\{ \begin{smallmatrix}
1
\end{smallmatrix}\oplus\begin{smallmatrix}
3
\end{smallmatrix} \}\coprod \{ 
\begin{smallmatrix}
3
\end{smallmatrix}\oplus\begin{smallmatrix}
2\\
3
\end{smallmatrix},
\begin{smallmatrix}
2
\end{smallmatrix}\oplus\begin{smallmatrix}
2\\
3
\end{smallmatrix}\}.\notag
\end{align}
Thus we have
\begin{align}
\ttilt \Gamma_{3}^{2}=
\{ 
\begin{smallmatrix}
3
\end{smallmatrix}\oplus\begin{smallmatrix}
2\\
3
\end{smallmatrix}\oplus\begin{smallmatrix}
1\\
2
\end{smallmatrix},
\begin{smallmatrix}
2
\end{smallmatrix}\oplus\begin{smallmatrix}
2\\
3
\end{smallmatrix}\oplus\begin{smallmatrix}
1\\
2
\end{smallmatrix},
\begin{smallmatrix}
1
\end{smallmatrix}\oplus\begin{smallmatrix}
3
\end{smallmatrix}\oplus\begin{smallmatrix}
1\\
2
\end{smallmatrix} \}.\notag
\end{align}
Similarly, we have
\begin{align}
\ttilt \Gamma_{4}^{2}=
&\{ 
\begin{smallmatrix}
4
\end{smallmatrix}\oplus\begin{smallmatrix}
3\\
4
\end{smallmatrix}\oplus\begin{smallmatrix}
1
\end{smallmatrix}\oplus\begin{smallmatrix}
1\\
2
\end{smallmatrix},
\begin{smallmatrix}
3
\end{smallmatrix}\oplus\begin{smallmatrix}
3\\
4
\end{smallmatrix}\oplus\begin{smallmatrix}
1
\end{smallmatrix}\oplus\begin{smallmatrix}
1\\
2
\end{smallmatrix} \}
\notag \\
&\coprod
\{ 
\begin{smallmatrix}
4
\end{smallmatrix}\oplus\begin{smallmatrix}
3\\
4
\end{smallmatrix}\oplus\begin{smallmatrix}
2\\
3
\end{smallmatrix}\oplus\begin{smallmatrix}
1\\
2
\end{smallmatrix},
\begin{smallmatrix}
3
\end{smallmatrix}\oplus\begin{smallmatrix}
3\\
4
\end{smallmatrix}\oplus\begin{smallmatrix}
2\\
3
\end{smallmatrix}\oplus\begin{smallmatrix}
1\\
2
\end{smallmatrix},
\begin{smallmatrix}
2
\end{smallmatrix}\oplus\begin{smallmatrix}
4
\end{smallmatrix}\oplus\begin{smallmatrix}
2\\
3
\end{smallmatrix}\oplus\begin{smallmatrix}
1\\
2
\end{smallmatrix} \}.\notag
\end{align}
\end{example}
As a similar result of Corollary \ref{2-19}, Jasso \cite{J2} showed a recurrence relation
\begin{align}
|\sttilt\Gamma_{n}^{2}|=2|\sttilt\Gamma_{n-1}^{2}|+|\sttilt\Gamma_{n-2}^{2}|. \notag
\end{align}

We give examples of the number of support $\tau$-tilting modules over some Nakayama algebras. 
\begin{table}[htb]
\begin{center}
\begin{tabular}{c}
\begin{minipage}{0.5\hsize}
\begin{center}
\caption{$|\ttilt \Gamma_{n}^{r}|$}
\begin{tabular}{|c||c|c|c|c|c|} 
\hline
\backslashbox{\ \ \ $r$}{$n$\ \ \ } & 1 & 2 & 3 & 4 & 5 \\ \hline \hline
$1$ & 1 & 1 & 1 & 1 & 1  \\ \hline
$2$ & 1 & 2 & 3 & 5 & 8  \\ \hline
$3$ & 1 & 2 & 5 & 9 & 18 \\ \hline
$4$ & 1 & 2 & 5 & 14& 28 \\ \hline
$5$ & 1 & 2 & 5 & 14& 42 \\ \hline
\end{tabular}
\end{center}
\end{minipage}
\begin{minipage}{0.6\hsize}
\begin{center}
\caption{$|\sttilt \Gamma_{n}^{r}|$}
\begin{tabular}{|c||c|c|c|c|c|} 
\hline
\backslashbox{\ \ \ $r$}{$n$\ \ \ } & 1 & 2 & 3 & 4 & 5  \\ \hline \hline
$1$ & 2 & 4 & 8 & 16& 32  \\ \hline
$2$ & 2 & 5 & 12& 29& 70  \\ \hline
$3$ & 2 & 5 & 14& 37& 98  \\ \hline
$4$ & 2 & 5 & 14& 42&118  \\ \hline
$5$ & 2 & 5 & 14& 42&132  \\ \hline
\end{tabular}
\end{center}
\end{minipage}
\end{tabular}
\begin{tabular}{c}
\begin{minipage}{0.5\hsize}
\begin{center}
\caption{$|\ttilt\Lambda_{n}^{r}|$}
\begin{tabular}{|c||c|c|c|c|c|} 
\hline
\backslashbox{\ \ \ $r$}{$n$\ \ \ } & 1 & 2 & 3 & 4 & 5 \\ \hline \hline
$1$ & 1 & 1 & 1 & 1 & 1  \\ \hline
$2$ & 1 & 3 & 4 & 7 & 11  \\ \hline
$3$ & 1 & 3 & 10& 15& 31 \\ \hline
$4$ & 1 & 3 & 10& 35& 56 \\ \hline
$5$ & 1 & 3 & 10& 35&126 \\ \hline
\end{tabular}
\end{center}
\end{minipage}
\begin{minipage}{0.6\hsize}
\begin{center}
\caption{$|\sttilt\Lambda_{n}^{r}|$}
\begin{tabular}{|c||c|c|c|c|c|} 
\hline
\backslashbox{\ \ \ $r$}{$n$\ \ \ } & 1 & 2 & 3 & 4 & 5  \\ \hline \hline
$1$ & 2 & 4 & 8 & 16& 32 \\ \hline
$2$ & 2 & 6 & 14& 34& 82 \\ \hline
$3$ & 2 & 6 & 20& 50&132 \\ \hline
$4$ & 2 & 6 & 20& 70&182 \\ \hline
$5$ & 2 & 6 & 20& 70&252 \\ \hline
\end{tabular}
\end{center}
\end{minipage}
\end{tabular}
\end{center}
\end{table}
\section{$\tau$-tilting modules and Drozd-Kirichenko rejection}
Our aim in this section is to study a connection of support $\tau$-tilting modules 
between two algebras related by Drozd-Kirichenko rejection.
As an application, we give an algorithm to construct the Hasse quivers of support $\tau$-tilting modules over Nakayama algebras. 

The rejection Lemma of Drozd-Kirichenko, stated below as Proposition \ref{3-1}, provides an explicit relation 
between representation theory of $\Lambda$
and that of factor algebra $\Lambda/S$, where $S=\soc Q$ for an indecomposable projective-injective direct summand of $\Lambda$.
Recall that the natural map $\Lambda \to \Lambda/S$ induces a fully faithful functor $\mod(\Lambda/S)\to\mod\Lambda$, 
which allows us to regard $\mod(\Lambda/S)$ as a full subcategory of $\mod\Lambda$.
We denote by $\ind\Lambda$ the set of isomorphism classes of indecomposable $\Lambda$-modules.
\begin{proposition}\cite{DK}\label{3-1}
Let $\Lambda$ be a basic finite dimensional algebra and $Q$ an indecomposable projective-injective summand of $\Lambda$ as a $\Lambda$-module.
Then the following hold.
\begin{itemize}
\item[(1)] $S:=\soc Q$ is a two-sided ideal of $\Lambda$.
\item[(2)] $\ind(\Lambda/S)=\ind(\Lambda)\setminus \{ Q \}$.
\item[(3)] $\tau_{\Lambda}U\simeq\tau_{\Lambda/S}U$ for any $U\in \mod(\Lambda/S)$ with $U\not\simeq Q/S$, and $\tau_{\Lambda}(Q/S)\simeq \rad Q$.
\item[(4)] Any almost split sequence in $\mod(\Lambda/S)$ is an almost split sequence in $\mod \Lambda$.
\item[(5)] All almost split sequences in $\mod \Lambda$ are either almost split sequences in $\mod(\Lambda/S)$ or 
\begin{align}
\xymatrix{
0\ar[r]&\rad Q\ar[r]&Q\oplus \rad Q/S\ar[r]&Q/S\ar[r]&0.
}\notag
\end{align}
\item[(6)] $Q/S$ is an indecomposable projective $(\Lambda/S)$-module and $\rad Q$ is an indecomposable injective $(\Lambda/S)$-module.
\end{itemize}
\end{proposition}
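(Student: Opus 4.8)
The plan is to treat the six statements in the order (1), (2), (6), (4), (5), (3), since the almost split sequence assertions (4) and (5) rest on the structural facts (1), (2), (6), and (3) is read off from them. The one trick that drives (1) and (2) is that for any $l$ left multiplication by an element $a\in e_l\Lambda e_i$ defines a \emph{right}-module homomorphism $L_a\colon Q=e_i\Lambda\to e_l\Lambda$, $x\mapsto ax$. For (1): writing $\soc Q=Ks$ with $s=e_ise_t$, I first note $s\Lambda=Ks$ because $sJ=0$; for the left ideal property I must show $\Lambda s\subseteq Ks$, equivalently $as=0$ for $a\in e_l\Lambda e_i$ with $l\neq i$. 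If $as\neq0$ then $L_a$ is nonzero on the essential socle $\soc Q$, hence injective, so $Q$ embeds in the indecomposable $e_l\Lambda$; injectivity of $Q$ makes it a summand, forcing $Q\cong e_l\Lambda$ and (as $\Lambda$ is basic) $l=i$, a contradiction. The same endomorphism-ring remark with $l=i$ (an element of the radical $e_iJe_i$ of the local ring $\End_\Lambda Q$ cannot be an automorphism) yields $Js=0$, so $S=Ks$ is a two-sided ideal with $JS=SJ=0$. For (2), $\mod(\Lambda/S)$ is the full subcategory of modules killed by $S$; if an indecomposable $M$ has $Ms\neq0$, then for a suitable $m\in M$ with $ms\neq0$ the assignment $y\mapsto my$ is a right-module map $Q\to M$ nonzero on $\soc Q$, hence a split monomorphism by injectivity, so $M\cong Q$. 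Thus $\ind(\Lambda/S)=\ind\Lambda\setminus\{Q\}$.

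For (6): the Peirce decomposition gives $\Lambda/S=(Q/S)\oplus\bigoplus_{k\neq i}e_k\Lambda$ (using $e_kS=0$ for $k\neq i$), so $Q/S=e_i(\Lambda/S)$ is the indecomposable projective $\Lambda/S$-module at $i$. For the injective claim I would use $Js=0$ to compute $\ann_Q(S)=\rad Q$ (the map $Q\to S$, $x\mapsto xs$, has kernel $\rad Q$); then, given any short exact sequence $0\to\rad Q\to X\to Y\to0$ in $\mod(\Lambda/S)$, injectivity of $Q$ extends $\rad Q\hookrightarrow Q$ to $g\colon X\to Q$, whose image lies in $\ann_Q(S)=\rad Q$ because $XS=0$; hence $g$ retracts the inclusion and the sequence splits. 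Finally $\soc(\rad Q)=\soc Q=S$ is simple, so $\rad Q$ is indecomposable, and being injective over $\Lambda/S$ with simple socle it is the indecomposable injective $\Lambda/S$-module there.

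The heart of the argument is (4) and (5). For (4), let $0\to A\to B\to C\to0$ be almost split in $\mod(\Lambda/S)$; it is non-split with indecomposable ends, so it suffices to check it is right almost split in $\mod\Lambda$. A non-isomorphism $X\to C$ with $X$ indecomposable and $X\not\cong Q$ lies in $\Hom_{\Lambda/S}(X,C)$ by full faithfulness and factors through $B$ by hypothesis; and any map $Q\to C$ kills $S=QS$ (since $CS=0$) and so factors through $Q/S$. Because $C$ is non-projective over $\Lambda/S$ while $Q/S$ is projective over $\Lambda/S$ by (6), we have $C\not\cong Q/S$, so $Q/S\to C$ is a non-retraction and factors through $B$. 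Hence the sequence is almost split in $\mod\Lambda$. For (5), the right-hand terms of the almost split sequences in $\mod\Lambda$ are exactly the non-projective indecomposables, i.e. (by (2)) the non-projective members of $\ind(\Lambda/S)$. Those that are also non-projective over $\Lambda/S$ contribute, via (4), precisely the almost split sequences of $\mod(\Lambda/S)$; the only indecomposable that is projective over $\Lambda/S$ but not over $\Lambda$ is $Q/S$, which therefore accounts for the single extra sequence. Computing it from the minimal projective presentation $e_t\Lambda\to e_i\Lambda\to Q/S\to0$ (where $S\cong S_t$) and the Nakayama functor $\nu=D\Hom_\Lambda(-,\Lambda)$ (using that $e_i\Lambda=Q$ is the injective envelope of $S_t$) gives $\tau_\Lambda(Q/S)\cong\rad Q$, and the resulting almost split sequence of a projective-injective quotient is $0\to\rad Q\to Q\oplus\rad Q/S\to Q/S\to0$; one checks directly that this pullback-type sequence is non-split with the stated middle term.

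Part (3) then follows. For $U\in\mod(\Lambda/S)$ non-projective over $\Lambda/S$, (4) identifies the two almost split sequences ending at $U$, so $\tau_\Lambda U\cong\tau_{\Lambda/S}U$; for $U$ projective over $\Lambda/S$ with $U\not\cong Q/S$, which is necessarily some $e_k\Lambda$ with $k\neq i$, both sides vanish; and the remaining case $\tau_\Lambda(Q/S)\cong\rad Q$ is exactly the content of (5). I expect the main obstacle to be (4) and (5): specifically, controlling homomorphisms out of the rejected module $Q$, where the decisive point is that they all factor through $Q/S$, which (6) shows is projective over $\Lambda/S$ and hence never the target of an almost split sequence there. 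Getting (1) right, the subtle two-sidedness, which genuinely uses injectivity of $Q$ rather than projectivity alone, is the other place where care is required.
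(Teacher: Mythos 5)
The paper gives no argument here at all: Proposition \ref{3-1} is quoted from Drozd--Kirichenko \cite{DK}, so there is no internal proof to compare yours with, and your proposal has to be judged on its own terms; on those terms it is correct and would serve as a self-contained substitute for the citation. The load-bearing steps all check out: in (1), left multiplication $L_a$ combined with injectivity and indecomposability of $Q$ (and basicness of $\Lambda$) forces $e_l\Lambda e_i\,s=0$ for $l\neq i$ and $e_iJe_i\,s=0$, giving $\Lambda s=Ks$ and $JS=SJ=0$; the same split-monomorphism trick yields (2); the identification $\ann_Q(S)=\rad Q$, via the map $x\mapsto xs$ with one-dimensional image and $Js=0$, is exactly what makes $\rad Q$ injective over $\Lambda/S$ in (6); and in (4) the decisive observation that every homomorphism $Q\to C$ factors through the $(\Lambda/S)$-projective module $Q/S$, which can never be a retraction onto the non-projective end term $C$, is the right one, as is the count in (5) showing $Q/S$ is the unique indecomposable that is $(\Lambda/S)$-projective but not $\Lambda$-projective. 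Two small caveats. First, the only place you wave your hands is the exceptional sequence itself: ``one checks directly that this pullback-type sequence is non-split with the stated middle term'' is precisely the nontrivial verification, and rather than redoing the Nakayama-functor computation you could simply invoke the classical fact that for a non-simple projective-injective module $Q$ the sequence $0\to\rad Q\to Q\oplus\rad Q/S\to Q/S\to 0$ is almost split, see \cite[Proposition V.5.5]{ARS}. Second, your argument (like parts (5) and (6) of the statement) tacitly assumes $Q$ is not simple, i.e.\ $S\neq Q$: when $Q=S$ one has $Q/S=0$ and $\rad Q=0$, parts (1), (2), (4) go through with your proofs (every map $Q\to C$ with $C\in\mod(\Lambda/S)$ is zero), and there is no extra sequence in (5); this degenerate case is the one the paper later isolates in the Remark following Proposition \ref{3-7}, and one sentence acknowledging it would make your write-up complete.
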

\subsection{Main results}
Let $\Lambda$ be an arbitrary basic finite dimensional algebra (we do not assume that $\Lambda$ is Nakayama).
We always assume that $\Lambda$ has an indecomposable projective-injective summand $Q$ as a $\Lambda$-module.
Moreover, let $S:=\soc Q$ and $\overline{\Lambda}:=\Lambda/S$. 
We consider the functor
\begin{align}
\overline{(-)}:=-\otimes_{\Lambda}\bLambda:\mod\Lambda\rightarrow \mod\bLambda.\notag
\end{align}
Then $\overline{Q}=Q/S$. 
Note that, for every indecomposable $\Lambda$-module $M\not\simeq Q$, 
we have an isomorphism $\overline{M}\simeq M$ as $\bLambda$-module.

In this subsection, we show that the poset $\sttilt\Lambda$ can be constructed from the poset $\sttilt\bLambda$.
The following construction is crucial.
\begin{definition}\label{3-2}
Let $\po=(\po,\ge)$ be a poset and $\mathsf{N}$ a subposet of $\po$.
\begin{itemize}
\item[(1)] We define a new poset $\po^{\mathsf{N}}=(\po^{\mathsf{N}},\ge_{\mathsf{N}})$ as follows,
where $\mathsf{N}^{+}:=\{ n^{+}\mid n\in \mathsf{N}\}$ is a copy of $\mathsf{N}$, 
and $\omega_{1},\omega_{2}\in\po$ and $n_{1},n_{2}\in \mathsf{N}$ are arbitrary elements:
\begin{align}
&\po^{\mathsf{N}}:=\po\coprod \mathsf{N}^{+},\notag\\
&\omega_{1}\ge_{\mathsf{N}} \omega_{2} :\Leftrightarrow \omega_{1}\ge \omega_{2},\notag\\
&\omega_{1}\ge_{\mathsf{N}}n_{1}^{+} :\Leftrightarrow \omega_{1}\ge n_{1},\notag\\
&n_{2}^{+}\ge_{\mathsf{N}}\omega_{2} :\Leftrightarrow n_{2}\ge\omega_{2},\notag\\
&n_{1}^{+}\ge_{\mathsf{N}}n_{2}^{+} :\Leftrightarrow n_{1}\ge n_{2}.\notag
\end{align}
In particular, $n_{1}\ge_{\mathsf{N}} n_{2}^{+}$ never holds.
It is easily checked that $(\po^{\mathsf{N}},\ge_{\mathsf{N}})$ forms a poset.

\item[(2)] Let $\hasse(\po):=(\po,\hasse_{\arr})$ be the Hasse quiver of $\po$.
We define a new quiver $\hasse(\po)^{\mathsf{N}}:=(\po^{\mathsf{N}},\hasse^{\mathsf{N}}_{\arr})$ as follows, 
where $\omega_{1},\omega_{2}$ are arbitrary elements in $\po\setminus \mathsf{N}$ and $n_{1},n_{2}$ are arbitrary elements in $\mathsf{N}$:
\begin{align}
\hasse^{\mathsf{N}}_{\arr}
&=\{ \omega_{1}\rightarrow \omega_{2}\mid \omega_{1}\rightarrow \omega_{2}\ \textnormal{in}\ \hasse_{\arr}\}
\coprod\{ n_{2}\rightarrow \omega_{2}\mid n_{2}\rightarrow \omega_{2}\ \textnormal{in}\ \hasse_{\arr}\}\notag\\
&\ \ \ \coprod\{ n_{1}\rightarrow n_{2},\ n_{1}^{+}\rightarrow n_{2}^{+}\mid n_{1}\rightarrow n_{2}\ \textnormal{in}\ \hasse_{\arr}\}\notag\\
&\ \ \ \coprod\{ \omega_{1}\rightarrow n_{1}^{+}\mid \omega_{1}\rightarrow n_{1}\ \textnormal{in}\ \hasse_{\arr}\}
\coprod\{ n_{1}^{+}\rightarrow n_{1}\mid n_{1}\in \po \}. \notag
\end{align}
It is easy to check that
\begin{align}
\hasse(\po^{\mathsf{N}})=\hasse(\po)^{\mathsf{N}} \notag
\end{align}
holds.
\begin{align}
\tiny{
\xymatrix@C=1mm@R=1mm{
\omega_{1}\ar[dddd]\ar[drr]&&&&&&&&&&\omega_{1}\ar[dddd]\ar[drr]&&&\\
&&n_{1}\ar[dd]&&&&&&&&&&n_{1}^{+}\ar[dl]\ar[dr]&\\
&&&&\ar[rrrrr]&&&&&&&n_{1}\ar[dr]&&n_{2}^{+}\ar[dl]\\
&&n_{2}\ar[dll]&&&&&&&&&&n_{2}\ar[dll]&\\
\omega_{2}&&&&&&&&&&\omega_{2}&&&\\
&\hasse(\po)&&&&&&&&&&&\hasse(\po)^{\mathsf{N}}&
}}\notag
\end{align}
\end{itemize}
\end{definition}
The following theorem is our main result.
Let 
\begin{align}
\N:=\{ N\in\sttilt\bLambda\ |\ \bQ\in\add N\ \textnormal{and}\ \Hom_{\Lambda}(N,Q)=0 \}.\notag
\end{align}
Applying Definition \ref{3-2}, we have a poset $(\sttilt\bLambda)^{\N}$.
For any $\Lambda$-module $M$, we denote by $\alpha(M)$ a basic $\Lambda$-module satisfying $\add(\alpha(M))=\add\overline{M}$.
\begin{theorem}\label{3-4}
Let $\Lambda$ be a basic finite dimensional algebra and $Q$ an indecomposable projective-injective summand of $\Lambda$ as a $\Lambda$-module.
Then the following hold.
\begin{itemize}
\item[(1)] There is an isomorphism of posets
\begin{align}
\sttilt\Lambda\longrightarrow (\sttilt\bLambda)^{\N} \notag
\end{align}
given by $M\mapsto\alpha(M)$.
In particular, we have an isomorphism of Hasse quivers
\begin{align}
\hasse(\Lambda)\simeq \hasse(\bLambda)^{\N}.\notag
\end{align}
\item[(2)] Assume that $\bQ$ has $S$ as a composition factor. 
Then $\N=\emptyset$ holds, and we have an isomorphism of posets
\begin{align}
\sttilt\Lambda \simeq \sttilt\bLambda \notag
\end{align}
and an isomorphism of Hasse quivers
\begin{align}
\hasse(\Lambda)\simeq \hasse(\bLambda). \notag
\end{align}
\end{itemize}
\end{theorem}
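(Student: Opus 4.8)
The plan is to exhibit the map $M\mapsto\alpha(M)=\overline{M}$ as a bijection by analysing its fibres, and then to verify separately that it is an isomorphism of posets; the Hasse-quiver statement follows at once from the identity $\hasse(\po^{\N})=\hasse(\po)^{\N}$ recorded in Definition \ref{3-2}. Throughout I would use four inputs: Drozd--Kirichenko rejection (Proposition \ref{3-1}), so that $\ind\bLambda=\ind\Lambda\setminus\{Q\}$ and a $\Lambda$-module is a $\bLambda$-module exactly when it has no summand isomorphic to $Q$; the comparison $\tau_{\Lambda}\bQ\simeq\rad Q$ while $\tau_{\bLambda}\bQ=0$ (and $\tau_{\Lambda}U\simeq\tau_{\bLambda}U$ otherwise); the adjunction isomorphism $\Hom_{\Lambda}(Q,X)\simeq\Hom_{\bLambda}(\bQ,X)$ for $X\in\mod\bLambda$ together with the full faithfulness of $\mod\bLambda\hookrightarrow\mod\Lambda$; and the numerical criterion of Proposition \ref{1-3}, that a $\tau$-rigid $M$ is support $\tau$-tilting precisely when $|M|=\s(M)$.

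The key technical lemma, which I would isolate first, is the symmetry
\[
[Q:\top Q]=\dim_{K}\End_{\Lambda}(Q)=[Q:\soc Q]
\]
for the indecomposable projective-injective $Q$: writing $Q=P_{a}=E_{b}$ with $\top Q=S_{a}$ and $\soc Q=S_{b}$, both multiplicities equal $\dim_{K}\End_{\Lambda}(Q)$ via $[Q:S_{a}]=\dim\Hom_{\Lambda}(P_{a},Q)$ and $[Q:S_{b}]=\dim\Hom_{\Lambda}(Q,E_{b})$. Consequently $\soc Q$ is a composition factor of $\bQ$ if and only if $\top Q$ is a composition factor of $\rad Q$, i.e. if and only if $\Hom_{\Lambda}(Q,\rad Q)\neq0$ by Lemma \ref{1-1}. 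This single equivalence drives the whole dichotomy: it is exactly the hypothesis of part (2), and it is what makes the doubling possible in part (1).

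Next I would compute the fibre of $\alpha$ over a fixed $N\in\sttilt\bLambda$. The only candidate preimages are $M_{1}=N$ and, when $\bQ\in\add N$ (write $N=\bQ\oplus N'$), also $M_{2}=Q\oplus N'$ and $M_{3}=Q\oplus N$; conversely $\overline{M}\in\sttilt\bLambda$ for every $M\in\sttilt\Lambda$ (its $\tau_{\bLambda}$-rigidity follows from that of $M$ by full faithfulness and $\tau_{\Lambda}\bQ\simeq\rad Q$), so these exhaust $\sttilt\Lambda$. Using $\tau_{\Lambda}$ versus $\tau_{\bLambda}$, the adjunction, and Lemma \ref{1-1}(2a) (so that $\Hom_{\Lambda}(X,\rad Q)\simeq\Hom_{\Lambda}(X,Q)$ for $Q$-free $X$), one checks: $M_{1}\in\sttilt\Lambda$ iff $\bQ\notin\add N$ or $\Hom_{\Lambda}(N,Q)=0$; $M_{2}$ is always $\tau_{\Lambda}$-rigid but support $\tau$-tilting iff $\soc Q$ is a composition factor of $N$; and $M_{3}$ is support $\tau$-tilting iff $N\in\N$ and $\Hom_{\Lambda}(Q,\rad Q)=0$, the last condition being supplied for free by the symmetry lemma exactly when $\N\neq\emptyset$. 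Feeding in Proposition \ref{1-3} (the counts $|M_{i}|$ against $\s(M_{i})$ differ by one according to whether $\soc Q$ is a new composition factor) yields a unique preimage over each $N\notin\N$ and exactly two, $N$ and $Q\oplus N$, over each $N\in\N$. Declaring $\alpha(M)\in\N^{+}$ when $Q,\bQ\in\add M$ and $\alpha(M)\in\sttilt\bLambda$ otherwise makes $\alpha$ a bijection $\sttilt\Lambda\to(\sttilt\bLambda)^{\N}$; part (2) then drops out, since its hypothesis forces $\Hom_{\Lambda}(\bQ,Q)\neq0$ (Lemma \ref{1-1}), hence $\N=\emptyset$ and $(\sttilt\bLambda)^{\N}=\sttilt\bLambda$.

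Finally, for the order I would establish the formula
\[
M\ge M'\ \Longleftrightarrow\ \alpha(M)\ge\alpha(M')\ \text{in }\sttilt\bLambda\ \text{ and }\ (Q\in\add M'\Rightarrow Q\in\add M).
\]
This rests on $\Fac_{\Lambda}M\cap\mod\bLambda=\Fac_{\bLambda}\alpha(M)$ (immediate from the adjunction, since a surjection onto a $\bLambda$-module factors through $\overline{M}$) and on the facts that a $\Lambda$-module lies outside $\mod\bLambda$ iff $Q$ is one of its summands, while the projective $Q$ lies in $\Fac X$ iff $Q\in\add X$. Matching this formula against the relation $\ge_{\N}$ of Definition \ref{3-2} is where I expect the main difficulty: one must use that $\bQ$ is projective over $\bLambda$ (so that $\bQ\in\add N$ is inherited downward along $\ge$) and that composition factors only grow upward along $\ge$ (so that membership of $\soc Q$, equivalently the presence of $Q$ in the associated $\Lambda$-module, propagates correctly). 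A short case analysis over the comparisons in $(\sttilt\bLambda)^{\N}$ then shows the two orders agree, giving the poset isomorphism, and the Hasse-quiver isomorphism follows from $\hasse(\po^{\N})=\hasse(\po)^{\N}$.
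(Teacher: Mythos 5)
Your proposal is correct, and its skeleton coincides with the paper's: your fibre analysis over $N\in\sttilt\bLambda$ reproduces exactly the content of Theorem \ref{3-5} and Proposition \ref{3-7} (the paper sorts $\sttilt\Lambda$ into the classes $\M_{1},\M_{2}^{\pm},\M_{3}$ matching $\N_{1},\N_{2},\N_{2}^{+},\N_{3}$, which is your trichotomy $M_{1},M_{2},M_{3}$ in different clothing), and your order formula $M\ge M'\Leftrightarrow\alpha(M)\ge\alpha(M')$ together with $(Q\in\add M'\Rightarrow Q\in\add M)$ packages the paper's final case analysis; the compatibility checks you flag as the main difficulty (that the extra clause is vacuous for plain--plain comparisons, and that no element of $\N$ can dominate an element of $\N^{+}$ among the images) are precisely the paper's Lemma \ref{3-8}, proved from the two facts you cite, namely that projectives in $\Fac$ split off as summands and that composition factors of $N'$ are among those of $N$ when $N\ge N'$. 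Two local deviations are worth recording. First, your symmetry lemma $[Q:\top Q]=\dim_{K}\End_{\Lambda}(Q)=[Q:\soc Q]$ is correct but is a repackaging of what the paper extracts in one line from Lemma \ref{1-1}(2), namely $\Hom_{\Lambda}(Q,\rad Q)\simeq\Hom_{\Lambda}(\bQ,\rad Q)\simeq\Hom_{\Lambda}(\bQ,Q)$, which is folded into the isomorphism chains of Lemma \ref{3-6} and Proposition \ref{3-7}(2) so that the condition $\Hom_{\Lambda}(Q,\rad Q)=0$ never needs separate mention (and note it is supplied whenever $N\in\N$, not merely when $\N\neq\emptyset$). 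Second, and this is a genuine gain in economy: for the assertion that $Q\oplus N'\in\sttilt\Lambda$ forces $\Hom_{\Lambda}(N,Q)\neq0$, the paper argues by contradiction via Bongartz maximality (Proposition \ref{air2.10} combined with Lemma \ref{air2.1}), whereas your counting gives both directions at once from Proposition \ref{1-3}: $Q\oplus N'$ is always $\tau$-rigid, $|Q\oplus N'|=|N|=\s(N)$, and the composition factors of $Q\oplus N'$ are those of $N$ together with $S$, so $|Q\oplus N'|=\s(Q\oplus N')$ holds exactly when $S$ already occurs in $N$. Your backward verification of the order formula is also sound, resting on the splitting $X\simeq Q^{a}\oplus Y$ with $Y\in\mod\bLambda$ for every $\Lambda$-module $X$, which is Proposition \ref{3-1}(2).
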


We illustrate Theorem \ref{3-4} with the following example.
\begin{example}
Let $\Lambda$ be the preprojective algebra of Dynkin type $A_{3}$.
Then we have $\Lambda=\begin{smallmatrix}
1\\
2\\
3
\end{smallmatrix}\oplus\begin{smallmatrix}
2\\
1\ 3\\
2
\end{smallmatrix}\oplus\begin{smallmatrix}
3\\
2\\
1
\end{smallmatrix}$.
Let $Q:=\begin{smallmatrix}
1\\
2\\
3
\end{smallmatrix}$.
Then we have $\bLambda=\begin{smallmatrix}
1\\
2
\end{smallmatrix}\oplus\begin{smallmatrix}
2\\
1\ 3\\
2
\end{smallmatrix}\oplus\begin{smallmatrix}
3\\
2\\
1
\end{smallmatrix}$.
The Hasse quiver $\hasse(\bLambda)$ is given by
\begin{align}
\tiny{
\xymatrix@C=1mm@R=3mm{
&&&&&{\begin{smallmatrix}
1\\
2
\end{smallmatrix}\begin{smallmatrix}
2\\
1\ 3\\
2
\end{smallmatrix}\begin{smallmatrix}
3\\
2\\
1
\end{smallmatrix}}\ar[dll]\ar[d]\ar[drr]&&&&&\\
&&&{\begin{smallmatrix}
2\\
3
\end{smallmatrix}\begin{smallmatrix}
2\\
1\ 3\\
2
\end{smallmatrix}\begin{smallmatrix}
3\\
2\\
1
\end{smallmatrix}}\ar[dll]\ar[drr]&&{\begin{smallmatrix}
1\\
2\\
\end{smallmatrix}\begin{smallmatrix}
1\ 3\\
2
\end{smallmatrix}\begin{smallmatrix}
3\\
2\\
1
\end{smallmatrix}}\ar[dll]\ar[drr]&&{\begin{smallmatrix}
1\\
2\\
\end{smallmatrix}\begin{smallmatrix}
2\\
1\ 3\\
2
\end{smallmatrix}\begin{smallmatrix}
2\\
1
\end{smallmatrix}}\ar[dll]\ar[ddrrr]&&&\\
&{\begin{smallmatrix}
2\\
3
\end{smallmatrix}\begin{smallmatrix}
3\\
2
\end{smallmatrix}\begin{smallmatrix}
3\\
2\\
1
\end{smallmatrix}}\ar[dl]\ar[dr]&&{\begin{smallmatrix}
3
\end{smallmatrix}\begin{smallmatrix}
1\ 3\\
2
\end{smallmatrix}\begin{smallmatrix}
3\\
2\\
1
\end{smallmatrix}}\ar[dl]\ar[dr]&&{\begin{smallmatrix}
2\\
3
\end{smallmatrix}\begin{smallmatrix}
2\\
1\ 3\\
2
\end{smallmatrix}\begin{smallmatrix}
2\\
1
\end{smallmatrix}}\ar[dr]&&{\begin{smallmatrix}
1\\
2\\
\end{smallmatrix}\begin{smallmatrix}
1\ 3\\
2
\end{smallmatrix}\begin{smallmatrix}
1
\end{smallmatrix}}\ar[dlll]\ar[ddrr]&&&\\
{\begin{smallmatrix}
2\\
3
\end{smallmatrix}\begin{smallmatrix}
3\\
2
\end{smallmatrix}}\ar[dr]\ar[drrr]&&{\begin{smallmatrix}
3
\end{smallmatrix}\begin{smallmatrix}
3\\
2
\end{smallmatrix}\begin{smallmatrix}
3\\
2\\
1
\end{smallmatrix}}\ar[dl]&&{\begin{smallmatrix}
3
\end{smallmatrix}\begin{smallmatrix}
1\ 3\\
2
\end{smallmatrix}\begin{smallmatrix}
1
\end{smallmatrix}}\ar[dr]&&{\begin{smallmatrix}
2\\
3
\end{smallmatrix}\begin{smallmatrix}
2
\end{smallmatrix}\begin{smallmatrix}
2\\
1
\end{smallmatrix}}\ar[dlll]\ar[dr]&&&&*+[Fo]{\begin{smallmatrix}
1\\
2
\end{smallmatrix}\begin{smallmatrix}
2\\
1
\end{smallmatrix}}\ar[dlll]\ar[dl]\\
&{\begin{smallmatrix}
3
\end{smallmatrix}\begin{smallmatrix}
3\\
2
\end{smallmatrix}}\ar[drr]&&{\begin{smallmatrix}
2\\
3
\end{smallmatrix}\begin{smallmatrix}
2
\end{smallmatrix}}\ar[drr]&&{\begin{smallmatrix}
3
\end{smallmatrix}\begin{smallmatrix}
1
\end{smallmatrix}}\ar[dll]\ar[drr]&&{\begin{smallmatrix}
2
\end{smallmatrix}\begin{smallmatrix}
2\\
1
\end{smallmatrix}}\ar[dll]&&*+[Fo]{\begin{smallmatrix}
1\\
2
\end{smallmatrix}\begin{smallmatrix}
1
\end{smallmatrix}}\ar[dll]&\\
&&&{\begin{smallmatrix}
3
\end{smallmatrix}}\ar[drr]&&{\begin{smallmatrix}
2
\end{smallmatrix}}\ar[d]&&{\begin{smallmatrix}
1
\end{smallmatrix}}\ar[dll]&&&\\
&&&&&{\begin{smallmatrix}
\{0\}
\end{smallmatrix}}&&&&&
}}\notag
\end{align}
where the elements in $\N$ are marked by circles.

On the other hand, the Hasse quiver $\hasse(\Lambda)$ is given by
\begin{align}
\tiny{
\xymatrix@C=1mm@R=3mm{
&&&&&{\begin{smallmatrix}
1\\
2\\
3
\end{smallmatrix}\begin{smallmatrix}
2\\
1\ 3\\
2
\end{smallmatrix}\begin{smallmatrix}
3\\
2\\
1
\end{smallmatrix}}\ar[dll]\ar[d]\ar[drr]&&&&&\\
&&&{\begin{smallmatrix}
2\\
3
\end{smallmatrix}\begin{smallmatrix}
2\\
1\ 3\\
2
\end{smallmatrix}\begin{smallmatrix}
3\\
2\\
1
\end{smallmatrix}}\ar[dll]\ar[drr]&&{\begin{smallmatrix}
1\\
2\\
3
\end{smallmatrix}\begin{smallmatrix}
1\ 3\\
2
\end{smallmatrix}\begin{smallmatrix}
3\\
2\\
1
\end{smallmatrix}}\ar[dll]\ar[drr]&&{\begin{smallmatrix}
1\\
2\\
3
\end{smallmatrix}\begin{smallmatrix}
2\\
1\ 3\\
2
\end{smallmatrix}\begin{smallmatrix}
2\\
1
\end{smallmatrix}}\ar[dll]\ar[drr]&&&\\
&{\begin{smallmatrix}
2\\
3
\end{smallmatrix}\begin{smallmatrix}
3\\
2
\end{smallmatrix}\begin{smallmatrix}
3\\
2\\
1
\end{smallmatrix}}\ar[dl]\ar[dr]&&{\begin{smallmatrix}
3
\end{smallmatrix}\begin{smallmatrix}
1\ 3\\
2
\end{smallmatrix}\begin{smallmatrix}
3\\
2\\
1
\end{smallmatrix}}\ar[dl]\ar[dr]&&{\begin{smallmatrix}
2\\
3
\end{smallmatrix}\begin{smallmatrix}
2\\
1\ 3\\
2
\end{smallmatrix}\begin{smallmatrix}
2\\
1
\end{smallmatrix}}\ar[dr]&&{\begin{smallmatrix}
1\\
2\\
3
\end{smallmatrix}\begin{smallmatrix}
1\ 3\\
2
\end{smallmatrix}\begin{smallmatrix}
1
\end{smallmatrix}}\ar[dlll]\ar[dr]&&*+[F]{\begin{smallmatrix}
1\\
2\\
3
\end{smallmatrix}\begin{smallmatrix}
1\\
2
\end{smallmatrix}\begin{smallmatrix}
2\\
1
\end{smallmatrix}}\ar[dl]\ar[dr]&\\
{\begin{smallmatrix}
2\\
3
\end{smallmatrix}\begin{smallmatrix}
3\\
2
\end{smallmatrix}}\ar[dr]\ar[drrr]&&{\begin{smallmatrix}
3
\end{smallmatrix}\begin{smallmatrix}
3\\
2
\end{smallmatrix}\begin{smallmatrix}
3\\
2\\
1
\end{smallmatrix}}\ar[dl]&&{\begin{smallmatrix}
3
\end{smallmatrix}\begin{smallmatrix}
1\ 3\\
2
\end{smallmatrix}\begin{smallmatrix}
1
\end{smallmatrix}}\ar[dr]&&{\begin{smallmatrix}
2\\
3
\end{smallmatrix}\begin{smallmatrix}
2
\end{smallmatrix}\begin{smallmatrix}
2\\
1
\end{smallmatrix}}\ar[dlll]\ar[dr]&&*+[F]{\begin{smallmatrix}
1\\
2\\
3
\end{smallmatrix}\begin{smallmatrix}
1\\
2
\end{smallmatrix}\begin{smallmatrix}
1
\end{smallmatrix}}\ar[dr]&&*+[Fo]{\begin{smallmatrix}
1\\
2
\end{smallmatrix}\begin{smallmatrix}
2\\
1
\end{smallmatrix}}\ar[dlll]\ar[dl]\\
&{\begin{smallmatrix}
3
\end{smallmatrix}\begin{smallmatrix}
3\\
2
\end{smallmatrix}}\ar[drr]&&{\begin{smallmatrix}
2\\
3
\end{smallmatrix}\begin{smallmatrix}
2
\end{smallmatrix}}\ar[drr]&&{\begin{smallmatrix}
3
\end{smallmatrix}\begin{smallmatrix}
1
\end{smallmatrix}}\ar[dll]\ar[drr]&&{\begin{smallmatrix}
2
\end{smallmatrix}\begin{smallmatrix}
2\\
1
\end{smallmatrix}}\ar[dll]&&*+[Fo]{\begin{smallmatrix}
1\\
2
\end{smallmatrix}\begin{smallmatrix}
1
\end{smallmatrix}}\ar[dll]&\\
&&&{\begin{smallmatrix}
3
\end{smallmatrix}}\ar[drr]&&{\begin{smallmatrix}
2
\end{smallmatrix}}\ar[d]&&{\begin{smallmatrix}
1
\end{smallmatrix}}\ar[dll]&&&\\
&&&&&{\begin{smallmatrix}
\{0\}
\end{smallmatrix}}&&&&&
}}\notag
\end{align}
where the elements in $\N^{+}$ are marked by rectangles.
One can easily check that the relationship between $\sttilt\Lambda$ and $\sttilt\bLambda$ is given by Theorem \ref{3-4}.

We refer to \cite{Mi2} for more information on $\hasse(\Lambda)$ for a preprojective algebra $\Lambda$ of Dynkin type.
\end{example}
In the rest of this subsection, we give a proof of Theorem \ref{3-4}.
We will describe the relationship between $\sttilt\Lambda$ and $\sttilt\bLambda$ more explicitly.
We decompose $\sttilt\Lambda$ as $\sttilt\Lambda=\M_{1}\coprod\M_{2}^{-}\coprod\M_{2}^{+}\coprod\M_{3}$, where
\begin{align}
&\M_{1}=\M_{1}(Q):=\{ M\in\sttilt\Lambda\mid Q,\bQ\notin\add M\}, \notag\\
&\M_{2}^{-}=\M_{2}^{-}(Q):=\{ M\in\sttilt\Lambda\mid Q\notin\add M\ \textnormal{and}\ \bQ\in\add M\}, \notag\\
&\M_{2}^{+}=\M_{2}^{+}(Q):=\{ M\in\sttilt\Lambda\mid Q\oplus \bQ \in \add M\}, \notag \\
&\M_{3}=\M_{3}(Q):=\{ M\in\sttilt\Lambda\mid Q\in\add M\ \textnormal{and}\ \bQ\notin \add M\}. \notag
\end{align}
\begin{theorem}\label{3-5}
Let $\Lambda$ be a basic finite dimensional algebra and $Q$ an indecomposable projective-injective summand of $\Lambda$ as a $\Lambda$-module.
Then the following hold.
\begin{itemize}
\item[(1)] The map $M\mapsto \alpha(M)$ gives bijections
\begin{align}
\M_{1}\rightarrow \N_{1},\ \M_{2}^{-}\rightarrow \N_{2},\ \M_{2}^{+}\rightarrow\N_{2}^{+},\ \M_{3}\rightarrow \N_{3}, \notag
\end{align}
where 
\begin{align}
&\N_{1}=\N_{1}(Q):=\{ N\in\sttilt\bLambda\ |\ \bQ\notin\add N \}, \notag\\
&\N_{2}=\N_{2}(Q):=\N=\{ N\in\sttilt\bLambda\ |\ \bQ\in\add N\ \textnormal{and}\ \Hom_{\Lambda}(N,Q)=0 \}, \notag\\
&\N_{3}=\N_{3}(Q):=\{ N\in\sttilt\bLambda\ |\ \bQ\in\add N\ \textnormal{and}\ \Hom_{\Lambda}(N,Q)\neq 0 \}. \notag
\end{align}
and $\N_{2}^{+}$ is a copy of $\N_{2}$.
In particular, there is a bijection
\begin{align}
\alpha: \sttilt\Lambda\rightarrow (\sttilt\bLambda)^{\N_{2}} \notag
\end{align}
given by the bijection above.
\item[(2)] We have
\begin{align}
\sttilt\Lambda=\{ N\mid N\in \N_{1}\coprod \N_{2} \} \coprod \{ Q\oplus N \mid N\in\N_{2} \} \coprod \{ Q\oplus (N/\bQ)\mid N\in\N_{3}\}. \notag
\end{align}
\end{itemize}
\end{theorem}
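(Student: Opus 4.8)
The engine of the whole argument will be Proposition~\ref{1-3}: a $\tau$-rigid module $M$ is support $\tau$-tilting precisely when $|M|=\s(M)$. Writing $\sigma(X)$ for the set of composition factors of $X$ (so $\s(X)=|\sigma(X)|$), the plan is to show that $M\mapsto\alpha(M)$ restricts to the four asserted maps --- concretely $M\mapsto M$ on $\M_1$ and $\M_2^{-}$, $M\mapsto M/Q$ on $\M_2^{+}$, and $M\mapsto (M/Q)\oplus\bQ$ on $\M_3$ --- and to exhibit the inverses $N\mapsto N$, $N\mapsto N$, $N\mapsto Q\oplus N$, $N\mapsto Q\oplus(N/\bQ)$ respectively. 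We may assume $Q$ is non-simple (if $Q$ is simple then $\bQ=0$, $\N_2=\N_3=\emptyset$, and only $\M_1\leftrightarrow\N_1$ survives).

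First I would record two translation identities for $X\in\mod\bLambda$. Since $Q$ is indecomposable injective with $\soc Q=S$ and, $Q$ being local with $S\subseteq\rad Q$, the $\bLambda$-injective module $\rad Q$ has $\soc(\rad Q)=\rad Q\cap\soc Q=S$, Lemma~\ref{1-1}(1)(b) over $\Lambda$ and over $\bLambda$ gives
\[
\Hom_\Lambda(X,Q)=0\iff S\notin\sigma(X)\iff \Hom_{\bLambda}(X,\rad Q)=0 .
\]
Second, writing $S_i=\top Q$, both $\Hom_\Lambda(Q,X)$ and $\Hom_{\bLambda}(\bQ,X)$ are identified with $Xe_i$, so $\Hom_\Lambda(Q,X)\cong\Hom_{\bLambda}(\bQ,X)$. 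Next I would prove the rigidity transfer lemma: for $M=\bQ^{a}\oplus M'$ with $\bQ\notin\add M'$, Proposition~\ref{3-1}(3) gives $\tau_\Lambda M=(\rad Q)^{a}\oplus\tau_{\bLambda}M'$ and $\tau_{\bLambda}M=\tau_{\bLambda}M'$, so by fullness of $\mod\bLambda\subseteq\mod\Lambda$ and the first identity,
\[
M\ \tau_\Lambda\text{-rigid}\iff M\ \tau_{\bLambda}\text{-rigid and}\ \bigl(a=0\ \text{or}\ \Hom_\Lambda(M,Q)=0\bigr).
\]

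With these in hand the first three correspondences are short. For $M\in\M_1$ the transfer lemma (case $a=0$) identifies $\tau_\Lambda$- and $\tau_{\bLambda}$-rigidity while $|M|,\s(M)$ are unchanged, so Proposition~\ref{1-3} gives $M\in\N_1$, and conversely. For $M\in\M_2^{-}$ the case $a\ge1$ gives $M\in\sttilt\bLambda$ with $\bQ\in\add M$ and $\Hom_\Lambda(M,Q)=0$, i.e.\ $M\in\N_2$, and conversely. For $\M_2^{+}$ I write $M=Q\oplus N$ with $N=M/Q$ ($\bQ\in\add N$); $\tau_\Lambda$-rigidity of $M$ (using $\tau_\Lambda Q=0$) decomposes into $N$ being $\tau_\Lambda$-rigid and $\Hom_\Lambda(Q,\tau_\Lambda N)=0$. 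The first forces $N\in\N_2$ by the transfer lemma (so $\Hom_\Lambda(N,Q)=0$, hence $S\notin\sigma(N)$), whence $\sigma(Q\oplus N)=\sigma(N)\cup\{S\}$ gives $\s(M)=\s(N)+1$ and $|M|=|N|+1$; thus $|M|=\s(M)\Leftrightarrow|N|=\s(N)$, placing $N\in\N_2$, regarded as an element of the copy $\N_2^{+}$. For the inverse $N\mapsto Q\oplus N$ I check $\Hom_\Lambda(Q,\tau_\Lambda N)=0$ from the two identities: the $(\rad Q)^{a}$-part vanishes since $\Hom_{\bLambda}(\bQ,\rad Q)$ is a summand of $\Hom_{\bLambda}(N,\rad Q)=0$, and the $\tau_{\bLambda}N'$-part vanishes since $\Hom_{\bLambda}(\bQ,\tau_{\bLambda}N')$ is a summand of $\Hom_{\bLambda}(N,\tau_{\bLambda}N)=0$.

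The case $\M_3\leftrightarrow\N_3$ carries the main difficulty. Writing $M=Q\oplus M''$ with $\bQ\notin\add M''$ and $N:=\bQ\oplus M''=\alpha(M)$, the same splitting of $\tau_\Lambda$-rigidity together with $\Hom_\Lambda(Q,\tau_{\bLambda}M'')\cong\Hom_{\bLambda}(\bQ,\tau_{\bLambda}M'')$ shows $N$ is $\tau_{\bLambda}$-rigid and $|N|=|M|$. The delicate point is to place $N$ in $\N_3$, i.e.\ to prove $S\in\sigma(N)$ and $\s(N)=|N|$. Since $\sigma(M)=\sigma(N)\cup\{S\}$ we have $\s(M)=\s(N)$ if $S\in\sigma(N)$ and $\s(M)=\s(N)+1$ otherwise; as $\s(M)=|M|=|N|$, the latter alternative would give $\s(N)=|N|-1$, contradicting the inequality $|N|\le\s(N)$ valid for every $\tau_{\bLambda}$-rigid module (the argument in the proof of Proposition~\ref{1-3}). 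Hence $S\in\sigma(N)$, so $\Hom_\Lambda(N,Q)\ne0$ and $\s(N)=|N|$, giving $N\in\N_3$; the inverse $N\mapsto Q\oplus(N/\bQ)$ is verified by the same Hom-computations and $\s$-bookkeeping. Finally, since $\sttilt\bLambda=\N_1\coprod\N_2\coprod\N_3$, assembling the four bijections yields the bijection $\alpha\colon\sttilt\Lambda\to(\sttilt\bLambda)^{\N_2}$ of~(1), and reading off the images of the inverse maps ($N$, $Q\oplus N$, $Q\oplus(N/\bQ)$) gives the decomposition in~(2).
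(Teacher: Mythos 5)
Your proof is correct in the main case $\bQ\neq 0$ and follows the paper's architecture quite closely: the same four-block decomposition of $\sttilt\Lambda$, the same translation identities (your two identities are exactly Lemma \ref{1-1}(1)(b), (2) combined with Proposition \ref{3-1}(3),(6)), your rigidity-transfer lemma is precisely Lemma \ref{3-6}, and the engine is the same counting criterion, Proposition \ref{1-3}. The one genuinely different step is the forward direction of $\M_3\to\N_3$, which you rightly identify as the delicate point. The paper proves $\Hom_\Lambda(\bQ\oplus U,Q)\neq 0$ by contradiction: if it vanished, then $Q\oplus\bQ\oplus U$ would be $\tau$-rigid over $\Lambda/\langle e_{Q\oplus U}\rangle$ (via Lemma \ref{3-6} and Lemma \ref{air2.1}), contradicting the maximality of $\tau$-tilting modules from Proposition \ref{air2.10}. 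You instead force $S\in\sigma(N)$ purely numerically: if $S\notin\sigma(N)$ then $\s(N)=|N|-1$, contradicting the inequality $|N|\le\s(N)$ valid for every $\tau$-rigid module --- and that inequality does follow from the argument proving Proposition \ref{1-3} (choose a maximal annihilating idempotent and apply the bound $|N|\le|\bLambda/\langle \bar e\rangle|$). This substitute is valid and arguably cleaner, since it keeps the whole proof inside the counting framework and avoids the Bongartz-type maximality argument entirely.

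There is, however, one concrete error: your one-line disposal of the degenerate case $\bQ=0$ is backwards. If $Q$ is simple then $0=\bQ\in\add N$ for \emph{every} $N$, and every $\bLambda$-module is annihilated by $S$, so $\Hom_\Lambda(N,Q)=0$ automatically; hence $\N_1=\N_3=\emptyset$ and $\N_2=\sttilt\bLambda$ (and correspondingly $\M_1=\M_3=\emptyset$, with $\M_2^{-}$ and $\M_2^{+}$ carrying everything), exactly as in the Remark following Proposition \ref{3-7}. Your claim that $\N_2=\N_3=\emptyset$ and only $\M_1\leftrightarrow\N_1$ survives would give $|\sttilt\Lambda|=|\sttilt\bLambda|$, which is false: when $Q$ is a simple projective-injective block, each $N\in\sttilt\bLambda$ lifts in two ways, as $N$ and as $Q\oplus N$, so $|\sttilt\Lambda|=2\,|\sttilt\bLambda|$, consistent with $(\sttilt\bLambda)^{\N_2}$ only under the convention $\N_2=\sttilt\bLambda$. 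The fix is easy, but as written your reduction to the non-simple case is incorrect.
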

\begin{proof}
We only have to give a proof of (1) because (2) follows from (1) immediately.
We start with an easy lemma.
\begin{lemma}\label{3-6}
Assume $\bQ\neq 0$ and that $U\in\mod\bLambda$ does not have $\bQ$ as a direct summand.
The following are equivalent.
\begin{itemize}
\item[(a)] $\bQ\oplus U$ is a $\tau$-rigid $\Lambda$-module.
\item[(b)] $\bQ\oplus U$ is a $\tau$-rigid $\bLambda$-module with $\Hom_{\Lambda}(\bQ\oplus U,Q)=0$.
\end{itemize}
\end{lemma}
\begin{proof}
We have isomorphisms
\begin{align}
\Hom_{\Lambda}(\bQ\oplus U,\tau_{\Lambda}(\bQ\oplus U))
&\overset{\ref{3-1}(3)}{\simeq} \Hom_{\Lambda}(\bQ\oplus U,\rad Q) \oplus \Hom_{\bLambda}(\bQ\oplus U,\tau_{\bLambda}U)\notag\\
&\overset{\ref{1-1}(2)}{\simeq} \Hom_{\Lambda}(\bQ\oplus U,Q) \oplus \Hom_{\bLambda}(\bQ\oplus U,\tau_{\bLambda}(\bQ\oplus U)).\notag
\end{align}
Thus the assertion follows.
\end{proof}
The following proposition plays an important role.
\begin{proposition}\label{3-7}
Assume $\bQ\neq 0$ and that $U\in\mod\bLambda$ does not have $\bQ$ as a direct summand.
\begin{itemize}
\item[(1)] The following are equivalent:
\begin{itemize}
\item[(a)] $U$ is a support $\tau$-tilting $\Lambda$-module.
\item[(b)] $U$ is a support $\tau$-tilting $\bLambda$-module.
\end{itemize}
\item[(2)] The following are equivalent:
\begin{itemize}
\item[(a)] $Q\oplus \bQ\oplus U$ is a support $\tau$-tilting $\Lambda$-module.
\item[(b)] $\bQ\oplus U$ is a support $\tau$-tilting $\Lambda$-module.
\item[(c)] $\bQ\oplus U$ is a support $\tau$-tilting $\bLambda$-module with $\Hom_{\Lambda}(\bQ\oplus U,Q)=0$.
\end{itemize}
\item[(3)] The following are equivalent:
\begin{itemize}
\item[(a)] $Q\oplus U$ is a support $\tau$-tilting $\Lambda$-module.
\item[(b)] $\bQ\oplus U$ is a support $\tau$-tilting $\bLambda$-module with $\Hom_{\Lambda}(\bQ\oplus U,Q)\neq 0$.
\end{itemize}
\end{itemize}
\end{proposition}
\begin{proof}
(1) Since we have $\tau_{\Lambda}U\simeq\tau_{\bLambda}U$ by Proposition \ref{3-1}(3), the assertion follows.

(2) (a) $\Leftrightarrow$ (b): We claim that $M:=Q\oplus \bQ\oplus U$ is a $\tau$-rigid $\Lambda$-module if and only if 
$N:=\bQ\oplus U$ is a $\tau$-rigid $\Lambda$-module.
Indeed, this follows from isomorphisms
\begin{align}
\Hom_{\Lambda}(M,\tau_{\Lambda}M)
&\overset{\ \ \ \ \ }{\simeq}\Hom_{\Lambda}(Q,\tau_{\Lambda}N)\oplus \Hom_{\Lambda}(N,\tau_{\Lambda}N)\notag\\
&\overset{\ref{1-1}(2)}{\simeq}\Hom_{\Lambda}(\bQ, \tau_{\Lambda}N)\oplus \Hom_{\Lambda}(N,\tau_{\Lambda}N), \notag
\end{align}
and $\bQ\in \add N$.

By Lemma \ref{3-6}, both (a) and (b) imply $\Hom_{\Lambda}(N,Q)=0$, 
or equivalently $N$ does not have $S$ as a composition factor by Lemma \ref{1-1}(1). 
Hence $\s(M)=\s(N)+1$ holds, where $\s(M)$ is the number of nonisomorphic simple modules appearing in a composition series of $M$..
Thus the assertion follows from Proposition \ref{1-3} and $|M|=|N|+1$.

(b) $\Leftrightarrow$ (c): This is immediate from Lemma \ref{3-6}.

(3) First, we claim that $Q\oplus U$ is a $\tau$-rigid $\Lambda$-module if and only if $\bQ\oplus U$ is a $\tau$-rigid $\bLambda$-module.
Indeed, this follows from isomorphisms
\begin{align}
\Hom_{\Lambda}(Q\oplus U, \tau_{\Lambda}U)
\overset{\ref{1-1}(2)}{\simeq}\Hom_{\Lambda}(\bQ\oplus U,\tau_{\Lambda} U)
\overset{\ref{3-1}(3)}{\simeq}\Hom_{\bLambda}(\bQ\oplus U,\tau_{\bLambda}U).\notag
\end{align}
Next, we claim that, if $Q\oplus U\in\sttilt\Lambda$, then $\Hom_{\Lambda}(\bQ\oplus U,Q)\neq 0$.
Indeed, assume that $\Hom_{\Lambda}(\bQ\oplus U,Q) =0$.
Since $Q\oplus U$ is $\tau$-rigid, $\bQ\oplus U$ is a $\tau$-rigid $\bLambda$-module by Lemma \ref{1-1}(2) and Proposition \ref{3-1}(3), 
and hence $\bQ\oplus U$ is a $\tau$-rigid $\Lambda$-module by Lemma \ref{3-6}.
Thus $Q\oplus \bQ\oplus U$ is a $\tau$-rigid $(\Lambda/\langle e_{Q\oplus U})\rangle)$-module by Lemma \ref{air2.1}.
This contradicts that $\tau$-tilting modules are maximal $\tau$-rigid by Proposition \ref{air2.10}.
Since $\Hom_{\Lambda}(\bQ\oplus U,Q)\neq 0$ holds by our assumption, 
$\bQ\oplus U$ has $S$ as a composition factor by Lemma \ref{1-1}(1).
Thus the assertion follows from Proposition \ref{1-3} since we have $\s(Q\oplus U)=\s(\bQ\oplus U)$.
\end{proof}
\begin{remark}
If $Q$ is a simple projective-injective $\Lambda$-module, or equivalently $\bQ=0$, then we have $\sttilt\bLambda=\N_{2}$.
In this case, Proposition \ref{3-7}(2) holds because we have $\bLambda\simeq \Lambda/\langle e_{i}\rangle$, where $\soc(Q)=S_{i}=\top(e_{i}\Lambda)$.
\end{remark}

We now finish the proof of Theorem \ref{3-5}(1).

By Proposition \ref{3-7}, the map $M\mapsto\alpha(M)$ gives bijections
\begin{align}
\M_{1}\rightarrow \N_{1},\ \M_{2}^{-}\rightarrow \N_{2},\ \M_{2}^{+}\rightarrow \N_{2}^{+},\ \M_{3}\rightarrow \N_{3}. \notag
\end{align}
Hence $\alpha:\sttilt\Lambda\rightarrow (\sttilt\bLambda)^{\N}$ is a bijection.
\end{proof}
The following lemma is useful to understand the structure of the poset $\sttilt\Lambda$.
\begin{lemma}\label{3-8}
\begin{itemize}
\item[(1)] Let $N,N'\in\sttilt\bLambda$. If $N>N'$ holds, then we have 
\begin{align}
(N,N')\notin (\N_{1}\times \N_{2})\coprod(\N_{1}\times \N_{3})\coprod(\N_{2}\times\N_{3}). \notag
\end{align}
\item[(2)] Let $M,M'\in\sttilt\Lambda$. If $M>M'$ holds, then we have 
\begin{align}
(M,M')\notin (\M_{1}\times \M_{2}^{\pm})\coprod(\M_{1}\times \M_{3})\coprod(\M_{2}^{\pm}\times\M_{3})\coprod(\M_{2}^{-}\times\M_{2}^{+}) \notag.
\end{align}
\end{itemize}
\end{lemma}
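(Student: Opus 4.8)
The plan is to read all the forbidden relations as violations of a single monotonicity principle for the order $\ge$ (which is $\Fac$-inclusion by the Definition-Proposition preceding \ref{1-2}): I assign to each piece of the partition a ``height'', and I show that along a strict relation the height cannot increase as one goes down. So I first isolate the monotone quantities and then eliminate the listed pairs family by family.

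For part (1), suppose $N>N'$, so $\Fac N\supseteq\Fac N'$ and in particular $N'$ is a quotient of a direct sum of copies of $N$. Two facts drive the argument. First, every composition factor of such a quotient $N'$ is a composition factor of $N$; since $Q$ is indecomposable injective with $\soc Q=S$, Lemma \ref{1-1}(1) then gives that $\Hom_{\Lambda}(N',Q)\neq0$ forces $\Hom_{\Lambda}(N,Q)\neq0$. Second, $\bQ$ is a projective $\bLambda$-module by Proposition \ref{3-1}(6), so it has no higher self-extensions; thus if $\bQ\in\add N'$ then $\bQ\in\Fac N'\subseteq\Fac N$, and being Ext-projective in $\Fac N$ it must be a summand of $N$, because the indecomposable summands of a support $\tau$-tilting module are exactly the indecomposable Ext-projective objects of its $\Fac$ \cite{AIR}. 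Combining these: if $N'\in\N_{2}$ then $\bQ\in\add N$, so $N\in\N_{2}\coprod\N_{3}$, excluding $\N_{1}\times\N_{2}$; if $N'\in\N_{3}$ then additionally $S$ is a composition factor of $N$, so $\Hom_{\Lambda}(N,Q)\neq0$ and $N\in\N_{3}$, excluding both $\N_{1}\times\N_{3}$ and $\N_{2}\times\N_{3}$.

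For part (2) I would record two auxiliary statements. First, the set $\{M\in\sttilt\Lambda\mid Q\in\add M\}=\M_{2}^{+}\coprod\M_{3}$ is upward closed: since $Q$ is projective, exactly the Ext-projective argument above shows that $Q\in\add M'$ together with $M\ge M'$ forces $Q\in\add M$. Second, the functor $\overline{(-)}=-\otimes_{\Lambda}\bLambda$ is order-preserving, i.e.\ $M\ge M'$ implies $\alpha(M)\ge\alpha(M')$: the natural surjection $M'\twoheadrightarrow\overline{M'}$ puts $\overline{M'}\in\mathsf{Fac}_{\Lambda}M'\subseteq\mathsf{Fac}_{\Lambda}M$, and applying the right-exact functor $\overline{(-)}$ to a surjection $M^{k}\twoheadrightarrow\overline{M'}$ (using $\overline{\overline{M'}}\simeq\overline{M'}$, as $\overline{M'}$ is annihilated by $S$) yields $\overline{M'}\in\mathsf{Fac}_{\bLambda}\overline{M}$, hence $\mathsf{Fac}_{\bLambda}\overline{M'}\subseteq\mathsf{Fac}_{\bLambda}\overline{M}$.

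With these in hand the six forbidden families split into two groups. The four pairs $\M_{1}\times\M_{2}^{+}$, $\M_{1}\times\M_{3}$, $\M_{2}^{-}\times\M_{2}^{+}$, $\M_{2}^{-}\times\M_{3}$ all have the left factor without $Q$ and the right factor with $Q$ as a summand; a strict relation $M>M'$ would then force $Q\in\add M$ by the first auxiliary statement, a contradiction. The two remaining pairs $\M_{1}\times\M_{2}^{-}$ and $\M_{2}^{+}\times\M_{3}$ have matching $Q$-status, so the crude projectivity argument is unavailable and I pass to $\bLambda$: by Theorem \ref{3-5}(1) the map $\alpha$ sends $\M_{1},\M_{2}^{-},\M_{2}^{+},\M_{3}$ into $\N_{1},\N_{2},\N_{2},\N_{3}$ respectively. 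For $(M,M')\in\M_{1}\times\M_{2}^{-}$ the images lie in the disjoint sets $\N_{1},\N_{2}$, so $M>M'$ gives $\alpha(M)>\alpha(M')$ with $(\alpha(M),\alpha(M'))\in\N_{1}\times\N_{2}$, contradicting part (1); for $(M,M')\in\M_{2}^{+}\times\M_{3}$ one lands in $\N_{2}\times\N_{3}$, again excluded by part (1). This exhausts the list. I expect the main obstacle to be precisely these two ``matching $Q$-status'' pairs: everything hinges on verifying that $\overline{(-)}$ respects the $\Fac$-order and that the four $\alpha$-images fall into the correct, mutually disjoint pieces $\N_{i}$, after which part (1) closes the argument.
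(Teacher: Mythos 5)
Your proof is correct, and for part (1) it is in substance the paper's own argument: projective direct summands of the smaller module pass up (the paper's implicit justification is simply that a surjection $N^{k}\twoheadrightarrow\bQ$ in $\mod\bLambda$ splits because $\bQ$ is projective over $\bLambda$ by Proposition \ref{3-1}(6) --- your appeal to Ext-projectivity of $\Fac N$ and the \cite{AIR} description of its Ext-projectives is correct but heavier than needed), and the pair $\N_{2}\times\N_{3}$ is excluded by exactly your composition-factor comparison with $S=\soc Q$ via Lemma \ref{1-1}(1). Where you genuinely diverge is part (2), which the paper dispatches with ``the proof of (2) is similar'': you handle the four pairs with mismatched $Q$-status by the same splitting principle, and then reduce the two pairs with matching $Q$-status, $\M_{1}\times\M_{2}^{-}$ and $\M_{2}^{+}\times\M_{3}$, to part (1) through the map $\alpha$, using that the images land in the mutually disjoint pieces $\N_{1},\N_{2},\N_{3}$ to get strictness. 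This reduction needs $\alpha$ to be order-preserving, and you are right not to quote Theorem \ref{3-4} for this --- the paper's proof of Theorem \ref{3-4} invokes Lemma \ref{3-8}, so that would be circular --- but instead to re-derive it from the surjection $M'\twoheadrightarrow\overline{M'}$ and right-exactness of $-\otimes_{\Lambda}\bLambda$; this is precisely the ``only if'' computation that appears inside the paper's proof of Theorem \ref{3-4}, extracted and used earlier, which is legitimate since that direction is independent of the lemma. What your route buys is a clean treatment of $\M_{2}^{+}\times\M_{3}$, where a direct argument ``similar'' to (1) is not a verbatim copy: one would have to argue that a module in $\Fac(Q\oplus N)$ with $N\in\N_{2}$ having $S$ as a composition factor must contain $Q$ as a direct summand (a socle argument using that every nonzero submodule of $Q$ contains $S$ and that $Q$ is injective), contradicting basicness of $M'=Q\oplus U'$; your $\alpha$-reduction sidesteps this extra step at the modest cost of the order-preservation verification. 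Both approaches are sound; yours makes explicit what the paper's one-line ``similar'' leaves to the reader.
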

\begin{proof}
We only prove (1); the proof of (2) is similar.
If $N\ge N'$ holds and $P\in\add N'$ is projective, then $P\in \add N$.
Thus the assertion except $(N,N')\notin\N_{2}\times \N_{3}$ follows.
Assume that $(N,N')\in \N_{2}\times \N_{3}$.
Then we have $N'\in\Fac(N)$.
This is a contradiction since $N$ does not have $S$ as a composition factor and $N'$ has $S$ as a composition factor.
\end{proof}
Now, we are ready to prove Theorem \ref{3-4}.
\begin{proof}[Proof of Theorem \ref{3-4}]
(1) Since the map $\alpha:\sttilt\Lambda\rightarrow (\sttilt\bLambda)^{\N}$ is a bijection by Theorem \ref{3-5},
we have only to show that, for any $M,L\in\sttilt\Lambda$, $M\ge L$ holds if and only if $\alpha(M)\ge \alpha(L)$ holds.
Indeed, if $M\ge L$, then 
$L\in \Fac M$, and hence $\overline{L}\in \Fac \overline{M}$ hold, which implies $\alpha(M)\ge \alpha(L)$.
Conversely, assume that $\alpha(M)\ge \alpha(L)$.
If both $M$ and $L$ are in either $\M_{1}$, $\M^{\pm}_{2}$ or $\M_{3}$, then $M\ge L$ holds clearly.
Otherwise, by Lemma \ref{3-8}, we have
\begin{align}
(M,L)\in (\M_{3}\times \M_{2}^{\pm})\coprod (\M_{3}\times\M_{1})\coprod (\M_{2}^{\pm}\times \M_{1})\coprod (\M_{2}^{+},\M_{2}^{-}). \notag
\end{align}
By the definition of the functor $\overline{(-)}$, we have $\overline{M}\in \Fac M$ in $\mod \Lambda$.
Hence $M\ge \alpha(M)\ge \alpha(L)$ holds in $\mod\Lambda$.
If we have $L\in \M_{1}\coprod\M_{2}^{-}$, then $\alpha(L)=\overline{L}=L$ and hence $M\ge L$ hold.
On the other hand, if we have $L\in \M_{2}^{+}$, then $L=Q\oplus \overline{L}\in \Fac M$ holds because $M\in \M_{3}$ has $Q$ as a direct summand.
Hence the assertion follows.

(2) Since $\bQ$ has $S$ as a composition factor, we have $\Hom_{\Lambda}(\bQ, Q)\neq 0$ by Lemma \ref{1-1}(1).
Thus $\N=\emptyset$ holds.
The assertion follows from (1) since $(\sttilt\bLambda)^{\emptyset}=\sttilt\bLambda$.
\end{proof}
\subsection{Applications to Nakayama algebras}
In this subsection, we apply the results in the previous subsection to a Nakayama algebra $\Lambda$.
We give a combinatorial method to construct the Hasse quiver of support $\tau$-tilting $\Lambda$-modules.

It is basic that any Nakayama algebra $\Lambda$ has an at least one indecomposable projective-injective module $Q$
and its factor algebra $\Lambda/\soc Q$ is again Nakayama (see \cite[V.3.3 and V.3.4]{ASS}). 
Thus we can iteratively apply  Drozd-Kirichenko rejection to Nakayama algebras. 
We have the following algorithm for a construction of the Hasse quiver.
\begin{algorithm}\label{3-9}
Let $\Lambda$ be a Nakayama algebra.
\begin{itemize}
\item[(0)] Take a (non-unique) sequence of Nakayama algebras
\begin{align}\label{seq}
\xymatrix{\Lambda=:\Lambda_{1}\ar@{->>}[r]&\Lambda_{2}\ar@{->>}[r]&\cdots\ar@{->>}[r]&\Lambda_{m-1}\ar[r]&\Lambda_{m}:=0}\notag
\end{align}
such that $\Lambda_{i+1}:=\Lambda_{i}/\soc Q_{i}$, 
where $Q_{i}$ is an indecomposable projective-injective $\Lambda_{i}$-module, and $m> 0$ is an integer.
\item[(1)] First, the Hasse quiver $\hasse(\Lambda_{m})$ consists of one vertex and no arrows.
\item[(2)] Secondly, the Hasse quiver $\hasse(\Lambda_{m-1})$ is given by $\hasse(\Lambda_{m})^{\N_{2}(Q_{m-1})}$ (see Definition \ref{3-2}).
\item[(3)] Thirdly, the Hasse quiver $\hasse(\Lambda_{m-2})$ is given by $\hasse(\Lambda_{m-1})^{\N_{2}(Q_{m-2})}$.
\item[$\vdots$]
\item[(l)] The Hasse quiver $\hasse(\Lambda_{m-l+1})$ is given by $\hasse(\Lambda_{m-l+2})^{\N_{2}(Q_{m-l+1})}$.
\item[$\vdots$]
\item[(m)] Finally, the Hasse quiver $\hasse(\Lambda_{1})$ is given by $\hasse(\Lambda_{2})^{\N_{2}(Q_{1})}$.
\end{itemize}
\end{algorithm}

As a direct consequence of Theorem \ref{3-4}, we have the following result.
\begin{theorem}\label{3-10}
Let $\Lambda$ be a Nakayama algebra with $n$ simple modules. 
Assume that $\ell(P_{i})\ge n$ for all $i\in [1,n]$.
Then we have an isomorphism of posets
\begin{align}
\sttilt \Lambda\simeq \sttilt \Lambda_{n}^{n}, \notag
\end{align}
where $\Lambda_{n}^{n}$ is the self-injective Nakayama algebra with $n$ simple modules and the Loewy length $n$.
\end{theorem}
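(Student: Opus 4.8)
The plan is to reduce $\Lambda$ to $\Lambda_n^n$ by a sequence of Drozd--Kirichenko rejections, each of which leaves the poset of support $\tau$-tilting modules unchanged. Since $\Lambda$ is connected Nakayama and $\ell(P_i)\ge n$ for every $i$, Proposition \ref{ass3.2} forces its quiver to be $\vec{\Delta}_{n}$ (the quiver $\vec{A}_{n}$ is excluded because its projective at the sink is simple, of Loewy length $1<n$ once $n\ge 2$). Thus $\Lambda=K\vec{\Delta}_{n}/I$ with Kupisch series $c_i:=\ell(P_i)\ge n$. I would argue by induction on the nonnegative integer $d(\Lambda):=\sum_{i=1}^{n}(c_i-n)$. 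When $d(\Lambda)=0$ all $c_i$ equal $n$, so $\rad^n\Lambda=0$ while no projective is shorter, forcing $I=J^n$ and hence $\Lambda\simeq\Lambda_n^n$; this is the base case.

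For the inductive step, choose an index $i$ with $c_i$ maximal and put $Q:=P_i$. First I would record that $Q$ is projective-injective: the indecomposable projective of maximal Loewy length is always projective-injective in a Nakayama algebra (the injective envelope of its socle has Loewy length at most $c_i$, which $Q$ already realizes), so Proposition \ref{3-1} applies to $Q$. Assuming $d(\Lambda)>0$, maximality gives $c_i\ge n+1$. The key numerical point is that $\bQ=Q/\soc Q$ then has $S:=\soc Q$ among its composition factors: the composition factors of $\bQ$ are the $c_i-1\ (\ge n)$ consecutive simples $S_{(i)_n},S_{(i-1)_n},\dots,S_{(i-c_i+2)_n}$, and a set of at least $n$ consecutive residues modulo $n$ exhausts all residues, in particular contains the index $(i-c_i+1)_n$ of $S$. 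Hence the hypothesis of Theorem \ref{3-4}(2) is met, and that theorem yields an isomorphism of posets $\sttilt\Lambda\simeq\sttilt\bLambda$ together with $\hasse(\Lambda)\simeq\hasse(\bLambda)$.

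It remains to check that $\bLambda=\Lambda/S$ stays inside the class to which the induction applies and that the measure drops. By \cite{ASS}, $\bLambda$ is again a connected Nakayama algebra, and since $S$ lies in the radical no vertex idempotent is destroyed, so $\bLambda$ still has $n$ simple modules. Here the choice of $Q$ of maximal Loewy length is essential: because $c_j\le c_i$ for all $j$, one checks that $P_jS=0$ for $j\ne i$ while $\bar P_i=P_i/\soc P_i$, so the Kupisch series of $\bLambda$ is $(c_1,\dots,c_i-1,\dots,c_n)$ with every entry still $\ge n$ and with $d(\bLambda)=d(\Lambda)-1$. By the inductive hypothesis $\sttilt\bLambda\simeq\sttilt\Lambda_n^n$, and composing with the isomorphism above finishes the proof.

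I expect the main obstacle to be the bookkeeping that guarantees the rejection stays within the prescribed class: namely verifying simultaneously that a projective of maximal Loewy length is projective-injective, that its quotient by the socle still contains that socle as a composition factor (so that Theorem \ref{3-4}(2), rather than the poset-enlarging Theorem \ref{3-4}(1), governs the step), and that only the single chosen Kupisch entry decreases. Choosing $Q$ of maximal Loewy length is precisely what makes all three hold at once; any projective-injective of Loewy length exactly $n$ would instead fall under Theorem \ref{3-4}(1) and enlarge the poset, which is why the induction must always peel off a strictly longest projective.
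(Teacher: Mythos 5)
Your proposal is correct and takes essentially the same route as the paper: the paper obtains Theorem \ref{3-10} as a direct consequence of Theorem \ref{3-4}(2), i.e.\ by iterated Drozd--Kirichenko rejection at a projective-injective $Q$ whose quotient $\bQ$ still contains $\soc Q$ as a composition factor, which is exactly the induction you run. The details you supply---that a projective of maximal Loewy length is injective, that $c_i\ge n+1$ forces $S$ to occur in $\bQ$, and that only the chosen Kupisch entry drops by one so the class $\ell(P_i)\ge n$ is preserved until $\Lambda_n^n$ is reached---are precisely the routine verifications the paper leaves implicit.
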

We give an example of calculation of the Hasse quiver using Algorithm \ref{3-9}.
\begin{example}\label{3-3-2}
Let $\Lambda_{1}:=\Lambda_{3}^{4}=K\vec{\Delta_{3}}/J^{4}$.
Then we have  sequence 
\begin{align}
\xymatrix{\Lambda_{1}\ar@{->>}[r]&\Lambda_{2}\ar@{->>}[r]&\Lambda_{3}\ar@{->>}[r]&\cdots \ar@{->>}[r]
&\Lambda_{8}\ar@{->>}[r]&\Lambda_{9}\ar@{->>}[r]&\Lambda_{10}=:K^{3}}.\notag
\end{align}
of Nakayama algebras, where each $\Lambda_{i}$ is explicitly given bellow.
We will describe $\hasse(\Lambda_{i})$ inductively,
where the elements in $\N_{2}(Q_{i})$ are marked by rectangles and those in $\M_{2}(Q_{i})$ are marked by circles.
Note that $\hasse(K^{m})$ is isomorphic to the Hasse quiver of the set of all subsets of an $m$-element set ordered by inclusion.
Thus we may begin with a semisimple algebra.

\begin{picture}(410,158)(0,0)
\put(0,145){(1) $\hasse(\Lambda_{10})$ is given below, where $\Lambda_{10}=K^{3}$ and $Q_{9}=\tiny\begin{smallmatrix}
3\\
2
\end{smallmatrix}$:}
\put(0,130){(2) $\hasse(\Lambda_{9})$ is given below, where $\Lambda_{9}=K(\xymatrix{3\ar[r]&2&1})$ and $Q_{8}=\tiny\begin{smallmatrix}
2\\
1
\end{smallmatrix}$:}
\put(45,113){{\small$\hasse(\Lambda_{10})$}}
\put(265,113){{\small$\hasse(\Lambda_{9})$}}
\put(0,100){$
\xymatrix @R=2mm @C=0mm{
 & & & & &{\tiny\begin{smallmatrix}
1
\end{smallmatrix}\hspace{-1mm}\begin{smallmatrix}
2
\end{smallmatrix}\hspace{-1mm}\begin{smallmatrix}
3
\end{smallmatrix}}\ar[ddllll]\ar[dd]\ar[ddrrrr]& & & & & \\
 & & & & & & & & & & \\
 &{\tiny\begin{smallmatrix}
2
\end{smallmatrix}\hspace{-1mm}\begin{smallmatrix}
3
\end{smallmatrix}}\ar[dd]\ar[ddrrrrrrrr]& & & &{\tiny\begin{smallmatrix}
2
\end{smallmatrix}\hspace{-1mm}\begin{smallmatrix}
1
\end{smallmatrix}}\ar[dd]\ar[ddllll]& & & &*+[F]{\tiny\begin{smallmatrix}
3
\end{smallmatrix}\hspace{-1mm}\begin{smallmatrix}
1
\end{smallmatrix}}\ar[dd]\ar[ddllll]& \\
 & & & & & & & & & & \\
 &{\tiny\begin{smallmatrix}
2
\end{smallmatrix}}\ar[ddrrrr]& & & &{\tiny\begin{smallmatrix}
1
\end{smallmatrix}}\ar[dd]& & & &*+[F]{\tiny\begin{smallmatrix}
3
\end{smallmatrix}}\ar[ddllll]& \\
 & & & & & & & & & & \\
 & & & & &{\tiny\begin{smallmatrix}
\{0\}
\end{smallmatrix}}& & & & & \\
}
$}
\put(220,100){$
\xymatrix @R=0.4mm @C=0mm{
 & & & & &{\tiny\begin{smallmatrix}
1
\end{smallmatrix}\hspace{-1mm}\begin{smallmatrix}
2
\end{smallmatrix}\hspace{-1mm}\begin{smallmatrix}
3\\
2
\end{smallmatrix}}\ar[ddllll]\ar[dd]\ar[drrrr]& & & & & \\
 & & & & & & & & &*+[Fo]{\tiny\begin{smallmatrix}
1
\end{smallmatrix}\hspace{-1mm}\begin{smallmatrix}
3
\end{smallmatrix}\hspace{-1mm}\begin{smallmatrix}
3\\
2
\end{smallmatrix}}\ar[ddl]\ar[ddr]& \\
 &*+[F]{\tiny\begin{smallmatrix}
2
\end{smallmatrix}\hspace{-1mm}\begin{smallmatrix}
3\\
2
\end{smallmatrix}}\ar[dd]\ar[drrrrrrrrr]& & & &{\tiny\begin{smallmatrix}
2
\end{smallmatrix}\hspace{-1mm}\begin{smallmatrix}
1
\end{smallmatrix}}\ar[dd]\ar[ddllll]& & & & & \\
 & & & & & & & &*+[Fo]{\tiny\begin{smallmatrix}
3
\end{smallmatrix}\hspace{-1mm}\begin{smallmatrix}
1
\end{smallmatrix}}\ar[ddr]\ar[dlll]& &*+[Fo]{\tiny\begin{smallmatrix}
3\\
2
\end{smallmatrix}\hspace{-1mm}\begin{smallmatrix}
3
\end{smallmatrix}}\ar[ddl]\\
 &*+[F]{\tiny\begin{smallmatrix}
2
\end{smallmatrix}}\ar[ddrrrr]& & & &{\tiny\begin{smallmatrix}
1
\end{smallmatrix}}\ar[dd]& & & & & \\
 & & & & & & & & &*+[Fo]{\tiny\begin{smallmatrix}
3
\end{smallmatrix}}\ar[dllll]& \\
 & & & & &{\tiny\begin{smallmatrix}
\{0\}
\end{smallmatrix}}& & & & & \\
}
$}
\end{picture}

\begin{picture}(410,202)(0,0)
\put(0,189){(3) $\hasse(\Lambda_{8})$ is given below, 
where $\Lambda_{8}=K\vec{A}_{3}/\langle \alpha_{2}\alpha_{1}\rangle$ and $Q_{7}=\tiny \begin{smallmatrix}
3\\
2\\
1
\end{smallmatrix}$:}
\put(0,174){(4) $\hasse(\Lambda_{7})$ is given below, where $\Lambda_{7}=K\vec{A}_{3}$ and $Q_{6}=\tiny \begin{smallmatrix}
1\\
3
\end{smallmatrix}$:}

\put(75,157){{\small$\hasse(\Lambda_{8})$}}
\put(285,157){{\small$\hasse(\Lambda_{7})$}}
\put(0,140){$
\xymatrix @R=3.5mm @C=0mm{
 & & & & &{\tiny\begin{smallmatrix}
1
\end{smallmatrix}\hspace{-1mm}\begin{smallmatrix}
2\\
1
\end{smallmatrix}\hspace{-1mm}\begin{smallmatrix}
3\\
2
\end{smallmatrix}}\ar[dllll]\ar[dd]\ar[drrrr]& & & & & \\
 &*+[Fo]{\tiny\begin{smallmatrix}
2
\end{smallmatrix}\hspace{-1mm}\begin{smallmatrix}
2\\
1
\end{smallmatrix}\hspace{-1mm}\begin{smallmatrix}
3\\
2
\end{smallmatrix}}\ar[ddl]\ar[ddr]& & & & & & & &{\tiny\begin{smallmatrix}
1
\end{smallmatrix}\hspace{-1mm}\begin{smallmatrix}
3
\end{smallmatrix}\hspace{-1mm}\begin{smallmatrix}
3\\
2
\end{smallmatrix}}\ar[ddl]\ar[ddr]& \\
 & & & & &{\tiny\begin{smallmatrix}
2\\
1
\end{smallmatrix}\hspace{-1mm}\begin{smallmatrix}
1
\end{smallmatrix}}\ar[dlll]\ar[dd]& & & & & \\
*+[Fo]+[F]{\tiny\begin{smallmatrix}
2
\end{smallmatrix}\hspace{-1mm}\begin{smallmatrix}
3\\
2
\end{smallmatrix}}\ar[ddr]\ar@/^6mm/[rrrrrrrrrr]& &*+[Fo]{\tiny\begin{smallmatrix}
2
\end{smallmatrix}\hspace{-1mm}\begin{smallmatrix}
2\\
1
\end{smallmatrix}}\ar[ddl]& & & & & &{\tiny\begin{smallmatrix}
3
\end{smallmatrix}\hspace{-1mm}\begin{smallmatrix}
1
\end{smallmatrix}}\ar[ddr]\ar[dlll]& &*+[F]{\tiny\begin{smallmatrix}
3\\
2
\end{smallmatrix}\hspace{-1mm}\begin{smallmatrix}
3
\end{smallmatrix}}\ar[ddl]\\
 & & & & &{\tiny\begin{smallmatrix}
1
\end{smallmatrix}}\ar[dd]& & & & & \\
 &*+[Fo]{\tiny\begin{smallmatrix}
2
\end{smallmatrix}}\ar[drrrr]& & & & & & & &{\tiny\begin{smallmatrix}
3
\end{smallmatrix}}\ar[dllll]& \\
 & & & & &{\tiny\begin{smallmatrix}
\{0\}
\end{smallmatrix}}& & & & & \\
}
$}
\put(210,140){$
\xymatrix @R=2.8mm @C=0mm{
 & & & & &{\tiny\begin{smallmatrix}
1
\end{smallmatrix}\hspace{-1mm}\begin{smallmatrix}
2\\
1
\end{smallmatrix}\hspace{-1mm}\begin{smallmatrix}
3\\
2\\
1
\end{smallmatrix}}\ar[dllll]\ar[dd]\ar[drrrr]& & & & & \\
 &{\tiny\begin{smallmatrix}
2
\end{smallmatrix}\hspace{-1mm}\begin{smallmatrix}
2\\
1
\end{smallmatrix}\hspace{-1mm}\begin{smallmatrix}
3\\
2\\
1
\end{smallmatrix}}\ar[dl]\ar[ddr]& & & & & & & &{\tiny\begin{smallmatrix}
1
\end{smallmatrix}\hspace{-1mm}\begin{smallmatrix}
3
\end{smallmatrix}\hspace{-1mm}\begin{smallmatrix}
3\\
2\\
1
\end{smallmatrix}}\ar[ddl]\ar[dr]& \\
*+[Fo]{\tiny\begin{smallmatrix}
2
\end{smallmatrix}\hspace{-1mm}\begin{smallmatrix}
3\\
2
\end{smallmatrix}\hspace{-1mm}\begin{smallmatrix}
3\\
2\\
1
\end{smallmatrix}}\ar[dd]\ar@/^6mm/[rrrrrrrrrr]& & & & &*+[F]{\tiny\begin{smallmatrix}
2\\
1
\end{smallmatrix}\hspace{-1mm}\begin{smallmatrix}
1
\end{smallmatrix}}\ar[dlll]\ar[dd]& & & & &*+[Fo]{\tiny\begin{smallmatrix}
3\\
2
\end{smallmatrix}\hspace{-1mm}\begin{smallmatrix}
3
\end{smallmatrix}\hspace{-1mm}\begin{smallmatrix}
3\\
2\\
1
\end{smallmatrix}}\ar[dd] \\
 & &{\tiny\begin{smallmatrix}
2
\end{smallmatrix}\hspace{-1mm}\begin{smallmatrix}
2\\
1
\end{smallmatrix}}\ar[ddl]& & & & & &{\tiny\begin{smallmatrix}
3
\end{smallmatrix}\hspace{-1mm}\begin{smallmatrix}
1
\end{smallmatrix}}\ar[dlll]\ar[ddr]& & \\
*+[Fo]{\tiny\begin{smallmatrix}
2
\end{smallmatrix}\hspace{-1mm}\begin{smallmatrix}
3\\
2
\end{smallmatrix}}\ar[dr]\ar@/^6mm/[rrrrrrrrrr]& & & & &*+[F]{\tiny\begin{smallmatrix}
1
\end{smallmatrix}}\ar[dd]& & & & &*+[Fo]{\tiny\begin{smallmatrix}
3\\
2
\end{smallmatrix}\hspace{-1mm}\begin{smallmatrix}
3
\end{smallmatrix}}\ar[dl]\\
 &{\tiny\begin{smallmatrix}
2
\end{smallmatrix}}\ar[drrrr]& & & & & & & &{\tiny\begin{smallmatrix}
3
\end{smallmatrix}}\ar[dllll]& \\
 & & & & &{\tiny\begin{smallmatrix}
\{0\}
\end{smallmatrix}}& & & & & \\
}\notag
$}
\end{picture}

\begin{picture}(430,625)(0,0)
\put(0,620){(5) $\hasse(\Lambda_{6})$ is given below, 
where $\Lambda_{6}=K\vec{\Delta}_{3}/\langle \alpha_{3}\alpha_{2},\ \alpha_{1}\alpha_{3}\rangle$ and $Q_{5}=\tiny\begin{smallmatrix}
2\\
1\\
3
\end{smallmatrix}$:}
\put(0,605){(6) $\hasse(\Lambda_{5})$ is given below, 
where $\Lambda_{5}=K\vec{\Delta}_{3}/\langle \alpha_{3}\alpha_{2}, \alpha_{2}\alpha_{1}\alpha_{3}\rangle$ and $Q_{4}=\tiny \begin{smallmatrix}
1\\
3\\
2
\end{smallmatrix}$:}
\put(95,591){{\small$\hasse(\Lambda_{6})$}}
\put(305,591){{\small$\hasse(\Lambda_{5})$}}
\put(0,573){$
\xymatrix @R=2.7mm @C=0mm{
 & & & & &{\tiny\begin{smallmatrix}
1\\
3
\end{smallmatrix}\hspace{-1mm}\begin{smallmatrix}
2\\
1
\end{smallmatrix}\hspace{-1mm}\begin{smallmatrix}
3\\
2\\
1
\end{smallmatrix}}\ar[dllll]\ar[d]\ar[drrrr]& & & & & \\
 &{\tiny\begin{smallmatrix}
2
\end{smallmatrix}\hspace{-1mm}\begin{smallmatrix}
2\\
1
\end{smallmatrix}\hspace{-1mm}\begin{smallmatrix}
3\\
2\\
1
\end{smallmatrix}}\ar[dl]\ar[ddr]& & & &*+[Fo]{\tiny\begin{smallmatrix}
1\\
3
\end{smallmatrix}\hspace{-1mm}\begin{smallmatrix}
2\\
1
\end{smallmatrix}\hspace{-1mm}\begin{smallmatrix}
1
\end{smallmatrix}}\ar[ddl]\ar[ddr]& & & &{\tiny\begin{smallmatrix}
1\\
3
\end{smallmatrix}\hspace{-1mm}\begin{smallmatrix}
3
\end{smallmatrix}\hspace{-1mm}\begin{smallmatrix}
3\\
2\\
1
\end{smallmatrix}}\ar[ddl]\ar[dr]& \\
{\tiny\begin{smallmatrix}
2
\end{smallmatrix}\hspace{-1mm}\begin{smallmatrix}
3\\
2
\end{smallmatrix}\hspace{-1mm}\begin{smallmatrix}
3\\
2\\
1
\end{smallmatrix}}\ar[dd]\ar@/^6mm/[rrrrrrrrrr]& & & & & & & & & &{\tiny\begin{smallmatrix}
3\\
2
\end{smallmatrix}\hspace{-1mm}\begin{smallmatrix}
3
\end{smallmatrix}\hspace{-1mm}\begin{smallmatrix}
3\\
2\\
1
\end{smallmatrix}}\ar[dd] \\
 & &*+[F]{\tiny\begin{smallmatrix}
2
\end{smallmatrix}\hspace{-1mm}\begin{smallmatrix}
2\\
1
\end{smallmatrix}}\ar[ddl]& &*+[Fo]+[F]{\tiny\begin{smallmatrix}
2\\
1
\end{smallmatrix}\hspace{-1mm}\begin{smallmatrix}
1
\end{smallmatrix}}\ar[ll]\ar[ddr]& &*+[Fo]{\tiny\begin{smallmatrix}
1\\
3
\end{smallmatrix}\hspace{-1mm}\begin{smallmatrix}
1
\end{smallmatrix}}\ar[ddl]& &{\tiny\begin{smallmatrix}
3
\end{smallmatrix}\hspace{-1mm}\begin{smallmatrix}
1\\
3
\end{smallmatrix}}\ar[ll]\ar[ddr]& & \\
{\tiny\begin{smallmatrix}
2
\end{smallmatrix}\hspace{-1mm}\begin{smallmatrix}
3\\
2
\end{smallmatrix}}\ar[dr]\ar@/^6mm/[rrrrrrrrrr]& & & & & & & & & &{\tiny\begin{smallmatrix}
3\\
2
\end{smallmatrix}\hspace{-1mm}\begin{smallmatrix}
3
\end{smallmatrix}}\ar[dl]\\
 &{\tiny\begin{smallmatrix}
2
\end{smallmatrix}}\ar[drrrr]& & & &*+[Fo]{\tiny\begin{smallmatrix}
1
\end{smallmatrix}}\ar[d]& & & &{\tiny\begin{smallmatrix}
3
\end{smallmatrix}}\ar[dllll]& \\
 & & & & &{\tiny\begin{smallmatrix}
\{0\}
\end{smallmatrix}}& & & & & \\
}
$}
\put(210,573){$
\xymatrix @R=3.2mm @C=0mm{
 & & & & &{\tiny\begin{smallmatrix}
1\\
3
\end{smallmatrix}\hspace{-1mm}\begin{smallmatrix}
2\\
1\\
3
\end{smallmatrix}\hspace{-1mm}\begin{smallmatrix}
3\\
2\\
1
\end{smallmatrix}}\ar[dllll]\ar[d]\ar[drrrr]& & & & & \\
 &{\tiny\begin{smallmatrix}
2
\end{smallmatrix}\hspace{-1mm}\begin{smallmatrix}
2\\
1\\
3
\end{smallmatrix}\hspace{-1mm}\begin{smallmatrix}
3\\
2\\
1
\end{smallmatrix}}\ar[dl]\ar[dr]& & & &{\tiny\begin{smallmatrix}
1\\
3
\end{smallmatrix}\hspace{-1mm}\begin{smallmatrix}
2\\
1\\
3
\end{smallmatrix}\hspace{-1mm}\begin{smallmatrix}
1
\end{smallmatrix}}\ar[dl]\ar[ddr]& & & &{\tiny\begin{smallmatrix}
1\\
3
\end{smallmatrix}\hspace{-1mm}\begin{smallmatrix}
3
\end{smallmatrix}\hspace{-1mm}\begin{smallmatrix}
3\\
2\\
1
\end{smallmatrix}}\ar[ddl]\ar[dr]& \\
{\tiny\begin{smallmatrix}
2
\end{smallmatrix}\hspace{-1mm}\begin{smallmatrix}
3\\
2
\end{smallmatrix}\hspace{-1mm}\begin{smallmatrix}
3\\
2\\
1
\end{smallmatrix}}\ar[dd]\ar@/^6mm/[rrrrrrrrrr]& &*+[Fo]{\tiny\begin{smallmatrix}
2
\end{smallmatrix}\hspace{-1mm}\begin{smallmatrix}
2\\
1\\
3
\end{smallmatrix}\hspace{-1mm}\begin{smallmatrix}
2\\
1
\end{smallmatrix}}\ar[dd]& &*+[Fo]{\tiny\begin{smallmatrix}
2\\
1
\end{smallmatrix}\hspace{-1mm}\begin{smallmatrix}
2\\
1\\
3
\end{smallmatrix}\hspace{-1mm}\begin{smallmatrix}
1
\end{smallmatrix}}\ar[dd]\ar[ll]& & & & & &{\tiny\begin{smallmatrix}
3\\
2
\end{smallmatrix}\hspace{-1mm}\begin{smallmatrix}
3
\end{smallmatrix}\hspace{-1mm}\begin{smallmatrix}
3\\
2\\
1
\end{smallmatrix}}\ar[dd] \\
 & & & & & &*+[F]{\tiny\begin{smallmatrix}
1\\
3
\end{smallmatrix}\hspace{-1mm}\begin{smallmatrix}
1
\end{smallmatrix}}\ar[ddl]& &*+[F]{\tiny\begin{smallmatrix}
3
\end{smallmatrix}\hspace{-1mm}\begin{smallmatrix}
1\\
3
\end{smallmatrix}}\ar[ll]\ar[ddr]& & \\
{\tiny\begin{smallmatrix}
2
\end{smallmatrix}\hspace{-1mm}\begin{smallmatrix}
3\\
2
\end{smallmatrix}}\ar[dr]\ar@/^6mm/[rrrrrrrrrr]& &*+[Fo]{\tiny\begin{smallmatrix}
2
\end{smallmatrix}\hspace{-1mm}\begin{smallmatrix}
2\\
1
\end{smallmatrix}}\ar[dl]& &*+[Fo]{\tiny\begin{smallmatrix}
2\\
1
\end{smallmatrix}\hspace{-1mm}\begin{smallmatrix}
1
\end{smallmatrix}}\ar[ll]\ar[dr]& & & & & &{\tiny\begin{smallmatrix}
3\\
2
\end{smallmatrix}\hspace{-1mm}\begin{smallmatrix}
3
\end{smallmatrix}}\ar[dl] \\
 &{\tiny\begin{smallmatrix}
2
\end{smallmatrix}}\ar[drrrr]& & & &{\tiny\begin{smallmatrix}
1
\end{smallmatrix}}\ar[d]& & & &{\tiny\begin{smallmatrix}
3
\end{smallmatrix}}\ar[dllll]& \\
 & & & & &{\tiny\begin{smallmatrix}
\{0\}
\end{smallmatrix}}& & & & & \\
}
$}

\put(0,402){(7) $\hasse(\Lambda_{4})$ is given below, 
where $\Lambda_{4}=\Lambda_{3}^{3}$ and $Q_{3}=\tiny\begin{smallmatrix}
3\\
2\\
1\\
3
\end{smallmatrix}$:}
\put(0,387){(8) $\hasse(\Lambda_{3})$ is given below, 
where $\Lambda_{3}=K\vec{\Delta}_{3}/\langle \alpha_{3}\alpha_{2}\alpha_{1}, \alpha_{1}\alpha_{3}\alpha_{2}\rangle$ and $Q_{2}=\tiny\begin{smallmatrix}
2\\
1\\
3\\
2
\end{smallmatrix}$:}
\put(95,370){{\small$\hasse(\Lambda_{4})$}}
\put(305,370){{\small$\hasse(\Lambda_{3})$}}
\put(0,352){$
\xymatrix @R=2.5mm @C=0mm{
 & & & & &{\tiny\begin{smallmatrix}
1\\
3\\
2
\end{smallmatrix}\hspace{-1mm}\begin{smallmatrix}
2\\
1\\
3
\end{smallmatrix}\hspace{-1mm}\begin{smallmatrix}
3\\
2\\
1
\end{smallmatrix}}\ar[dllll]\ar[d]\ar[drrrr]& & & & & \\
 &{\tiny\begin{smallmatrix}
2
\end{smallmatrix}\hspace{-1mm}\begin{smallmatrix}
2\\
1\\
3
\end{smallmatrix}\hspace{-1mm}\begin{smallmatrix}
3\\
2\\
1
\end{smallmatrix}}\ar[dl]\ar[dr]& & & &{\tiny\begin{smallmatrix}
1\\
3\\
2
\end{smallmatrix}\hspace{-1mm}\begin{smallmatrix}
2\\
1\\
3
\end{smallmatrix}\hspace{-1mm}\begin{smallmatrix}
1
\end{smallmatrix}}\ar[dl]\ar[dr]& & & &{\tiny\begin{smallmatrix}
1\\
3\\
2
\end{smallmatrix}\hspace{-1mm}\begin{smallmatrix}
3
\end{smallmatrix}\hspace{-1mm}\begin{smallmatrix}
3\\
2\\
1
\end{smallmatrix}}\ar[dl]\ar[dr]& \\
{\tiny\begin{smallmatrix}
2
\end{smallmatrix}\hspace{-1mm}\begin{smallmatrix}
3\\
2
\end{smallmatrix}\hspace{-1mm}\begin{smallmatrix}
3\\
2\\
1
\end{smallmatrix}}\ar[dd]\ar@/^6mm/[rrrrrrrrrr]& &{\tiny\begin{smallmatrix}
2
\end{smallmatrix}\hspace{-1mm}\begin{smallmatrix}
2\\
1\\
3
\end{smallmatrix}\hspace{-1mm}\begin{smallmatrix}
2\\
1
\end{smallmatrix}}\ar[dd]& &{\tiny\begin{smallmatrix}
2\\
1
\end{smallmatrix}\hspace{-1mm}\begin{smallmatrix}
2\\
1\\
3
\end{smallmatrix}\hspace{-1mm}\begin{smallmatrix}
1
\end{smallmatrix}}\ar[dd]\ar[ll]& &*+[Fo]{\tiny\begin{smallmatrix}
1\\
3\\
2
\end{smallmatrix}\hspace{-1mm}\begin{smallmatrix}
1\\
3
\end{smallmatrix}\hspace{-1mm}\begin{smallmatrix}
1
\end{smallmatrix}}\ar[dd]& &*+[Fo]{\tiny\begin{smallmatrix}
1\\
3\\
2
\end{smallmatrix}\hspace{-1mm}\begin{smallmatrix}
3
\end{smallmatrix}\hspace{-1mm}\begin{smallmatrix}
1\\
3
\end{smallmatrix}}\ar[dd]\ar[ll]& &{\tiny\begin{smallmatrix}
3\\
2
\end{smallmatrix}\hspace{-1mm}\begin{smallmatrix}
3
\end{smallmatrix}\hspace{-1mm}\begin{smallmatrix}
3\\
2\\
1
\end{smallmatrix}}\ar[dd] \\
 & & & & & & & & & & \\
{\tiny\begin{smallmatrix}
2
\end{smallmatrix}\hspace{-1mm}\begin{smallmatrix}
3\\
2
\end{smallmatrix}}\ar[dr]\ar@/^6mm/[rrrrrrrrrr]& &{\tiny\begin{smallmatrix}
2
\end{smallmatrix}\hspace{-1mm}\begin{smallmatrix}
2\\
1
\end{smallmatrix}}\ar[dl]& &{\tiny\begin{smallmatrix}
2\\
1
\end{smallmatrix}\hspace{-1mm}\begin{smallmatrix}
1
\end{smallmatrix}}\ar[ll]\ar[dr]& &*+[Fo]{\tiny\begin{smallmatrix}
1\\
3
\end{smallmatrix}\hspace{-1mm}\begin{smallmatrix}
1
\end{smallmatrix}}\ar[dl]& &*+[Fo]{\tiny\begin{smallmatrix}
3
\end{smallmatrix}\hspace{-1mm}\begin{smallmatrix}
1\\
3
\end{smallmatrix}}\ar[ll]\ar[dr]& &{\tiny\begin{smallmatrix}
3\\
2
\end{smallmatrix}\hspace{-1mm}\begin{smallmatrix}
3
\end{smallmatrix}}\ar[dl] \\
 &{\tiny\begin{smallmatrix}
2
\end{smallmatrix}}\ar[drrrr]& & & &{\tiny\begin{smallmatrix}
1
\end{smallmatrix}}\ar[d]& & & &{\tiny\begin{smallmatrix}
3
\end{smallmatrix}}\ar[dllll]& \\
 & & & & &{\tiny\begin{smallmatrix}
\{0\}
\end{smallmatrix}}& & & & & \\
}
$}
\put(210,352){$
\xymatrix @R=2mm @C=-0.3mm{
 & & & & &{\tiny\begin{smallmatrix}
1\\
3\\
2
\end{smallmatrix}\hspace{-1mm}\begin{smallmatrix}
2\\
1\\
3
\end{smallmatrix}\hspace{-1mm}\begin{smallmatrix}
3\\
2\\
1\\
3
\end{smallmatrix}}\ar[dllll]\ar[d]\ar[drrrr]& & & & & \\
 &{\tiny\begin{smallmatrix}
2
\end{smallmatrix}\hspace{-1mm}\begin{smallmatrix}
2\\
1\\
3
\end{smallmatrix}\hspace{-1mm}\begin{smallmatrix}
3\\
2\\
1\\
3
\end{smallmatrix}}\ar[dl]\ar[dr]& & & &{\tiny\begin{smallmatrix}
1\\
3\\
2
\end{smallmatrix}\hspace{-1mm}\begin{smallmatrix}
2\\
1\\
3
\end{smallmatrix}\hspace{-1mm}\begin{smallmatrix}
1
\end{smallmatrix}}\ar[dl]\ar[dr]& & & &{\tiny\begin{smallmatrix}
1\\
3\\
2
\end{smallmatrix}\hspace{-1mm}\begin{smallmatrix}
3
\end{smallmatrix}\hspace{-1mm}\begin{smallmatrix}
3\\
2\\
1\\
3
\end{smallmatrix}}\ar[dl]\ar[dr]& \\
{\tiny\begin{smallmatrix}
2
\end{smallmatrix}\hspace{-1mm}\begin{smallmatrix}
3\\
2
\end{smallmatrix}\hspace{-1mm}\begin{smallmatrix}
3\\
2\\
1\\
3
\end{smallmatrix}}\ar[dd]\ar@/^6mm/[rrrrrrrrrr]& &{\tiny\begin{smallmatrix}
2
\end{smallmatrix}\hspace{-1mm}\begin{smallmatrix}
2\\
1\\
3
\end{smallmatrix}\hspace{-1mm}\begin{smallmatrix}
2\\
1
\end{smallmatrix}}\ar[dd]& &{\tiny\begin{smallmatrix}
2\\
1
\end{smallmatrix}\hspace{-1mm}\begin{smallmatrix}
2\\
1\\
3
\end{smallmatrix}\hspace{-1mm}\begin{smallmatrix}
1
\end{smallmatrix}}\ar[dd]\ar[ll]& &{\tiny\begin{smallmatrix}
1\\
3\\
2
\end{smallmatrix}\hspace{-1mm}\begin{smallmatrix}
1\\
3
\end{smallmatrix}\hspace{-1mm}\begin{smallmatrix}
1
\end{smallmatrix}}\ar[dd]& &{\tiny\begin{smallmatrix}
1\\
3\\
2
\end{smallmatrix}\hspace{-1mm}\begin{smallmatrix}
3
\end{smallmatrix}\hspace{-1mm}\begin{smallmatrix}
1\\
3
\end{smallmatrix}}\ar[dd]\ar[ll]& &{\tiny\begin{smallmatrix}
3\\
2
\end{smallmatrix}\hspace{-1mm}\begin{smallmatrix}
3
\end{smallmatrix}\hspace{-1mm}\begin{smallmatrix}
3\\
2\\
1\\
3
\end{smallmatrix}}\ar[dd] \\
 & & & & & & & & & & \\
{\tiny\begin{smallmatrix}
2
\end{smallmatrix}\hspace{-1mm}\begin{smallmatrix}
3\\
2
\end{smallmatrix}}\ar[dr]\ar@/^6mm/[rrrrrrrrrr]& &{\tiny\begin{smallmatrix}
2
\end{smallmatrix}\hspace{-1mm}\begin{smallmatrix}
2\\
1
\end{smallmatrix}}\ar[dl]& &{\tiny\begin{smallmatrix}
2\\
1
\end{smallmatrix}\hspace{-1mm}\begin{smallmatrix}
1
\end{smallmatrix}}\ar[ll]\ar[dr]& &{\tiny\begin{smallmatrix}
1\\
3
\end{smallmatrix}\hspace{-1mm}\begin{smallmatrix}
1
\end{smallmatrix}}\ar[dl]& &{\tiny\begin{smallmatrix}
3
\end{smallmatrix}\hspace{-1mm}\begin{smallmatrix}
1\\
3
\end{smallmatrix}}\ar[ll]\ar[dr]& &{\tiny\begin{smallmatrix}
3\\
2
\end{smallmatrix}\hspace{-1mm}\begin{smallmatrix}
3
\end{smallmatrix}}\ar[dl] \\
 &{\tiny\begin{smallmatrix}
2
\end{smallmatrix}}\ar[drrrr]& & & &{\tiny\begin{smallmatrix}
1
\end{smallmatrix}}\ar[d]& & & &{\tiny\begin{smallmatrix}
3
\end{smallmatrix}}\ar[dllll]& \\
 & & & & &{\tiny\begin{smallmatrix}
\{0\}
\end{smallmatrix}}& & & & & \\
}
$}

\put(0,195){(9) $\hasse(\Lambda_{2})$ is given below, 
where $\Lambda_{2}=K\vec{\Delta}_{3}/\langle \alpha_{3}\alpha_{2}\alpha_{1}, \alpha_{2}\alpha_{1}\alpha_{3}\alpha_{2}\rangle$ and $Q_{1}=\tiny\begin{smallmatrix}
1\\
3\\
2\\
1
\end{smallmatrix}$:}
\put(0,180){(10) $\hasse(\Lambda_{1})$ is given below, where $\Lambda_{1}=\Lambda_{3}^{4}$:}
\put(93,165){{\small$\hasse(\Lambda_{2})$}}
\put(303,165){{\small$\hasse(\Lambda_{1})$}}
\put(0,145){$
\xymatrix @R=2mm @C=-0.3mm{
 & & & & &{\tiny\begin{smallmatrix}
1\\
3\\
2
\end{smallmatrix}\hspace{-1mm}\begin{smallmatrix}
2\\
1\\
3\\
2
\end{smallmatrix}\hspace{-1mm}\begin{smallmatrix}
3\\
2\\
1\\
3
\end{smallmatrix}}\ar[dllll]\ar[d]\ar[drrrr]& & & & & \\
 &{\tiny\begin{smallmatrix}
2
\end{smallmatrix}\hspace{-1mm}\begin{smallmatrix}
2\\
1\\
3\\
2
\end{smallmatrix}\hspace{-1mm}\begin{smallmatrix}
3\\
2\\
1\\
3
\end{smallmatrix}}\ar[dl]\ar[dr]& & & &{\tiny\begin{smallmatrix}
1\\
3\\
2
\end{smallmatrix}\hspace{-1mm}\begin{smallmatrix}
2\\
1\\
3\\
2
\end{smallmatrix}\hspace{-1mm}\begin{smallmatrix}
1
\end{smallmatrix}}\ar[dl]\ar[dr]& & & &{\tiny\begin{smallmatrix}
1\\
3\\
2
\end{smallmatrix}\hspace{-1mm}\begin{smallmatrix}
3
\end{smallmatrix}\hspace{-1mm}\begin{smallmatrix}
3\\
2\\
1\\
3
\end{smallmatrix}}\ar[dl]\ar[dr]& \\
{\tiny\begin{smallmatrix}
2
\end{smallmatrix}\hspace{-1mm}\begin{smallmatrix}
3\\
2
\end{smallmatrix}\hspace{-1mm}\begin{smallmatrix}
3\\
2\\
1\\
3
\end{smallmatrix}}\ar[dd]\ar@/^6mm/[rrrrrrrrrr]& &{\tiny\begin{smallmatrix}
2
\end{smallmatrix}\hspace{-1mm}\begin{smallmatrix}
2\\
1\\
3\\
2
\end{smallmatrix}\hspace{-1mm}\begin{smallmatrix}
2\\
1
\end{smallmatrix}}\ar[dd]& &{\tiny\begin{smallmatrix}
2\\
1
\end{smallmatrix}\hspace{-1mm}\begin{smallmatrix}
2\\
1\\
3\\
2
\end{smallmatrix}\hspace{-1mm}\begin{smallmatrix}
1
\end{smallmatrix}}\ar[dd]\ar[ll]& &{\tiny\begin{smallmatrix}
1\\
3\\
2
\end{smallmatrix}\hspace{-1mm}\begin{smallmatrix}
1\\
3
\end{smallmatrix}\hspace{-1mm}\begin{smallmatrix}
1
\end{smallmatrix}}\ar[dd]& &{\tiny\begin{smallmatrix}
1\\
3\\
2
\end{smallmatrix}\hspace{-1mm}\begin{smallmatrix}
3
\end{smallmatrix}\hspace{-1mm}\begin{smallmatrix}
1\\
3
\end{smallmatrix}}\ar[dd]\ar[ll]& &{\tiny\begin{smallmatrix}
3\\
2
\end{smallmatrix}\hspace{-1mm}\begin{smallmatrix}
3
\end{smallmatrix}\hspace{-1mm}\begin{smallmatrix}
3\\
2\\
1\\
3
\end{smallmatrix}}\ar[dd] \\
 & & & & & & & & & & \\
{\tiny\begin{smallmatrix}
2
\end{smallmatrix}\hspace{-1mm}\begin{smallmatrix}
3\\
2
\end{smallmatrix}}\ar[dr]\ar@/^6mm/[rrrrrrrrrr]& &{\tiny\begin{smallmatrix}
2
\end{smallmatrix}\hspace{-1mm}\begin{smallmatrix}
2\\
1
\end{smallmatrix}}\ar[dl]& &{\tiny\begin{smallmatrix}
2\\
1
\end{smallmatrix}\hspace{-1mm}\begin{smallmatrix}
1
\end{smallmatrix}}\ar[ll]\ar[dr]& &{\tiny\begin{smallmatrix}
1\\
3
\end{smallmatrix}\hspace{-1mm}\begin{smallmatrix}
1
\end{smallmatrix}}\ar[dl]& &{\tiny\begin{smallmatrix}
3
\end{smallmatrix}\hspace{-1mm}\begin{smallmatrix}
1\\
3
\end{smallmatrix}}\ar[ll]\ar[dr]& &{\tiny\begin{smallmatrix}
3\\
2
\end{smallmatrix}\hspace{-1mm}\begin{smallmatrix}
3
\end{smallmatrix}}\ar[dl] \\
 &{\tiny\begin{smallmatrix}
2
\end{smallmatrix}}\ar[drrrr]& & & &{\tiny\begin{smallmatrix}
1
\end{smallmatrix}}\ar[d]& & & &{\tiny\begin{smallmatrix}
3
\end{smallmatrix}}\ar[dllll]& \\
 & & & & &{\tiny\begin{smallmatrix}
\{0\}
\end{smallmatrix}}& & & & & \\
}
$}
\put(210,145){$
\xymatrix @R=2mm @C=-0.3mm{
 & & & & &{\tiny\begin{smallmatrix}
1\\
3\\
2\\
1
\end{smallmatrix}\hspace{-1mm}\begin{smallmatrix}
2\\
1\\
3\\
2
\end{smallmatrix}\hspace{-1mm}\begin{smallmatrix}
3\\
2\\
1\\
3
\end{smallmatrix}}\ar[dllll]\ar[d]\ar[drrrr]& & & & & \\
 &{\tiny\begin{smallmatrix}
2
\end{smallmatrix}\hspace{-1mm}\begin{smallmatrix}
2\\
1\\
3\\
2
\end{smallmatrix}\hspace{-1mm}\begin{smallmatrix}
3\\
2\\
1\\
3
\end{smallmatrix}}\ar[dl]\ar[dr]& & & &{\tiny\begin{smallmatrix}
1\\
3\\
2\\
1
\end{smallmatrix}\hspace{-1mm}\begin{smallmatrix}
2\\
1\\
3\\
2
\end{smallmatrix}\hspace{-1mm}\begin{smallmatrix}
1
\end{smallmatrix}}\ar[dl]\ar[dr]& & & &{\tiny\begin{smallmatrix}
1\\
3\\
2\\
1
\end{smallmatrix}\hspace{-1mm}\begin{smallmatrix}
3
\end{smallmatrix}\hspace{-1mm}\begin{smallmatrix}
3\\
2\\
1\\
3
\end{smallmatrix}}\ar[dl]\ar[dr]& \\
{\tiny\begin{smallmatrix}
2
\end{smallmatrix}\hspace{-1mm}\begin{smallmatrix}
3\\
2
\end{smallmatrix}\hspace{-1mm}\begin{smallmatrix}
3\\
2\\
1\\
3
\end{smallmatrix}}\ar[dd]\ar@/^6mm/[rrrrrrrrrr]& &{\tiny\begin{smallmatrix}
2
\end{smallmatrix}\hspace{-1mm}\begin{smallmatrix}
2\\
1\\
3\\
2
\end{smallmatrix}\hspace{-1mm}\begin{smallmatrix}
2\\
1
\end{smallmatrix}}\ar[dd]& &{\tiny\begin{smallmatrix}
2\\
1
\end{smallmatrix}\hspace{-1mm}\begin{smallmatrix}
2\\
1\\
3\\
2
\end{smallmatrix}\hspace{-1mm}\begin{smallmatrix}
1
\end{smallmatrix}}\ar[dd]\ar[ll]& &{\tiny\begin{smallmatrix}
1\\
3\\
2\\
1
\end{smallmatrix}\hspace{-1mm}\begin{smallmatrix}
1\\
3
\end{smallmatrix}\hspace{-1mm}\begin{smallmatrix}
1
\end{smallmatrix}}\ar[dd]& &{\tiny\begin{smallmatrix}
1\\
3\\
2\\
1
\end{smallmatrix}\hspace{-1mm}\begin{smallmatrix}
3
\end{smallmatrix}\hspace{-1mm}\begin{smallmatrix}
1\\
3
\end{smallmatrix}}\ar[dd]\ar[ll]& &{\tiny\begin{smallmatrix}
3\\
2
\end{smallmatrix}\hspace{-1mm}\begin{smallmatrix}
3
\end{smallmatrix}\hspace{-1mm}\begin{smallmatrix}
3\\
2\\
1\\
3
\end{smallmatrix}}\ar[dd] \\
 & & & & & & & & & & \\
{\tiny\begin{smallmatrix}
2
\end{smallmatrix}\hspace{-1mm}\begin{smallmatrix}
3\\
2
\end{smallmatrix}}\ar[dr]\ar@/^6mm/[rrrrrrrrrr]& &{\tiny\begin{smallmatrix}
2
\end{smallmatrix}\hspace{-1mm}\begin{smallmatrix}
2\\
1
\end{smallmatrix}}\ar[dl]& &{\tiny\begin{smallmatrix}
2\\
1
\end{smallmatrix}\hspace{-1mm}\begin{smallmatrix}
1
\end{smallmatrix}}\ar[ll]\ar[dr]& &{\tiny\begin{smallmatrix}
1\\
3
\end{smallmatrix}\hspace{-1mm}\begin{smallmatrix}
1
\end{smallmatrix}}\ar[dl]& &{\tiny\begin{smallmatrix}
3
\end{smallmatrix}\hspace{-1mm}\begin{smallmatrix}
1\\
3
\end{smallmatrix}}\ar[ll]\ar[dr]& &{\tiny\begin{smallmatrix}
3\\
2
\end{smallmatrix}\hspace{-1mm}\begin{smallmatrix}
3
\end{smallmatrix}}\ar[dl] \\
 &{\tiny\begin{smallmatrix}
2
\end{smallmatrix}}\ar[drrrr]& & & &{\tiny\begin{smallmatrix}
1
\end{smallmatrix}}\ar[d]& & & &{\tiny\begin{smallmatrix}
3
\end{smallmatrix}}\ar[dllll]& \\
 & & & & &{\tiny\begin{smallmatrix}
\{0\}
\end{smallmatrix}}& & & & & \\
}
$}
\end{picture}

Note that the Hasse quivers $\Lambda_{i}$ $(i\le 4)$ are the same.
This is a consequence of Theorem \ref{3-10}.
\end{example}


\begin{thebibliography}{99}
\bibitem[AIR]{AIR}
T.~Adachi, O.~Iyama, I.~Reiten, \emph{$\tau$-tilting theory}.
Compos. Math. {\bf 150} (2014), no.3, 415--452.

\bibitem[AI]{AI}
T.~Aihara, O.~Iyama, \emph{Silting mutation in triangulated categories}.
J. London. Math. Soc. (2) {\bf 85} (2012), no.3, 633--668.

\bibitem[AZ]{AZ}
M.~Antipov, A.~Zvonareva, \emph{Two-term partial tilting complexes over Brauer tree algebras}.
arXiv:1302.3342.

\bibitem[ASS]{ASS}
I.~Assem, D.~Simson, A.~Skowro$\acute{\textnormal n}$ski, \emph{Elements of the Representation Theory of Associative Algebras. Vol. 1}.
London Mathematical Society Student Texts {\bf 65}, Cambridge university press (2006).

\bibitem[ARS]{ARS}
M.~Auslander, I.~Reiten, S.~O.~Smal\o, \emph{Representation theory of artin algebras}.
Cambridge studies in advanced mathematics {\bf 36}, Cambridge university Press (1995).

\bibitem[BBM]{BBM}
K.~Baur, A.~B.~Buan, R.~J.~Marsh, \emph{Torsion pairs and rigid objects in tubes}.
Algebr. Represent. Theory {\bf 17} (2014), no.2, 565--591.

\bibitem[BS]{BS}
T.~Bridgeland, I.~Smith, \emph{Quadratic differentials as stability conditions}.
Publ. Math. Inst. Hautes $\acute{\textnormal{E}}$tudes Sci. {\bf 121} (2015), 155--278.

\bibitem[BY]{BY}
T.~Br$\ddot{\textnormal{u}}$stle, D.~Yang, \emph{Ordered exchange graphs}.
Advances in Representation Theory of Algebras, 135--193, European Mathematical Society, 2013.

\bibitem[BK]{BK}
A.~B.~Buan, H.~Krause, \emph{Tilting and cotilting for quivers of type $\tilde{A}_{n}$}.
J. Pure Appl. Algebra {\bf 190} (2004), no.1--3, 1--21.

\bibitem[BMRRT]{BMRRT}
A.~B.~Buan, R.~Marsh, M.~Reineke, I.~Reiten, G.~Todorov, \emph{Tilting theory and cluster combinatorics}.
Adv. Math. {\bf 204} (2006), no.2, 572--618.

\bibitem[CCS]{CCS}
P.~Caldero, F.~Chapoton, R.~Schiffler, \emph{Quiver with relations arising from clusters ($A_{n}$ case)}.
Trans. Amer. Math. Soc. {\bf 358} (2006), no.3, 1347--1364.

\bibitem[DK]{DK}
Y.~A.~Drozd, V.~V.~Kirichenko, \emph{On quasi-Bass orders}.
Math. USSR-Izv. {\bf 6} (1972), no.2, 323--365.

\bibitem[FST]{FST}
S.~Fomin, M.~Shapiro, D.~Thurston, \emph{Cluster algebras and triangulated surfaces. I. Cluster complexes}.
Acta Math. {\bf 201} (2008), no.1, 83--146.

\bibitem[HU]{HU}
D.~Happel, L.~Unger, \emph{On a partial order of tilting modules}.
Algebr. Represent. Theory {\bf 8} (2005), no.2, 147--156.

\bibitem[IN]{IN}
K.~Iwaki, T.~Nakanishi, \emph{Exact WKB analysis and cluster algebras}.
J. Phys. A {\bf 47} (2014), no. 47, 474009, 98 pp.

\bibitem[I]{I}
O.~Iyama, \emph{Higher dimensional Auslander-Reiten theory on maximal orthogonal subcategories}.
Adv. Math. {\bf 210} (2007), no.1, 22--50.

\bibitem[J1]{J1}
G.~Jasso, \emph{Reduction of $\tau$-tilting modules and torsion pairs}.
arXiv:1302.2709.

\bibitem[J2]{J2}
G.~Jasso, Private communication.

\bibitem[KV]{KV}
B.~Keller, D.~Vossieck, \emph{Aisles in derived categories}.
Deuxi$\grave{\textnormal e}$me Contact Franco-Belge en Alg$\grave{\textnormal e}$bre (Faulx-les-Tombes, 1987).
Bull. Soc. Math. Belg. S$\acute{\textnormal e}$r. A {\bf 40} (1988), no.2, 239--253.

\bibitem[Ma]{Ma}
F.~Marks, \emph{Universal localisations and tilting modules for finite dimensional algebras}.
J. Pure Appl. Algebra {\bf 219} (2015), no. 7, 3053--3088. 

\bibitem[Mi1]{Mi1}
Y.~Mizuno, \emph{$\nu$-stable $\tau$-tilting modules}.
Comm. Algebra {\bf 43} (2015), no. 4, 1654--1667. 

\bibitem[Mi2]{Mi2}
Y.~Mizuno, \emph{Classifying $\tau$-tilting modules over preprojective algebras of Dynkin type}.
Math. Z. {\bf 277} (2014), no. 3--4, 665--690. 

\bibitem[LF]{LF}
D.~Labardini-Fragoso, \emph{Quivers with potentials associated to triangulated surfaces, part IV: Removing boundary assumptions}.
arXiv:1206.1798.

\bibitem[RS]{RS}
C.~Riedtmann, A.~Schofield, \emph{On a simplicial complex associated with tilting modules}.
Comment. Math. Helv. {\bf 66} (1991), no.1, 70--78.

\bibitem[S]{S}
R.~Schiffler, \emph{A geometric model for cluster categories of type $D_{n}$},
J. Algebraic Combin. {\bf 27} (2008), no.1, 1--21.

\bibitem[U]{U}
L.~Unger, \emph{Schur modules over wild, finite-dimensional path algebras with three simple modules}.
J. Pure Appl. Algebra {\bf 64} (1990), no.2, 205--222.

\bibitem[Z]{Z}
X.~ Zhang, \emph{$\tau$-rigid modules for algebras with radical square zero}.
arXiv:1211.5622.
\end{thebibliography}
\end{document}